\newcommand{\algrule}[1][0.4pt]{\par\vskip.5\baselineskip\hrule height #1\par\vskip.5\baselineskip}
\providecommand{\U}[1]{\protect\rule{.1in}{.1in}}
\newtheorem{theorem}{Theorem}[section]
\newtheorem{proposition}[theorem]{Proposition}
\newtheorem{lemma}[theorem]{Lemma}
\newtheorem{corollary}[theorem]{Corollary}
\newtheorem{definition}[theorem]{Definition}
\theoremstyle{remark}
\let\O\undefined
\DeclareMathOperator{\O}{O}
\DeclareMathOperator{\diag}{diag}
\DeclareMathOperator{\rank}{rank}
\DeclareMathOperator{\sgn}{sgn}
\DeclareMathOperator{\Aut}{Aut}
\DeclareMathOperator{\im}{im}
\DeclareMathOperator{\V}{V}
\DeclareMathOperator{\B}{B}
\DeclareMathOperator{\OB}{OB}
\def\A{\mathcal A}
\def\SS{\operatorname{S}}
\def\Tt{\operatorname{T}}
\def\diag{\operatorname{diag}}
\def\Diag{\operatorname{Diag}}
\def\O{\operatorname{O}}
\def\x{\mathbf x}
\def\y{\mathbf y}
\def\z{\mathbf z}
\def\ee{\mathbf e}
\def\uu{\mathbf u}
\def\vv{\mathbf v}
\def\w{\mathbf w}
\def\A{\mathcal A}
\def\T{\mathcal T}
\newcommand{\tp}{{\scriptscriptstyle\mathsf{T}}}
\tikzset{
  symbol/.style={
    draw=none,
    every to/.append style={
      edge node={node [sloped, allow upside down, auto=false]{$#1$}}}
  }
}
\tikzstyle{startstop} = [rectangle, rounded corners, minimum width = 2cm, minimum height=1cm,text centered, draw = black, fill = red!40]
\tikzstyle{process} = [rectangle, minimum width=1cm, minimum height=1cm, text centered, draw=black, fill = yellow!50]
\tikzstyle{processb} = [rectangle, minimum width=1cm, minimum height=1cm, text centered, draw=black, fill = blue!50]
\tikzstyle{arrow} = [->,>=stealth]
\tikzstyle{dotsnd}=[rectangle, minimum width=1cm, minimum height=1cm, text centered]
\tikzstyle{processd} = [dashed, minimum width=8cm, minimum height=1.5cm, text centered, draw=black]
\tikzstyle{processdd} = [dashed, minimum width=7cm, minimum height=1.5cm, text centered, draw=black]
\begin{document}
\title[Low rank orthogonal tensor approximation]{When geometry meets optimization theory: partially orthogonal tensors}


\author{Ke Ye}
\address{KLMM, Academy of Mathematics and Systems Science, Chinese Academy of Sciences, Beijing 100190, China.}
\email{keyk@amss.ac.cn}

\author{Shenglong Hu}
\address{Department of Mathematics, School of Science, Hangzhou Dianzi University, Hangzhou 310018, China.}
\email{shenglonghu@hdu.edu.cn}

\begin{abstract}
Due to the multi-linearity of tensors, most algorithms for tensor optimization problems are designed based on the block coordinate descent method. Such algorithms are widely employed by practitioners for their implementability and effectiveness. However, these algorithms usually suffer from the lack of theoretical guarantee of global convergence and analysis of convergence rate. In this paper, we propose a block coordinate descent type algorithm for the low rank partially orthogonal tensor approximation problem and analyse its convergence behaviour. To achieve this, we carefully investigate the variety of low rank partially orthogonal tensors and its geometric properties related to the parameter space, which enable us to locate KKT points of the concerned optimization problem. With the aid of these geometric properties, we prove without any assumption that: (1) Our algorithm converges globally to a KKT point; (2) For any given tensor, the algorithm exhibits an overall sublinear convergence with an explicit rate which is sharper than the usual $O(1/k)$ for first order methods in nonconvex optimization; {(3)} For a generic tensor, our algorithm converges $R$-linearly.
\end{abstract}

\subjclass[2010]{15A18; 15A69; 65F18}
\keywords{low rank tensors with orthogonal factors, best low rank approximation, $R$-linear convergence, sublinear convergence, global convergence}
\maketitle

\tableofcontents
\newpage
\section{Introduction}\label{sec:intro}
Tensors are ubiquitous objects in a large variety of applications. A basic mathematical model in these applications is approximating a given tensor by another of lower rank \cite{L-21,C-14,L-13}. Different from matrices, there exist various notions of ranks for tensors, including canonical polyadic (CP) rank \cite{H-27}, Tucker rank \cite{T-66}, hierachical rank \cite{GH11}, Vandermonde rank \cite{TS01}, symmetric rank \cite{C91}, tensor train rank \cite{O11} and tensor network rank \cite{YL18}. Among these different ranks, the CP rank \cite{L-12,C-14,L-21,L-13,H-27} is probably the most extensively discussed one because of its flexibility and generality. In this paper, we adopt the convention that rank refers to CP rank, unless otherwise stated.

The low rank approximation problem is notoriously known for its ill-posedness \cite{DL-08} and NP-hardness \cite{HL-13}. However, tensors involved in real-world applications always have additional structures, such as orthogonality \cite{ZG-01}, nonnegativity \cite{Lim-Com:non,Qi-Com-Lim:non,QCL-16}, Hankel structure \cite{NY-19}, etc. It is often the case that when we restrict the low rank approximation problem to these structural tensors, the problem instantly becomes well-posed and even tractable \cite{Lim-Com:non,HY-19,GC-19,Y-19}. In this paper, we focus on a widely existing special structure: orthogonality, which is first considered in \cite{F-92} and \cite{ZG-01}. On the one hand, orthogonal structure plays an important role in the theoretical study of tensors. For instance, the celebrated Eckart-Young-Mirsky theorem can be generalized to such tensors \cite{DDV-00,CS-09,LS-98}. The set of tensors with orthogonality constraints is also considered in the context of algebraic geometry \cite{R-16,BDHR17}.
On the other hand, tensors with orthogonality structures serve as natural mathematical models in numerous applied fields such as independent component analysis \cite{C-94}, DS-CDMA systems \cite{SDD-10}, image processing \cite{SL-01}, latent variable identification \cite{AGHKT-14}, joint singular value decomposition \cite{PPP-01}. Essentially, the  aforementioned applications are all concerned with the following optimization problem: For a given tensor $\mathcal{A}\in\mathbb{R}^{n_1}\otimes\dots\otimes\mathbb{R}^{n_k}$, find a best rank $r$ partially orthogonal approximation of $\mathcal{A}$. The problem can be formulated as:
\begin{equation}\label{eq:best-app}
\begin{array}{rl}
\min &\|\mathcal{A}-\sum_{j=1}^r\lambda_j\mathbf{x}^{(1)}_j\otimes\dots\otimes\mathbf{x}^{(k)}_j\|^2\\
\text{s.t.}&\|\mathbf{x}^{(i)}_j\|=1\ \text{for all }j=1,\dots,r,\ i=1,\dots,k.
\end{array}
\end{equation}
Here $1\le s \le k$ is a fixed positive integer and for each $1\le i \le s$, the $i$-th factor matrix $A^{(i)}:=[\mathbf{x}^{(i)}_1,\dots,\mathbf{x}^{(i)}_r]$ is orthonormal.
It is notable that in the literature, variants of \eqref{eq:best-app} are extensively studied as well. Nonetheless, we only focus on the analysis of \eqref{eq:best-app} in this paper and we refer readers who are interested in its variants to \cite{MV-08,IAV-13,MHG-15,LUC-18} and references therein for more details.

We observe that when $k = 2$ in \eqref{eq:best-app}, the problem is simply the low rank approximation problem for matrices, which can be solved by singular value decompositions of matrices. Hence it is tempting to expect that problem \eqref{eq:best-app} can be solved in polynomial time for $k \ge 3$ as well. Unfortunately, determining a solution to \eqref{eq:best-app} is NP-hard in general, according to \cite{HL-13}. Because of the NP-hardness of problem \eqref{eq:best-app}, solving it numerically is probably the inevitable choice for practitioners. To that end, block coordinate descent (BCD) is the most commonly used method, due to the inborn nature of tensors. As the objective function in \eqref{eq:best-app} is a squared distance, the BCD method is also called \textit{alternating least square (ALS)} method in the context of tensor decompositions. Various ALS type algorithms are proposed to solve problem \eqref{eq:best-app}. See, for example, \cite{CS-09,SDCJD-12,GV-13,CLD-14,WTY-15} and references therein.

When $r=1$, problem \eqref{eq:best-app} reduces to the best rank one approximation problem, which is studied in depth over the past two decades \cite{Kof-Reg:bes,ZG-01,DDV-01}. It is proved in \cite{L-05} that any limiting point of an iterative sequence generated by the ALS method is a singular vector tuple of the tensor. The global convergence of an ALS type algorithm is established in \cite{WC-14} for a generic tensor and later the genericity assumption is removed by \cite{U-15}. The convergence rate is discussed in \cite{EHK-15} which shows that although the algorithm converges globally, the convergence rate can be superlinear, linear or sublinear for different tensors. Very recently, the R-linear convergence is established for a generic tensor in \cite{HL-18}.

For $r \ge 2$, only some partial results regarding the convergence behaviour of ALS type algorithms for problem \eqref{eq:best-app} are known. The global convergence in case $s=1$ is addressed in \cite{WTY-15} for a generic tensor. For case $s=k$, the global convergence with a sublinear (resp. R-linear) convergence rate for arbitrary (resp. generic) tensor is established in \cite{HY-19}, without any assumption. The situation where $1 < s < k$ is more subtle. The global convergence is proved in \cite{GC-19} under a full rank assumption for the whole iteration sequence, which is removed later in \cite{Y-19} by utilizing a proximal technique. Moreover, as a consequence of \cite{LZ-19}, the global convergence with linear convergence rate can be obtained under an assumption on the limiting point of the iteration sequence. The convergence analysis of ALS type algorithms for problem \eqref{eq:best-app} is still far from accomplished.

This paper is devoted to completely analyse an ALS type algorithm for problem \eqref{eq:best-app}, for any values of $r$ and $s$. More specifically, we propose an algorithm called \emph{the iAPD-ALS algorithm} (Algorithm~\ref{algo}) and we prove without any assumption that
\begin{enumerate}[label=(\roman*)]
\item Any sequence generated by the iAPD-ALS algorithm converges globally to a KKT point of problem \eqref{eq:best-app} (Theorem~\ref{thm:global}).
\item The convergence rate is sublinear for any tensor $\mathcal{A}$ (Theorem~\ref{thm:sublinear}).
\item The convergence rate is $R$-linear for a generic tensor $\mathcal{A}$ (Theorem~\ref{thm:generic}).
\end{enumerate}
Surprisingly, except for the commonly used techniques in optimization theory, proofs of the above convergence results are heavily relied on algebraic and geometric properties of the feasible set of \eqref{eq:best-app}, among which the most important one is the location of KKT points of \eqref{eq:best-app} for a generic $\mathcal{A}$ (Propositions~\ref{prop:KKT location s=3}, \ref{prop:KKT location s=2} and \ref{prop:KKT location s=1}).

The rest of the paper is organized as follows. Preliminaries for linear and multilinear algebra, differential geometry, algebraic geometry and optimization theory are collected in Section~\ref{sec:preliminary}. As a preparation for the convergence analysis of the iAPD-ALS algorithm, we investigate in Section~\ref{sec:podt} the feasible set of problem~\eqref{eq:best-app}, which {consists} of partially orthogonal tensors. Section~\ref{sec:kkt} is concerned with properties of KKT points of problem \eqref{eq:best-app}. With the aid of results obtained in previous sections, the convergence analysis is eventually carried out in Section~\ref{sec:convergence}. To conclude this paper, some final remarks are given in Section~\ref{sec:conclusion}.

\section{Preliminaries}\label{sec:preliminary}
In this section, we provide essential preliminaries for this paper, including rudiments of multilinear algebra, differential geometry, algebraic geometry and optimization theory. For ease of reference, these basic notions and facts are divided into {six} subsections accordingly.

\subsection{Basics of tensors}\label{subsec:notation}
Given positive integers $n_1,\dots,n_k$, a real tensor $\mathcal{A}$ of dimension $n_1\times\dots\times n_k$ is an array of $\prod_{j =1}^k n_j$ real numbers indexed by $k$ indices. Namely, an element in $\mathcal{A}$ is a real number $a_{i_1,\dots, i_k}$ where $1\le i_j \le n_j$ for $1\le j \le k$. The space of real tensors of dimension $n_1\times\dots\times n_k$ is denoted {by} $\mathbb R^{n_1}\otimes\dots\otimes\mathbb R^{n_k}$ and this notation is simplified to be $(\mathbb{R}^n)^{\otimes k}$ if $n_1 = \cdots = n_k = n$.

For reader's convenience, we summarize below notations related to tensors we frequently use in this paper.
\begin{enumerate}[label=(\roman*)]
\item vector spaces are denoted by blackboard bold letters $\mathbb{U},\mathbb{V},\mathbb{W}$, etc. In particular, $\mathbb{R}$ (resp. $\mathbb{C}$) denotes the field of real (resp. complex) numbers. The only exception is $\mathbb{S}^n$, which denotes the $n$-dimensional sphere;
\item vectors are denoted by lower case bold face letters $\mathbf{a},\mathbf{b},\mathbf{c}$, etc.;
\item matrices are denoted by normal italic letters $A,B,C$, etc.;
\item tensors are denoted by calligraphic letters $\mathcal{A},\mathcal{B},\mathcal{C}$, etc.
\end{enumerate}
\subsubsection{Operations defined by contractions}\label{sec:contraction}
Given two tensors $\mathcal{A} \in \mathbb{R}^{n_1} \otimes \cdots \otimes \mathbb{R}^{n_k}$ and $\mathcal{B} \in \mathbb{R}^{n_{i_1}} \otimes \cdots \otimes \mathbb{R}^{{n_{i_s}}}$ where $1\le i_1< \cdots < i_s \le k$ and $\{j_1,\dots, j_{k-s}\} \coloneqq \{1,\dots, k\} \setminus \{i_1,\dots, i_s\}$ with $j_1 < \cdots < j_{k-s}$, we denote by $\langle \mathcal{A}, \mathcal{B} \rangle$ the tensor in $\mathbb{R}^{n_{j_1}} \otimes \cdots \otimes \mathbb{R}^{n_{j_{k-s}}}$ obtained by contracting $\mathcal{A}$ with $\mathcal{B}$. In this paper, the indices involved in the contraction are not explicitly spelled out and understood from the context for the sake of notational simplicity.  For instance, if $i_p = p + k - s, p =1,\dots, s$, then  
\begin{equation}\label{eqn:contraction}
(\langle \mathcal{A}, \mathcal{B} \rangle)_{q_1,\dots, q_{k-s}} = \sum_{\substack{1\le  {r}_l \le n_{l+k-s} \\ 1\le l \le s }} a_{q_1,\dots,q_{k-s},{r}_1,\dots, {r}_s} b_{{r}_1,\dots, {r}_s}.
\end{equation}
In particular, if $s = k$, then we obtain the Hilbert-Schmidt inner product of tensors $\mathcal A, \mathcal B\in\mathbb R^{n_1}\otimes\dots\otimes\mathbb R^{n_k}$:
\[
\langle\mathcal A,\mathcal B\rangle =\sum_{\substack{1\le  i_l \le n_{l} \\ 1\le l \le k }}a_{i_1,\dots, i_k}b_{i_1,\dots, i_k},
\]
of which the induced norm
\[
\lVert\mathcal A\rVert \coloneqq \sqrt{\langle\mathcal A,\mathcal A\rangle}
\]
is called the Hilbert-Schmidt norm \cite{L-13}. Moreover, if $s = k=2$, then an element  $A$ in $\mathbb{R}^{n_1} \otimes \mathbb{R}^{n_2}$ is simply an $n_1 \times n_2$ matrix, whose Hilbert-Schmidt norm reduces to the Frobenius norm $\lVert A \rVert_F$.

Given $k$ matrices $B^{(i)}\in\mathbb R^{m_i\times n_i}$ for $i\in\{1,\dots,k\}$, the \textit{matrix-tensor product} of $(B^{(1)},\dots,B^{(k)})$ and $\mathcal{A}$ is defined to be (cf.\ \cite{L-21})
\begin{equation}\label{eq:matrix-tensor}
(B^{(1)},\dots,B^{(k)})\cdot\mathcal A \coloneqq \langle B^{(1)}\otimes \cdots \otimes B^{(k)}, \mathcal{A} \rangle \in \mathbb{R}^{m_1} \otimes \cdots \otimes \mathbb{R}^{m_k}.
\end{equation}
More precisely, we have
\begin{equation}\label{eq:matrix-tensor1}
\left(  (B^{(1)},\dots,B^{(k)})\cdot\mathcal A\right)_{i_1,\dots, i_k}=\sum_{\substack{1\le j_l \le n_l \\  1\le l \le k  }}b^{(1)}_{i_1j_1}\dots b^{(k)}_{i_kj_k}a_{j_1,\dots, j_k}
\end{equation}
for all $1\le i_t \le m_t$ and $1\le t \le k$.

\subsubsection{Some special maps}\label{subsubsec:special maps}
We first introduce a map sending a vector to a diagonal tensor. For positive integers $k$ and $r$, we define a map $
\operatorname{diag}_k : \mathbb{R}^r\to  (\mathbb{R}^r)^{\otimes k}$ by
\begin{equation}\label{eqn:diag}
\left( \operatorname{diag}_k (\lambda_1,\dots,\lambda_r)\right)_{i_1,\dots, i_k} =
\begin{cases}
\lambda_j\quad &\text{if}~i_1 =\cdots = i_k = j\in\{1,\dots,r\}, \\
0\quad &\text{otherwise}.
\end{cases}
\end{equation}
In particular, $\operatorname{diag}_2(\lambda_1,\dots,\lambda_r)$ is the $r\times r$ diagonal matrix whose diagonal entries are $\lambda_1,\dots,\lambda_r$. It is clear that the map $\operatorname{Diag}_k: (\mathbb{R}^r)^{\otimes k} \to\mathbb R^r$ given by
\begin{equation}\label{eqn:Diag}
(\operatorname{Diag}_k(\mathcal{A}))_i = a_{i,\dots, i},\quad i =1,\dots, r
\end{equation}
is a left inverse of $\operatorname{diag}_k$, i.e., $\operatorname{Diag}_k \circ\operatorname{diag}_k = I_r$, the $r\times r$ identity matrix.

Next we record a class of maps which can be used to produce rank one tensors. We define a map $\tau : \mathbb R^{n_1}\times\dots\times\mathbb R^{n_k}\rightarrow\mathbb R^{n_1}\otimes\dots\otimes\mathbb R^{n_k}$ by
\begin{equation}\label{eq:segre}
\tau(\mathbf u_1,\dots, \mathbf{u}_k) \coloneqq \mathbf u_1\otimes\dots\otimes\mathbf u_k.
\end{equation}
For each $1 \le i \le k$, we also define a map $\tau_i : \mathbb R^{n_1}\times\dots\times\mathbb R^{n_k}\rightarrow\mathbb R^{n_1}\otimes\dots\otimes\mathbb R^{n_{i-1}}\otimes\mathbb R^{n_{i+1}}\otimes\dots\otimes\mathbb R^{n_k}$ by
\begin{equation}\label{eq:partial segre}
\tau_i(\mathbf{u}_1,\dots, \mathbf{u}_k) \coloneqq \tau(\mathbf{u}_1,\dots, \mathbf{u}_{i-1},\mathbf{u}_{i+1},\dots,\mathbf{u}_k) = \mathbf u_1\otimes\dots\otimes\mathbf u_{i-1}\otimes\mathbf u_{i+1}\otimes\dots\otimes\mathbf u_k.
\end{equation}

Given a tensor {$\mathcal A\in\mathbb R^{n_1}\otimes\dots\otimes\mathbb R^{n_k}$}, we define $\mathcal{A}\tau: \mathbb{R}^{n_1} \times \cdots \times \mathbb{R}^{n_k} \to \mathbb{R}$ by
\begin{equation}\label{eq:segre+contraction}
\mathcal{A}\tau (\mathbf{u}_1,\dots, \mathbf{u}_k) \coloneqq \langle \mathcal{A}, \tau (\mathbf{u}_1,\dots, \mathbf{u}_k) \rangle = \langle\mathcal A,\mathbf u_1\otimes\dots\otimes\mathbf u_k\rangle,
\end{equation}
and similarly for each $1\le i \le k$, we define {$\mathcal{A}\tau_i: \mathbb{R}^{n_1} \times \cdots \times \mathbb{R}^{n_k} \to \mathbb{R}^{n_i}$ by}
\begin{equation}\label{eq:partial segre+contraction}
\mathcal{A}\tau_i (\mathbf{u}_1,\dots, \mathbf{u}_k) \coloneqq \langle \mathcal{A}, \tau_i (\mathbf{u}_1,\dots, \mathbf{u}_k) \rangle = \langle\mathcal{A}, \mathbf u_1\otimes\dots\otimes\mathbf u_{i-1}\otimes\mathbf u_{i+1}\otimes\dots\otimes\mathbf u_k \rangle.
\end{equation}
We remark that $\mathcal A\tau$ and $\mathcal A\tau_i$ are related by the relation
\begin{equation}\label{eq:segre and partial segre}
\langle \mathcal A\tau_i (\mathbf{u}_1,\dots, \mathbf{u}_k),\mathbf u_i\rangle=\mathcal A\tau( \mathbf{u}_1,\dots, \mathbf{u}_k).
\end{equation}
Moreover, it holds true that 
\begin{equation}\label{eq:unit}
|\mathcal A\tau(\mathbf{u}_1,\dots, \mathbf{u}_k)|\leq \|\mathcal A\|
\end{equation}
for any unit vectors $\mathbf{u}_i\in\mathbb R^{n_i}$ for all $i=1,\dots,k$.
\subsubsection{Rank and decomposition}\label{subsubsec:rank}
For a tensor $\mathcal{A} \in \mathbb{R}^{n_1}\otimes \cdots \otimes \mathbb{R}^{n_k}$, the \emph{rank} of $\mathcal{A}$, denoted by $\rank (\mathcal{A})$, is the smallest nonnegative integer $r$ such that
\begin{equation}\label{eqn:tensor decomp}
\mathcal{A} = \sum_{j =1}^r \lambda_j \mathbf{u}_j^{(1)} \otimes \cdots \otimes \mathbf{u}_j^{(k)}
\end{equation}
for some $\lambda_j \in \mathbb{R}$ and $\mathbf{u}^{(i)}_j \in \mathbb{R}^{n_i}$ with $\lVert \mathbf{u}^{(i)}_j \rVert = 1$, where $i=1,\dots, k$ and $j=1,\dots,r$. A \textit{rank decomposition} of $\mathcal{A}$ is a decomposition of the form \eqref{eqn:tensor decomp} with $r = \rank(\mathcal{A})$. We say that a tensor $\mathcal{A}$ is \textit{identifiable} if its rank decomposition is \emph{essentially unique} in the following sense: if $\mathcal{A}$ can be written as
\[
\mathcal{A} = \sum_{j =1}^{{\rank(\mathcal A)}} \lambda_j \mathbf{u}_j^{(1)} \otimes \cdots \otimes \mathbf{u}_j^{(k)} = \sum_{j =1}^{{\rank(\mathcal A)}} \mu_j \mathbf{v}_j^{(1)} \otimes \cdots \otimes \mathbf{v}_j^{(k)},
\]
then there exist some permutation $\sigma$ on the set $\{1,\dots, {\rank(\mathcal A)}\}$ and nonzero real numbers $\alpha_{j}^{(i)}$ such that
\[
\mathbf v_{\sigma(j)}^{(i)} = \alpha_{j}^{(i)}  \mathbf u_{j}^{(i)},\quad \left( \prod_{s =1}^k  \alpha_j^{(s)} \right)  \mu_{\sigma(j)} = \lambda_j
\]
for each $i=1,\dots,k$ and $j=1,\dots, {\rank(\mathcal A)}$.

\subsection{Basics of real algebraic geometry} A subset $X$ of $\mathbb{R}^n$ is called a \emph{semi-algebraic set} if there exists a collection of  polynomials $\{f_{ij}\in \mathbb{R}[x_1,\dots, x_n]\colon 1\le j \le r_i,\ 1 \le i \le s\}$ such that
\[
X = \bigcup_{i=1}^s \bigcap_{j=1}^{r_i} \{\x\in \mathbb{R}^n: f_{ij} \ast_{ij} 0 \},
\]
where $\ast_{ij}$ is either $<$ or $=$. In particular, if $\ast_{ij}$ is $=$ for all $1\le j \le r_i, 1 \le i \le s$, then $X$ is called an \emph{algebraic variety} or \emph{algebraic set}. In this case, $X$ is said to be \emph{irreducible} if the decomposition $X = V_1 \cup V_2$ for some algebraic varieties $V_1,V_2$ implies that either $V_1 = X$ or $V_2 = X$.

Let $X\subseteq \mathbb{R}^n, Y\subseteq \mathbb{R}^m$ be two semi-algebraic subsets. A map $f:X\to Y$ is called a \emph{semi-algebraic map} if its graph is semi-algebraic in $\mathbb{R}^{n + m}$. If $f$ is also a homeomorphism, then $f$ is called a \emph{semi-algebraic homeomorphism} and we say that $X$ is \emph{semi-algebraically homeomorphic} to $Y$. The following decomposition theorem is the most fundamental result concerning the structure of a semi-algebraic set.
\begin{theorem}\cite[Theorem~2.3.6]{BCR-98}\label{thm:decomp}
Every semi-algebraic subset of $\mathbb{R}^n$ can be decomposed as the disjoint union of finitely many semi-algebraic sets, each of which is semi-algebraically homeomorphic to an open cube $(0,1)^d \subseteq \mathbb{R}^d$ for some nonnegative integer $d$.
\end{theorem}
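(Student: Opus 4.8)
The plan is to prove the statement by induction on $n$ via \emph{cylindrical algebraic decomposition} (CAD). Writing $X = \bigcup_{i=1}^s \bigcap_{j=1}^{r_i} \{f_{ij} \ast_{ij} 0\}$ as in the statement, the goal is to build a finite partition of $\mathbb{R}^n$ into semi-algebraic cells on each of which every $f_{ij}$ has constant sign ($<0$, $=0$, or $>0$), and to show that each such cell is semi-algebraically homeomorphic to an open cube $(0,1)^d$. Once this is done, each cell lies entirely inside $X$ or entirely inside its complement, so $X$ is automatically a disjoint union of finitely many of these cells, which is exactly the desired conclusion. For the base case $n=1$, the real zero locus of finitely many nonzero univariate polynomials is finite, and on each of the finitely many open intervals between consecutive zeros every polynomial has constant sign; a point is semi-algebraically homeomorphic to $(0,1)^0$, a bounded open interval to $(0,1)$ by an affine map, and an unbounded one by an explicit semi-algebraic homeomorphism such as $t \mapsto t/(1+\lvert t\rvert)$.

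For the inductive step I would use the projection $\pi : \mathbb{R}^n \to \mathbb{R}^{n-1}$ forgetting $x_n$, and regard each $f_{ij}$ as a polynomial in $x_n$ with coefficients in $\mathbb{R}[x_1,\dots,x_{n-1}]$. The crucial input, and where I expect the real work to lie, is a \emph{projection lemma}: there is a finite family $\mathcal{G} \subseteq \mathbb{R}[x_1,\dots,x_{n-1}]$, manufactured from the leading coefficients, the discriminants of the $f_{ij}$, and the resultants of pairs $f_{ij}, f_{i'j'}$ (more precisely, from their principal subresultant coefficients), such that on any connected semi-algebraic set $C \subseteq \mathbb{R}^{n-1}$ on which every element of $\mathcal{G}$ has constant sign, the number and multiplicity pattern of the complex roots of the $f_{ij}(\mathbf{a},\cdot)$ is independent of $\mathbf{a} \in C$, and the distinct real roots can be listed as continuous semi-algebraic functions $\xi_1(\mathbf{a}) < \dots < \xi_\ell(\mathbf{a})$ on $C$. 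The main obstacle is proving this lemma: one has to argue that the roots of a one-parameter family of univariate polynomials cannot collide or escape to infinity without the discriminant or a leading coefficient vanishing, and one has to extract the root functions as genuine semi-algebraic sections — this is precisely where the subresultant calculus is needed, both to make the root count effective and to guarantee continuity and semi-algebraicity of the $\xi_t$.

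Granting the projection lemma, I would apply the induction hypothesis to decompose $\mathbb{R}^{n-1}$ into finitely many semi-algebraic cells, each semi-algebraically homeomorphic to some $(0,1)^d$, and refined so that each cell is connected and every element of $\mathcal{G}$ has constant sign on it. Over such a cell $C$ one obtains the sections $\xi_1 < \dots < \xi_\ell$, and the cylinder $\pi^{-1}(C)$ breaks up into the graphs $\Gamma(\xi_t) = \{(\mathbf{a}, \xi_t(\mathbf{a})) : \mathbf{a} \in C\}$ together with the open bands between consecutive graphs and the two unbounded bands. The graph $\Gamma(\xi_t)$ is semi-algebraically homeomorphic to $C$, hence to $(0,1)^d$, via $\pi$; a bounded band is semi-algebraically homeomorphic to $C \times (0,1)$, hence to $(0,1)^{d+1}$, via $(\mathbf{a}, t) \mapsto \bigl(\mathbf{a}, (t - \xi_t(\mathbf{a}))/(\xi_{t+1}(\mathbf{a}) - \xi_t(\mathbf{a}))\bigr)$, with an analogous semi-algebraic rescaling for the unbounded bands. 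By the projection lemma each $f_{ij}$ has constant sign on each of these cells, since its sign at $(\mathbf{a}, t)$ is determined by which of the $\xi_i(\mathbf{a})$ lie below $t$; thus these cells refine the desired sign-partition of $\mathbb{R}^n$, each is semi-algebraically homeomorphic to an open cube, and taking the union of those contained in $X$ completes the induction and the proof.
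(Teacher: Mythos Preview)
The paper does not prove this statement; it merely quotes it from \cite[Theorem~2.3.6]{BCR-98} as a background fact, so there is no ``paper's own proof'' to compare against. Your outline via cylindrical algebraic decomposition is exactly the standard argument given in that reference, and the steps you sketch (induction on $n$, projection lemma via subresultants to control root functions, slicing cylinders into graphs and bands) are correct in substance and match the cited source.
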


\subsubsection{Dimension of a semi-algebraic set}
For a semi-algebraic subset $X$ of $\mathbb{R}^n$, we denote by $I(X)$ the ideal consisting of polynomials vanishing on $X$. The \emph{dimension} of $X$, denoted by $\dim (X)$, is defined to be the Krull dimension of the quotient ring $\mathbb{R}[x_1,\dots, x_n]/I(X)$. The subset $\overline{X}\subseteq \mathbb{R}^n$ consisting of common zeros of polynomials in $I(X)$ is called the \emph{Zariski closure} of $X$. To distinguish, we denote the Euclidean closure of $X$ by $\overline{X}^E$. Notice that in general we have
\[
X \subseteq \overline{X}^E \subseteq \overline{X},
\]
where equalities hold if and only if $X$ is an algebraic variety. As a comparison, dimensions of the three sets are always equal:
\begin{proposition}\cite[Proposition~2.8.2]{BCR-98}\label{prop:closure dimension}
Let $X\subseteq \mathbb{R}^n$ be a semi-algebraic set. Then we have
\[
\dim (X) = \dim (\overline{X}) = \dim (\overline{X}^E).
\]
\end{proposition}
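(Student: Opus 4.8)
The plan is to reduce the whole proposition to a single statement about vanishing ideals. By the definition of dimension adopted here, $\dim(Y)$ for a (semi-)algebraic set $Y\subseteq\mathbb{R}^n$ is the Krull dimension of the coordinate ring $\mathbb{R}[x_1,\dots,x_n]/I(Y)$. Consequently, to prove
\[
\dim(X)=\dim(\overline{X}^E)=\dim(\overline{X}),
\]
it suffices to show that the three ideals $I(X)$, $I(\overline{X}^E)$ and $I(\overline{X})$ coincide, since then the three quotient rings are literally the same ring and hence share the same Krull dimension. This is where the inclusion chain $X\subseteq\overline{X}^E\subseteq\overline{X}$ recorded just before the statement does all the work.

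First I would apply the (inclusion-reversing) operator $I(\cdot)$ to that chain, obtaining $I(\overline{X})\subseteq I(\overline{X}^E)\subseteq I(X)$. To close the loop it remains to check $I(X)\subseteq I(\overline{X})$; but this is immediate from the definition of the Zariski closure, since $\overline{X}$ is by construction the common zero locus of all polynomials in $I(X)$, so every $f\in I(X)$ vanishes identically on $\overline{X}$, i.e.\ $f\in I(\overline{X})$. Combining the two inclusions squeezes everything into the equality $I(X)=I(\overline{X}^E)=I(\overline{X})$, and the proposition follows.

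The only step that needs a moment's care --- and, I expect, essentially the only obstacle --- is justifying the topological inclusion $\overline{X}^E\subseteq\overline{X}$ underlying the chain above: one uses that $\overline{X}$ is closed in the Euclidean topology (being an intersection of zero sets of continuous polynomial functions) and contains $X$, hence contains the Euclidean closure $\overline{X}^E$. I would emphasize what the argument does \emph{not} require: no irreducibility hypothesis on $X$, no real-Nullstellensatz or real-radical input, and in particular no appeal to the cell decomposition of Theorem~\ref{thm:decomp}. That decomposition would become relevant only under the alternative, more geometric definition of $\dim(X)$ as the largest $d$ occurring among the cells $(0,1)^d$ in such a decomposition; reconciling that number with the Krull dimension of $\mathbb{R}[x_1,\dots,x_n]/I(\overline{X})$ --- for instance via the transcendence degree of the function field of an irreducible component --- is genuinely more involved, but it is not needed for the statement as phrased here.
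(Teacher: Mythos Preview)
Your argument is correct and self-contained. The paper does not supply its own proof of this proposition; it is quoted verbatim from \cite[Proposition~2.8.2]{BCR-98} as a preliminary fact, so there is no in-paper argument to compare against. Your observation that the definition of $\dim$ via the Krull dimension of $\mathbb{R}[x_1,\dots,x_n]/I(\cdot)$ reduces everything to checking $I(X)=I(\overline{X}^E)=I(\overline{X})$, together with the chain $X\subseteq\overline{X}^E\subseteq\overline{X}$ and the tautology $I(X)\subseteq I(V(I(X)))=I(\overline{X})$, is a clean and complete proof under the paper's chosen definition.
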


The next result implies that the dimension of a semi-algebraic set is non-increasing under a semi-algebraic map.
\begin{theorem}\cite[Proposition~2.2.7 \& Theorem~2.8.8]{BCR-98}\label{thm:image dimension}
Let $X$ be a semi-algebraic subset in $\mathbb{R}^n$ and let $f:X\to \mathbb{R}^m$ be a semi-algebraic map. Then $f(X)$ is semi-algebraic and $\dim (X) \ge \dim (f(X))$. In particular, if $X$ is an algebraic variety and $f:\mathbb{R}^n \to \mathbb{R}^m$ is a polynomial map, then $\dim (X) \ge \dim (f(X))$.
\end{theorem}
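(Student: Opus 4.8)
The plan is to realize $f(X)$ as a projection. By the definition of a semi-algebraic map, the graph $\Gamma_f=\{(x,f(x))\colon x\in X\}$ is a semi-algebraic subset of $\mathbb{R}^{n+m}$, and $f(X)=\pi(\Gamma_f)$ where $\pi\colon\mathbb{R}^{n}\times\mathbb{R}^{m}\to\mathbb{R}^{m}$ is the projection onto the last $m$ coordinates. Hence the assertion is exactly the content of the Tarski--Seidenberg projection theorem (projections of semi-algebraic sets are semi-algebraic), which I would invoke as a black box, it being the foundational stability property of semi-algebraic sets. The ``in particular'' clause is then immediate: a polynomial map $\mathbb{R}^n\to\mathbb{R}^m$ is semi-algebraic because its graph is cut out by the polynomial equations $y_i=f_i(x_1,\dots,x_n)$, and an algebraic variety is in particular a semi-algebraic set.

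\textbf{The dimension inequality.} Here I would rely on the cell decomposition Theorem~\ref{thm:decomp}, together with two standard features of the dimension theory of semi-algebraic sets that I would quote from \cite{BCR-98}: dimension is additive over finite unions, $\dim(A_1\cup\dots\cup A_N)=\max_i\dim A_i$, and dimension is invariant under semi-algebraic homeomorphisms (so in particular any semi-algebraic set homeomorphic to $(0,1)^{d}$ has dimension $d$); both are developed from Theorem~\ref{thm:decomp} and the definition of $\dim$ via Krull dimension, in the spirit of Proposition~\ref{prop:closure dimension}. First apply Theorem~\ref{thm:decomp} to write $X=\bigsqcup_{i=1}^{N}C_i$ with each $C_i$ semi-algebraically homeomorphic via some $\phi_i$ to a cube $(0,1)^{d_i}$; then $\dim X=\max_i d_i$, and since $f(X)=\bigcup_i f(C_i)$ with $f(C_i)=(f\circ\phi_i)((0,1)^{d_i})$, additivity reduces the problem to the claim: for any semi-algebraic map $g\colon(0,1)^{d}\to\mathbb{R}^{m}$ one has $\dim g((0,1)^{d})\le d$.

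To prove the claim I would pass to the graph of $g$. Using Theorem~\ref{thm:decomp} (equivalently, using the piecewise continuity of semi-algebraic maps, itself a consequence of Theorem~\ref{thm:decomp}), decompose $(0,1)^{d}$ into finitely many semi-algebraic pieces $E_\ell$ on each of which $g$ is continuous. On each such piece the projection $(x,g(x))\mapsto x$ is a semi-algebraic homeomorphism of $\Gamma_{g|_{E_\ell}}$ onto $E_\ell$, with continuous inverse $x\mapsto(x,g(x))$, so $\dim\Gamma_{g|_{E_\ell}}=\dim E_\ell\le d$; hence $\dim\Gamma_{g}=\max_\ell\dim\Gamma_{g|_{E_\ell}}\le d$. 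Finally, $g((0,1)^{d})$ is the image of $\Gamma_{g}$ under the coordinate projection onto the last $m$ coordinates, and a coordinate projection cannot raise dimension --- equivalently, $\dim Y$ is the largest $d$ for which some coordinate projection of $Y$ has image with nonempty interior, a characterization that falls out of Theorem~\ref{thm:decomp}. Therefore $\dim g((0,1)^{d})\le\dim\Gamma_{g}\le d$, which finishes the claim and hence $\dim f(X)=\max_i\dim f(C_i)\le\max_i d_i=\dim X$.

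\textbf{Main obstacle.} Everything except one ingredient is bookkeeping with the cell decomposition theorem. The genuinely non-formal point is the behaviour of dimension under coordinate projections (and, relatedly, its invariance under semi-algebraic homeomorphisms): one needs that a $d$-dimensional semi-algebraic set admits a coordinate projection onto $\mathbb{R}^{d}$ with image of nonempty interior, and that no coordinate projection can do better. In a self-contained treatment this is exactly where the structure theory of semi-algebraic sets (good directions, finiteness of fibres over a projection realizing the dimension) must be set up; in this paper I would simply cite \cite{BCR-98} for it rather than reproduce the argument.
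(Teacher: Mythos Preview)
The paper does not supply its own proof of this statement: it is quoted verbatim as \cite[Proposition~2.2.7 \& Theorem~2.8.8]{BCR-98} and used as a black box. Your sketch is the standard argument one finds in that reference (Tarski--Seidenberg for the image, cell decomposition plus invariance of dimension under semi-algebraic homeomorphism for the inequality), so there is nothing to compare against and your proposal is consistent with what the paper is citing.
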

We remark that if $X$ is an algebraic variety and $f$ is a polynomial map, Theorem~\ref{thm:image dimension} only ensures that $f(X)$ is also a semi-algebraic set. In general, $f(X)$ may not be an algebraic variety. For instance, the image of $f(x) = x^2, x\in \mathbb{R}$ is the half line $\mathbb{R}_+$. As a corollary of Theorem~\ref{thm:image dimension}, we obtain the following geometric characterization of the dimension of a semi-algebraic set.
\begin{proposition}\cite[Corollary~2.8.9]{BCR-98}\label{prop:dim}
Let $X = \bigcup_{i=1}^s X_i$ be a finite union of semi-algebraic sets, where each $X_i$ is semi-algebraically homeomorphic to an open cube $(0,1)^{d_i} \subseteq \mathbb{R}^{d_i}$, then $\dim (X) = \max\{d_1,\dots, d_s\}$.
\end{proposition}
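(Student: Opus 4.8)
The plan is to prove the two inequalities $\dim(X)\ge \max_i d_i$ and $\dim(X)\le\max_i d_i$ separately, after first establishing that $\dim(X_i)=d_i$ for each $i$. For the latter, I would begin by recording that $\dim\big((0,1)^d\big)=d$: the only polynomial in $\mathbb{R}[x_1,\dots,x_d]$ vanishing on the nonempty open set $(0,1)^d$ is the zero polynomial, so $I\big((0,1)^d\big)=(0)$ and its Krull dimension equals $\dim\mathbb{R}[x_1,\dots,x_d]=d$. Then, fixing a semi-algebraic homeomorphism $f_i\colon (0,1)^{d_i}\to X_i$ and applying Theorem~\ref{thm:image dimension} to $f_i$ gives $\dim(X_i)=\dim\big(f_i((0,1)^{d_i})\big)\le d_i$, while applying it to the inverse map $f_i^{-1}\colon X_i\to(0,1)^{d_i}$, which is again semi-algebraic, gives $d_i=\dim\big(f_i^{-1}(X_i)\big)\le\dim(X_i)$. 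Hence $\dim(X_i)=d_i$.

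For the inequality $\max_i d_i\le\dim(X)$, I would invoke monotonicity of dimension under inclusion: since $X_i\subseteq X$ we have $I(X)\subseteq I(X_i)$, so $\mathbb{R}[x_1,\dots,x_n]/I(X_i)$ is a quotient of $\mathbb{R}[x_1,\dots,x_n]/I(X)$ and therefore $d_i=\dim(X_i)\le\dim(X)$; taking the maximum over $i$ yields the claim.

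The reverse inequality $\dim(X)\le\max_i d_i$ is the substantive part. Here I would pass to Zariski closures, using that they commute with finite unions: $\overline{X}=\overline{X_1}\cup\dots\cup\overline{X_s}$, because a finite union of algebraic sets is again an algebraic set, contains $X$, and is contained in every algebraic set containing $X$. By Proposition~\ref{prop:closure dimension} we have $\dim(X)=\dim(\overline{X})$ and $\dim(X_i)=\dim(\overline{X_i})$, so it suffices to prove $\dim\big(\bigcup_i\overline{X_i}\big)=\max_i\dim(\overline{X_i})$. This is the standard fact that an irreducible variety sitting inside a finite union of varieties lies in one of them: any irreducible component $Z$ of $\bigcup_i\overline{X_i}$ satisfies $Z=\bigcup_i(Z\cap\overline{X_i})$, so irreducibility forces $Z\subseteq\overline{X_i}$ for some $i$, hence $\dim Z\le\dim\overline{X_i}\le\max_i d_i$; since the dimension of an algebraic set is the maximum of the dimensions of its irreducible components, this gives $\dim(X)\le\max_i d_i$.

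The main obstacle is this last step: the other parts are formal manipulations of the definition of dimension together with Theorem~\ref{thm:image dimension}, but controlling $\dim$ under a finite union genuinely requires reducing to Zariski closures and using that an irreducible set contained in a finite union of varieties must be contained in one of them. A secondary point worth verifying carefully is that the inverse of a semi-algebraic homeomorphism is itself semi-algebraic, which is precisely what legitimises applying Theorem~\ref{thm:image dimension} in both directions in the first paragraph.
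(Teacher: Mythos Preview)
The paper does not supply its own proof of this proposition; it is simply quoted from \cite[Corollary~2.8.9]{BCR-98} with the remark that it follows from Theorem~\ref{thm:image dimension}. Your argument is correct and is essentially the standard one: you obtain $\dim(X_i)=d_i$ by applying Theorem~\ref{thm:image dimension} to both $f_i$ and $f_i^{-1}$ (the inverse is semi-algebraic because its graph is the transpose of the graph of $f_i$), get $\dim(X)\ge\max_i d_i$ from monotonicity, and get $\dim(X)\le\max_i d_i$ by passing to Zariski closures via Proposition~\ref{prop:closure dimension} and using that every irreducible component of a finite union of closed sets lies in one of them. There is nothing further in the paper to compare against.
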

By definition, It is obvious that $\dim(X)$ does not depend on the decomposition $X = \bigcup_{i=1}^s X_i$. Proposition~\ref{prop:dim} indicates that the algebraic definition of $\dim (X)$ also coincides with the intuitive geometric view on the dimension of a set. The following is a direct consequence of Proposition~\ref{prop:dim}.
\begin{corollary}\label{cor:0-dim}
Let $X\subseteq \mathbb{R}^n$ be a nonempty semi-algebraic subset. The dimension of $X$ is zero if and only if $X$ consists of finitely many points.
\end{corollary}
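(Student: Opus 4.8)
The plan is to deduce the corollary directly from the structure results recalled above, so the argument is quite short.

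First I would prove the forward implication. Assume $X$ is nonempty with $\dim(X) = 0$. Applying Theorem~\ref{thm:decomp}, write $X$ as a disjoint union $X = \bigcup_{i=1}^s X_i$ of finitely many semi-algebraic sets, where each $X_i$ is semi-algebraically homeomorphic to an open cube $(0,1)^{d_i} \subseteq \mathbb{R}^{d_i}$. Proposition~\ref{prop:dim} then yields $0 = \dim(X) = \max\{d_1,\dots,d_s\}$, so $d_i = 0$ for every $i$. Since $(0,1)^0$ is the one-point space, each $X_i$ is a single point, and hence $X$ is a union of at most $s$ points, i.e.\ a finite set.

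For the converse, suppose $X = \{p_1,\dots,p_m\}$ is a finite set; nonemptiness of $X$ means $m \ge 1$. Each singleton $\{p_j\}$ is the algebraic set $\{\x \in \mathbb{R}^n : x_\ell - (p_j)_\ell = 0,\ \ell = 1,\dots,n\}$, hence semi-algebraic, and it is semi-algebraically homeomorphic to $(0,1)^0$ via the (trivially semi-algebraic) constant map. Thus $X = \bigcup_{j=1}^m \{p_j\}$ is a decomposition of the form required in Proposition~\ref{prop:dim} with all exponents equal to $0$, and that proposition gives $\dim(X) = \max\{0,\dots,0\} = 0$.

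There is no genuine obstacle here: the corollary is essentially Proposition~\ref{prop:dim} specialized to $d = 0$. The only points to be careful about are the convention that $(0,1)^0$ (equivalently, a set semi-algebraically homeomorphic to it) is a single point, and the use of the hypothesis $X \neq \emptyset$ so that the maximum in Proposition~\ref{prop:dim} is taken over a nonempty index set and $\dim(X)$ is honestly $0$ rather than vacuous.
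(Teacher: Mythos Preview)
Your proof is correct and matches the paper's approach: the paper states the corollary as a direct consequence of Proposition~\ref{prop:dim} without giving an explicit argument, and your two-direction proof via Theorem~\ref{thm:decomp} and Proposition~\ref{prop:dim} is precisely the intended derivation.
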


\subsubsection{Smooth locus of an algebraic variety} Let $X$ be an irreducible algebraic variety in $\mathbb{R}^n$ and let $I(X)$ be the ideal  of $X$, generated by $f_1,\dots, f_s\in \mathbb{R}[x_1,\dots, x_n]$. For each point $\x\in X$, we define the \emph{Zariski tangent space} of $X$ at $\x$ to be
\[
T_X (\x) = \ker(J_{\x}(f_1,\dots, f_s)),
\]
where $J_{\x}(f_1,\dots, f_s)$ is the Jacobian matrix
\[
J_{\x}(f_1,\dots, f_s) := \begin{bmatrix}
\frac{\partial f_1}{\partial x_1}(\x) &  \cdots & \frac{\partial f_1}{\partial x_n}(\x) \\
\vdots & \ddots & \vdots  \\
\frac{\partial f_s}{\partial x_1}(\x) & \cdots & \frac{\partial f_s}{\partial x_n}(\x) \\
\end{bmatrix} \in \mathbb{R}^{s \times n}
\]
and $\ker(J_{\x}(f_1,\dots, f_s))\subseteq \mathbb{R}^n$ is the right null space of $J_{\x}(f_1,\dots, f_s)$. We remark that in general $\dim(T_X (\x)) \ge \dim (X)$. A point $\x \in X$ is said to be \emph{nonsingular} if $\dim(T_X (\x)) = \dim (X)$. The set of all nonsingular points of $X$, denoted by $X_{\operatorname{sm}}$, is called the \emph{smooth locus} of $X$.
\begin{proposition}\cite[Propositions~3.3.11 \& 3.3.14]{BCR-98}\label{prop:smooth locus}
Let $X$ be a $d$-dimensional irreducible algebraic variety in $\mathbb{R}^n$. We have the following:
\begin{enumerate}[label=(\roman*)]
\item \label{prop:smooth locus:item1} For each nonsingular point $\x$ of $X$, there exists an open semi-algebraic neighbourhood of $\x$ in $X$ which is a $d$-dimensional smooth submanifold\footnote{Indeed, such a neighbourhood can be chosen to be a Nash submanifold in $\mathbb{R}^n$, but we will not need this stronger fact in the sequel.} in $\mathbb{R}^n$.
\item \label{prop:smooth locus:item2} $X_{\operatorname{sm}}$ is a nonempty Zariski open subset of $X$ and $\dim(X_{\operatorname{sm}}) = \dim (X)$. Hence $X\setminus X_{\operatorname{sm}}$ is an algebraic variety of dimension {strictly} smaller than $\dim (X)$.
\end{enumerate}
\end{proposition}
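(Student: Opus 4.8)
The plan is to establish the Jacobian criterion for smoothness and read both items off it. \emph{Reformulating the smooth locus.} Since the excerpt records that $\dim T_X(\x)\ge\dim X=d$ for every $\x\in X$, a point $\x$ is nonsingular precisely when $\dim\ker J_{\x}(f_1,\dots,f_s)=d$, equivalently when $\rank J_{\x}(f_1,\dots,f_s)=n-d$ — its largest possible value — equivalently when at least one of the $(n-d)\times(n-d)$ minors of $J_{\x}(f_1,\dots,f_s)$ does not vanish at $\x$. These minors are polynomials in $\x$, so $X_{\operatorname{sm}}$ is the complement in $X$ of the common zero locus of finitely many polynomials: it is Zariski open in $X$, and $X\setminus X_{\operatorname{sm}}$ is an algebraic variety.

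\emph{Nonemptiness and dimension.} The crux is to show that not all of these minors vanish identically on $X$. Because $\mathbb{R}$ is perfect of characteristic zero, the finitely generated domain $\mathbb{R}[x_1,\dots,x_n]/I(X)$ (here $I(X)$ is prime, as $X$ is irreducible) is generically smooth over $\mathbb{R}$; equivalently, the Jacobian criterion applied at the generic point of $X$ shows the full-rank locus is a nonempty Zariski open subset. One may make this concrete by Noether normalization together with the primitive element theorem, which realize $X$ as birational to an irreducible hypersurface $\{p=0\}\subseteq\mathbb{R}^{d+1}$ whose singular locus is visibly proper because $p$ is separable in its last variable. Hence the ideal $I(X\setminus X_{\operatorname{sm}})$ strictly contains the prime ideal $I(X)$, so $\dim(X\setminus X_{\operatorname{sm}})<d$ by Krull's principal ideal theorem. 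In particular $X_{\operatorname{sm}}\ne\emptyset$; being a nonempty Zariski open subset of the irreducible variety $X$ it is Zariski dense, so $\dim X_{\operatorname{sm}}=\dim\overline{X_{\operatorname{sm}}}=\dim X=d$ by Proposition~\ref{prop:closure dimension}. This establishes item~\ref{prop:smooth locus:item2}.

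\emph{Local manifold structure.} For item~\ref{prop:smooth locus:item1}, fix a nonsingular $\x$ and choose $n-d$ of the generators, say $g_1,\dots,g_{n-d}$, and $n-d$ of the coordinates so that the corresponding minor of $J_{\x}(g_1,\dots,g_{n-d})$ is nonzero; it stays nonzero on a Euclidean neighbourhood of $\x$. The implicit function theorem (in its analytic, indeed Nash, form, which yields the footnoted refinement) then presents $Z\coloneqq\{g_1=\cdots=g_{n-d}=0\}$ near $\x$ as a graph over the remaining $d$ coordinates, i.e.\ as a $d$-dimensional smooth submanifold with $T_{\x}Z=\ker J_{\x}(g_1,\dots,g_{n-d})$. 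Since $g_i\in I(X)$ we have $X\subseteq Z$ near $\x$, and since $\dim\ker J_{\x}(g_1,\dots,g_{n-d})=d=\dim\ker J_{\x}(f_1,\dots,f_s)$ these kernels coincide; therefore the remaining generators $f_j$ restrict to smooth functions on $Z$ whose differentials vanish at $\x$. Regularity of the local ring $\mathcal{O}_{X,\x}$ at the nonsingular point $\x$ (equivalently, an Artin approximation / Nash argument) then forces $X$ to coincide with $Z$ on a possibly smaller neighbourhood of $\x$, which is the desired submanifold.

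\emph{Main obstacle.} The genuinely substantial step is the nonemptiness of $X_{\operatorname{sm}}$ together with the real-geometry bookkeeping it requires: irreducibility must be used in the form that $I(X)$ is prime, generic smoothness must be applied to $\mathbb{R}[x_1,\dots,x_n]/I(X)$ rather than to a complexification (which need not be a domain), and one must use that real dimension is bounded by Krull dimension so that the singular locus is truly lower-dimensional. A secondary delicate point is the local identification of $X$ with the implicit-function manifold $Z$, which is precisely where nonsingularity of $\x$ — and not merely the set-theoretic inclusion $X\subseteq Z$ — enters.
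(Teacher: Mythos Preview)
The paper does not prove this proposition; it is quoted verbatim from \cite[Propositions~3.3.11 \& 3.3.14]{BCR-98} and used as a black box, so there is no in-paper argument to compare against.

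Your outline is essentially the standard proof from that reference: the Jacobian-minor description of $X_{\operatorname{sm}}$, generic smoothness in characteristic zero for nonemptiness, and the implicit function theorem for the local manifold structure. One simplification in item~\ref{prop:smooth locus:item1}: you do not need Artin approximation to pass from $X\subseteq Z$ to $X=Z$ near $\x$. Since $\x$ is nonsingular, $\mathcal{O}_{X,\x}$ is a regular local ring of dimension $d$, hence the height-$(n-d)$ prime $I(X)A_{\mathfrak m}$ in the regular local ring $A_{\mathfrak m}$ (with $A=\mathbb{R}[x_1,\dots,x_n]$ and $\mathfrak m$ the maximal ideal at $\x$) is generated by exactly $n-d$ elements; by Nakayama your $g_1,\dots,g_{n-d}$ already generate it. Thus every $f_j$ lies in $(g_1,\dots,g_{n-d})$ after inverting a polynomial nonvanishing at $\x$, which gives $Z\subseteq X$ on a Zariski neighbourhood of $\x$ and hence $X=Z$ there. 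This is purely commutative algebra in the localized polynomial ring; the completion and Artin approximation are not needed (though they do yield the Nash refinement in the footnote).
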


\subsection{Basics of differential geometry}
In this subsection, we give a brief overview of constructions and tools from differential geometry, which are essential to the analysis of the dynamics of the optimization algorithm proposed in this paper.
\subsubsection{Oblique and Stiefel manifolds}\label{subsec:stiefel}
Let $m \le n$ be two positive integers. We define
\begin{equation}\label{eq:fixed-norm}
\operatorname{B}(m,n) \coloneqq \{A\in\mathbb R^{n\times m}\colon (A^\tp A)_{jj}= 1,\ 1\le j \le m\}.
\end{equation}
By definition, a matrix $A\in \mathbb{R}^{n\times m}$ lies in $\operatorname{B}(m,n)$ if and only if each column vector of $A$ is a unit vector. This implies that $\B(m,n)$ is simply the Cartesian product of $m$ spheres in $\mathbb{R}^{n}$:
\begin{equation}\label{eq:prod-sp}
\B(m,n) =  \mathbb S^{n-1}\times\dots\times\mathbb S^{n-1},
\end{equation}
which induces a smooth manifold structure on $\B(m,n)$. Therefore, $\operatorname{B}(m,n)$ is both a smooth submanifold and a closed subvariety of $\mathbb{R}^{n\times m}$.

Inside $\operatorname{B}(m,n)$ there is a well-known {\textit{Oblique manifold} \cite{AMS-08}
\begin{equation}\label{eq:oblique}
\operatorname{OB}(m,n) \coloneqq \{A\in\operatorname{B}(m,n)\colon \operatorname{det}(A^\tp A)\neq 0\}.
\end{equation}
Equivalently, $\operatorname{OB}(m,n)$ consists of full rank matrices in $\operatorname{B}(m,n)$. We notice that $\OB(m,n)$ is an open submanifold of $\B(m,n)$. From the product structure \eqref{eq:prod-sp} of $\B(m,n)$, the tangent spaces of both $\B(m,n)$ and $\OB(m,n)$ can be easily computed. Indeed, if we write $A\in \operatorname{B}(m,n)$ (resp. $B\in \operatorname{OB}(m,n)$) as $A = [\mathbf{a}_1,\dots, \mathbf{a}_m]$ (resp. $B = [\mathbf{b}_1,\dots, \mathbf{b}_m]$), then we have {
\begin{align}
\operatorname{T}_{\B(m,n)} (A)  &= \operatorname{T}_{ \mathbb{S}^{n-1}}(\mathbf{a}_1) \oplus \cdots \oplus \operatorname{T}_{\mathbb{S}^{n-1}}(\mathbf{a}_m), \label{eq:prod-sp:oblique} \\
\operatorname{T}_{\OB(m,n)} (B)  &= \operatorname{T}_{\mathbb{S}^{n-1}}(\mathbf{b}_1) \oplus \cdots \oplus \operatorname{T}_{\mathbb{S}^{n-1}}(\mathbf{b}_m). \label{q:tangent:oblique}
\end{align}
}

In the sequel, we also need the notion of \emph{Stiefel manifold} consisting of $n\times m$ orthonormal matrices:
\begin{equation}\label{eq:stf}
\V(m,n) \coloneqq \{A \in \mathbb R^{n \times m}\colon A^\tp A=I_m \},
\end{equation}
where $I_m$ is the $m\times m$ identity matrix. It is obvious from the definition that $\V(m,n)$ is a closed submanifold of $\operatorname{OB}(m,n)$. In particular, $\V(n,n)$ is simply the orthogonal group $\O (n)$.

In general, if $M$ is a Riemannian embeded submanifold of $\mathbb{R}^k$, then the \emph{normal space} $\operatorname{N}_M(\x)$ of $M$ at a point $\x\in M$ is defined to be the orthogonal complement of its tangent space $\operatorname{T}_M(\x)$ in $\mathbb{R}^k$, i.e., $\operatorname{N}_M (\x) \coloneqq \operatorname{T}_M(\x)^{\perp}$. By definition, $\V(m,n)$ is naturally a Riemannian embeded submanifold of $\mathbb{R}^{n \times m}$ and hence its normal space is well-defined. Indeed, it follows from \cite[Chapter~6.C]{RW-98} and \cite{EAT-98,AMS-08} that {
\begin{equation}\label{eq:normal-stf}
\operatorname{N}_{\V(m,n)}(A)=\{AX: X\in \SS^m\},
\end{equation}
where $\SS^m\subseteq \mathbb{R}^{m \times m}$ is the space of $m\times m$ symmetric matrices.}
For a fixed $A\in \V(m,n)$, projection maps from $\mathbb{R}^{n\times m}$ onto $\operatorname{N}_{\V(m,n)}(A)$ and $\operatorname{T}_{\V(m,n)}(A)$ are respectively given by:
\begin{align}
\pi_{\operatorname{N}_{\V(m,n)}(A)}: \mathbb R^{n \times m} &\to \operatorname{N}_{\V(m,n)}(A),\quad B\mapsto A \left( \frac{A^\tp B+B^\tp A}{2} \right), \label{eq:normal projection} \\
\pi_{\operatorname{T}_{\V(m,n)}(A)}: \mathbb R^{n \times m} &\to \operatorname{T}_{\V(m,n)}(A),\quad B\mapsto (I-\frac{1}{2}AA^\tp )(B-AB^\tp A). \label{eq:tangent-form}
\end{align}

Given a function $f : \mathbb R^k\rightarrow \mathbb R\cup\{\infty\}$, the \emph{subdifferential} \cite{RW-98} of $f$ at $\x\in\mathbb R^k$ is defined as
\begin{equation}\label{eqn:subdifferential}
\partial f(\x) \coloneqq \Bigg\{\mathbf v\in\mathbb R^k\colon\liminf_{\x\neq \y\rightarrow \x}\frac{f(\y)-f(\x)-\langle\mathbf v, \y-\x\rangle}{\|\y-\x\|}\geq 0\Bigg\}.
\end{equation}
Elements in $\partial f(\x)$ are called \textit{subgradients} of $f$ at $\x$.
If $\mathbf 0\in\partial f(\mathbf x)$, then $\mathbf x$ is a \textit{critical point} of $f$. {Given a set $K\subseteq\mathbb R^k$, the function $\delta_{K}:\mathbb{R}^k \to \mathbb{R} \cup \{\infty\}$ defined by
\begin{equation}\label{eq:indicator}
\delta_{K}(\x) \coloneqq \begin{cases}0&\text{if } \x \in K,\\ +\infty &\text{otherwise}\end{cases}
\end{equation}
is called the \emph{indicator function} of $K$. It is an important fact \cite[Section~8.D]{RW-98} that for a submanifold $M$ of $\mathbb R^k$ and any $\x\in M$, we have
\begin{equation}\label{eq:normal-sub}
\partial \delta_{M}(\x)=\operatorname{N}_{M}(\x).
\end{equation}
In particular, since $\V(m,n)$, $\B(m,n)$ and $\OB(m,n)$ are all submanifolds of $\mathbb{R}^{n \times m}$, one may apply \eqref{eq:normal-sub} to compute the subdifferentials of their indicator functions respectively.
\subsubsection{Morse functions}\label{subsec:morse}
In the following, we recall the
notion of a Morse function and some of its basic properties. On a smooth manifold $M$, a {smooth} function $f : M\rightarrow \mathbb R$ is called a \textit{Morse function} if each critical point of $f$ on $M$ is nondegenerate, i.e., the Hessian matrix of $f$ at each critical point is nonsingular. The following result is well-known.
\begin{lemma}[Projection is Generically Morse]\cite[Theorem~6.6]{M-63}\label{lem:generic-morse}
Let $M$ be a {submanifold} of $\mathbb R^n$. For {a generic} $\mathbf a = (a_1,\dots,a_n)^\tp \in\mathbb R^n$, the squared Euclidean distance function $f(\mathbf x) \coloneqq  \|\mathbf x-\mathbf a\|^2
$ is a Morse function on $M$.
\end{lemma}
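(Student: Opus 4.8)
\emph{Proof strategy.} The plan is to run the classical argument of Milnor via Sard's theorem applied to the flat normal exponential (endpoint) map. Put $d = \dim M$ and let
\[
N(M) := \{(\mathbf{x}, \mathbf{v}) \in M \times \mathbb{R}^n \colon \mathbf{v} \in \operatorname{N}_M(\mathbf{x})\}
\]
be the normal bundle of $M$ in $\mathbb{R}^n$; it is a smooth, second-countable manifold of dimension $d + (n-d) = n$ (even when $M$ has components of different dimensions). Define the endpoint map $E : N(M) \to \mathbb{R}^n$ by $E(\mathbf{x}, \mathbf{v}) = \mathbf{x} + \mathbf{v}$. It is smooth and, since $\dim N(M) = n = \dim \mathbb{R}^n$, Sard's theorem shows that the set $C \subseteq \mathbb{R}^n$ of critical values of $E$ has Lebesgue measure zero. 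I will take ``generic $\mathbf{a}$'' to mean $\mathbf{a} \in \mathbb{R}^n \setminus C$; when $M$ is in addition a Nash submanifold, a standard argument shows $C$ is contained in a proper algebraic subset, giving Zariski-genericity, but the measure-zero statement is all that is needed below.

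Next I would record two chain-rule identities linking $f_{\mathbf{a}} := \|\cdot - \mathbf{a}\|^2$ to $E$. For $\mathbf{x} \in M$ and $\mathbf{u} \in \operatorname{T}_M(\mathbf{x})$ one has $\langle \nabla (f_{\mathbf{a}}|_M)(\mathbf{x}), \mathbf{u}\rangle = 2\langle \mathbf{x} - \mathbf{a}, \mathbf{u}\rangle$, so $\mathbf{x}$ is a critical point of $f_{\mathbf{a}}|_M$ iff $\mathbf{a} - \mathbf{x} \in \operatorname{N}_M(\mathbf{x})$, i.e. exactly when $(\mathbf{x}, \mathbf{a} - \mathbf{x}) \in N(M)$ lies in the fibre $E^{-1}(\mathbf{a})$. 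Differentiating once more along a curve $\gamma$ in $M$ with $\gamma(0) = \mathbf{x}$, $\dot{\gamma}(0) = \mathbf{u}$, and using that only the normal part of $\ddot{\gamma}(0)$ pairs nontrivially with $\mathbf{a} - \mathbf{x}$, one obtains at such a critical point
\[
\operatorname{Hess}(f_{\mathbf{a}}|_M)(\mathbf{u}, \mathbf{u}) = 2\|\mathbf{u}\|^2 - 2\langle \operatorname{II}_{\mathbf{x}}(\mathbf{u}, \mathbf{u}),\, \mathbf{a} - \mathbf{x}\rangle = 2\langle (\operatorname{Id} - S_{\mathbf{a} - \mathbf{x}})\mathbf{u},\, \mathbf{u}\rangle,
\]
where $\operatorname{II}_{\mathbf{x}}$ is the second fundamental form of $M$ at $\mathbf{x}$ and $S_{\mathbf{v}}$ the associated shape (Weingarten) operator on $\operatorname{T}_M(\mathbf{x})$. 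Hence $\operatorname{Hess}(f_{\mathbf{a}}|_M)(\mathbf{x})$ is nonsingular iff $\operatorname{Id} - S_{\mathbf{a} - \mathbf{x}}$ is invertible.

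The heart of the proof is the matching statement for $E$: the differential $dE_{(\mathbf{x}, \mathbf{v})}$ is an isomorphism iff $\operatorname{Id} - S_{\mathbf{v}}$ is invertible on $\operatorname{T}_M(\mathbf{x})$. I would check this in a local orthonormal normal frame $\mathbf{e}_1(\mathbf{x}), \dots, \mathbf{e}_{n-d}(\mathbf{x})$, writing points of $N(M)$ near $(\mathbf{x}_0, \mathbf{v}_0)$ as $(\mathbf{x}, \sum_i t_i \mathbf{e}_i(\mathbf{x}))$: then $\partial E / \partial t_i = \mathbf{e}_i(\mathbf{x})$ spans $\operatorname{N}_M(\mathbf{x})$, while the derivative of $E$ along $M$ in a direction $\mathbf{u} \in \operatorname{T}_M(\mathbf{x})$ equals $\mathbf{u} + \sum_i t_i D_{\mathbf{u}} \mathbf{e}_i$, whose tangential component is $(\operatorname{Id} - S_{\mathbf{v}})\mathbf{u}$ by the Weingarten relation $\langle D_{\mathbf{u}} \mathbf{e}_i, \mathbf{w}\rangle = -\langle S_{\mathbf{e}_i}\mathbf{u}, \mathbf{w}\rangle$ for tangent $\mathbf{w}$. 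With respect to the splittings $\operatorname{T}_{(\mathbf{x}, \mathbf{v})} N(M) = \operatorname{T}_M(\mathbf{x}) \oplus (\text{vertical})$ and $\mathbb{R}^n = \operatorname{T}_M(\mathbf{x}) \oplus \operatorname{N}_M(\mathbf{x})$ the matrix of $dE_{(\mathbf{x}, \mathbf{v})}$ is block lower-triangular with diagonal blocks $\operatorname{Id} - S_{\mathbf{v}}$ and $\operatorname{Id}$, hence invertible iff $\operatorname{Id} - S_{\mathbf{v}}$ is. Combining everything: if $\mathbf{a} \notin C$ then every point of $E^{-1}(\mathbf{a})$ is a regular point of $E$, so for each critical point $\mathbf{x}$ of $f_{\mathbf{a}}|_M$ the point $(\mathbf{x}, \mathbf{a} - \mathbf{x}) \in E^{-1}(\mathbf{a})$ is regular; therefore $\operatorname{Id} - S_{\mathbf{a} - \mathbf{x}}$, and thus $\operatorname{Hess}(f_{\mathbf{a}}|_M)(\mathbf{x})$, is nonsingular, i.e. $f_{\mathbf{a}}|_M$ is Morse. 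I expect the differential-of-$E$ computation, together with its clean identification with the Hessian through the shape operator, to be the only genuine obstacle; Sard's theorem and the first-derivative bookkeeping are routine, and the possible non-closedness of $M$ is harmless since $N(M)$ is still a second-countable smooth $n$-manifold.
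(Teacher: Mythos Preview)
The paper does not supply its own proof of this lemma; it simply quotes the result from Milnor's \emph{Morse Theory} \cite[Theorem~6.6]{M-63}. Your proposal is precisely the classical Milnor argument from that reference: Sard's theorem applied to the endpoint map $E$ of the normal bundle, together with the identification of the Hessian of $f_{\mathbf a}|_M$ at a critical point with $2(\operatorname{Id}-S_{\mathbf a-\mathbf x})$ via the second fundamental form. The computation of $dE$ in a local orthonormal normal frame and the resulting block lower-triangular form are exactly as in Milnor, and the argument is correct as written.
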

We will also need the following property of nondegenerate critical points in the sequel.
\begin{lemma}\cite[Corollary~2.3]{M-63}\label{lem:isolated critical points}
Let $M$ be a manifold and let $f:M \to \mathbb{R}$ be a smooth function. Nondegenerate critical points of $f$ are isolated.
\end{lemma}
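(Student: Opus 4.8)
The plan is to reduce the statement to a local assertion in a coordinate chart and then invoke the inverse function theorem; this is essentially Milnor's argument. Let $p$ be a nondegenerate critical point of $f$. First I would choose a smooth chart $(U,\varphi)$ of $M$ around $p$ with $\varphi(p) = \mathbf{0}$, and write $V \coloneqq \varphi(U)$, an open neighbourhood of $\mathbf{0}$ in $\mathbb{R}^n$ where $n = \dim M$. Set $g \coloneqq f \circ \varphi^{-1} : V \to \mathbb{R}$, a smooth function. Critical points of $f$ in $U$ correspond bijectively, via $\varphi$, to critical points of $g$ in $V$, and $\mathbf{0}$ is one such. Moreover, since at a critical point the Hessian transforms by congruence under a change of coordinates, the nondegeneracy of $p$ as a critical point of $f$ is equivalent to the nonsingularity of the Hessian matrix $\operatorname{Hess}_{\mathbf 0} g$; this is the only place where coordinate-independence needs to be checked, and it is immediate from the transformation law.

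Next I would consider the gradient map $G : V \to \mathbb{R}^n$ defined by $G(\x) \coloneqq \left( \tfrac{\partial g}{\partial x_1}(\x), \dots, \tfrac{\partial g}{\partial x_n}(\x) \right)^{\tp}$. This is a smooth map with $G(\mathbf 0) = \mathbf 0$, because $\mathbf 0$ is a critical point of $g$, and its Jacobian at the origin is precisely $DG(\mathbf 0) = \operatorname{Hess}_{\mathbf 0} g$, which is nonsingular by the previous step. By the inverse function theorem there is an open neighbourhood $W \subseteq V$ of $\mathbf 0$ on which $G$ restricts to a diffeomorphism onto an open neighbourhood of $\mathbf 0$; in particular $G^{-1}(\mathbf 0) \cap W = \{\mathbf 0\}$. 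Pulling this back through $\varphi$, we conclude that $p$ is the unique critical point of $f$ in the open neighbourhood $\varphi^{-1}(W)$ of $p$, i.e.\ $p$ is isolated in the set of critical points of $f$. Since $p$ was an arbitrary nondegenerate critical point, the lemma follows.

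There is no serious obstacle here: the entire content is a routine application of the inverse function theorem, with the gradient (equivalently, the differential $df$ read off in a chart) playing the role of the map being inverted, and the Hessian playing the role of its derivative. The only point deserving a moment's care is the assertion that nondegeneracy of a critical point is well defined independently of the chart, which I would dispatch with the one-line remark about congruence of Hessians at critical points recorded above.
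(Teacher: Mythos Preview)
Your proof is correct and follows exactly Milnor's argument from \cite[Corollary~2.3]{M-63}. Note that the paper itself does not supply a proof of this lemma; it simply cites the result from Milnor, so there is nothing to compare against beyond the original source, which your argument reproduces faithfully.
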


{To conclude this subsection, we briefly discuss how critical points behave under a local diffeomorphism. {For this purpose,} we recall that {a smooth manifold $M_1$ is \emph{locally diffeomorphic} to another smooth manifold $M_2$} \cite{dC-92} if there is a smooth map $\varphi : M_1\rightarrow M_2$ such that for each point $\x\in M_1$ there exists a neighborhood $U\subseteq M_1$ of $\x$ and a neighborhood $V\subseteq M_2$ of $\varphi(\x)$ such that the restriction $\varphi|_U : U\rightarrow V$ is a diffeomorphism. In this case, the corresponding $\varphi$ is called a \emph{local diffeomorphism} {from $M_1$ to $M_2$}. {It is clear from the definition that {if $M_1$ is locally diffeomorphic to $M_2$ then the two manifolds $M_1$ and $M_2$} must have the same dimension. For ease of reference, we record the following two simple results about local diffeomorphism and critical points.
\begin{lemma}\cite[Corollary~II.6.7]{B-75}\label{lem:local}
Let $M\subseteq\mathbb R^m$ be a manifold and let $f : M \to \mathbb R^n$ be a smooth, injective map. Then $f$ is a differemorphism between $M$ and $f(M)$ if and only if the differential map $d_\x f:\operatorname{T}_M(\x) \to \operatorname{T}_{f(M)}{f(\x)}$ is nonsingular for every $\x\in M$.
\end{lemma}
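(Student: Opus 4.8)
The plan is to prove the two implications separately, the ``only if'' direction being a one-line consequence of the chain rule and the ``if'' direction carrying all the content. For ``only if'', suppose $f$ is a diffeomorphism onto $f(M)$ and set $g \coloneqq f^{-1}\colon f(M)\to M$, which is smooth by hypothesis. Differentiating the identities $g\circ f=\operatorname{id}_M$ and $f\circ g=\operatorname{id}_{f(M)}$ at $\x$ and at $f(\x)$ gives $d_{f(\x)}g\circ d_\x f=\operatorname{id}_{\operatorname{T}_M(\x)}$ and $d_\x f\circ d_{f(\x)}g=\operatorname{id}_{\operatorname{T}_{f(M)}(f(\x))}$, so $d_\x f$ is a linear isomorphism, in particular nonsingular, at every $\x\in M$.

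For the ``if'' direction, assume $f$ is injective and $d_\x f$ is nonsingular for all $\x\in M$. Nonsingularity forces $\dim\operatorname{T}_M(\x)=\dim\operatorname{T}_{f(M)}(f(\x))$, so $\rank(d_\x f)=\dim M$ is constant on $M$ and $f$ is an injective immersion. By the local immersion (constant rank) theorem, each $\x\in M$ has an open neighbourhood $U$ such that $f(U)$ is an embedded submanifold of $\mathbb R^n$ and $f|_U\colon U\to f(U)$ is a diffeomorphism. Hence $f\colon M\to f(M)$ is a smooth bijection that is a local diffeomorphism, and the only remaining point is the continuity of $f^{-1}$: once $f$ is known to be open onto $f(M)$, each $f(U)$ is open in $f(M)$, there $f^{-1}$ coincides with the smooth map $(f|_U)^{-1}$, and therefore $f^{-1}$ is smooth and $f$ is a diffeomorphism.

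The continuity of $f^{-1}$ is where the real work lies: an injective immersion need not be an embedding — the dense line of irrational slope on the torus is the standard counterexample, with discontinuous inverse — so some extra ingredient is needed. In the situations where this lemma is applied the manifold $M$ is compact (a finite product of spheres, a Stiefel manifold, or a submanifold thereof), and then a continuous injection from the compact space $M$ into the Hausdorff space $\mathbb R^n$ is automatically a closed map, hence a homeomorphism onto $f(M)$; alternatively, once one knows $f(M)$ is an embedded submanifold of the same dimension as $M$, invariance of domain shows directly that $f$ is open. Either way $f\colon M\to f(M)$ is a homeomorphism, and combined with the local diffeomorphism property this finishes the proof. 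I expect exactly this topological upgrade — from ``local diffeomorphism everywhere together with global injectivity'' to ``global diffeomorphism onto the image'' — to be the main obstacle, since the differentiable part of the statement is essentially free from the constant rank theorem.
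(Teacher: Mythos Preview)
The paper does not supply its own proof of this lemma; it merely cites \cite[Corollary~II.6.7]{B-75}. So there is no argument in the paper to compare your proposal against.

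That said, your analysis is correct and you have located the genuine subtlety precisely. The ``only if'' direction is indeed immediate from the chain rule. For the ``if'' direction, you are right that an injective immersion need not be an embedding --- the irrational winding on the torus is the standard obstruction --- so the statement as literally written requires an extra hypothesis to be true. Your two proposed fixes are both standard and both work: either assume $M$ compact (so a continuous bijection into a Hausdorff space is automatically a homeomorphism onto its image), or read the hypothesis ``$d_\x f\colon \operatorname{T}_M(\x)\to\operatorname{T}_{f(M)}(f(\x))$ is nonsingular'' as implicitly presupposing that $f(M)$ is already an embedded submanifold of $\mathbb{R}^n$ of the same dimension as $M$. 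Your observation that the paper only invokes this lemma for compact $M$ (Stiefel manifolds, products of spheres, and closed submanifolds thereof) is accurate, so the result is safe in context even though the statement could be sharpened.
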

\begin{proposition}\cite[Proposition~5.2]{HL-18}\label{prop:critical}
Let $M_1,M_2$ be smooth manifolds and let $\varphi : M_1\rightarrow M_2$ be a local diffeomorphism.
Assume that $f : M_2\rightarrow \mathbb R$ is a smooth function. A point $\x\in M_1$ is a (nondegenerate) critical point of $f\circ \varphi$ if and only if $\varphi(\mathbf x)$ is a (nondegenerate) critical point of $f$.
\end{proposition}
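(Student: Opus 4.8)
The plan is to reduce the global assertion to a purely local one using the definition of a local diffeomorphism, and then to invoke the standard behaviour of critical points under an honest change of coordinates, via the first- and second-order chain rule.

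First I would fix a point $\x\in M_1$. By definition of a local diffeomorphism, there are open neighbourhoods $U\subseteq M_1$ of $\x$ and $V\subseteq M_2$ of $\varphi(\x)$ such that $\psi\coloneqq\varphi|_U:U\to V$ is a diffeomorphism. Since being a critical point of a smooth function, as well as the (non)degeneracy of that critical point, are local properties, $\x$ is a (nondegenerate) critical point of $f\circ\varphi$ if and only if it is a (nondegenerate) critical point of $(f|_V)\circ\psi$, and $\varphi(\x)$ is a (nondegenerate) critical point of $f$ if and only if it is one of $f|_V$. Thus it suffices to prove the statement with the diffeomorphism $\psi$ in place of $\varphi$.

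For the critical-point equivalence, the chain rule gives $d_\x(f\circ\psi)=d_{\psi(\x)}f\circ d_\x\psi$. Because $\psi$ is a diffeomorphism, Lemma~\ref{lem:local} shows that $d_\x\psi:\operatorname{T}_{M_1}(\x)\to\operatorname{T}_{M_2}(\psi(\x))$ is a linear isomorphism, so $d_\x(f\circ\psi)=0$ precisely when $d_{\psi(\x)}f=0$. For the nondegeneracy statement, assume $\x$ (equivalently $\psi(\x)$) is a critical point. The slickest route is to pick a chart $\eta$ of $M_2$ centred at $\psi(\x)$ and pull it back through $\psi$ to the chart $\eta\circ\psi$ of $M_1$ centred at $\x$; in these two charts the coordinate representations of $f\circ\psi$ and of $f$ are literally the same function on an open subset of $\mathbb R^d$, where $d=\dim M_1=\dim M_2$. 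Hence their ordinary Hessian matrices at the corresponding point coincide, and one is nonsingular exactly when the other is. Alternatively, one may argue intrinsically: at a critical point the Hessian is a well-defined symmetric bilinear form on the tangent space, and the second-order chain rule yields
\[
\operatorname{Hess}_\x(f\circ\psi)(u,v)=\operatorname{Hess}_{\psi(\x)}f\bigl(d_\x\psi(u),d_\x\psi(v)\bigr),
\]
so invertibility of $d_\x\psi$ transfers nondegeneracy in both directions.

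There is no serious obstacle here: the only subtlety is the coordinate-independence of the Hessian at a critical point (and the corresponding second-order chain rule), and this is entirely bypassed by the pulled-back-chart computation, which exhibits $f\circ\psi$ and $f$ as the very same function read in matched coordinate systems. In effect the proposition just records that a local diffeomorphism is, near each point, a change of variables, and neither vanishing of the differential nor invertibility of the Hessian depends on the choice of variables.
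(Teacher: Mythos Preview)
The paper does not supply its own proof of this proposition: it is quoted directly from \cite[Proposition~5.2]{HL-18} and used as a black box. Your argument is correct and is exactly the standard one --- localize via the definition of a local diffeomorphism, use the chain rule and invertibility of $d_\x\psi$ for the critical-point equivalence, and then either pull back a chart or apply the second-order chain rule at a critical point for nondegeneracy. There is nothing to compare against here beyond noting that your write-up would serve perfectly well as the omitted proof.
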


\subsubsection{Gradient of a function}\label{subsubsec:gradient}
When $f$ is a smooth function on $\mathbb{R}^n$ and $M$ is a submanifold of $\mathbb{R}^n$, we denote by $\nabla f$ the Euclidean gradient of $f$ as a vector field on $\mathbb{R}^n$, while we denote by $\operatorname{grad}(f)$ the Riemannian gradient of $f$ as a vector field on $M$. The two gradients are related by the formula:
\begin{equation}\label{eqn:gradients}
{\operatorname{grad}(f)(\x) = \pi_{\operatorname{T}_M(\x)} (\nabla f(\x))},\quad \x\in M,
\end{equation}
where $\pi_{\operatorname{T}_M(\x)}: \mathbb{R}^n \to \operatorname{T}_M(\x)$ is the projection map from $\operatorname{T}_{\mathbb{R}^n}(\x) \simeq \mathbb{R}^n$ onto the tangent space $\operatorname{T}_M(\x)$.

\subsection{KKT points and LICQ}\label{subsec:KKT}
Let us consider a general optimization problem
\begin{equation}\label{eq:abstract}
\begin{array}{rl}
\max& f(\x)\\
\text{s.t.}&g_i(\x)=0,\ i=1,\dots,p,
\end{array}
\end{equation}
where $f, g_i: \mathbb R^n\rightarrow \mathbb R$ are continuously differentiable functions. A feasible point $\x$ of \eqref{eq:abstract} is called a \emph{Karush-Kuhn-Tucker point} (KKT point) if the following \emph{KKT condition} is satisfied
\begin{equation}\label{eq:abstract-kkt}
\nabla f(\x)+\sum_{i=1}^pu_i\nabla g_i(\x)=\mathbf 0
\end{equation}
for some Lagrange multiplier vector $\uu = (u_1,\dots, u_p)^\tp \in\mathbb R^p$.

A fundamental result in optimization theory \cite{B-99} is that under some {constraint} qualifications, an optimizer of \eqref{eq:abstract} must be a KKT point.
A word of caution is imperative before we proceed: Constraint qualifications are usually nontrivial sufficient conditions imposing on $f$ and $g_i$'s in \eqref{eq:abstract}, and there exist examples \cite{B-99} for which the KKT condition \eqref{eq:abstract-kkt} does not hold even at global optimizers of \eqref{eq:abstract}. A classic one is \textit{Robinson's constraint qualification} \cite{R-76}. In our case, it becomes the commonly used \textit{linear independence constraint qualification} (LICQ), which requires the gradients $\nabla g_1(\x),\dots,\nabla g_p(\x)$ at the given feasible point $\x$ are linearly independent. It follows from \eqref{eq:abstract-kkt} that if LICQ holds, then the Lagrange multiplier vector $\uu$ is uniquely determined.

\subsection{Tools for convergence analysis}\label{sec:lojasiewicz}
The Kurdyka-\L ojasiewicz property and the \L ojasiewicz inequality are two imperative components of convergence analysis of an optimization algorithm. In this subsection, we provide some necessary definitions and related facts.
\subsubsection{The Kurdyka-\L ojasiewicz property}\label{subsubsec:kl}
In this subsection, we review some basic facts about the Kurdyka-\L ojasiewicz property, which was first established for a smooth definable function in \cite{K-98} by
Kurdyka and later was even generalized to non-smooth definable case in \cite{BDLS-07} by Bolte, Daniilidis, Lewis
and Shiota. Interested readers are also referred to \cite{ ABS-13,LP-16,ABRS-10,BDLM-10} for more discussions and applications of the Kurdyka-\L ojasiewicz property.

Let $f$ be an extended real-valued function on $\mathbb{R}^n$ and let $\partial f(\x)$ be the subdifferential of $f$ at $\x$ defined in \eqref{eqn:subdifferential} \footnote{In the general definition of KL property, $\partial f({\x})$ is actually the limiting subdifferential of $f$ at $\x$. However, for functions considered in this paper, the two notions of subdifferentials coincide. For this reason, we do not distinguish them in the sequel.}. We define $\operatorname{dom}(\x) \coloneqq \{\x\colon\partial f(\x)\neq \emptyset\}$ and take $\x^*\in \operatorname{dom}(\partial f)$. If there exist some $\eta\in(0,+\infty]$, a neighborhood $U$ of $\x^*$, and a continuous concave function $\varphi:[0,\eta)\rightarrow \mathbb{R}_+$, such that
\begin{enumerate}[label=(\roman*)]
\item $\varphi  (0)=0$ and $\varphi$ is {continuously differentiable} on $(0,\eta)$,
\item for all $s\in (0,\eta)$, $\varphi^{\prime}(s)>0$, and
\item for all $\x\in U\cap {\{\y\colon f(\x^*)<f(\y)<f(\x^*)+\eta \}}$, the Kurdyka-\L ojasiewicz inequality holds
\begin{equation}\label{eq:KL property}
\varphi^{\prime}(f(\x) - f(\x^*)) \operatorname{dist}( \mathbf{0},\partial f(\x))\geq 1,
\end{equation}
\end{enumerate}
then we say that $f$ has the \emph{Kurdyka-\L ojasiewicz (abbreviated as KL) property} at $\x^*$. Here $\operatorname{dist}(\mathbf 0,\partial f(\x))$ denotes the distance from $\mathbf 0$ to the set $\partial f({\x} )$. If $f$ is proper, lower semicontinuous, and has the KL property at each point of $\operatorname{dom}(\partial f)$, then $f$ is said to be a \emph{KL function}. Examples of KL functions include real subanalytic functions and semi-algebraic functions \cite{BST-14}.

In this paper, semi-algebraic functions will be involved, we refer to \cite{BCR-98} and references {herein} for more details. In particular, polynomial functions and indicator functions of semi-algebraic sets are semi-algebraic functions\cite{BCR-98,BST-14}. Also, a finite sum of semi-algebraic functions is again semi-algebraic {\cite{BCR-98,BST-14}}. We assemble these facts to derive the following lemma which will be crucial to the analysis of the global convergence of our algorithm.
\begin{lemma}\label{lem:KL functions}
A finite sum of polynomial functions and indicator functions of semi-algebraic sets is a KL function.
\end{lemma}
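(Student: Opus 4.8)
The statement to prove is Lemma~\ref{lem:KL functions}: that a finite sum of polynomial functions and indicator functions of semi-algebraic sets is a KL function. The plan is to reduce the claim to two already-recalled facts: first, that a finite sum of semi-algebraic functions is semi-algebraic (stated in the paragraph preceding the lemma, citing \cite{BCR-98,BST-14}), and second, that every semi-algebraic function is a KL function (the example of KL functions given after the definition of the KL property, citing \cite{BST-14}). The only remaining subtlety is a \emph{properness/lower semicontinuity} check, since the KL definition quantifies over $\operatorname{dom}(\partial f)$ and asks $f$ to be proper and lower semicontinuous, and indicator functions take the value $+\infty$.

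First I would note that each polynomial $p:\mathbb{R}^n\to\mathbb{R}$ is a semi-algebraic function (its graph is the zero set of $y - p(\x)$, an algebraic subset of $\mathbb{R}^{n+1}$), and each indicator function $\delta_K$ of a semi-algebraic set $K\subseteq\mathbb{R}^n$ is semi-algebraic: its graph is $(K\times\{0\})\cup\bigl((\mathbb{R}^n\setminus K)\times\{+\infty\}\bigr)$, which, after the usual convention for handling the value $+\infty$ in the definable/semi-algebraic setting, is semi-algebraic because $K$ and its complement are. Hence a finite sum $f = \sum_i p_i + \sum_j \delta_{K_j} = \bigl(\sum_i p_i\bigr) + \delta_{\bigcap_j K_j}$ is, by the closure of the class of semi-algebraic functions under finite sums, again a semi-algebraic function; note $\bigcap_j K_j$ is semi-algebraic as a finite intersection of semi-algebraic sets.

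Next I would verify the hypotheses attached to being a \emph{KL function}. The function $f$ above equals $q + \delta_K$ with $q \coloneqq \sum_i p_i$ a polynomial and $K \coloneqq \bigcap_j K_j$ semi-algebraic. If $K = \emptyset$ then $f \equiv +\infty$ and $\operatorname{dom}(\partial f) = \emptyset$, so the KL property holds vacuously; otherwise $f$ is proper since $f|_K = q|_K$ is finite-valued. Lower semicontinuity is immediate if $K$ is closed, and more care is needed for a general semi-algebraic $K$; however, $\partial f(\x)$ (as defined in \eqref{eqn:subdifferential}) is empty at any $\x$ that fails to be a point of lower semicontinuity, and in fact at any $\x \notin \overline{K}^E$, so the quantification in the KL inequality only ever sees points where the relevant regularity is automatic. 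With these reductions in place, the KL inequality \eqref{eq:KL property} at every $\x^*\in\operatorname{dom}(\partial f)$ is exactly the content of the cited theorem \cite{BST-14} that semi-algebraic functions are KL functions, which yields the desired $\eta$, neighborhood $U$, and desingularizing function $\varphi$.

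The main obstacle, such as it is, is bookkeeping rather than mathematics: one must be careful that the ambient class invoked for ``sum of semi-algebraic functions is semi-algebraic'' genuinely admits extended-real-valued functions (so that indicator functions are covered), and that the properness and lower-semicontinuity side conditions in the definition of a KL function are met or rendered vacuous. Both are standard — see \cite[Section~8.D]{RW-98} for the behaviour of subdifferentials of indicator functions and \cite{BST-14} for the semi-algebraic KL theorem — so no genuinely new argument is required beyond assembling these pieces.
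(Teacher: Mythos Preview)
Your proposal is correct and matches the paper's approach exactly: the paper does not write out a separate proof at all, but in the paragraph immediately preceding the lemma simply cites that polynomials and indicators of semi-algebraic sets are semi-algebraic, that finite sums of semi-algebraic functions are semi-algebraic, and that semi-algebraic functions are KL, then states ``We assemble these facts to derive the following lemma.'' Your write-up is precisely that assembly, with the added (and welcome) care about properness and lower semicontinuity that the paper leaves implicit.
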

\subsubsection{The \L ojasiewicz inequality}
The \L ojasiewicz inequality discussed in this subsection is an essential ingredient in convergence rate analysis. The classical \L ojasiewicz inequality for analytic functions is stated as follows (cf.\ \cite{L-63}):
\begin{itemize}
\item[] {\bf (Classical \L ojasiewicz's Gradient Inequality)} {If $f$ is an analytic function and $\nabla f(\mathbf x^*)=\mathbf 0$, then there exist positive constants $\mu, \kappa$ and $\epsilon$
such that
\begin{equation}\label{eqn:classicalKL}
\|\nabla f(\x)\|\ge \mu |f(\x)-f(\x^*)|^{\kappa}\;\mbox{ for all }\;\|\mathbf x-\x^*\|\le\epsilon.
\end{equation}}
\end{itemize}
We remark that \eqref{eqn:classicalKL} is the KL property discussed in Subsection~\ref{subsubsec:kl} spcialized with 
the auxiliary function being $\varphi(s)=\frac{1}{(1-\kappa)\mu}s^{1-\kappa}$. As pointed out in \cite{ABRS-10,BDLM-10}, it is often difficult to determine the corresponding exponent $\kappa$ in \L ojasiewicz's gradient inequality, and it is unknown for a general function. Fortunately, when $f$ is a polynomial function, an estimate of $\kappa$ is obtained by D'Acunto and Kurdyka. We record this result in the next lemma, which plays a key role in our sublinear convergence rate analysis.

\begin{lemma}[\L ojasiewicz's Gradient Inequality for Polynomials]\cite[Theorem~4.2]{DK-05}\label{lemma:loja1}
Let $f$ be a real polynomial of degree $d$ in $n$ variables. Suppose that $\nabla f(\x^*)=\mathbf 0$. There exist positive constants $\mu$ and $\epsilon$ such that for all $\|\mathbf x-\x^*\|\le\epsilon$, we have
\[
\|\nabla f(\x)\|\ge \mu|f(\x)-f(\x^*)|^{\kappa},
\]
where $\kappa \coloneqq 1-\frac{1}{d(3d-3)^{n-1}}$.
\end{lemma}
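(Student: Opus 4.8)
The plan is to follow the dimension-reduction strategy that underlies the cited theorem of D'Acunto and Kurdyka. Translating coordinates, assume $\x^* = 0$ and $f(0)=0$; since $\nabla f(0)=\mathbf 0$ we must exhibit $\mu,\epsilon>0$ with $\lVert\nabla f(\x)\rVert \ge \mu\,|f(\x)|^{\kappa}$ on the ball $\lVert\x\rVert\le\epsilon$. The first reduction is to the behaviour of $f$ along real analytic arcs. For a \emph{rational} exponent $\kappa=p/q$ the bad set
\[
S_{c,\kappa} \coloneqq \bigl\{\x\in\mathbb R^n\colon \lVert\nabla f(\x)\rVert^{2q} < c^2\,|f(\x)|^{2p}\bigr\}
\]
is semi-algebraic, so by the structure theory recalled in Section~\ref{sec:preliminary} (concretely, the Curve Selection Lemma, a standard companion of Theorem~\ref{thm:decomp}) the failure of the inequality for every $\mu$ on every neighbourhood of $0$ would produce an analytic curve $\gamma:[0,\varepsilon)\to\mathbb R^n$ with $\gamma(0)=0$, $\gamma(t)\ne 0$ for $t>0$, along which $\lVert\nabla f(\gamma(t))\rVert = o\bigl(|f(\gamma(t))|^{\kappa}\bigr)$. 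Using that the Łojasiewicz exponent of the semi-algebraic pair $(\lVert\nabla f\rVert,|f|)$ at $0$ is rational and attained, it therefore suffices to bound, uniformly over all such arcs, the arc-exponent
\[
\varrho(\gamma) \coloneqq \liminf_{t\to 0^+}\frac{\log\lVert\nabla f(\gamma(t))\rVert}{\log|f(\gamma(t))|},
\]
read off from the leading Puiseux terms, by $\kappa = 1-1/\bigl(d(3d-3)^{n-1}\bigr)$.

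For $n=1$ this is elementary: writing $f(x)=a_m x^m+\cdots$ with $m\le d$ and $a_m\ne 0$ near $x=0$ gives $f'(x)\sim m a_m x^{m-1}$, hence $|f'(x)|\ge \mu\,|f(x)|^{(m-1)/m}\ge \mu\,|f(x)|^{1-1/d}$, which is the asserted bound in dimension one. The inductive step is the heart of the matter. Given an arc $\gamma$ in $\mathbb R^n$, one picks a generic linear projection $\pi:\mathbb R^n\to\mathbb R^{n-1}$ and compares $f\circ\gamma$ and $\nabla f\circ\gamma$ with the corresponding data for the projected arc $\pi\circ\gamma$ and the polynomial induced on a generic affine section — equivalently, one analyses the polar variety of $f$ relative to $\pi$, i.e.\ the critical locus of $\pi$ restricted to the level hypersurfaces $\{f=\mathrm{const}\}$. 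The leading exponents of the relevant Puiseux parametrizations are governed by the degrees of these auxiliary varieties, and Bézout's theorem — applied to the partials $\partial f/\partial x_i$, which have degree $d-1$, together with at most two further equations of degree $\le d-1$ enforcing genericity of $\pi$ — supplies precisely the multiplicative factor $3(d-1)=3d-3$ at each reduction from dimension $j$ to $j-1$. Iterating $n-1$ times from the one-variable base case yields the exponent $\kappa = 1-\tfrac{1}{d(3d-3)^{n-1}}$, and unwinding the arc estimates back through the Curve Selection Lemma produces the required $\mu>0$ and $\epsilon>0$.

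The main obstacle is the bookkeeping in the inductive step. One must first establish a uniformity statement for the family of arcs escaping to $0$ — ensuring that a single projection $\pi$, generic enough to make the polar variety equidimensional of the expected degree, can be chosen independently of the (a priori unknown) arc $\gamma$; this is a finiteness argument of Sard type for the semi-algebraic family of polar loci. One must then control precisely how the valuation of $\lVert\nabla f\circ\gamma\rVert$ splits into the valuation of the restricted gradient along $\pi\circ\gamma$ and that of the component normal to the fibre of $\pi$, so that no exponent is lost beyond the Bézout-controlled factor; a careless estimate here would degrade $3d-3$ to a larger constant. A secondary, more routine point is the passage from the non-semi-algebraic quantity $|f(\x)|^{\kappa}$ to a genuinely semi-algebraic description before invoking curve selection, which is why the proof first proves rationality and attainment of the Łojasiewicz exponent and only afterwards optimizes the constant.
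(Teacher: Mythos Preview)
The paper does not prove this lemma at all: it is stated with a citation to \cite[Theorem~4.2]{DK-05} and used as a black box in the sublinear convergence analysis of Section~\ref{sec:sublinear}. There is no proof in the paper to compare your proposal against.

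What you have written is a sketch of the actual D'Acunto--Kurdyka argument, and the broad strategy you describe --- reduction to analytic arcs via curve selection, control of arc exponents through polar varieties, and an inductive dimension reduction with a B\'ezout-type degree bound producing the factor $3d-3$ at each step --- is indeed the shape of their proof. But your sketch is not a proof: the inductive step is asserted rather than carried out, the precise mechanism by which the polar variety controls the valuation of $\lVert\nabla f\circ\gamma\rVert$ is left vague, and the ``uniformity statement for the family of arcs'' you allude to is exactly the delicate part that requires real work. If the goal were to reproduce the D'Acunto--Kurdyka result, substantial detail would still be needed; if the goal is to supply what the paper supplies, a citation suffices.
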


Below is a generalization of the classical \L ojasiewicz gradient inequality \eqref{eqn:classicalKL} to a manifold, in which we can take the exponent $\kappa$ to be $1/2$ at a nondegenerate critical point. Functions with this property are usually called \textit{Polyak-\L ojasiewicz functions}. 
\begin{proposition}[\L ojasiewicz's Gradient Inequality]\cite{dC-92}\label{prop:lojasiewicz}
Let $M$ be a smooth manifold and let $f: M \to \mathbb R$ be a smooth function for which $\z^*$ is a nondegenerate critical point. Then there exist a neighborhood $U$ in $M$ of $\z^*$ and some $\mu >0$ such that for all $\z\in U$
\[
\|\operatorname{grad}(f)(\z)\| \geq \mu  |f(\z)-f(\z^*)|^{\frac{1}{2}}.
\]
\end{proposition}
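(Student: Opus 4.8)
The plan is to reduce the statement to the classical Łojasiewicz gradient inequality for analytic functions on $\mathbb{R}^d$ by means of a local chart, while keeping track of how the Riemannian gradient and the Euclidean gradient of the transported function are related. First I would fix a nondegenerate critical point $\z^*$ of $f$ on $M$ and choose a smooth local parametrization $\psi : W \to M$, where $W \subseteq \mathbb{R}^d$ is an open neighborhood of the origin with $\psi(0) = \z^*$ and $d = \dim M$. Set $\widetilde{f} \coloneqq f \circ \psi : W \to \mathbb{R}$. By the chain rule the origin is a critical point of $\widetilde{f}$, and by Proposition~\ref{prop:critical} applied to the local diffeomorphism $\psi$ (or directly from the fact that the Hessian of $\widetilde{f}$ at $0$ is the pullback of the Hessian of $f$ at $\z^*$ along the isomorphism $d_0\psi$), the origin is a \emph{nondegenerate} critical point of $\widetilde{f}$. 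Hence the Hessian $\nabla^2 \widetilde{f}(0)$ is a nonsingular symmetric matrix.

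The next step is to observe that at a nondegenerate critical point one does not even need the full strength of the Łojasiewicz inequality: a second-order Taylor expansion suffices and yields exponent exactly $1/2$. Writing $\widetilde{f}(\w) = f(\z^*) + \tfrac{1}{2}\w^\tp H \w + o(\|\w\|^2)$ with $H = \nabla^2 \widetilde{f}(0)$ nonsingular, one gets $|\widetilde{f}(\w) - f(\z^*)| \le C\|\w\|^2$ near $0$, while $\nabla \widetilde{f}(\w) = H\w + o(\|\w\|)$ gives $\|\nabla \widetilde{f}(\w)\| \ge c\|\w\|$ near $0$ for some $c>0$ (using that $H$ is invertible, so $\|H\w\| \ge \|H^{-1}\|^{-1}\|\w\|$). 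Combining the two, $\|\nabla \widetilde{f}(\w)\| \ge c\|\w\| \ge c\, (\,|\widetilde{f}(\w)-f(\z^*)|/C\,)^{1/2}$ on a possibly smaller neighborhood of $0$ in $W$, which is precisely the desired inequality for $\widetilde{f}$ with exponent $1/2$.

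It remains to transport this inequality back to $M$, and this is the step requiring a little care rather than the main difficulty. The Riemannian gradient $\operatorname{grad}(f)(\z)$ and the Euclidean gradient $\nabla \widetilde{f}(\w)$ at $\z = \psi(\w)$ are related by $\nabla \widetilde{f}(\w) = (d_\w\psi)^\tp \operatorname{grad}(f)(\psi(\w))$ when $M$ carries the metric induced from its ambient Euclidean space and $\psi$ is viewed as a map into that space; in general the relation is through the pullback metric, but in all cases $\|\nabla\widetilde{f}(\w)\|$ and $\|\operatorname{grad}(f)(\psi(\w))\|$ are comparable up to multiplicative constants on a neighborhood of $0$, since $d_\w\psi$ is an isomorphism depending continuously on $\w$ (so its singular values are bounded away from $0$ and $\infty$ on a compact neighborhood). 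Likewise $|f(\psi(\w)) - f(\z^*)| = |\widetilde{f}(\w) - f(\z^*)|$ exactly. Shrinking the neighborhood once more and absorbing the constants, we obtain a neighborhood $U = \psi(W')$ of $\z^*$ in $M$ and a constant $\mu > 0$ with $\|\operatorname{grad}(f)(\z)\| \ge \mu\,|f(\z) - f(\z^*)|^{1/2}$ for all $\z \in U$. The only genuinely delicate point is bookkeeping the metric identification and the continuity of $d_\w\psi$ to guarantee uniform constants on the neighborhood; everything else is the elementary quadratic estimate at a nondegenerate critical point.
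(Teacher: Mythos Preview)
Your proof is correct and follows the standard route: pull back to a chart, use the nondegeneracy of the Hessian to get the quadratic estimates $|\widetilde f(\w)-f(\z^*)|\le C\|\w\|^2$ and $\|\nabla\widetilde f(\w)\|\ge c\|\w\|$ via Taylor expansion, combine them to obtain exponent $1/2$, and then push the inequality back to $M$ using the uniform bounds on $d_\w\psi$ near $0$. There is nothing to compare against, however: the paper does not prove this proposition at all but simply records it as a known fact with a citation to \cite{dC-92}. Your argument is exactly the kind of elementary proof one would supply if asked to justify the cited statement.
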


\subsection{Properties of polar decompositions}\label{sec:polar}
In this subsection, we present an error bound property for the polar decomposition.
\begin{lemma}[Polar Decomposition]\label{lem:polar}
Let $A\in\mathbb R^{n\times m}$ with $n\geq m$. Then there exist an orthonormal matrix $U\in \V(m,n)$ and a unique $m\times m$ symmetric positive semidefinite matrix $H$ such that $A=UH$ and
\begin{equation}\label{eq:polar-optimal}
U\in\operatorname{argmax}\{\langle Q,A\rangle\colon Q\in \V(m,n)\}.
\end{equation}
Moreover, if $A$ is of full rank, then $U$ is uniquely determined and $H$ is positive definite.
\end{lemma}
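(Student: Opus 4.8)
The plan is to reduce the statement to the classical existence and uniqueness theorem for polar decomposition, obtained via the singular value decomposition, and then to verify the variational characterization \eqref{eq:polar-optimal} separately. First I would invoke the SVD of $A\in\mathbb R^{n\times m}$ with $n\ge m$: write $A = P\Sigma Q^\tp$ where $P\in\V(m,n)$, $Q\in\O(m)$, and $\Sigma = \operatorname{diag}_2(\sigma_1,\dots,\sigma_m)$ with $\sigma_1\ge\dots\ge\sigma_m\ge 0$. Setting $U \coloneqq PQ^\tp\in\V(m,n)$ and $H \coloneqq Q\Sigma Q^\tp$, one checks immediately that $H$ is symmetric positive semidefinite and $A = UH$. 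Uniqueness of $H$ follows from the fact that $A^\tp A = H U^\tp U H = H^2$, and a symmetric positive semidefinite matrix has a unique symmetric positive semidefinite square root; if in addition $A$ is of full rank $m$, then $A^\tp A$ is positive definite, hence so is $H$, and then $U = AH^{-1}$ is uniquely determined.

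Next I would establish the optimality property \eqref{eq:polar-optimal}. For any $Q'\in\V(m,n)$ we have $\langle Q', A\rangle = \langle Q', PQ^\tp H\rangle = \operatorname{tr}(H Q^\tp (Q')^\tp P) = \operatorname{tr}(Q\Sigma Q^\tp\, Q^\tp (Q')^\tp P)$; rearranging and writing $R \coloneqq Q^\tp (Q')^\tp P \in \mathbb R^{m\times m}$, which satisfies $R^\tp R = Q^\tp (Q')^\tp (Q') Q = I_m$ (using $(Q')^\tp Q' = I_m$), we get $\langle Q', A\rangle = \operatorname{tr}(\Sigma R) = \sum_{j=1}^m \sigma_j R_{jj}$. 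Since $R$ is orthogonal, $|R_{jj}|\le 1$ for each $j$, so $\langle Q', A\rangle \le \sum_{j=1}^m \sigma_j = \langle U, A\rangle$, the last equality being the case $Q' = U$ (equivalently $R = I_m$). This shows $U\in\operatorname{argmax}\{\langle Q, A\rangle : Q\in\V(m,n)\}$.

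The step requiring the most care is the full-rank uniqueness of $U$ together with the compatibility between the SVD-based construction and the argmax characterization — in particular, when $A$ is rank-deficient, the maximizer need not be unique (the entries of $\Sigma$ corresponding to zero singular values leave freedom in $P$), so the statement only asserts $U$ is \emph{a} maximizer, not \emph{the} maximizer, and I would phrase the argument to respect this. I would also take care that the inner product $\langle\cdot,\cdot\rangle$ here is the Hilbert--Schmidt (Frobenius) inner product on $\mathbb R^{n\times m}$ as defined in Subsection~\ref{sec:contraction}, so that $\langle Q', A\rangle = \operatorname{tr}((Q')^\tp A)$, and that the cyclic invariance of the trace is applied to square matrices only. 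No deep ingredient beyond the SVD is needed; the main obstacle is purely bookkeeping in the trace manipulations and being precise about which claims hold in the rank-deficient case versus the full-rank case.
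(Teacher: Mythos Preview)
The paper does not prove this lemma; it is stated as a classical fact with a reference to \cite{GV-13}. Your overall strategy via the SVD is the standard one and the existence/uniqueness parts are fine, but the verification of \eqref{eq:polar-optimal} contains a genuine error.

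You assert that $R \coloneqq Q^\tp (Q')^\tp P$ is orthogonal, writing $R^\tp R = Q^\tp (Q')^\tp (Q') Q = I_m$. This computation is incorrect: in fact $R^\tp R = P^\tp Q'(QQ^\tp)(Q')^\tp P = P^\tp Q'(Q')^\tp P$, and since $Q'\in\V(m,n)$ with $n\ge m$ one has $(Q')^\tp Q' = I_m$ but \emph{not} $Q'(Q')^\tp = I_n$ when $n>m$, so $R$ need not be orthogonal. (Your intermediate trace expression $\operatorname{tr}(HQ^\tp(Q')^\tp P)$ is also miswritten --- as stated it would involve $Q^\tp Q^\tp$ after substituting $H=Q\Sigma Q^\tp$ --- though the final identity $\langle Q',A\rangle = \sum_j \sigma_j R_{jj}$ does hold via the correct cycling $\operatorname{tr}\big((Q')^\tp P\Sigma Q^\tp\big) = \operatorname{tr}\big(\Sigma\, Q^\tp(Q')^\tp P\big)$.)

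The repair is easy and keeps your approach intact: observe directly that $R_{jj} = \langle Q'Qe_j,\, Pe_j\rangle$ is the inner product of two unit vectors in $\mathbb R^n$ (because $P,Q'$ have orthonormal columns and $Q\in\O(m)$), hence $|R_{jj}|\le 1$ by Cauchy--Schwarz. With this in place the bound $\langle Q',A\rangle\le\sum_j\sigma_j=\langle U,A\rangle$ and the remainder of your argument go through unchanged.
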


The decomposition $A=UH$ as in Lemma~\ref{lem:polar} is called the \textit{polar decomposition} of $A$  \cite{GV-13}. Correspondingly, $U$ is called {a} \textit{polar orthonormal factor matrix} and $H$ is called a  \textit{polar positive semidefinite factor matrix}. We denote by $\operatorname{Polar}(A)$ the subset of $\V(m,n)$ consisting of all polar orthonormal factor matrices of $A$.

We observe that the description \eqref{eq:polar-optimal}  of $U$ comes from rewriting the optimization problem
\[
\min_{Q\in \V(m,n)}\ \|B-QC\|^2
\]
for two given matrices $B$ and $C$ of appropriate sizes. Theorem~\ref{thm:error} below provides us a global error bound for this problem.
\begin{theorem}[Global Error Bound in Frobenius Norm]\cite{HY-19}\label{thm:error}
Let $p, m, n$ be positive integers with $n\ge m$ and let $B\in\mathbb R^{n\times p}$ and $C\in\mathbb R^{m\times p}$ be two given matrices. We set $A = BC^\tp \in\mathbb R^{n\times m}$ and suppose that $A$ has a polar decomposition $A=WH$ for some orthonormal matrix $W$ and symmetric positive semidefinite matrix $H$. Then for any $Q\in \V(m,n)$, we have
\begin{equation}\label{eq:polar-error}
\|B-QC\|_F^2-\|B-WC\|_F^2\geq \sigma_{\min}(A)\|W-Q\|^2_F,
\end{equation}
where $\sigma_{\min}(A)$ denotes the smallest singular value of $A$.
\end{theorem}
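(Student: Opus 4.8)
The plan is to expand both Frobenius norms and reduce the claimed inequality to a statement purely about the polar decomposition $A = WH$. Writing
$\|B-QC\|_F^2 - \|B-WC\|_F^2 = -2\langle B, QC\rangle + \|QC\|_F^2 + 2\langle B, WC\rangle - \|WC\|_F^2$,
and using $\langle B, QC\rangle = \langle BC^\tp, Q\rangle = \langle A, Q\rangle$ together with $\|QC\|_F^2 = \langle CC^\tp, Q^\tp Q\rangle = \operatorname{tr}(CC^\tp)$ (since $Q^\tp Q = I_m$, and the same identity for $W$), the quadratic terms $\|QC\|_F^2$ and $\|WC\|_F^2$ cancel. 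Thus the left-hand side of \eqref{eq:polar-error} equals $2\langle A, W\rangle - 2\langle A, Q\rangle = 2\langle A, W - Q\rangle$. So it suffices to prove
\[
2\langle A, W-Q\rangle \ge \sigma_{\min}(A)\,\|W-Q\|_F^2 \qquad \text{for all } Q\in\V(m,n).
\]

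Next I would substitute $A = WH$ and exploit that $W\in\V(m,n)$. Since $\langle WH, W-Q\rangle = \langle H, W^\tp W - W^\tp Q\rangle = \operatorname{tr}(H) - \langle H, W^\tp Q\rangle$ and $H$ is symmetric, and similarly $\|W-Q\|_F^2 = \operatorname{tr}(W^\tp W) + \operatorname{tr}(Q^\tp Q) - 2\langle W, Q\rangle = 2m - 2\langle W^\tp Q\rangle_{\mathrm{tr}}$... more usefully, one computes $\|W-Q\|_F^2 = 2(m - \operatorname{tr}(W^\tp Q))$ and also $\operatorname{tr}(H) - \langle H, W^\tp Q\rangle = \langle H, I_m - W^\tp Q\rangle = \tfrac12\langle H, (I_m - W^\tp Q) + (I_m - Q^\tp W)\rangle$, where the last equality uses the symmetry of $H$. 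Setting $S \coloneqq W^\tp Q$, the desired inequality becomes
\[
\langle H, \, 2I_m - S - S^\tp\rangle \ \ge\ \sigma_{\min}(A)\,(2m - \operatorname{tr}(S) - \operatorname{tr}(S^\tp))\ =\ \sigma_{\min}(A)\,\langle I_m, 2I_m - S - S^\tp\rangle.
\]
Equivalently, $\langle H - \sigma_{\min}(A) I_m,\ 2I_m - (S+S^\tp)\rangle \ge 0$. Now the key structural facts are: (a) $H - \sigma_{\min}(A) I_m$ is symmetric positive semidefinite, because the eigenvalues of $H$ are exactly the singular values of $A$ (as $W$ is orthonormal), so the smallest eigenvalue of $H$ is $\sigma_{\min}(A)$; and (b) $2I_m - (S + S^\tp)$ is symmetric positive semidefinite, because $S = W^\tp Q$ satisfies $\|S\|_2 = \|W^\tp Q\|_2 \le 1$ (product of matrices with orthonormal columns has operator norm at most $1$), hence every eigenvalue $\mu$ of the symmetric matrix $S + S^\tp$ satisfies $\mu \le 2$. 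Since the trace inner product of two symmetric positive semidefinite matrices is nonnegative, we conclude $\langle H - \sigma_{\min}(A) I_m,\ 2I_m - (S+S^\tp)\rangle \ge 0$, which is exactly what we need.

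The main obstacle — really the only subtle point — is handling the inner product $\langle H, W^\tp Q\rangle$ correctly when $W^\tp Q$ is not symmetric: the trick is to symmetrize using that $H$ is symmetric, so $\langle H, W^\tp Q\rangle = \langle H, \tfrac12(W^\tp Q + Q^\tp W)\rangle$, which lets the whole problem be phrased as a pairing of two \emph{symmetric} matrices and reduced to the elementary fact that $\langle P_1, P_2\rangle = \operatorname{tr}(P_1 P_2) \ge 0$ for symmetric positive semidefinite $P_1, P_2$ (itself proved by writing $P_1 = P_1^{1/2}P_1^{1/2}$ and noting $\operatorname{tr}(P_1^{1/2} P_2 P_1^{1/2}) \ge 0$). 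One should also remark that when $A$ is rank-deficient, $\sigma_{\min}(A) = 0$ and the inequality degenerates to $\langle A, W - Q\rangle \ge 0$, which still holds by the variational characterization \eqref{eq:polar-optimal} of $W$ from Lemma~\ref{lem:polar}; but in fact the argument above covers this case uniformly without needing to single it out.
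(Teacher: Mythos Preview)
The paper does not supply its own proof of this theorem; it is quoted from \cite{HY-19}. Your argument is correct and self-contained: the reduction of the left-hand side to $2\langle A, W-Q\rangle$ via $Q^\tp Q = W^\tp W = I_m$ is exact, the symmetrization step $\langle H, I_m - W^\tp Q\rangle = \tfrac12\langle H, 2I_m - S - S^\tp\rangle$ (with $S = W^\tp Q$) is valid because $H$ is symmetric, and the final inequality $\langle H - \sigma_{\min}(A) I_m,\ 2I_m - (S+S^\tp)\rangle \ge 0$ follows from the two positive-semidefiniteness facts you identify (eigenvalues of $H$ are the singular values of $A$; $\lVert W^\tp Q\rVert_2 \le 1$). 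Nothing is missing.
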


In the sequel, we also need the following inequality:
\begin{lemma}{\cite{HY-19}}\label{lem:distance}
For any orthonormal matrices $U,V\in \V(m,n)$, we have
\begin{equation}\label{eq:distance}
\|U^\tp V-I_m\|_F^2 \leq \|U-V\|^2_F.
\end{equation}
\end{lemma}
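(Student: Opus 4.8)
The plan is to reduce \eqref{eq:distance} to the elementary estimate $\|U^\tp V\|_F^2 \le m$ by expanding both sides. Using $\langle A,B\rangle = \tr(A^\tp B)$ together with the orthonormality relations $U^\tp U = V^\tp V = I_m$, I would compute
\[
\|U-V\|_F^2 = \|U\|_F^2 + \|V\|_F^2 - 2\tr(U^\tp V) = 2m - 2\tr(U^\tp V),
\]
and
\[
\|U^\tp V - I_m\|_F^2 = \|U^\tp V\|_F^2 + \|I_m\|_F^2 - 2\tr(U^\tp V) = \|U^\tp V\|_F^2 + m - 2\tr(U^\tp V).
\]
Subtracting these two identities, the cross terms $-2\tr(U^\tp V)$ cancel, so the claimed inequality is equivalent to the single statement $\|U^\tp V\|_F^2 \le m$.

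Next I would establish $\|U^\tp V\|_F^2 \le m$. Since $U \in \V(m,n)$, the matrix $P \coloneqq UU^\tp \in \mathbb R^{n\times n}$ is the orthogonal projector onto the column space of $U$; in particular $I_n - P$ is positive semidefinite, so $\tr\!\big(V^\tp(I_n-P)V\big) \ge 0$. Hence
\[
\|U^\tp V\|_F^2 = \tr(V^\tp U U^\tp V) = \tr(V^\tp P V) \le \tr(V^\tp V) = \tr(I_m) = m.
\]
(Equivalently, the spectral norm satisfies $\|U^\tp V\| \le \|U^\tp\|\,\|V\| = 1$, so each of the $m$ singular values of $U^\tp V$ is at most $1$ and the sum of their squares is at most $m$.) Combining this with the reduction in the previous paragraph yields \eqref{eq:distance}.

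There is no genuine obstacle here: the statement is an identity followed by a one-line positive-semidefiniteness inequality. The only point deserving a moment's attention is that when $m < n$ one cannot replace $UU^\tp$ by the identity — it is merely a rank-$m$ projector — so $\|U^\tp V\|_F^2 \le m$ is a true inequality rather than an equality; when $m = n$ both $U$ and $V$ are orthogonal matrices and the two sides of \eqref{eq:distance} in fact coincide, since then $\|U^\tp V - I_n\|_F = \|V - U\|_F$ by left-multiplication by the orthogonal matrix $U$.
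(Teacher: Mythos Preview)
Your proof is correct. The paper does not supply its own proof of this lemma but simply cites \cite{HY-19}; your argument---reducing the inequality to $\|U^\tp V\|_F^2 \le m$ via the expansion of both sides and then invoking the positive semidefiniteness of $I_n - UU^\tp$---is a clean, self-contained verification.
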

\section{Partially Orthogonal Tensors}\label{sec:podt}
Let $k,n_1,\dots, n_k$ and $1\le s \le k$ be positive integers and let $\mathcal A\in\mathbb R^{n_1}\otimes\dots\otimes\mathbb R^{n_k}$ be a tensor with a decomposition
\begin{equation}\label{eq:low-rank-orth}
\mathcal A=\sum_{i=1}^r\sigma_i\mathbf a^{(1)}_i\otimes\dots\otimes\mathbf a^{(k)}_i,
\end{equation}
such that
\begin{equation}\label{eq:factor-matrix}
A^{(i)}:=\begin{bmatrix}\mathbf a^{(i)}_1,\dots,\mathbf a^{(i)}_r\end{bmatrix}\in\operatorname{B}(r,n_i)\ \text{for all }i=1,\dots,k
\end{equation}
and
\begin{equation}\label{eq:factor-matrix-orth}
A^{(i)}\in\V(r,n_i)\ \text{for all }i=1,\dots,s.
\end{equation}
The matrix $A^{(i)} \in \mathbb{R}^{n_i \times r}$ is called the $i$-th \emph{factor (loading) matrix} of the decomposition \eqref{eq:low-rank-orth} for $i=1,\dots, k$. Obviously, we must have
\[
r\leq\min\{n_i\colon i=1,\dots,s\}.
\]

The tensor $\mathcal{A}$ is called a {\emph{partially orthogonal tensor (with  $s$ orthonormal factors)} of rank at most $r$} and a decomposition of the form \eqref{eq:low-rank-orth} with the smallest possible $r$ is called a \textit{partially orthogonal rank decomposition} of $\mathcal{A}$. We denote by $P_s (\mathbf{n},r)$ the set of all such tensors, where $\mathbf{n}:= (n_1,\dots, n_k)$. We remark that tensors in $P_k(\mathbf n,r)$ are orthogonally decomposable tensors studied in the literature \cite{HL-18,HY-19,ZG-01}.
The following is a simple observation directly obtained from the definition of $P_s (\mathbf{n},r)$.
\begin{proposition}
For fixed $\mathbf n$ and $r$, $P_s(\mathbf n,r)$'s are nested:
\[
P_k(\mathbf n,r) \subsetneq P_{k-1}(\mathbf n,r) \subsetneq \cdots \subsetneq P_1(\mathbf n,r).
\]
\end{proposition}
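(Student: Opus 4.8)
The plan is to establish the two inclusions $P_k(\mathbf n,r)\subseteq P_{k-1}(\mathbf n,r)\subseteq\cdots\subseteq P_1(\mathbf n,r)$ first, and then argue that each is strict. For the inclusions, I would simply invoke the definition: if $\mathcal A\in P_s(\mathbf n,r)$ admits a decomposition of the form \eqref{eq:low-rank-orth} satisfying \eqref{eq:factor-matrix} for all $i=1,\dots,k$ and \eqref{eq:factor-matrix-orth} for all $i=1,\dots,s$, then \textit{a fortiori} the same decomposition satisfies \eqref{eq:factor-matrix-orth} for all $i=1,\dots,s-1$, since the conditions $A^{(s)}\in\V(r,n_s)$ are merely dropped; hence $\mathcal A\in P_{s-1}(\mathbf n,r)$. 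Here one should be slightly careful that the rank referenced in the definition of $P_s(\mathbf n,r)$ is "rank at most $r$" so that no minimality is actually enforced on the number of terms in a witnessing decomposition; with this reading the inclusion is immediate. (If instead one insists on the partially orthogonal rank, the same argument works because a rank-$\le r$ partially orthogonal decomposition with $s-1$ orthonormal factors can always be found from the one with $s$ orthonormal factors.)

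For strictness, the task is to exhibit, for each $2\le s\le k$, a tensor in $P_{s-1}(\mathbf n,r)\setminus P_s(\mathbf n,r)$. The cleanest route is to reduce to small cases and use a dimension or a direct obstruction argument. For instance, take $r=2$ (assuming $n_i\ge 2$ for the relevant indices, which is forced by $r\le\min\{n_i\}$ anyway) and build a rank-$2$ tensor $\mathcal A=\mathbf a_1^{(1)}\otimes\cdots\otimes\mathbf a_1^{(k)}+\mathbf a_2^{(1)}\otimes\cdots\otimes\mathbf a_2^{(k)}$ whose first $s-1$ factor matrices are chosen orthonormal but whose $s$-th slot is arranged so that no rank-$\le 2$ partially orthogonal decomposition with $s$ orthonormal factors exists. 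Concretely, one can choose the factors in the first $s-1$ modes to be $[\,\mathbf e_1,\mathbf e_2\,]$ and in modes $s,\dots,k$ to be generic unit vectors $\mathbf a_1^{(i)},\mathbf a_2^{(i)}$ with $\langle\mathbf a_1^{(i)},\mathbf a_2^{(i)}\rangle\ne 0$; because such a rank-$2$ tensor with at least two orthonormal modes in modes $1,\dots,s-1$ is identifiable (orthonormality in $\ge 2$ modes forces essential uniqueness of the rank decomposition — this is the standard Kruskal-type / orthogonal-decomposability uniqueness), the decomposition is rigid, so if $\mathcal A$ also lay in $P_s(\mathbf n,r)$ its $s$-th factor matrix would have to be (up to permutation and sign) $[\,\mathbf a_1^{(s)},\mathbf a_2^{(s)}\,]$, which is not orthonormal by the choice $\langle\mathbf a_1^{(s)},\mathbf a_2^{(s)}\rangle\ne 0$, a contradiction.

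I expect the identifiability/rigidity step to be the main obstacle: one must justify that having $s-1\ge 1$ orthonormal factor matrices already pins down the rank decomposition tightly enough that membership in $P_s$ would force the $s$-th factor to be orthonormal. For $s-1\ge 2$ this follows from known essential uniqueness results for tensors with two or more orthonormal factors; for $s-1=1$ (i.e.\ proving $P_2(\mathbf n,r)\subsetneq P_1(\mathbf n,r)$, strictness at the bottom) one needs a slightly different witness — e.g.\ a rank-$r$ tensor whose \emph{only} orthonormal mode can be mode $1$ and never mode $2$, which can be arranged by making the mode-$2$ factor matrix have a prescribed Gram matrix that is incompatible with $I_r$ while keeping the mode-$1$ factors orthonormal and the remaining modes generic. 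An alternative, more robust approach that sidesteps fine identifiability arguments is a dimension count: show $\dim P_s(\mathbf n,r)<\dim P_{s-1}(\mathbf n,r)$ using the parametrization by factor matrices (the feasible set is the image of a product $\V(r,n_1)\times\cdots\times\V(r,n_s)\times\B(r,n_{s+1})\times\cdots\times\B(r,n_k)\times\mathbb R^r$ under the polynomial map sending factor matrices and coefficients to the tensor \eqref{eq:low-rank-orth}), and invoke Theorem~\ref{thm:image dimension} together with $\dim\V(r,n_s)=rn_s-\binom{r+1}{2}<rn_s-r=\dim\B(r,n_s)$ for $r\ge 2$; since a proper closed subvariety has strictly smaller dimension, strict inclusion follows once one checks the generic fibres of the two parametrizations have the same dimension. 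I would likely present the dimension-count argument as the primary proof and relegate the explicit-witness construction to a remark.
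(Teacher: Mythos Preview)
The paper does not actually provide a proof of this proposition; it is introduced with the line ``The following is a simple observation directly obtained from the definition of $P_s(\mathbf n,r)$'' and then moves on. Your inclusion argument --- that a decomposition witnessing membership in $P_s(\mathbf n,r)$ automatically witnesses membership in $P_{s-1}(\mathbf n,r)$ because one is simply dropping a constraint --- is precisely what that line means, and is all the paper intends.

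For strictness you go well beyond the paper. Both of your proposed routes are valid. The dimension count is in fact carried out later in the paper (Proposition~\ref{prop:smooth}\ref{prop:smooth:item5}), where $\dim R_s(\mathbf n,r) = r\bigl(\sum_i n_i - \tfrac{s(r-1)}{2} - k + 1\bigr)$ is computed and is strictly decreasing in $s$ for $r\ge 2$; combined with the density of $R_s(\mathbf n,r)$ in $P_s(\mathbf n,r)$ (Corollary~\ref{cor:Q_s}) this gives strictness cleanly, and also handles the fibre issue you flagged. Your explicit-witness construction via identifiability is also correct for $s\ge 3$, and your caution about the $s=2$ case (where only one orthonormal factor is available) is well placed --- there one really does need either a different witness or the dimension argument, since Proposition~\ref{prop:unique} requires $s\ge 1$ plus full-rank non-orthogonal factors for identifiability.

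One caveat neither you nor the paper addresses explicitly: for $r=1$ the inclusions are equalities, since $\V(1,n)=\B(1,n)=\mathbb S^{n-1}$ and there is no orthogonality constraint to impose. Your arguments implicitly assume $r\ge 2$, which is the only interesting case; you may want to state this explicitly.
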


Before we proceed to a more detailed study of partially orthogonal tensors, we investigate the identifiability of them. It is well-known \cite{ZG-01} that tensors in $P_k(\mathbf n,r)$ are all identifiable, but this is no longer true for $P_s(\mathbf n,r)$ when $s < k$. For instance, we have
\[
\mathbf{u}_1 \otimes \mathbf{u}_1 \otimes \mathbf{v} + \mathbf{u}_2 \otimes \mathbf{u}_2 \otimes \mathbf{v} = \frac{1}{2}  (\mathbf{u}_1 + \mathbf{u}_2) \otimes (\mathbf{u}_1 + \mathbf{u}_2) \otimes \mathbf{v}  + \frac{1}{2}  (\mathbf{u}_1 - \mathbf{u}_2) \otimes (\mathbf{u}_1 - \mathbf{u}_2) \otimes \mathbf{v},
\]
where $\{\mathbf{u}_1,\mathbf{u}_2\}$ is an orthonormal basis of $\mathbb{R}^2$ and $\mathbf{v}\in \mathbb{R}^2$ is a unit vector. However, an application of Kruskal's uniqueness theorem provides us an explicit condition to guarantee the identifiability.

\begin{proposition}[Identifiability]\label{prop:unique}
Suppose $k\geq 3$ and $s\ge 1$. If in a decomposition \eqref{eq:low-rank-orth} of a tensor $\mathcal{A}\in P_{s}(\mathbf{n},r)$, factor matrices $A^{(s+1)},\dots, A^{(k)}$ are of full column ranks, then $\mathcal{A}$ is identifiable. Moreover, if $s\ge 3$, then any tensor in $P_s(\mathbf n,r)$ is identifiable.
\end{proposition}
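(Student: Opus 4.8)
The plan is to derive both assertions from Kruskal's uniqueness theorem for CP decompositions, which controls identifiability in terms of the Kruskal ranks of the factor matrices. Recall that the Kruskal rank $\kappa(A)$ of a matrix $A$ with $r$ columns is the largest integer $t$ such that every $t$ columns of $A$ are linearly independent, and Kruskal's theorem guarantees essential uniqueness of a rank-$r$ decomposition $\mathcal{A}=\sum_{i=1}^r \sigma_i\, \mathbf a^{(1)}_i\otimes\cdots\otimes \mathbf a^{(k)}_i$ as soon as $\sum_{i=1}^k \kappa(A^{(i)}) \ge 2r + (k-1)$. The strategy is simply to bound the Kruskal ranks of the factor matrices appearing in a partially orthogonal decomposition \eqref{eq:low-rank-orth} and check that Kruskal's inequality is satisfied under the stated hypotheses.

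For the first assertion, observe that for each $i=1,\dots,s$ the factor matrix $A^{(i)}$ lies in $\V(r,n_i)$ by \eqref{eq:factor-matrix-orth}, hence has orthonormal columns and therefore full column rank $r$; consequently $\kappa(A^{(i)}) = r$ for $i=1,\dots,s$. By hypothesis $A^{(s+1)},\dots,A^{(k)}$ also have full column rank, so $\kappa(A^{(i)}) = r$ for all $i=1,\dots,k$. Then $\sum_{i=1}^k \kappa(A^{(i)}) = kr$, and since $k\ge 3$ we have $kr \ge 3r > 2r + (k-1)$ precisely when $r > k-1$; to cover all $r$ one argues slightly more carefully — when $r=1$ the decomposition is trivially essentially unique (a single rank-one term), and when $r\ge 2$ and $k\ge 3$ one has $kr \ge 3r = 2r+r \ge 2r+2 \ge 2r+1$, which is not quite $2r+(k-1)$ for large $k$. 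So in fact the clean statement needs $\kappa$-ranks summing to at least $2r+k-1$: with all $\kappa(A^{(i)})=r$ this reads $kr\ge 2r+k-1$, i.e. $(k-2)r \ge k-1$, which holds for all $r\ge 1$ when $k\ge 3$ except the single case $k=3,r=1$, again handled trivially. This gives identifiability of $\mathcal{A}$.

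For the second assertion, suppose $s\ge 3$ but impose nothing on $A^{(s+1)},\dots,A^{(k)}$ beyond \eqref{eq:factor-matrix}, i.e. unit-norm columns. Since the columns of each $A^{(i)}$ are unit vectors, none of them is zero, so $\kappa(A^{(i)}) \ge 1$ for $i=s+1,\dots,k$. Combining with $\kappa(A^{(i)}) = r$ for $i=1,\dots,s$ gives $\sum_{i=1}^k \kappa(A^{(i)}) \ge sr + (k-s)$. Kruskal's condition $sr + (k-s) \ge 2r + (k-1)$ simplifies to $(s-2)r \ge s-1$, which holds for every $r\ge 1$ as soon as $s\ge 3$. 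Hence every decomposition of a tensor in $P_s(\mathbf n,r)$ of the form \eqref{eq:low-rank-orth} is essentially unique, so every such tensor is identifiable; note that uniqueness among \emph{all} rank-$r$ decompositions also follows, since the bound $r=\rank(\mathcal{A})$ is built into the definition.

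The main obstacle I anticipate is bookkeeping the edge cases of Kruskal's inequality — in particular handling small $r$ (especially $r=1$) and making sure the Kruskal rank lower bounds are exactly what is needed rather than off by one. A secondary subtlety is that Kruskal's theorem yields essential uniqueness in the sense of permutation-and-scaling of the rank-one terms, and one must verify this matches the precise notion of identifiability defined in \S\ref{subsubsec:rank}; this is routine but worth stating explicitly. Finally one should remark that the first assertion genuinely needs the full-rank hypothesis on the trailing factors — the displayed counterexample just before the proposition, with $k=3$, $s=1$ (or $s=2$), $r=2$, and $A^{(3)}$ having rank $1$, shows identifiability can fail otherwise.
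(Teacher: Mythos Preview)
Your approach via Kruskal's theorem is exactly the paper's, and your level of detail is greater than the paper's own two-line proof. Two points need correcting, however.

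First, your arithmetic on the Kruskal inequality is off. The condition $(k-2)r \ge k-1$ fails for $r=1$ and \emph{every} $k\ge 3$ (since $k-2<k-1$ always), not just for $k=3$; likewise $(s-2)r\ge s-1$ fails for $r=1$ and every $s\ge 3$, contrary to your claim that it ``holds for every $r\ge 1$ as soon as $s\ge 3$.'' So in both assertions the case $r=1$ must be set aside uniformly as trivial before invoking Kruskal, and you should say this cleanly rather than treating it as a single exceptional pair.

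Second, and this is the one substantive step the paper actually spells out, you must first discard any summands with $\sigma_i=0$. Kruskal's theorem applies to decompositions with all terms nonzero; if some $\sigma_i$ vanish, the given $r$-term decomposition is not a candidate rank decomposition at all. After deleting the zero terms, the remaining $r'\le r$ columns of each $A^{(i)}$ are still orthonormal for $i\le s$ and still linearly independent for $s+1\le i\le k$ (subsets of linearly independent sets are linearly independent), so your Kruskal-rank bounds carry over to the reduced decomposition and the argument goes through with $r'$ in place of $r$.
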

\begin{proof}
Without loss of generality, we may assume that all $\sigma_1,\dots, \sigma_r$ are nonzero, since otherwise we can remove the zero components in \eqref{eq:low-rank-orth} and the rest decomposition satisfies all the hypotheses. The existence of decomposition \eqref{eq:low-rank-orth} of $\mathcal{A}$ ensures that $\rank (\mathcal{A}) \le r$ and hence Kruskal's uniqueness theorem \cite{K-77,SB-00} applies.
\end{proof}

A tensor $\mathcal{A} \in P_s(\mathbf{n},r)$ is called \textit{independently partially orthogonal} if factor matrices $A^{(s+1)},\dots, A^{(k)}$ in decomposition \eqref{eq:low-rank-orth} are respectively of the full rank $r$. The set of all independently partially orthogonal tensors with at most (resp. exactly) $r$ nonzero $\sigma_i$'s is denoted by $Q_s(\mathbf n,r)$ (resp. $R_s(\mathbf n,r)$). By definition, we have
\begin{equation}\label{eqn:inclusion PQR}
R_s(\mathbf n,r) \subsetneq Q_s(\mathbf n,r) \subseteq P_s(\mathbf n,r),\quad Q_s(\mathbf n,r) = \bigsqcup_{t=0}^r R_{{s}}(\mathbf n,{t}).
\end{equation}
Here the second inclusion is proper for $s<k$, but it becomes an equality for $s = k$. Figure~\ref{fig:nested structure} provides an intuitive picture for the nested structure described in \eqref{eqn:inclusion PQR}. Moreover, Figure~\ref{fig:nested structure} also serves as a pictorial interpretation of a more refined geometric relation among  $P_s(\mathbf n,r),Q_s(\mathbf n,r)$ and $R_s(\mathbf n,r)$, i.e., $R_s(\mathbf n,r)$ and $Q_s(\mathbf n,r)$ are dense subsets of $P_s(\mathbf n,r)$, which is the content of  Corollary~\ref{cor:Q_s}.

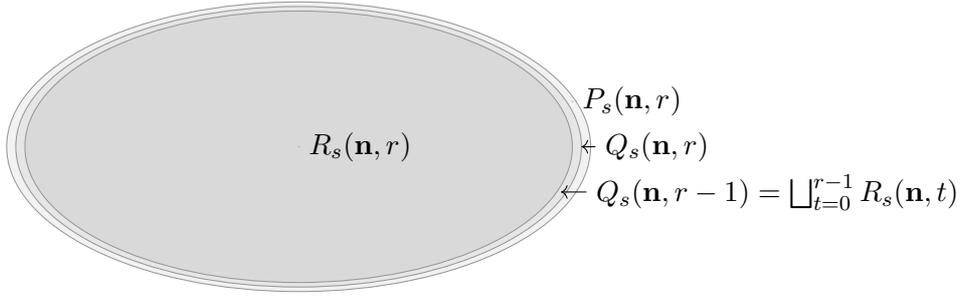
\begin{figure}
\begin{tikzpicture}[decoration=zigzag,scale=0.6]
  \filldraw[fill=gray!10, draw=gray!80] (0,0) ellipse (6.4cm and 3.2cm);
  \path[] (0,0) ++(110:6.4cm and 3.2cm) arc(110:70:6.4cm and 3.2cm);

  \filldraw[fill=gray!20, draw=gray!80] (0,0) ellipse (6.2cm and 3.1cm);
  \path[] (0,0) ++(110:4.8cm and 2.4cm) arc(110:70:6cm and 3.1cm);

  \filldraw[fill=gray!30, draw=gray!80] (0,0) ellipse (6cm and 3cm);
  \path[] (0,0) ++(110:3.2cm and 1.6cm) arc(110:70:6cm and 3cm);


  \filldraw[draw=black!80] (6,1) circle (0pt) node[anchor=west] {$P_s(\mathbf{n},r)$};

\filldraw[draw=black!80] (6.5,0) circle (0pt) node[anchor=west] {$Q_s(\mathbf{n},r)$};
    \draw[->] (6.5,0) -- (6.2,0);

    \filldraw[draw=black!80] (6.3,-1) circle (0pt) node[anchor=west] {$Q_s(\mathbf{n},r-1) = {\bigsqcup_{t=0}^{r-1} R_s(\mathbf{n},t)}$};
        \draw[->] (6.3,-1) -- (5.75,-1);

    \filldraw[draw=black!80] (0,0) circle (0pt) node[anchor=west] {$R_s(\mathbf{n},r)$};

\end{tikzpicture}
\caption{{Nested structure of $P_s(\mathbf n,r)$}}
\label{fig:nested structure}
\end{figure}

For any $\mathcal{A} \in R_s(\mathbf{n},r)$, we have $\rank (\mathcal{A}) = r$
and Proposition~\ref{prop:unique} implies that its {rank decomposition coincides with its partially orthogonal rank decomposition}, which is essentially unique. Therefore it is not necessary to distinguish {rank decomposition and partially orthogonal rank decomposition} of a tensor in $Q_s(\mathbf{n},r)$ and hence in $R_s(\mathbf{n},r)$.

The existence of decomposition \eqref{eq:low-rank-orth} of $\mathcal{A}\in P_{s}(\mathbf{n},r)$ trivially implies that $\rank (\mathcal{A}) \le r$. In fact, we can even determine $\rank (\mathcal{A})$ directly from any decomposition of the form \eqref{eq:low-rank-orth}.
\begin{proposition}[Rank]\label{prop:rank}
Let $k\geq 3$ and $s\geq 2$. If $\mathcal A\in P_s(\mathbf n,r)$ has a decomposition \eqref{eq:low-rank-orth}, then $\rank(\mathcal A)=\# \{i\colon \sigma_i\neq 0\}$ and a partially orthogonal rank decomposition is a rank decomposition.
\end{proposition}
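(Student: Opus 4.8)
The plan is to reduce to a full-support decomposition and then read off $\rank(\mathcal A)$ from a matrix obtained by regarding $\mathcal A$ as a two-tensor. First I would dispose of vanishing coefficients: if $\sigma_i=0$ for some $i$, delete the $i$-th summand in \eqref{eq:low-rank-orth}. Since any subset of the columns of an orthonormal matrix is again orthonormal, and any subset of unit columns consists of unit columns, the shortened decomposition is still of the form \eqref{eq:low-rank-orth} and has exactly the same nonzero coefficients. Hence it suffices to prove that if $\sigma_1,\dots,\sigma_r$ are all nonzero then $\rank(\mathcal A)=r$; the bound $\rank(\mathcal A)\le r$ is immediate from \eqref{eq:low-rank-orth}, so the real content is $\rank(\mathcal A)\ge r$.

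For the lower bound I would view $\mathcal A$ as an element of $\mathbb R^{n_1}\otimes(\mathbb R^{n_2}\otimes\cdots\otimes\mathbb R^{n_k})$, i.e.\ as a matrix $M\in\mathbb R^{n_1\times(n_2\cdots n_k)}$ under the natural identification. From \eqref{eq:low-rank-orth} we get $M=\sum_{i=1}^r\sigma_i\,\mathbf a^{(1)}_i\mathbf c_i^{\tp}$, where $\mathbf c_i\coloneqq\mathbf a^{(2)}_i\otimes\cdots\otimes\mathbf a^{(k)}_i$ is regarded as a vector of length $n_2\cdots n_k$. Because $s\ge 2$, the constraint \eqref{eq:factor-matrix-orth} makes both $A^{(1)}$ and $A^{(2)}$ orthonormal; in particular $\mathbf a^{(1)}_1,\dots,\mathbf a^{(1)}_r$ are linearly independent, and I would check that $\mathbf c_1,\dots,\mathbf c_r$ are linearly independent by contracting a relation $\sum_i c_i\mathbf c_i=\mathbf 0$ against $\mathbf a^{(2)}_j$ in the slot carrying the second-mode vectors, which leaves $c_j\bigl(\mathbf a^{(3)}_j\otimes\cdots\otimes\mathbf a^{(k)}_j\bigr)=\mathbf 0$; this tensor is nonzero since each $\mathbf a^{(\ell)}_j$ is a unit vector, so $c_j=0$ for every $j$. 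As $\sigma_1,\dots,\sigma_r\neq 0$, it follows that $\rank(M)=r$ (a matrix $\sum_i\sigma_i\mathbf u_i\mathbf v_i^{\tp}$ with the $\sigma_i$ nonzero and with the $\mathbf u_i$'s and the $\mathbf v_i$'s each linearly independent has rank exactly $r$). Finally, any rank-$\rho$ decomposition of $\mathcal A$ exhibits $M$ as a sum of $\rho$ matrices of rank at most one, so $\rank(M)\le\rank(\mathcal A)$; combining the inequalities gives $r=\rank(M)\le\rank(\mathcal A)\le r$, i.e.\ $\rank(\mathcal A)=r$.

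The second assertion then follows immediately. If \eqref{eq:low-rank-orth} is a partially orthogonal rank decomposition, i.e.\ one with the least possible number $r$ of summands, then no $\sigma_i$ can vanish, for otherwise the deletion step above would produce a strictly shorter decomposition of the same form. By the first part, $\rank(\mathcal A)=r$, so this decomposition has exactly $\rank(\mathcal A)$ terms and is therefore a rank decomposition.

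I do not expect a serious obstacle here; the argument requires care only at a few routine points: that deleting a zero summand preserves the constraints \eqref{eq:factor-matrix} and \eqref{eq:factor-matrix-orth}; the elementary inequality between the matrix rank of $M$ and $\rank(\mathcal A)$; and the linear independence of the vectors $\mathbf c_i$. It is worth noting where the hypotheses enter: the independence of the $\mathbf c_i$ uses a \emph{second} orthonormal factor matrix, so $s\ge 2$ is genuinely needed, whereas $k\ge 3$ plays no essential role (when $k=2$ one necessarily has $s=2$ and the statement reduces to the rank formula furnished by the singular value decomposition).
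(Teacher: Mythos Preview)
Your proof is correct and follows essentially the same approach as the paper: both flatten $\mathcal A$ to an $n_1\times\prod_{j\ge 2}n_j$ matrix and use the orthonormality of the first two factor matrices (guaranteed by $s\ge 2$) to conclude that this matrix has rank $r$. The paper phrases this slightly more compactly by observing that the flattened decomposition is literally a singular value decomposition (the vectors $\mathbf c_i$ are in fact orthonormal, not merely independent), but the content is identical.
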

\begin{proof}
Suppose without loss of generality that $\sigma_i\neq 0$ for all $i=1,\dots, r$. We want to prove that $\rank (\mathcal{A}) = r$. It is clear from the definition of the tensor rank that $r \ge \rank (\mathcal{A})$, due to the existence of the decomposition \eqref{eq:low-rank-orth}. Hence it suffices to prove the reverse inequality. To that end, since $s \ge 2$, we may flatten $\mathcal{A}$ as a matrix $A$ of size $n_1 \times \prod_{j=2}^k n_j$ so that the partially orthogonal decomposition \eqref{eq:low-rank-orth} of $\mathcal{A}$ translates into a singular value decomposition of $A$. In particular, nonzero numbers $\sigma_1,\dots, \sigma_r$ are singular values of $A$ and hence $\rank(\mathcal{A}) \ge \rank (A) = r$, which completes the proof.
\end{proof}

We remark that Proposition~\ref{prop:rank} is not true for $P_1(\mathbf{n},r)$. For example, we consider
\[
\mathcal{T} = \mathbf{u}_1 \otimes \mathbf{v} \otimes \mathbf{w} + \mathbf{u}_2 \otimes \mathbf{v} \otimes \mathbf{w},
\]
where $\{\mathbf{u}_1,\mathbf{u}_2\}$ is an orthonormal base of $\mathbb{R}^2$ and $\mathbf{v},\mathbf{w}$ are unit vectors in $\mathbb{R}^2$. It is clear that $\mathcal{T} \in P_1((2,2,2),2)$ and $\sigma_1 = \sigma_2 = 1$ in the above partially orthogonal decomposition, while $\rank (\mathcal{T}) = 1$ since $\mathcal{T}  = (\mathbf{u}_1 + \mathbf{u}_2) \otimes \mathbf{v} \otimes \mathbf{w}$.

There is another simple method to compute the rank of $\mathcal{A} \in P_s(\mathbf{n},r)$, without first knowing any partially orthogonal decomposition of $\mathcal{A}$.
\begin{proposition}\label{prop:matrix rank}
Let $k\geq 3$ and $s\geq 1$. The rank of $\mathcal{A} \in P_s(\mathbf{n},r)$ is equal to $\rank (A_1)$, where $A_1 \in \mathbb{R}^{n_1 \times \prod_{i=2}^k n_i}$ is the flattening of $\mathcal{A}$ with respect to its first factor.
\end{proposition}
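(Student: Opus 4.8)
The plan is to prove the two inequalities $\rank(A_1)\le\rank(\mathcal A)$ and $\rank(\mathcal A)\le\rank(A_1)$ separately. The first holds for every tensor and every flattening: starting from a rank decomposition $\mathcal A=\sum_{j=1}^{\rank(\mathcal A)}\lambda_j\,\mathbf u^{(1)}_j\otimes\cdots\otimes\mathbf u^{(k)}_j$ and flattening with respect to the first factor writes $A_1=\sum_{j=1}^{\rank(\mathcal A)}\lambda_j\,\mathbf u^{(1)}_j\bigl(\mathbf u^{(2)}_j\otimes\cdots\otimes\mathbf u^{(k)}_j\bigr)^{\tp}$ as a sum of $\rank(\mathcal A)$ rank-one matrices, so $\rank(A_1)\le\rank(\mathcal A)$. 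Hence the whole content of the proposition is the reverse inequality $\rank(\mathcal A)\le\rank(A_1)$, and this is where the hypothesis $s\ge1$---the orthonormality of the first factor matrix---is used (for $s\ge2$ the conclusion already follows from Proposition~\ref{prop:rank}, but that route breaks down for $s=1$; see the last paragraph).

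For the reverse inequality I would begin with a partially orthogonal decomposition \eqref{eq:low-rank-orth} of $\mathcal A$, discard the terms with $\sigma_i=0$---which changes neither $\mathcal A$, nor $A_1$, nor their ranks---and relabel so that $\sigma_1,\dots,\sigma_m$ are exactly the nonzero coefficients. Put $\mathbf b_i\coloneqq\mathbf a^{(2)}_i\otimes\cdots\otimes\mathbf a^{(k)}_i\in\mathbb R^{n_2}\otimes\cdots\otimes\mathbb R^{n_k}$, so that $\mathcal A=\sum_{i=1}^m\sigma_i\,\mathbf a^{(1)}_i\otimes\mathbf b_i$ and, after flattening and regarding each $\mathbf b_i$ as a column vector, $A_1=\widehat A\,\diag(\sigma_1,\dots,\sigma_m)\,\widehat B^{\tp}$ with $\widehat A=[\mathbf a^{(1)}_1,\dots,\mathbf a^{(1)}_m]$ and $\widehat B=[\mathbf b_1,\dots,\mathbf b_m]$. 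Since $s\ge1$ we have $A^{(1)}\in\V(r,n_1)$, so $\widehat A$ has orthonormal columns, in particular full column rank $m$; as $\diag(\sigma_1,\dots,\sigma_m)$ is invertible, multiplying $\widehat B^{\tp}$ on the left by the full-column-rank matrix $\widehat A\,\diag(\sigma_1,\dots,\sigma_m)$ does not change the rank. Setting $\rho\coloneqq\dim\operatorname{span}\{\mathbf b_1,\dots,\mathbf b_m\}$, this gives
\[
\rank(A_1)=\rank(\widehat B^{\tp})=\rank(\widehat B)=\rho.
\]

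The final step re-decomposes $\mathcal A$ into only $\rho$ rank-one terms. Among $\mathbf b_1,\dots,\mathbf b_m$ pick a basis of their span and relabel so that it is $\mathbf b_1,\dots,\mathbf b_\rho$; write $\mathbf b_i=\sum_{j=1}^{\rho}c_{ij}\mathbf b_j$ for every $i\in\{1,\dots,m\}$, where $c_{ij}=\delta_{ij}$ when $i\le\rho$. Substituting and rearranging the double sum by bilinearity of $\otimes$ in the first factor,
\[
\mathcal A=\sum_{j=1}^{\rho}\mathbf v_j\otimes\mathbf a^{(2)}_j\otimes\cdots\otimes\mathbf a^{(k)}_j,\qquad\mathbf v_j\coloneqq\sum_{i=1}^m\sigma_i c_{ij}\,\mathbf a^{(1)}_i\in\mathbb R^{n_1}.
\]
Each summand is a pure tensor, hence $\rank(\mathcal A)\le\rho=\rank(A_1)$, which together with the first inequality proves $\rank(\mathcal A)=\rank(A_1)$.

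I do not expect a genuine obstacle---the argument is short once set up this way---only a caveat about strategy. For $s\ge2$ one is tempted to quote Proposition~\ref{prop:rank}: indeed $A^{(2)}\in\V(r,n_2)$ forces $\{\mathbf a^{(2)}_i\otimes\cdots\otimes\mathbf a^{(k)}_i\}$ to be orthonormal, so $A_1=\widehat A\,\diag(\sigma)\,\widehat B^{\tp}$ has orthonormal columns on both sides and $\rank(A_1)=\#\{i:\sigma_i\neq0\}=\rank(\mathcal A)$. But for $s=1$ this fails, since $\rank(\mathcal A)$ can be strictly smaller than $\#\{i:\sigma_i\neq0\}$---witness the tensor $\mathcal T$ displayed after Proposition~\ref{prop:rank}. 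The argument above bypasses this by reading the rank off directly from $\operatorname{span}\{\mathbf b_1,\dots,\mathbf b_m\}$, the one point requiring care being that the identity $\rank(A_1)=\dim\operatorname{span}\{\mathbf b_i\}$ really does use that the columns $\mathbf a^{(1)}_i$ with $\sigma_i\neq0$ are linearly independent---exactly what orthonormality of $A^{(1)}$ supplies.
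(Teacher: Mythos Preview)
Your proof is correct and follows essentially the same approach as the paper's: both arguments use orthonormality of the first factor to identify $\rank(A_1)$ with the dimension of the span of the tail tensors $\mathbf b_i=\mathbf a^{(2)}_i\otimes\cdots\otimes\mathbf a^{(k)}_i$, and then re-express $\mathcal A$ as a sum of that many rank-one terms. Your write-up is somewhat more explicit (the matrix factorization $A_1=\widehat A\,\diag(\sigma)\,\widehat B^{\tp}$ and the formula for $\mathbf v_j$), and the closing paragraph comparing with Proposition~\ref{prop:rank} is a helpful addition, but the underlying idea is identical.
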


\begin{proof}
{We may assume that $\mathcal A\neq 0$.}
Since $\mathcal{A} \in P_s(\mathbf{n},r)$, there exists some {$1\le t \le r$} such that $\mathcal{A} $ can be written as
\[
\mathcal{A} = \sum_{i=1}^t \mathbf{u}_i \otimes \mathcal{T}_i,
\]
where $\{\mathbf{u}_1,\dots,\mathbf{u}_t \}$  is an orthonormal set and $\rank(\mathcal{T}_i) = 1$ for $1 \le i \le t$. It is clear that $\rank (A_1)$ is the {maximal} cardinality of linearly independent subsets of $\{\mathcal{T}_1,\dots, \mathcal{T}_t\}$. Without loss of generality, we assume $\rank (A_1) = r_0$ and $\{\mathcal{T}_1,\dots, \mathcal{T}_{r_0}\}$ is a {maximal} linearly independent subset of $\{\mathcal{T}_1,\dots, \mathcal{T}_t\}$. Therefore, $\mathcal{T}_{i}$ is a linear combination of $\mathcal{T}_1,\dots, \mathcal{T}_{r_0}$ for each $i = r_0 + 1, \dots, t$, from which we may obtain a decomposition of $\mathcal{A}$ as the sum of $r_0$ rank one tensors. This implies that $\rank (\mathcal{A}) \le r_0$ and hence $\rank (\mathcal{A}) = r_0 = \rank (A_1)$ since we also have $\rank(A_1) \le \rank (\mathcal{A})$ by the definition of flattening matrices of a tensor.
\end{proof}

Before we proceed, we make a clarification on the applicability of Proposition~\ref{prop:matrix rank}: It only applies to $\mathcal{A}$ which lies in $P_s(\mathbf{n},r)$. In fact, for a general tensor $\mathcal{A}$, $\rank(A_1)$ can be strictly smaller than $\rank(\mathcal{A})$. For example, for any linearly independent $\mathbf{u},\mathbf{v}\in \mathbb{R}^2$, the tensor
\[
\mathcal{A} = \mathbf{u} \otimes \mathbf{u} \otimes \mathbf{v} + \mathbf{u} \otimes \mathbf{v} \otimes \mathbf{u} + \mathbf{v} \otimes \mathbf{u} \otimes \mathbf{u} \in \mathbb{R}^2 \otimes \mathbb{R}^2 \otimes \mathbb{R}^2
\]
is not partially orthogonal. It is straightforward to verify that $\rank(\mathcal{A}) = 3$ but $\rank(A_1) = 2$. Moreover, $\rank(\mathcal{A}) = \rank(A_1)$ does not guarantee {that} $\mathcal{A}$ is partially orthogonal, which can be easily seen from the tensor
\[
\mathcal{A} = \mathbf{u}_1 \otimes \mathbf{v}_1 \otimes \mathbf{w}_1 + \mathbf{u}_2 \otimes \mathbf{v}_2 \otimes \mathbf{w}_2 \in \mathbb{R}^2 \otimes \mathbb{R}^2 \otimes \mathbb{R}^2,
\]
where $\{\mathbf{u}_1,\mathbf{u}_2\}$, $\{\mathbf{v}_1,\mathbf{v}_2\}$ and $\{\mathbf{w}_1,\mathbf{w}_2\}$ are non-orthogonal bases of $\mathbb{R}^2$ respectively.
\subsection{Low rank partially orthogonal tensor approximation problem and its algorithm}\label{subsec:lowrank}
We begin with the statement of low rank partially orthogonal tensor approximation (LRPOTA) problem: Given a tensor $\mathcal A\in\mathbb R^{n_1}\otimes\dots\otimes\mathbb R^{n_k}$, find a partially orthogonal tensor $\mathcal B\in\mathbb R^{n_1}\otimes\dots\otimes\mathbb R^{n_k}$ of rank at most $r\leq\min\{n_1,\dots,n_s\}$ such that the residual $\|\mathcal A-\mathcal B\|$ is minimized. The above problem can be formulated as the following optimization problem:
\begin{flalign}\label{eq:original}
&\text{(LRPOTA)}\ \ \ \ \ \ \quad \quad\quad\quad\quad\quad\quad\quad\quad  \begin{array}{rl}
\min& \frac{1}{2} \|\mathcal A- \mathcal{B} \|^2\\
\text{s.t.}& \mathcal{B} \in P_s(\mathbf{n},r).
\end{array}&
\end{flalign}
We notice that problem \eqref{eq:original} is actually a constrained optimization problem in the tensor space $\mathbb{R}^{n_1} \otimes \cdots \otimes \mathbb{R}^{n_k} \simeq \mathbb{R}^N$ where $N = \prod_{j=1}^k n_j$. It is tempting to solve problem  \eqref{eq:original} by {methods for constrained optimization problems, such as} the Lagrange multiplier method \cite{B-82}, but such a method is not practical as $P_s(\mathbf{n},r)$ has a huge number of complicated defining equations from the algebraic perspective (cf. Lemmas~\ref{lem:P_s>1} and \ref{lem:P_s=1}), and it is actually an exponentially small subset of $\mathbb{R}^N$ from the geometric perspective (cf. Proposition~\ref{prop:smooth}-\eqref{prop:smooth:item5}). Moreover, there even lacks a theoretical guarantee for the applicability of the Lagrange multiplier method to problem \eqref{eq:original} since $P_s(\mathbf{n},r)$ is not smooth (cf. Proposition~\ref{prop:smooth}-\eqref{prop:smooth:item6}). A brief discussion along this line can be found in Section~\ref{sec:projection}. To address the above two issues, we can parametrize $P_s(\mathbf{n},r)$ and reformulate the problem~\eqref{eq:original} as:

\begin{flalign}\label{eq:sota}
&\text{(LRPOTA)}\ \ \ \ \ \ \begin{array}{rl}\min&\|\mathcal A- (U^{(1)},\dots,U^{(k)})\cdot \mathcal{D} \|^2\\
\text{s.t.}& \mathcal{D}=\operatorname{diag}_k(\lambda_1,\dots,\lambda_r),\ {\lambda_j\in\mathbb R\ \text{for all }j\in\{1,\dots,r\}},\\
& \big(U^{(i)}\big)^\tp U^{(i)}=I_r \  \text{for all }i\in\{1,\dots,s\},\\
& U^{(i)}\in\B(r,n_i)\ \text{for all }i\in\{s+1,\dots,k\}.
\end{array}&
\end{flalign}
Here we remind readers that $\operatorname{diag}_k(\lambda_1,\dots,\lambda_r)\in (\mathbb{R}^r)^{\otimes k}$ is the diagonal tensor defined in \eqref{eqn:diag} and $(U^{(1)},\dots,U^{(k)})\cdot \mathcal{D}\in \mathbb{R}^{n_1} \otimes \cdots \otimes \mathbb{R}^{n_k}$ is the matrix-tensor product of $(U^{(1)},\dots,U^{(k)})$ and $\mathcal{D}$ defined in \eqref{eq:matrix-tensor}. Some extreme cases of problem \eqref{eq:sota} are intensively considered in the literature: For $s=1$, the problem is investigated in \cite{WTY-15}; For $s=k$, it is the completely orthogonal low rank tensor approximation problem discussed thoroughly in \cite{HL-18,HY-19}; For $r=1$, the problem simply reduces to the best rank one tensor approximation problem studied in {\cite{HL-18,ZG-01,DDV-00}}.

\begin{proposition}[Maximization Equivalence]\label{prop:sota-max}
The approximation problem~\eqref{eq:sota} is equivalent to the maximization problem
\begin{flalign}\label{eq:sota-max}
&\text{(mLRPOTA)}\ \ \ \ \ \ \quad\quad\quad
\begin{array}{rl}
\max& \sum_{j=1}^r\Big(\big(\big(U^{(1)}\big)^\tp ,\dots,\big(U^{(k)}\big)^\tp \big)\cdot\mathcal A\Big)_{{j,\dots,j}}^2\\
 \text{s.t.} &\big(U^{(i)}\big)^\tp U^{(i)}=I_r \  \text{for all }i\in\{1,\dots,s\},\\
& U^{(i)}\in\B(r,n_i)\ \text{for all }i\in\{s+1,\dots,k\},
\end{array}&
\end{flalign}
in the following sense:
\begin{enumerate}[label=(\roman*)]
\item an optimizer
\[
(\widehat{U}, \widehat{\mathcal{D}}) \coloneqq \big( (\widehat{U}^{(1)},\dots,\widehat{U}^{(k)}) ,\operatorname{diag}_k( \widehat{\lambda}_1,\dots, \widehat{\lambda}_r) \big)
\]
of \eqref{eq:sota} gives an optimizer $\widehat{ U}$ of \eqref{eq:sota-max} with respective optimal values {$\|\mathcal A\|^2-\sum_{j=1}^r \widehat{\lambda}_j^2$ and $\sum_{j=1}^r \widehat{\lambda}_j^2$};
\item conversely, an optimizer $\widehat{ U}$ of \eqref{eq:sota-max}, together with
\[
\widehat{\mathcal{D}}=\diag_k\circ\Diag_k
\left(
( (\widehat{U}^{(1)})^\tp,\dots,(\widehat{U}^{(k)})^\tp) \cdot\mathcal A
\right),
\]
gives an optimizer of \eqref{eq:sota}.
\end{enumerate}
\end{proposition}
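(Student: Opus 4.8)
The plan is to reduce the approximation problem \eqref{eq:sota} to the maximization problem \eqref{eq:sota-max} by carrying out the minimization over the diagonal tensor $\mathcal{D}$ explicitly, with the factor matrices $(U^{(1)},\dots,U^{(k)})$ held fixed. First I would rewrite the objective using the isometry property of the map $(U^{(1)},\dots,U^{(k)})$ when the $U^{(i)}$ are orthonormal for $i\le s$ and have unit columns for $i>s$; more precisely, expand
\[
\|\mathcal A-(U^{(1)},\dots,U^{(k)})\cdot\mathcal D\|^2 = \|\mathcal A\|^2 - 2\langle \mathcal A,(U^{(1)},\dots,U^{(k)})\cdot\mathcal D\rangle + \|(U^{(1)},\dots,U^{(k)})\cdot\mathcal D\|^2.
\]
The adjoint relation for the matrix-tensor product gives $\langle\mathcal A,(U^{(1)},\dots,U^{(k)})\cdot\mathcal D\rangle = \langle ((U^{(1)})^\tp,\dots,(U^{(k)})^\tp)\cdot\mathcal A,\mathcal D\rangle$, and since $\mathcal D=\diag_k(\lambda_1,\dots,\lambda_r)$ is supported on the diagonal, this inner product equals $\sum_{j=1}^r \lambda_j\, c_j$ where $c_j\coloneqq\big(((U^{(1)})^\tp,\dots,(U^{(k)})^\tp)\cdot\mathcal A\big)_{j,\dots,j}$.

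The key step is to compute $\|(U^{(1)},\dots,U^{(k)})\cdot\mathcal D\|^2$. Because $\mathcal D$ is diagonal, $(U^{(1)},\dots,U^{(k)})\cdot\mathcal D = \sum_{j=1}^r \lambda_j\, \mathbf u^{(1)}_j\otimes\dots\otimes\mathbf u^{(k)}_j$, so its squared norm is $\sum_{j,l=1}^r \lambda_j\lambda_l \prod_{i=1}^k\langle\mathbf u^{(i)}_j,\mathbf u^{(i)}_l\rangle$. For $i\le s$ the orthonormality constraint $(U^{(i)})^\tp U^{(i)}=I_r$ forces $\langle\mathbf u^{(i)}_j,\mathbf u^{(i)}_l\rangle=\delta_{jl}$, so every cross term with $j\ne l$ vanishes; the remaining terms have $j=l$ and $\prod_{i=1}^k\langle\mathbf u^{(i)}_j,\mathbf u^{(i)}_j\rangle=1$ because all columns are unit vectors (including those with index $>s$). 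Hence $\|(U^{(1)},\dots,U^{(k)})\cdot\mathcal D\|^2=\sum_{j=1}^r\lambda_j^2$, and the objective becomes $\|\mathcal A\|^2 - 2\sum_j\lambda_j c_j + \sum_j\lambda_j^2 = \|\mathcal A\|^2 + \sum_{j=1}^r(\lambda_j-c_j)^2 - \sum_{j=1}^r c_j^2$. For fixed $U$ this is minimized uniquely at $\lambda_j=c_j$, i.e.\ at $\widehat{\mathcal D}=\diag_k\circ\Diag_k\big(((U^{(1)})^\tp,\dots,(U^{(k)})^\tp)\cdot\mathcal A\big)$, with minimal value $\|\mathcal A\|^2-\sum_{j=1}^r c_j^2$.

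Finally I would assemble the equivalence. Minimizing $\|\mathcal A\|^2-\sum_{j=1}^r c_j^2$ over the feasible $U$ is the same as maximizing $\sum_{j=1}^r c_j^2$, which is exactly the objective of \eqref{eq:sota-max}; this yields item (ii), and reading the argument backwards---an optimizer $\widehat U$ of \eqref{eq:sota-max} paired with the associated $\widehat{\mathcal D}$ attains the value $\|\mathcal A\|^2-\sum_j \widehat c_j{}^2$, which is the infimum of \eqref{eq:sota} since for any other feasible $(U,\mathcal D)$ the objective is at least $\|\mathcal A\|^2-\sum_j c_j^2\ge \|\mathcal A\|^2-\sum_j\widehat c_j{}^2$---gives item (i), together with the stated optimal values $\sum_j\widehat\lambda_j^2$ and $\|\mathcal A\|^2-\sum_j\widehat\lambda_j^2$ (noting $\widehat\lambda_j=\widehat c_j$). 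I expect no serious obstacle here; the only point requiring a little care is the norm computation, where one must use \emph{both} the orthonormality constraint for $i\le s$ (to kill cross terms) and the unit-column constraint for all $i$ (to normalize the diagonal terms)---the argument genuinely breaks if one forgets that $\mathcal D$ being diagonal is what makes the partial orthogonality suffice.
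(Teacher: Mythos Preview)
Your proposal is correct and follows essentially the same approach as the paper: expand the squared norm, use the adjoint relation for the cross term, optimize out the unconstrained $\lambda_j$'s, and conclude the equivalence. You are in fact more careful than the paper at one point---the paper simply writes $\|(U^{(1)},\dots,U^{(k)})\cdot\mathcal D\|^2=\sum_j\lambda_j^2$ without comment, whereas you justify it via the orthonormality of $U^{(i)}$ for $i\le s$ (which kills the cross terms) together with the unit-column constraint for all $i$.
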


\begin{proof}
It is by a direct calculation to {obtain} that
 \begin{align*}
 \|\mathcal A-(U^{(1)},\dots,U^{(k)})\cdot \mathcal{D} \|^2&=\|\mathcal A\|^2+\sum_{j=1}^r\lambda_j^2-2\langle\mathcal A,(U^{(1)},\dots,U^{(k)})\cdot \mathcal{D} \rangle\\
 &=\|\mathcal A\|^2+\sum_{j=1}^r\lambda_j^2-2\Big\langle\big(\big(U^{(1)}\big)^\tp ,\dots,\big(U^{(k)}\big)^\tp \big)\cdot\mathcal A,\mathcal{D}\Big\rangle\\
 &=\|\mathcal A\|^2+\sum_{j=1}^r\lambda_j^2-2\sum_{j=1}^r\lambda_j\Big[\big(\big(U^{(1)}\big)^\tp ,\dots,\big(U^{(k)}\big)^\tp \big)\cdot\mathcal A\Big]_{j,\dots,j}.
 \end{align*}
Note that $\lambda_j$ in the minimization problem \eqref{eq:sota} is unconstrained for all $j\in\{1,\dots,r\}$, and they are mutually independent.
Thus, at an optimizer $(\widehat{U},\widehat{\mathcal{D}}) \coloneqq \big(( \widehat{U}^{(1)},\dots, \widehat{U}^{(k)}), \operatorname{diag}_k( \widehat{\lambda}_1,\dots, \widehat{\lambda}_r)  \big)$ of \eqref{eq:sota}, we must have by the optimality that
 \begin{equation}\label{eq:lambda}
 \widehat{\lambda}_j=\Big[\big(\big(\widehat{U}^{(1)}\big)^\tp ,\dots,\big(\widehat{U}^{(k)}\big)^\tp \big)\cdot\mathcal A\Big]_{j,\dots,j}\ \text{for all }j\in\{1,\dots,r\}
 \end{equation}
 with the optimal value being
 \[
 \|\mathcal A\|^2-\sum_{j=1}^r \widehat{\lambda}_j^2.
 \]
Therefore, problem~\eqref{eq:sota} is equivalent to \eqref{eq:sota-max}. The other statement follows by a similar calculation.
\end{proof}

We remark that \eqref{eq:sota-max} actually depends on the value of $r$, while the name ``mLRPOTA" does not indicate this dependence. In the most discussion of this paper, the value of $r$ is always understood from the context, hence we use the term ``mLRPOTA" for simplicity. However, in Section~\ref{sec:kkt} we need to deal with  problem \eqref{eq:sota-max} for different values of $r$. Therefore we instead use the term ``mLRPOTA($r$)" for clarification.

Next we investigate the relation between KKT points of problem~\eqref{eq:sota} and those of its maximization reformulation \eqref{eq:sota-max}.
\begin{lemma}\label{lem:kkt-equiv}
A feasible point $(U,\mathcal{D})$ of problem \eqref{eq:sota} is a KKT point with a multiplier $P$ if and only if $U$ is a KKT point of problem \eqref{eq:sota-max} with a multiplier $P$ and $\mathcal{D} = \operatorname{diag}_k\big(\operatorname{Diag}_k\big(((U^{(1)})^\tp ,\dots,(U^{(k)})^\tp )\cdot\mathcal A\big)\big)$.
\end{lemma}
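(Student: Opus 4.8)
The plan is to derive the KKT system for both problems explicitly and to match them term by term, using that the $\lambda_j$'s in \eqref{eq:sota} are free (unconstrained) variables. First I would write the Lagrangian for \eqref{eq:sota}. The objective is $\frac12\|\mathcal A - (U^{(1)},\dots,U^{(k)})\cdot\mathcal D\|^2$ with $\mathcal D = \operatorname{diag}_k(\lambda_1,\dots,\lambda_r)$, and the only equality constraints are $(U^{(i)})^\tp U^{(i)} = I_r$ for $i = 1,\dots,s$; the remaining conditions $U^{(i)}\in\B(r,n_i)$ for $i>s$ are smooth manifold constraints, which, via \eqref{eq:normal-sub} and the product-of-spheres structure \eqref{eq:prod-sp:oblique}, contribute a normal-space term rather than an explicit multiplier. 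So the stationarity conditions split into: (a) $\partial/\partial\lambda_j = 0$ for each $j$; (b) $\partial/\partial U^{(i)} = 0$ modulo $\operatorname{N}_{\V(r,n_i)}(U^{(i)})$ for $i\le s$ (equivalently, the Riemannian gradient on $\V(r,n_i)$ vanishes, with the symmetric-matrix multiplier being exactly $P^{(i)}$); and (c) $\partial/\partial U^{(i)} = 0$ modulo $\operatorname{N}_{\B(r,n_i)}(U^{(i)})$ for $i>s$.

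Next I would compute these derivatives using the inner-product identity already established in the proof of Proposition~\ref{prop:sota-max}, namely
\[
\|\mathcal A-(U^{(1)},\dots,U^{(k)})\cdot\mathcal D\|^2 = \|\mathcal A\|^2 + \sum_{j=1}^r\lambda_j^2 - 2\sum_{j=1}^r\lambda_j\bigl[((U^{(1)})^\tp,\dots,(U^{(k)})^\tp)\cdot\mathcal A\bigr]_{j,\dots,j}.
\]
Condition (a) then reads $\lambda_j = \bigl[((U^{(1)})^\tp,\dots,(U^{(k)})^\tp)\cdot\mathcal A\bigr]_{j,\dots,j}$, which is precisely the asserted formula $\mathcal D = \operatorname{diag}_k(\operatorname{Diag}_k(((U^{(1)})^\tp,\dots,(U^{(k)})^\tp)\cdot\mathcal A))$. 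Substituting this back, the objective restricted to the optimal $\lambda$ becomes $\|\mathcal A\|^2 - \sum_{j=1}^r[\cdots]_{j,\dots,j}^2$, so up to the constant $\|\mathcal A\|^2$ and a sign, stationarity in the $U^{(i)}$ variables of \eqref{eq:sota} (with $\lambda$ eliminated) is identical to stationarity of the objective $\sum_{j=1}^r(((U^{(1)})^\tp,\dots,(U^{(k)})^\tp)\cdot\mathcal A)_{j,\dots,j}^2$ of \eqref{eq:sota-max}. The cleanest way to make the "eliminate $\lambda$ first, then differentiate $U$" step rigorous is to note that $\partial/\partial U^{(i)}$ of the full objective, evaluated at a point satisfying (a), equals $\partial/\partial U^{(i)}$ of the reduced objective (the extra terms involve $\partial\lambda_j/\partial U^{(i)}$ times $\partial(\text{obj})/\partial\lambda_j$, which vanishes by (a)) — this is just the chain rule / envelope argument. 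I would then observe that the gradient of $\sum_j(\cdots)_{j,\dots,j}^2$ in $U^{(i)}$ matches, up to the factor $-2$ absorbed into multipliers, the gradient appearing in (b)–(c), so the normal-space conditions and the symmetric multipliers $P^{(i)}$ coincide for the two problems.

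The main obstacle — really the only subtlety — is bookkeeping the constraint structure correctly: treating the orthonormality constraints $i\le s$ as genuine equality constraints generating the symmetric multiplier $P$ (here one uses that the normal space to $\V(r,n_i)$ is $\{U^{(i)}X : X\in\SS^r\}$, from \eqref{eq:normal-stf}), while treating the $i>s$ constraints as smooth manifold constraints contributing $\operatorname{N}_{\B(r,n_i)}(U^{(i)})$ via \eqref{eq:normal-sub}, and verifying that this hybrid stationarity notion is exactly the one used to define KKT points of \eqref{eq:sota} and \eqref{eq:sota-max} in the paper. Once the derivative of the quadratic form is written out explicitly in both settings, the equivalence is a direct comparison with no further work; both directions of the "if and only if" come out simultaneously since each step is reversible.
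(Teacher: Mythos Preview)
Your proposal is correct and follows essentially the same approach as the paper: eliminate the unconstrained $\lambda_j$'s via their stationarity condition (which is exactly the formula for $\mathcal D$), then observe that the remaining stationarity conditions in the $U^{(i)}$ coincide with those of \eqref{eq:sota-max}. The paper's proof compresses all of this into two sentences, simply invoking the fact that the objective is quadratic in each $\lambda_j$ with unique critical point and then appealing to the general KKT theory of Section~\ref{subsec:KKT}; your version spells out the envelope/chain-rule step and the constraint bookkeeping explicitly, but the content is the same.
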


\begin{proof}
Noticing that variables $\lambda_j$'s in \eqref{eq:sota} are unconstrained and also the objective function \eqref{eq:sota} is quadratic with leading coefficient $1$ for each $\lambda_j$, we may conclude that for fixed $U^{(i)}$'s, the objective function of \eqref{eq:sota} has a unique critical point for $\lambda_j$'s. Thus, when applying the general theory of calculating the KKT points of \eqref{eq:sota} (cf.\ Section~\ref{subsec:KKT}), we see that $\mathcal{D}$ is uniquely determined by $U$. The desired correspondence between KKT points of \eqref{eq:sota} and \eqref{eq:sota-max} then follows.
\end{proof}

Motivated by the SVD-based algorithm presented in \cite{GC-19}, we propose Algorithm~\ref{algo} to numerically solve the optimization problem \eqref{eq:sota}. The goal of this paper is to analyse the convergence behaviour of Algorithm~\ref{algo}.
\begin{algorithm}
\caption{iAPD-ALS algorithm for low rank partially orthogonal tensor approximation problem}\label{algo}
\begin{algorithmic}[0]
\State{Input: a nonzero tensor $\mathcal A\in\mathbb R^{n_1}\otimes\dots\otimes\mathbb R^{n_k}$, a positive integer $r$ and a proximal parameter $\epsilon > 0$.}
\State{Output: a partially orthogonal tensor approximation of $\mathcal{A}$.}
\State{Initialization: Choose $ U_{[0]}:=(U^{(1)}_{[0]},\dots,U^{(k)}_{[0]})\in \V(r,n_1)\times\dots\times \V(r,n_s)\times\B(r,n_{s+1})\times\dots\times \B(r,n_k)$ such that
$f( U_{[0]})>0$, a truncation parameter $\kappa\in(0,\sqrt{f( U_{[0]})/r})$ and set $p:=1$.}

\algrule
\While{not converge}
\For{$i=1,\dots, k$}
\If {$i\le s$} \Comment{alternating polar decompositions}
\State{Compute $U_{[p]}^{(i)}\in\operatorname{Polar}(V^{(i)}_{[p]}\Lambda^{(i)}_{[p]})$ and $S^{(i)}_{[p]}:=(U^{(i)}_{[p]})^\tp V^{(i)}_{[p]}\Lambda^{(i)}_{[p]}$.}

\If{$\sigma^{(i)}_{r,[p]}=\lambda_{\min}(S^{(i)}_{[p]})<\epsilon$}  \Comment{proximal correction}
\State {Update $U_{[p]}^{(i)} \in \operatorname{Polar}(V^{(i)}_{[p]}\Lambda^{(i)}_{[p]}+\epsilon U^{(i)}_{[p-1]})$ and $S^{(i)}_{[p]} \coloneqq (U^{(i)}_{[p]})^\tp {\big(V^{(i)}_{[p]}\Lambda^{(i)}_{[p]}+\epsilon U^{(i)}_{[p-1]}\big)}$} 
\EndIf
\EndIf
\If{$|\big((U^{(s)}_{[p]})^\tp V^{(s)}_{[p]}\big)_{jj}|<\kappa$ for some $j\in J\subseteq\{1,\dots,r\}$}  \Comment{truncation}
\If{$i\in\{1,\dots,s\}$}
\State{Update $U^{(i)}_{[p]}:=\big(U^{(i)}_{[p]}\big)_{\{1,\dots,r\}\setminus J}$.}
\Else
\State{Update $U^{(i)}_{[p-1]}:=\big(U^{(i)}_{[p-1]}\big)_{\{1,\dots,r\}\setminus J}$.}
\EndIf
\State{Update $r:=r-|J|$.}
\EndIf

\If{$i=s+1,\dots,k$}
\Comment{alternating least squares}
\For{$j=1,\dots,r$}
\State{Compute
\begin{equation}\label{eq:iteration0}
\mathbf u^{(i)}_{j,[p]}:=
\sgn(\lambda^{i-1}_{j,[p]})\frac{\mathcal A\tau_i(\mathbf x^{i}_{j,[p]})}{\|\mathcal A\tau_i(\mathbf x^{i}_{j,[p]})\|}.
\end{equation}
}
\EndFor
\EndIf
\EndFor
\State{Update $p:=p+1$.} \Comment{iteration number}
\EndWhile
\end{algorithmic}
\end{algorithm}

To conclude this subsection, we explain Algorithm~\ref{algo} in details. To do that, we fix some notations. For each $i=1,\dots,k$, $j=1,\dots,r$ and $p\in \mathbb{N}$, we define
\begin{align}
\mathbf x^{i}_{j,[p]} &\coloneqq (\mathbf u^{(1)}_{j,[p]},\dots,\mathbf u^{(i-1)}_{j,[p]}, \mathbf u^{(i)}_{j,[p-1]}, \mathbf u^{(i+1)}_{j,[p-1]},\dots,\mathbf u^{(k)}_{j,[p-1]}), \label{eq:inter-vector} \\
\lambda^{i-1}_{j,[p]} &\coloneqq \mathcal A\tau(\mathbf x^{i}_{j,[p]}),\quad
\Lambda^{(i)}_{[p]} \coloneqq \operatorname{diag}_2(\lambda^{i-1}_{1,[p]},\dots,\lambda^{i-1}_{r,[p]}), \label{eq:lambda-orth}\\
\mathbf v^{(i)}_{j,[p]} &\coloneqq \mathcal A\tau_i(\mathbf x^{i}_{j,[p]}),\quad V^{(i)}_{[p]} \coloneqq \begin{bmatrix}\mathbf v^{(i)}_{1,[p]}&\dots&\mathbf v^{(i)}_{r,[p]}\end{bmatrix}, \label{eq:marixv}
\end{align}
where $\mathbf u^{(i)}_{j,[p]}$ is the $j$-th column of the factor matrix $U^{(i)}_{[p]}$.

Furthermore, for a matrix $B\in\SS^r$, $\lambda_{\min}(B)$ denotes the smallest eigenvalue of $B$. Given a matrix $A\in\mathbb R^{n\times r}$ and a nonempty subset $J\subseteq\{1,\dots,r\}$, $A_J\in\mathbb R^{n\times |J|}$ is the submatrix of $A$ formed by columns of $A$ indexed by $J$.

We are now in the position to decode Algorithm~\ref{algo}. 
We denote by $f( U)$ the objective function of \eqref{eq:sota-max}.
Generally speaking, the algorithm starts with an initialization $U_{[0]}:=(U^{(1)}_{[0]},\dots,U^{(k)}_{[0]})$ and $p:=1$. If $U_{[p-1]}$ is already obtained, in the $p$-th iteration, Algorithm~\ref{algo} updates matrices in $U_{[p-1]}$ sequentially as
\[
U^{(1)}_{[p-1]} \rightarrow U^{(1)}_{[p]},\ U^{(2)}_{[p-1]}\rightarrow U^{(2)}_{[p]},\dots,U^{(k)}_{[p-1]}\rightarrow U^{(k)}_{[p]}.
\]
To obtain $U_{[p]}$ from $U_{[p-1]}$, Algorithm~\ref{algo} performs two different types of operations, depending on whether $i \le s$ or $i > s$. For $i=1,\dots,s$, $U^{(i)}_{[p-1]}$ is updated by computing the polar decomposition 
\begin{equation}\label{eq:algorithm1-polar}
U^{(i)}_{[p]} S^{(i)}_{[p]}  = V^{(i)}_{[p]}\Lambda^{(i)}_{[p]}+ \alpha U^{(i)}_{[p-1]},
\end{equation}
where $\alpha = \epsilon$ or $0$ depending on whether or not there is a proximal correction. The polar decomposition of a matrix $X\in\mathbb{R}^{m\times n}$ can be performed by the following two steps:
\begin{enumerate}[label=(\roman*)]
\item  We compute the SVD of {$X$}:
\begin{equation}\label{eq:svd}
X = G \Sigma H^\tp
\end{equation}
where $G\in \V(n,m)$, $H \in \O(n)$ and singular values of $X$ are sorted nonincreasingly in the diagonal of $\Sigma$.
\item The desired polar decomposition is:
\begin{equation}\label{eq:polar}
X = US,
\end{equation}
where $U \coloneqq GH^\tp$, $S \coloneqq H \Sigma H^\tp$.
\end{enumerate}

To ensure the convergence, we modify {orthonormal} factors in these updates by proximal correction (resp. truncation) with respect to $\epsilon$ (resp. $\kappa$). For $i = s+1,\dots, k$,  Algorithm~\ref{algo} simply updates $U^{(i)}_{[p-1]}$  by the usual ALS method {together with a sign coherence choice}. It is worthy to remark that although Algorithm~\ref{algo} is designed to solve problem~\eqref{eq:sota}, whose feasible set is a parameter space of $P_s(\mathbf n,r)$, the convergence analysis of Algorithm~\ref{algo} requires {an in-depth} understanding of $P_s(\mathbf n,r)$ itself.

\subsection{Algebraic geometry of $P_s(\mathbf n,r)$}\label{sec:part-orth}
In general, a numerical algorithm solving the optimization problem \eqref{eq:sota} (or equivalently its maximization reformulation \eqref{eq:sota-max}) is usually designed in the parameter space ${V_{\mathbf{n},r} \coloneqq \V(r,n_1)\times \dots\times \V(r,n_s)\times\B(r,n_{s+1})\times\dots\times\B(r,n_k)\times\mathbb R^r}$. See for example, \cite{CS-09,GC-19,Y-19,WTY-15}. However, from a more geometric perspective, we can also regard problem \eqref{eq:sota} as the projection of a given tensor $\mathcal A$ onto $P_s(\mathbf n,r)$ in the tensor space $\mathbb{R}^{n_1} \otimes \cdots \otimes \mathbb{R}^{n_k}$. A key ingredient in our study of problem \eqref{eq:sota} is the connection between these two viewpoints. Once such a connection is established, we will be able to implement an algorithm in the parameter space $V_{\mathbf{n},r}$ but analyse it in the tensor space $P_s(\mathbf n,r)$.

Note that we have a natural vector space isomorphism $\mathbb R^m\otimes\mathbb R^n \simeq  \mathbb R^{mn}$. Because of this isomorphism, in the sequel we may simply identify $\mathbb{R}^m\otimes\mathbb R^n$ with $\mathbb R^{mn}$ when tensor structure is not of concern. In the following, unless otherwise stated, we always assume $k \ge s \ge 1$ and cases $s\geq 2$ and $s=1$ are treated separately.}
\begin{lemma}\label{lemma:induction}
Let $k \ge 4$ and $k > s \ge 3$ be positive integers. A tensor ${\mathcal T}\in \mathbb{R}^{n_1} \otimes \cdots \otimes \mathbb{R}^{n_k}$ lies in $P_s(\mathbf{n},r)$ if and only if
\[
\mathcal T \in P_s(\mathbf{m},r) \bigcap P_s(\mathbf{m}',r),
\]
where $\mathbf{m} \coloneqq (n_1,\dots, n_{s-1},n_{s}n_{s+1},n_{s+2},\dots,n_{k})$ and $\mathbf{m}' \coloneqq (n_1,\dots, n_{s-1}n_{s+1}, n_s, n_{s+2} \dots, n_{k})$.
\end{lemma}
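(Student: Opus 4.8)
The plan is to prove the two inclusions separately, with the forward direction (a partially orthogonal tensor stays partially orthogonal after flattening pairs of modes) being essentially immediate, and the reverse direction (the converse) being where the real content lies.

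\textbf{Forward direction.} Suppose $\mathcal{T}\in P_s(\mathbf{n},r)$ with a decomposition $\mathcal{T}=\sum_{i=1}^r \sigma_i \mathbf{a}^{(1)}_i\otimes\cdots\otimes\mathbf{a}^{(k)}_i$ satisfying \eqref{eq:factor-matrix} and \eqref{eq:factor-matrix-orth}. To see $\mathcal{T}\in P_s(\mathbf{m},r)$, I flatten modes $s$ and $s+1$ together: the merged factor vectors are $\mathbf{a}^{(s)}_i\otimes\mathbf{a}^{(s+1)}_i\in\mathbb{R}^{n_sn_{s+1}}$, which are unit vectors since $\|\mathbf{a}^{(s)}_i\otimes\mathbf{a}^{(s+1)}_i\| = \|\mathbf{a}^{(s)}_i\|\,\|\mathbf{a}^{(s+1)}_i\| = 1$. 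The orthonormality of the $s$-th factor matrix for $\mathbf{m}$ requires $\langle\mathbf{a}^{(s)}_i\otimes\mathbf{a}^{(s+1)}_i,\mathbf{a}^{(s)}_j\otimes\mathbf{a}^{(s+1)}_j\rangle = \langle\mathbf{a}^{(s)}_i,\mathbf{a}^{(s)}_j\rangle\langle\mathbf{a}^{(s+1)}_i,\mathbf{a}^{(s+1)}_j\rangle = \delta_{ij}$, which holds because the $s$-th factor of $\mathcal{T}$ is orthonormal (so the first inner product is $\delta_{ij}$). The other factor matrices for $\mathbf{m}$ are the $A^{(1)},\dots,A^{(s-1)},A^{(s+2)},\dots,A^{(k)}$ of the original decomposition, unchanged; the first $s-1$ of them are still orthonormal. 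Hence $\mathcal{T}\in P_s(\mathbf{m},r)$. The same argument with the pair $(s-1,s+1)$ — here using that $A^{(s-1)}$ is orthonormal since $s-1\le s-1\le s$ — gives $\mathcal{T}\in P_s(\mathbf{m}',r)$.

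\textbf{Reverse direction.} This is the crux. Assume $\mathcal{T}\in P_s(\mathbf{m},r)\cap P_s(\mathbf{m}',r)$; I must produce a genuine $k$-mode decomposition of $\mathcal{T}$ with the required orthonormality. Since $s\ge 3$, any tensor in $P_s(\mathbf{m},r)$ is identifiable by Proposition~\ref{prop:unique} (its orthonormal factors for $\mathbf{m}$ number $s\ge 3$); write the essentially unique decomposition as $\mathcal{T}=\sum_{i=1}^r \sigma_i \mathbf{a}^{(1)}_i\otimes\cdots\otimes\mathbf{a}^{(s-1)}_i\otimes \mathbf{c}_i\otimes\mathbf{a}^{(s+2)}_i\otimes\cdots\otimes\mathbf{a}^{(k)}_i$, where $\mathbf{c}_i\in\mathbb{R}^{n_sn_{s+1}}$ are the unit vectors in the merged mode and $\{\mathbf{c}_i\}$ is orthonormal. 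Likewise, from $\mathcal{T}\in P_s(\mathbf{m}',r)$ and identifiability there, the decomposition with modes $s-1$ and $s+1$ merged must be the \emph{same} one up to the permutation-and-scaling ambiguity; matching the two decompositions forces (after reindexing) that for each $i$ the mode-$(s-1)$ vector and a vector along mode $s+1$ agree, and crucially that $\mathbf{c}_i = \mathbf{a}^{(s)}_i\otimes\mathbf{a}^{(s+1)}_i$ is a \emph{rank-one} matrix when reshaped as an $n_s\times n_{s+1}$ array, for some unit vectors $\mathbf{a}^{(s)}_i\in\mathbb{R}^{n_s}$, $\mathbf{a}^{(s+1)}_i\in\mathbb{R}^{n_{s+1}}$. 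Concretely: knowing $\mathcal{T}$ decomposes with mode $s-1$ separate from modes $\{s,s+1\}$ fused, and also with mode $s+1$ separate from modes $\{s-1,s\}$ fused, and both decompositions being unique, one deduces by comparing the two that the mode-$\{s,s+1\}$ slice $\mathbf{c}_i$ of each rank-one term factors compatibly across the two groupings — this is a standard ``a tensor that is rank one after two different flattenings is rank one as a tensor'' argument applied termwise, legitimate because identifiability pins each term down. Once $\mathbf{c}_i=\mathbf{a}^{(s)}_i\otimes\mathbf{a}^{(s+1)}_i$ is established, substituting back gives a $k$-mode decomposition $\mathcal{T}=\sum_i\sigma_i\mathbf{a}^{(1)}_i\otimes\cdots\otimes\mathbf{a}^{(k)}_i$ with all columns unit; the first $s-1$ factor matrices are orthonormal by hypothesis, and the $s$-th is orthonormal because $\langle\mathbf{a}^{(s)}_i,\mathbf{a}^{(s)}_j\rangle\langle\mathbf{a}^{(s+1)}_i,\mathbf{a}^{(s+1)}_j\rangle=\langle\mathbf{c}_i,\mathbf{c}_j\rangle=\delta_{ij}$ together with all vectors being unit forces $\langle\mathbf{a}^{(s)}_i,\mathbf{a}^{(s)}_j\rangle=\delta_{ij}$ (the Cauchy--Schwarz bound $|\langle\mathbf{a}^{(s)}_i,\mathbf{a}^{(s)}_j\rangle|\le 1$ makes the product vanish only if the first factor does). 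Hence $\mathcal{T}\in P_s(\mathbf{n},r)$.

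\textbf{Main obstacle.} The delicate point is the termwise rank-one factorization of $\mathbf{c}_i$: it must be justified that the two identifiable decompositions (w.r.t.\ $\mathbf{m}$ and w.r.t.\ $\mathbf{m}'$) are compatible term by term after a common reindexing, rather than merely that the \emph{sets} of terms match in some unrelated way. The cleanest route is to observe that the mode-$\{1,\dots,s-1,s+2,\dots,k\}$ factors of the two decompositions literally coincide up to the same permutation and scalars (they live in the unmerged modes, which are identical in $\mathbf{m}$ and $\mathbf{m}'$), so identifiability forces the permutation to be the same for both; then for each fixed $i$ the corresponding rank-one term, viewed in the three modes $s-1,s,s+1$, is simultaneously rank one under the $(\{s-1\},\{s,s+1\})$ and the $(\{s+1\},\{s-1,s\})$ flattenings, which for a single rank-one-in-two-ways order-three tensor forces it to be a genuine rank-one order-three tensor — giving the desired $\mathbf{a}^{(s)}_i\otimes\mathbf{a}^{(s+1)}_i$. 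I would also double-check the degenerate case where some $\sigma_i=0$ (drop those terms first, as in the proof of Proposition~\ref{prop:unique}) and confirm that $s\ge 3$ is exactly what is needed to invoke unconditional identifiability in both groupings.
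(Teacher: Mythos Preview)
Your forward direction is fine, but the reverse direction has two real gaps. First, your term-matching mechanism is incorrect as stated: you assert that modes $\{1,\dots,s-1,s+2,\dots,k\}$ are unmerged in both $\mathbf{m}$ and $\mathbf{m}'$, but mode $s-1$ is \emph{merged} (with mode $s+1$) in $\mathbf{m}'$. To compare the two decompositions rigorously one must pass to a common flattening, e.g.\ $\mathbf{m}''$ merging all of $s-1,s,s+1$; both then become $P_{s-1}(\mathbf{m}'',r)$-decompositions, and Proposition~\ref{prop:unique} gives identifiability there only when $s-1\ge 3$, so your strategy collapses precisely at the base case $s=3$. Second, your Cauchy--Schwarz step for mode-$s$ orthonormality is simply false: from $\langle\mathbf{a}^{(s)}_i,\mathbf{a}^{(s)}_j\rangle\langle\mathbf{a}^{(s+1)}_i,\mathbf{a}^{(s+1)}_j\rangle=0$ one cannot conclude the first factor vanishes (take $\mathbf{a}^{(s)}_1=\mathbf{a}^{(s)}_2$ with $\mathbf{a}^{(s+1)}_1\perp\mathbf{a}^{(s+1)}_2$, which yields orthonormal $\mathbf{c}_i$'s but a rank-deficient $s$-th factor).

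The paper avoids both problems by \emph{not} invoking global identifiability. It contracts $\mathcal{T}$ against $\mathbf{a}^{(1)}_p\otimes\cdots\otimes\mathbf{a}^{(s-2)}_p\otimes\mathbf{b}^{(s)}_q$, using orthonormality in modes $1,\dots,s-2$ from the $\mathbf{m}$-side and in mode $s$ from the $\mathbf{m}'$-side to collapse each sum to a single term; equating the two surviving rank-one terms directly forces the merged factor $N_q$ of the $\mathbf{m}'$-decomposition to split as $\mathbf{a}^{(s-1)}_p\otimes\widetilde{\mathbf{b}}^{(s+1)}_q$. The resulting $k$-mode decomposition then keeps the already-orthonormal $\mathbf{b}^{(s)}_j$ as its mode-$s$ factor, so no separate orthonormality argument is needed, and the proof works uniformly for all $s\ge 3$.
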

\begin{proof}
It is straightforward to verify the inclusion $P_s(\mathbf{n},r) \subseteq P_s(\mathbf{m},r) \bigcap P_s(\mathbf{m}',r)$ and hence it suffices to prove the reverse inclusion. Suppose that $\mathcal T \in P_s(\mathbf{m},r)$.
We may assume that $\rank (\mathcal{T}) = r$. Otherwise, we simply replace $r$ by $\rank (\mathcal{T})$. We decompose $\T$ as
\[
\mathcal T = \sum_{j=1}^r \mathbf{a}^{(1)}_j \otimes \cdots \otimes \mathbf{a}^{(s-1)}_j \otimes M_j \otimes \mathbf{a}^{(s+2)}_j \otimes \cdots  \otimes \mathbf{a}^{(k)}_j,
\]
where $A^{(i)} = [\mathbf{a}^{(i)}_1,\dots, \mathbf{a}^{(i)}_r] \in {\V(r,n_i)}$ for $i \in \{1,\dots, s-1\}$, $A^{(s)} = [M_1,\dots, M_r]\in \V(r,n_{s}n_{s+1})$ and $\mathbf{a}_j^{(l)} \in {\mathbb{R}^{n_{l}}\setminus\{\mathbf 0\}}$ for $l \in\{ s+2,\dots, k\}$. Here we adopt the convention that if $s = k - 1$ then $n_l = 0$ and hence $\mathbf{a}_j^{(l)}$ is just a nonzero scalar. Similarly, if $\T \in P_s(\mathbf{m}',r)$, then we have
\[
\mathcal T = \sum_{j=1}^r \mathbf{b}^{(1)}_j \otimes \cdots \otimes \mathbf{b}^{(s-2)}_j \otimes N_j \otimes \mathbf{b}_j^{(s)} \otimes  \mathbf{b}^{(s+2)}_j \otimes \cdots \otimes \mathbf{b}^{(k)}_j,
\]
where $B^{(i)} = [\mathbf{b}^{(i)}_1,\dots, \mathbf{b}^{(i)}_r] \in \V(r,n)$ for $i\in \{1,\dots, s-2, s\}$, $A^{(s-1)} = [N_1,\dots, N_r]\in \V(r,n_{s-1}n_{s+1})$ and $\mathbf{b}_j^{(l)} \in {\mathbb{R}^{n_{l}}\setminus\{\mathbf 0\}}$, for $l \in \{ s+2,\dots, k\}$. For $p,q\in \{1,\dots, r\}$, we contract $\mathcal T$ with $\mathbf{a}_p^{(1)} \otimes \cdots \otimes \mathbf{a}_p^{(s-2)} \otimes \mathbf{b}_q^{(s)}$ to obtain
\[
\mathbf{a}_p^{(s-1)} \otimes \langle M_p, \mathbf{b}_q^{(s)} \rangle \otimes \mathbf{a}^{(s+2)}_p \otimes \cdots \otimes \mathbf{a}^{(k)}_p =\left( \prod_{i=1}^{s-2} \langle \mathbf{b}_q^{(i)},  \mathbf{a}_p^{(i)} \rangle \right)  N_q \otimes \mathbf{b}^{(s+2)}_q \otimes \cdots \otimes \mathbf{b}^{(k)}_q.
\]

We observe that $\langle M_p, \mathbf{b}_q^{(s)} \rangle \ne \mathbf 0$ implies that $\mathbf{a}^{(l)}_p$ and $\mathbf{b}^{(l)}_q$ are only differed by a nonzero scalar multiple for $l \in \{s+2,\dots, k\}$ and $N_q = \mathbf{a}^{(s-1)}_p \otimes \widetilde{\mathbf{b}}^{(s+1)}_q$ for some $\widetilde{\mathbf{b}}^{(s+1)}_q \in \mathbb{R}^{n_{s+1}} \setminus \{\mathbf{0}\}$, while  $\langle M_p, \mathbf{b}_q^{(s)} \rangle = 0$ implies $\prod_{i=1}^{s-2} \langle \mathbf{b}_q^{(i)},  \mathbf{a}_p^{(i)} \rangle = 0$.

For each $q$ there exists some $p$ such that $\langle M_p, \mathbf{b}_q^{(s)} \rangle \ne \mathbf 0$ since $\langle \T, \mathbf{b}_q^{(s)} \rangle \ne 0$. Moreover, if for some $q$ there exist $p \ne p'$ such that $\langle M_p, \mathbf{b}_q^{(s)} \rangle \ne \mathbf 0$ and $\langle M_{p'}, \mathbf{b}_q^{(s)} \rangle \ne \mathbf 0$, then
\[
N_q = \mathbf{a}^{(s-1)}_p \otimes \widetilde{\mathbf{b}}^{(s+1)}_q = \mathbf{a}^{(s-1)}_{p'} \otimes \widehat{\mathbf{b}}^{(s+1)}_q,
\]
which contradicts the assumption that $\langle \mathbf{a}^{(s-1)}_p, \mathbf{a}^{(s-1)}_{p'} \rangle = 0$. Therefore, we may reorder summands in $\T$ so that $\langle M_p, \mathbf{b}_p^{(s)} \rangle \ne 0$ for each $p\in \{1,\dots,r \}$ and hence we may write
\[
\T = \sum_{j=1}^r \mathbf{b}^{(1)}_j \otimes \cdots \otimes \mathbf{b}^{(s-2)}_j \otimes \mathbf{a}^{(s-1)}_j  \otimes \mathbf{b}_j^{(s)} \otimes \widetilde{\mathbf{b}}_j^{(s+1)} \otimes  \mathbf{b}^{(s+2)}_j \otimes \cdots \otimes \mathbf{b}^{(k)}_j,
\]
from which we may conclude that $\mathcal T\in P_s(\mathbf{n},r)$.
\end{proof}

\begin{lemma}\label{lem:P_s>1}
Let $k \ge 3$ and $k \ge s \ge 3$ be positive integers. For any $\mathbf{n} = (n_1,\dots, n_k)$ and {$r\le \min\{n_1,\dots, n_s\}$}, the set $P_s(\mathbf{n},r)$ is a real algebraic variety defined by quadratic polynomials.
\end{lemma}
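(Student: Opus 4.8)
The plan is to induct on the number of factors $k$, with base case $k=s$, using Lemma~\ref{lemma:induction} for the inductive step. Suppose $k>s$ and that the assertion is known for all partially orthogonal varieties with $k-1$ factors. By Lemma~\ref{lemma:induction} we have $P_s(\mathbf n,r)=P_s(\mathbf m,r)\cap P_s(\mathbf m',r)$, where $\mathbf m$ and $\mathbf m'$ each have $k-1$ entries; moreover $r\le\min\{m_1,\dots,m_s\}$ and $r\le\min\{m_1',\dots,m_s'\}$ because grouping two of the coordinates only enlarges the relevant dimensions, so the inductive hypothesis applies to both sets. Since the vector-space identification $\mathbb R^{n_1}\otimes\cdots\otimes\mathbb R^{n_k}\cong\mathbb R^{m_1}\otimes\cdots\otimes\mathbb R^{m_{k-1}}$ obtained by merging two tensor legs is a linear isomorphism, it carries quadratic polynomials to quadratic polynomials; hence $P_s(\mathbf m,r)$ and $P_s(\mathbf m',r)$ are each cut out by quadratic polynomials in the coordinates of $\mathbb R^{n_1}\otimes\cdots\otimes\mathbb R^{n_k}$, and so is their intersection. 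This reduces the proof to the base case $k=s$, where $P_k(\mathbf n,r)$ is the set of orthogonally decomposable tensors of rank at most $r$ (note $k=3,s=3$ is exactly this base case, and it cannot be further reduced since Lemma~\ref{lemma:induction} requires $k>s$).

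For the base case I would characterize $P_k(\mathbf n,r)$ by \emph{contraction equations}. For $\mathcal T\in\mathbb R^{n_1}\otimes\cdots\otimes\mathbb R^{n_k}$ and any two modes $a\ne b$, form the partial self-contraction $\langle\mathcal T,\mathcal T\rangle$ of $\mathcal T$ against a second copy of $\mathcal T$ along all modes $c\notin\{a,b\}$. On an odeco tensor $\mathcal T=\sum_i\sigma_i\,\mathbf u_i^{(1)}\otimes\cdots\otimes\mathbf u_i^{(k)}$ this collapses, by orthonormality of the surviving factors and because $k\ge 3$ guarantees at least one contracted mode, to $\sum_i\sigma_i^2\,\mathbf u_i^{(a)}\otimes\mathbf u_i^{(b)}\otimes\mathbf u_i^{(a)}\otimes\mathbf u_i^{(b)}$, which is symmetric under exchanging the two copies and is independent of which subset of modes $c\notin\{a,b\}$ was contracted. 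Imposing all these symmetry and consistency identities, together with the Segre-type quadrics forcing the rank-one structure of the contracted pieces in each mode, produces a family of quadratic equations vanishing on $P_k(\mathbf n,r)$. The content of the base case is that their common real zero set is exactly $P_k(\mathbf n,r)$: one direction is the computation above, and for the converse one argues, as in the algebraic theory of the odeco variety (cf.\ \cite{BDHR17}), that a tensor satisfying these relations is simultaneously orthogonally diagonalizable, exploiting the identifiability guaranteed by $s=k\ge 3$ (Proposition~\ref{prop:unique}) to recover the factors.

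The step I expect to be the main obstacle is incorporating the rank bound $\rank(\mathcal T)\le r$ into quadratic equations, since the naive conditions — vanishing of $(r+1)\times(r+1)$ minors of a flattening — have degree $r+1$. The orthogonal structure must be used essentially here: on the locus already cut out by the contraction quadrics, the rank of $\mathcal T$ equals the number of nonzero ``eigenvalue'' parameters $\sigma_i$ and is controlled rigidly by the collapsed form of the self-contractions, and the task is to show that, modulo the odeco relations, the minor-type conditions are generated in degree two (and, symmetrically, that on this locus the real and complex pictures agree so that the zero set is genuinely $P_k(\mathbf n,r)$ and not a larger real variety). I would attack this by a careful choice of which partial contractions to compare so that the desired vanishing is already forced quadratically once the odeco relations hold. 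Once the base case is established, the inductive step described above is routine.
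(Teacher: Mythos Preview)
Your inductive reduction via Lemma~\ref{lemma:induction} is exactly the paper's argument: fix $s\ge 3$, induct on $k\ge s$, and for $k>s$ write $P_s(\mathbf n,r)=P_s(\mathbf m,r)\cap P_s(\mathbf m',r)$ with $\mathbf m,\mathbf m'$ of length $k-1$, then use $V(I)\cap V(J)=V(I+J)$ to conclude that quadrics suffice. Your check that $r\le\min\{m_1,\dots,m_s\}$ and $r\le\min\{m_1',\dots,m_s'\}$ is correct and is implicitly used in the paper as well.

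The only difference is the base case $k=s$. The paper does not re-derive it: it simply invokes \cite[Proposition~31]{BDHR17}, which states directly that the set of completely orthogonal (odeco) tensors of rank at most $r$ in $\mathbb R^{n_1}\otimes\cdots\otimes\mathbb R^{n_s}$ is a real variety cut out by quadratic polynomials. So the ``main obstacle'' you flag---incorporating the rank bound $\operatorname{rank}(\mathcal T)\le r$ into quadratic equations---is exactly what is delegated to that reference; you need not reprove it here. Your sketch via self-contractions and symmetry/consistency relations is indeed the flavor of the argument in \cite{BDHR17}, but producing a complete set of quadrics (and in particular handling the rank bound) is genuinely nontrivial and is the content of that paper, not something to be redone inside this lemma.
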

\begin{proof}
For each fixed $s \ge 3$, we proceed by induction on $k\ge s$. First we consider the case $k = s$ and $\mathbf{n} = (n_1,\dots, n_s)$. {Note that in this case, $P_s(\mathbf{n},r)$ is the set consisting of all completely orthogonal tensors of rank at most $r$ in $\mathbb{R}^{n_1} \otimes \cdots \otimes \mathbb{R}^{n_s}$. According to \cite[Proposition 31]{BDHR17}, $P_s(\mathbf{n},r)$ is a real algebraic variety cut out by quadratic polynomials.}

Next suppose that the statement is proved for some $k> s$. Then for any $\mathbf{n} = (n_1,\dots, n_{k+1})$, Lemma~\ref{lemma:induction} implies that
\[
P_s(\mathbf{n},r) = P_s(\mathbf{m},r) \bigcap P_s(\mathbf{m}',r),
\]
where $\mathbf{m} \coloneqq (n_1,\dots, n_{s-1},n_{s}n_{s+1},n_{s+2},\dots,n_{k+1}) $ and
$\mathbf{m}' \coloneqq (n_1,\dots, n_{s-1}n_{s+1}, n_s, n_{s+2}, \dots, n_{k+1})$. Therefore, $P_s(\mathbf{n},r)$ is a real algebraic variety defined by quadratic polynomials since both $\mathbf{m}$ and $\mathbf{m}'$ have $k$ components and  $V(I + J) = V(I) \bigcap V(J)$ \cite{H-77}, where $I,J$ are ideals in a polynomial ring {and $V(I)$ denotes the algebraic set defined by the ideal $I$}.
\end{proof}

As a direct consequence of the proof of Lemma~\ref{lem:P_s>1} and \cite[Proposition 31]{BDHR17}, we can even obtain an explicit description of quadratic polynomials defining $P_s(\mathbf{n},r)$. However, these quadratic polynomials are complicated and will not be needed in the sequel. Next we deal with the case $s \le 2$.
\begin{lemma}\label{lem:P_s=1}
Let $k\ge 3$ be a positive integer and $s\in\{1,2\}$. For any $\mathbf{n} = (n_1,\dots, n_k)$ and $r\le \min\{n_i\colon i=1,\dots,s\}$, the set $P_s(\mathbf{n},r)$ is a real algebraic variety.
\end{lemma}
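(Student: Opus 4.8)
The plan is to establish the case $s=1$ and then derive $s=2$ from it. For the reduction I would show
\[
P_2(\mathbf n,r) \;=\; P_1(\mathbf n,r)\cap\big\{\mathcal A : \mathcal A'\in P_1(\mathbf n',r)\big\},
\]
where $\mathbf n' \coloneqq (n_2,n_1,n_3,\dots,n_k)$ and $\mathcal A'$ is the tensor obtained from $\mathcal A$ by transposing its first two factors. The inclusion ``$\subseteq$'' is clear. For ``$\supseteq$'', given a decomposition of $\mathcal A$ with only its first factor orthonormal and a (possibly different) one with only its second factor orthonormal, I would contract $\mathcal A$ with itself over the last $k-1$ slots of each copy, obtaining a positive semidefinite matrix $M$ of size $n_1n_2\times n_1n_2$; both decompositions exhibit eigenvectors of $M$ of the form $\mathbf a^{(1)}_i\otimes\mathbf a^{(2)}_i$, with exactly one of the two factor families orthonormal in each. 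When $M$ has simple nonzero spectrum the two families coincide up to scaling, so one decomposition can be rescaled so that \emph{both} of its first two factor families become orthonormal; the degenerate case is treated eigenspace by eigenspace. Since a finite intersection of real algebraic varieties is again one, and $\{\mathcal A : \mathcal A'\in P_1(\mathbf n',r)\}$ is $P_1(\mathbf n',r)$ read in permuted coordinates, this reduces the lemma to $s=1$.

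For $s=1$, let $A_1\in\mathbb R^{n_1\times N'}$ with $N'\coloneqq\prod_{i=2}^k n_i$ be the flattening of $\mathcal A$ along its first factor, let $\Sigma\subseteq\mathbb R^{N'}$ be the affine cone over $\Seg(\mathbb R^{n_2},\dots,\mathbb R^{n_k})$ (an irreducible real algebraic variety, cut out by $2\times2$ minors), and let $Y\subseteq\mathbb R^r\otimes\mathbb R^{n_2}\otimes\dots\otimes\mathbb R^{n_k}$ be the (closed) variety of tensors all of whose $r$ slices along the first index have rank at most $1$. I would first record the following reformulation: $\mathcal A\in P_1(\mathbf n,r)$ if and only if $A_1=UW^\tp$ for some $U\in\V(r,n_1)$ and some $W=[\mathbf w_1\ \cdots\ \mathbf w_r]$ with all $\mathbf w_i\in\Sigma$ (zero columns allowed); equivalently, using the polar decomposition (Lemma~\ref{lem:polar}), if and only if the Gram matrix $A_1^\tp A_1$ can be written as $\sum_{i=1}^r\mathbf w_i\mathbf w_i^\tp$ with $\mathbf w_i\in\Sigma$. (Given such $W$, one recovers $U$ from $A_1$ because $A_1^\tp A_1=WW^\tp$ forces $\operatorname{range}(A_1^\tp)=\operatorname{range}(W)$ and $\operatorname{rank}(A_1)=\operatorname{rank}(W)$.) Thus $\mathcal A\in P_1(\mathbf n,r)$ precisely when $A_1^\tp A_1$ lies in the cone
\[
\mathcal G_r \;\coloneqq\; \Big\{\textstyle\sum_{i=1}^r\mathbf w_i\mathbf w_i^\tp : \mathbf w_i\in\Sigma\Big\}\ \subseteq\ \Sym^2(\mathbb R^{N'}).
\]

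It remains to prove that $P_1(\mathbf n,r)$ is a variety. It is semi-algebraic by Theorem~\ref{thm:image dimension}, being the image under a polynomial map of $\V(r,n_1)\times Y$ (send $(U,\mathcal C)$ to $(U,I,\dots,I)\cdot\mathcal C$). It is Euclidean-closed: this map satisfies $\|(U,I,\dots,I)\cdot\mathcal C\|=\|\mathcal C\|$, so $P_1(\mathbf n,r)\cap\{\lVert\cdot\rVert\le\rho\}$ is the image of the compact set $\V(r,n_1)\times\big(Y\cap\{\lVert\cdot\rVert\le\rho\}\big)$ for every $\rho>0$, hence $P_1(\mathbf n,r)$ is a closed cone. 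The real obstacle is to upgrade ``Euclidean-closed and semi-algebraic'' to ``Zariski-closed'', i.e.\ to show the Zariski closure $\overline{P_1(\mathbf n,r)}$ acquires no extra real points. By the reformulation above this is exactly the assertion that $\overline{\mathcal G_r}\cap\{\text{positive semidefinite matrices}\}=\mathcal G_r$: every positive semidefinite matrix in the Zariski closure of $\mathcal G_r$ is already a sum of $r$ squares $\mathbf w_i\mathbf w_i^\tp$ of rank-one tensors. This is a ``sum-of-squares equals nonnegativity''--type statement for the Segre cone; it is immediate for $\Sigma=\mathbb R^{N'}$ (the spectral theorem identifies $\overline{\mathcal G_r}\cap\{\text{PSD}\}$ with the PSD matrices of rank $\le r$, which are exactly $\mathcal G_r$), and for the Segre cone I would try to push the decomposition through using the tensor-product factorization $\mathbf w\mathbf w^\tp=\bigotimes_{l=2}^k\mathbf u^{(l)}(\mathbf u^{(l)})^\tp$ of the generators together with the positivity of all mode-wise slices of an element of $\overline{\mathcal G_r}\cap\{\text{PSD}\}$. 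I expect essentially all the difficulty to be concentrated in this last point; the alternative, in its spirit matching the treatment of the case $s=k$, would be to write down explicit defining equations for $P_1(\mathbf n,r)$, combining $\operatorname{rank}(A_1)\le r$ with polynomial identities expressing that contracting $\mathcal A$ along an eigenvector of $A_1A_1^\tp$ produces a rank-$\le1$ tensor.
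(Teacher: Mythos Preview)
Your proposal has a genuine gap at precisely the point you yourself flag: the assertion $\overline{\mathcal G_r}\cap\{\text{PSD}\}=\mathcal G_r$ is, in your reformulation, the entire content of the lemma, and you do not prove it. The analogy with $\Sigma=\mathbb R^{N'}$ (where the spectral theorem settles everything) does not transfer to the Segre cone; a PSD matrix in $\overline{\mathcal G_r}$ need not obviously decompose as $\sum_{i=1}^r\mathbf w_i\mathbf w_i^\tp$ with each $\mathbf w_i$ a pure tensor, and ``pushing the decomposition through using the tensor-product factorization together with positivity of mode-wise slices'' is a hope, not an argument. Your reduction of $s=2$ to $s=1$ is also broken as written: the vectors $\mathbf a^{(1)}_i\otimes\mathbf a^{(2)}_i$ coming from a $P_1$-decomposition are \emph{not} eigenvectors of $M=A_{12}A_{12}^\tp$ in general, since if $\mathcal A=\sum_i\mathbf u_i\otimes\mathbf v_i\otimes\mathcal T_i'$ with only the $\mathbf u_i$ orthonormal then $M=\sum_{i,j}\langle\mathcal T_i',\mathcal T_j'\rangle\,(\mathbf u_i\otimes\mathbf v_i)(\mathbf u_j\otimes\mathbf v_j)^\tp$, which is diagonal in those vectors only when the $\mathcal T_i'$ happen to be orthogonal. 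The same error undermines your closing alternative (contracting $\mathcal A$ along an eigenvector of $A_1A_1^\tp$ need not produce a rank-one slice, for exactly the same reason).

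The paper's proof runs in the opposite direction: it reduces $s=1$ to $s=2$, and $s=2$ to the already-established $s=3$. For $s=1$ it exhibits $P_1(\mathbf n,r)$ as the image of a \emph{projective} variety under a morphism. One introduces an auxiliary factor $\mathbb R^r$, takes the subvariety $Z\cap P_2(\widehat{\mathbf n},r)\subseteq\mathbb R^{n_1}\otimes\cdots\otimes\mathbb R^{n_k}\otimes\mathbb R^r$ of tensors whose $r$ slices along the new factor are pairwise-orthogonal rank-one tensors (this uses the $s=2$ case), and contracts against $\mathbf e_1+\cdots+\mathbf e_r$ to land in $P_1(\mathbf n,r)$. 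Because this linear map kills only the origin on $Z\cap P_2(\widehat{\mathbf n},r)$, it descends to a morphism of real projective varieties; the image of a projective variety under a morphism is Zariski-closed, and taking the affine cone recovers $P_1(\mathbf n,r)$ as a variety. This device completely bypasses the Euclidean-versus-Zariski closure obstacle you isolate.
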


\begin{proof}
The conclusion for $s=1$ follows from the conclusion for $s=2$. In turn, the conclusion for $s=2$ follows from that for $s=3$, which is given by Lemma~\ref{lem:P_s>1}. As the rationale is the same, we only present the proof for $s=1$ by assuming the conclusion holds for $s=2$.

Let $\{\mathbf{e}_1,\dots, \mathbf{e}_r\}$ be a fixed orthonormal basis of $\mathbb{R}^r$. We first consider the set $Z$ consisting of tensors $\mathcal{T} \in \mathbb{R}^{n_1} \otimes \cdots \otimes \mathbb{R}^{n_k} \otimes \mathbb{R}^r$ satisfying
\[
\rank(\langle \mathcal{T}, \mathbf{e}_i \rangle) \le 1,\quad  \langle \langle \mathcal{T}, \mathbf{e}_i \rangle,\langle \mathcal{T}, \mathbf{e}_j \rangle \rangle = 0,\quad 1\le i < j \le r.
\]
We claim that $Z$ is a real algebraic variety. In fact, we may construct a polynomial map{
\begin{align*}
\psi : \mathbb{R}^{n_1} \otimes \cdots \otimes \mathbb{R}^{n_k} \otimes \mathbb{R}^r &\to \left( \mathbb{R}^{n_1} \otimes \cdots \otimes\mathbb{R}^{n_k} \right)^{\times r},\\
\T &\mapsto (\langle \mathcal{T}, \mathbf{e}_1 \rangle,\dots, \langle \mathcal{T}, \mathbf{e}_r \rangle).
\end{align*}}
On the one hand, it is straightforward to verify that $Z \subseteq \psi^{-1}(\Sigma_r)$ where $\Sigma_r$ is the real subvariety of $(\mathbb{R}^{n_1} \otimes \cdots \otimes \mathbb{R}^{n_k})^{\times r}$ consisting of all $r$-tuples $(\mathcal{T}_1,\dots, \mathcal{T}_r)$ such that $\rank (\mathcal{T}_i) \le 1$ and $\langle \mathcal{T}_i,\mathcal{T}_j \rangle = 0$ for all $1\le i < j \le r$. On the other hand, every $\mathcal{T}$ in $\mathbb{R}^{n_1} \otimes \cdots \otimes \mathbb{R}^{n_k} \otimes \mathbb{R}^r$ can be written as $\mathcal{T} = \sum_{i=1}^r \mathcal{T}_i \otimes \mathbf{e}_i$ for some tensors $\mathcal{T}_1, \dots, \mathcal{T}_r\in \mathbb{R}^{n_1} \otimes \cdots \otimes \mathbb{R}^{n_k}$, which are not necessarily of rank at most $1$. However, if $\mathcal{T}$ lies in $\psi^{-1}(\Sigma_r)$, then we may conclude that $\{ \mathcal{T}_i := \langle \mathcal{T}, \mathbf{e}_i \rangle, i =1,\dots, r \}$ must satisfy the desired property, which implies that $Z = \psi^{-1}(\Sigma_r)$ and this proves our claim.

Next we let $\widehat{\mathbf{n}}$ denote the sequence $(n_1,n_2,\dots, n_k,r)$.
We remark that for any $\mathcal{T} \in Z \bigcap P_2(\widehat{\mathbf{n}},r)$,\footnote{Here we identify $P_2(\widehat{\mathbf{n}},r)$ with $P_2(\tilde{\mathbf{n}},r)$ where $\tilde{\mathbf n}=(n_1,r,n_2,\dots,n_k)$.} there exist $s \le r$ uniquely determined orthogonal rank one tensors $\mathcal{T}_1,\dots, \mathcal{T}_s\in \mathbb{R}^{n_1} \otimes \cdots \otimes\mathbb{R}^{n_k}$ and indices $1\le j_1 < \dots <  j_s\le r$ such that $\mathcal{T} = \sum_{i=1}^s \mathcal{T}_i \otimes \mathbf{e}_{j_i}$. Existence is clear from the definition of $Z$. Suppose that there are other orthogonal rank one tensors $\mathcal{T}'_1,\dots, \mathcal{T}'_t$ together with indices $1\le j'_1 < \dots <  j'_t\le r$ such that $\mathcal{T} = \sum_{i=1}^t \mathcal{T}'_i \otimes \mathbf{e}_{j'_i}$. If $\{j_1,\dots, j_s\} \ne \{j'_1,\dots, j'_t\}$, we may assume without loss of generality that
\[
j_1 \in \{j_1,\dots, j_s\} \setminus \{j'_1,\dots, j'_t\}.
\]
Then $\mathcal{T}_1 = \langle \mathcal{T}, \mathbf{e}_{j_1} \rangle  = 0$ which contradicts the assumption $\rank (\mathcal{T}_1) = 1$. Therefore we have $s = t$ and $\{j_1,\dots, j_s\} = \{j'_1,\dots, j'_s\}$ so that (up to some re-ordering of indices)
\[
\sum_{i=1}^s \mathcal{T}_i \otimes \mathbf{e}_{j_i} = \mathcal{T} =  \sum_{i=1}^s \mathcal{T}'_i \otimes \mathbf{e}_{j_i},
\]
from which we further have
\[
\mathcal{T}_i = \langle \mathcal{T}, \mathbf{e}_{j_i} \rangle = \mathcal{T}'_i,\quad i=1,\dots, s.
\]

Let $\varphi:Z \bigcap P_2(\widehat{\mathbf{n}},r) \to P_1(\mathbf{n},r)\subseteq \mathbb{R}^{n_1} \otimes \cdots \otimes \mathbb{R}^{n_k}$ be the polynomial map defined by
\[
\varphi(\mathcal{T}) = \langle \mathcal{T}, \mathbf{e}_1 + \cdots + \mathbf{e}_r \rangle.
\]
{It is straightforward to verify that $\varphi$ is surjective onto its image $\im(\varphi)= P_1(\mathbf{n},r)$.}
If $\varphi(\mathcal{T}) = 0$, then we may conclude from the unique decomposition of $\mathcal{T}$ discussed in the last paragraph that $\mathcal{T} = 0$. Moreover, we also have $\lambda \mathcal{T} \in Z \bigcap P_2(\widehat{\mathbf{n}},r)$ (resp. $\lambda \mathcal{S} \in P_1(\mathbf{n},r)$) whenever $\lambda\in \mathbb{R}$ and $\mathcal{T} \in Z \bigcap P_2(\widehat{\mathbf{n}},r)$ (resp. $\mathcal{S} \in P_1(\mathbf{n},r)$). Therefore, $\varphi$ descends to a well-defined map
\[
\overline{\varphi}: \mathbb{P}_{\mathbb{R}} \left( Z \bigcap P_2(\widehat{\mathbf{n}},r) \right) \to \mathbb{P}_{\mathbb{R}} \left( \mathbb{R}^{n_1} \otimes \cdots \otimes \mathbb{R}^{n_k} \right),
\]
where $\mathbb{P}_{\mathbb{R}} (W)$ is a projectivization of an affine subvariety $W$ in $\mathbb{R}^N$. Since $Z \bigcap P_2(\widehat{\mathbf{n}},r)$ is an affine variety stable under scaling, $\mathbb{P}_{\mathbb{R}} \left( Z \bigcap P_2(\widehat{\mathbf{n}},r) \right)$ is a projective subvariety of $\mathbb{P}_{\mathbb{R}}(\mathbb{R}^{n_1} \otimes \cdots \otimes \mathbb{R}^{n_k} \otimes \mathbb{R}^r)$. This implies that $\im (\overline{\varphi}) = \mathbb{P}_{\mathbb{R}} \left(P_1(\mathbf{n},r) \right)$ is a closed subvariety of $\mathbb{P}_{\mathbb{R}} \left( \mathbb{R}^{n_1} \otimes \cdots \otimes \mathbb{R}^{n_k} \right)$ \cite[Theorem~1.10]{S-13}. Inasmuch as the set $P_1(\mathbf{n},r)$ is the affine cone over $\im (\overline{\varphi})$, $P_1(\mathbf{n},r)$ is a closed subvariety of $\mathbb{R}^{n_1} \otimes \cdots \otimes \mathbb{R}^{n_k}$.
\end{proof}

We summarize the above results.
\begin{theorem}\label{thm:P_s}
Let $k\ge 3$ and $s\ge 1$ be positive integers. For any $\mathbf{n} = (n_1,\dots, n_k)$ and {$r \le \min\{n_1,\dots, n_s\}$}, the set $P_s(\mathbf{n},r)$ is an irreducible real algebraic variety. Moreover if $s\ge 3$, then  $P_s(\mathbf{n},r)$ is defined by quadratic polynomials.
\end{theorem}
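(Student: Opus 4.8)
The plan is to obtain the two assertions ``real algebraic variety'' and ``defined by quadratic polynomials when $s\ge 3$'' directly from Lemmas~\ref{lem:P_s>1} and~\ref{lem:P_s=1}, so that the only remaining work is to prove irreducibility. For this I would realize $P_s(\mathbf n,r)$ as the image of an irreducible affine real variety under a polynomial map and invoke the standard principle that such an image has irreducible Zariski closure: if $\mathcal D\subseteq\mathbb R^M$ is an irreducible variety and $\Phi:\mathcal D\to\mathbb R^N$ is polynomial, then pullback of functions embeds $\mathbb R[x_1,\dots,x_N]/I(\overline{\Phi(\mathcal D)})$ into the integral domain $\mathbb R[\mathcal D]$, whence $I(\overline{\Phi(\mathcal D)})$ is prime and $\overline{\Phi(\mathcal D)}$ is irreducible; since $P_s(\mathbf n,r)$ is already Zariski closed by the two lemmas, it equals this closure once we exhibit it as such an image.

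The parametrization I would use sends $\big(U^{(1)},\dots,U^{(s)},(\mathbf v^{(i)}_j)_{s<i\le k,\ 1\le j\le r},(\lambda_j)_{1\le j\le r}\big)$ to $\sum_{j=1}^r\lambda_j\,\mathbf u^{(1)}_j\otimes\dots\otimes\mathbf u^{(s)}_j\otimes\mathbf v^{(s+1)}_j\otimes\dots\otimes\mathbf v^{(k)}_j$, where $\mathbf u^{(i)}_j$ is the $j$-th column of $U^{(i)}$. The difficulty is that the obvious domain $\prod_{i=1}^s\V(r,n_i)\times\prod_{i>s}(\mathbb R^{n_i})^r\times\mathbb R^r$ need not be irreducible: when $n_i=r$ the factor $\V(r,n_i)=\O(n_i)$ splits into two components. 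To repair this I would replace, for every $i\le s$ with $n_i=r$, the factor $\V(r,n_i)$ by the special orthogonal group $\{Q\in\O(r):\det Q=1\}$, calling the result $\mathcal D$; then $\mathcal D$ is a product of copies of $\operatorname{SO}(r)$ (a connected algebraic group, hence irreducible), of Stiefel manifolds $\V(r,n_i)$ with $n_i>r$ (irreducible, being homogeneous spaces of the connected group $\operatorname{SO}(n_i)$), and of Euclidean spaces, so $\mathcal D$ is irreducible. The crucial point is that $\Phi$ restricted to $\mathcal D$ is still surjective onto $P_s(\mathbf n,r)$: given $\mathcal A\in P_s(\mathbf n,r)$ with a decomposition~\eqref{eq:low-rank-orth}, for each $i\le s$ with $n_i=r$ and $\det A^{(i)}=-1$ one flips the sign of the first column of $A^{(i)}$, landing it in $\operatorname{SO}(r)$, and compensates for the parity of the number of such flips by flipping, inside the $j=1$ summand only, a single free factor $\mathbf v^{(s+1)}_1$ (if $k>s$) or the scalar $\lambda_1$ (if $k=s$); this leaves $\mathcal A$ unchanged and produces a preimage in $\mathcal D$.

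Putting the pieces together gives $P_s(\mathbf n,r)=\Phi(\mathcal D)=\overline{\Phi(\mathcal D)}$, an irreducible real algebraic variety, and the quadratic-polynomial claim for $s\ge 3$ is then just Lemma~\ref{lem:P_s>1}. The step I expect to require the most care is the verification in the previous paragraph that the sign-flip correction simultaneously moves every orthonormal factor into the correct component and leaves the tensor intact, with the two exhaustive cases $k>s$ and $k=s$ treated separately; everything else is either quoted (Zariski closedness, the quadratic description) or routine (irreducibility of $\operatorname{SO}$ and of Stiefel manifolds, and the image-of-an-irreducible-variety principle). An alternative that avoids introducing $\operatorname{SO}$ is to keep the reducible domain $\prod_i\V(r,n_i)\times\cdots$ and show directly, by the same sign-flip argument, that each of its finitely many irreducible components has the same image under $\Phi$; the two approaches are interchangeable.
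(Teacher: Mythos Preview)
Your proposal is correct and is essentially the same argument as the paper's own proof: the paper likewise exhibits $P_s(\mathbf n,r)$ as the image of the polynomial parametrization $\varphi_{\mathbf n,r}$, replaces $\V(r,n_i)=\O(r)$ by $\operatorname{SO}(r)$ whenever $n_i=r$ to secure an irreducible domain (this is handled in a one-line footnote rather than spelled out), and then cites Lemmas~\ref{lem:P_s>1} and~\ref{lem:P_s=1} for the remaining assertions. Your write-up is more explicit than the paper about the sign-flip surjectivity check and about why the individual factors are irreducible, and you use unrestricted $\mathbb R^{n_i}$ in place of $\B(r,n_i)$ for $i>s$, but these are cosmetic differences.
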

\begin{proof}
The irreducibility of $P_s(\mathbf{n},r)$ follows from the fact that $P_s(\mathbf{n},r) = \im (\varphi_{\mathbf{n},r})$ where $\varphi_{\mathbf{n},r}$ is the polynomial map defined by
\begin{equation}\label{eq:tensor-map}
\varphi_{\mathbf n,r} : \V(r,n_1)\times\dots\times\V(r,n_s)\times \operatorname{B}(r,n_{s+1})\times\dots\times\operatorname{B}(r,n_k)\times\mathbb R^r\rightarrow \mathbb R^{n_1}\otimes\dots\otimes\mathbb R^{n_k}
\end{equation}
and $\V(r,n_1)\times\dots\times\V(r,n_s)\times \operatorname{B}(r,n_{s+1})\times\dots\times\operatorname{B}(r,n_k)\times\mathbb R^r$ is an irreducible\footnote{Notice that when $n_j = r$, $\V(r,n_j) = \O(r)$ is disconnected. In this case, we simply replace $\O(r)$ by its component $\operatorname{SO}(r)$, the special orthogonal group in dimension $r$. This does not change the image of $\varphi_{\mathbf{n},r}$.} real algebraic variety. Other assertions directly follow from Lemmas~\ref{lem:P_s>1} and \ref{lem:P_s=1}.
\end{proof}
}

\begin{corollary}[{Zariski Open Subsets}]\label{cor:Q_s}
Let $r\leq\min\{n_1,\dots,n_k\}$. The set $P_s(\mathbf{n},r) \setminus R_s(\mathbf{n},r)$ is a real algebraic variety and hence $R_s(\mathbf{n},r)$ is a Zariski open dense subset of $P_s(\mathbf{n},r) $. In particular, $Q_s(\mathbf{n},r)$ contains a Zariski open dense subset of  $P_s(\mathbf{n},r) $. Moreover, if $k>s \ge 3$, then the real algebraic set $P_s(\mathbf{n},r) \setminus R_s(\mathbf{n},r)$ is defined by polynomials of degrees $(r+1)^{k-s}$ and $2$.
\end{corollary}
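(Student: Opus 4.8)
The plan is to obtain a determinantal description of $P_s(\mathbf n,r)\setminus R_s(\mathbf n,r)$, from which every assertion follows.

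\textbf{Step 1 (a rank criterion for $R_s$).} For $\mathcal A\in P_s(\mathbf n,r)$ write $A_j\in\mathbb R^{n_j\times\prod_{l\ne j}n_l}$ for the mode-$j$ flattening. I claim that for $k>s$
\[
\mathcal A\in R_s(\mathbf n,r)\quad\Longleftrightarrow\quad \rank(A_j)=r\ \text{ for all }j\in\{s+1,\dots,k\},
\]
while for $k=s$ one has $P_s(\mathbf n,r)\setminus R_s(\mathbf n,r)=P_s(\mathbf n,r-1)$. To prove the equivalence, take a decomposition $\mathcal A=\sum_{i=1}^{\rho}\sigma_i\,\mathbf a_i^{(1)}\otimes\dots\otimes\mathbf a_i^{(k)}$ as in \eqref{eq:low-rank-orth} with all $\sigma_i\ne 0$ and $\rho\le r$, and for $j>s$ factor $A_j=A^{(j)}\operatorname{diag}(\sigma_1,\dots,\sigma_\rho)K_j^{\tp}$, where $K_j$ is the Khatri--Rao (column-wise Kronecker) product of the factor matrices $A^{(l)}$, $l\ne j$. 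Since $\{l\ne j\}\supseteq\{1,\dots,s\}$, the sub-product $A^{(1)}\odot\dots\odot A^{(s)}$ has orthonormal columns, so the Khatri--Rao rank bound $\rank(B\odot C)\ge\rank B+\rank C-1$ forces $K_j$ to have full column rank $\rho$; as $\operatorname{diag}(\sigma_1,\dots,\sigma_\rho)$ is invertible, $\rank(A_j)=\rank(A^{(j)})$. Hence $\rank(A_j)=r$ for every $j>s$ forces $\rho=r$ (all $\sigma_i\ne 0$) and every $A^{(j)}$, $j>s$, of full rank $r$, i.e.\ $\mathcal A\in R_s(\mathbf n,r)$; the converse is the same computation read backwards. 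The $k=s$ case is immediate from $Q_s(\mathbf n,r)=P_s(\mathbf n,r)$ together with $Q_s(\mathbf n,r)=\bigsqcup_{t=0}^r R_s(\mathbf n,t)$.

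\textbf{Step 2 (variety structure, openness, density).} By Step 1, for $k>s$,
\[
P_s(\mathbf n,r)\setminus R_s(\mathbf n,r)=P_s(\mathbf n,r)\cap\bigcup_{j=s+1}^{k}\bigl\{\mathcal A:\ \text{all }r\times r\text{ minors of }A_j\text{ vanish}\bigr\},
\]
a finite union of intersections of the real algebraic variety $P_s(\mathbf n,r)$ (Theorem~\ref{thm:P_s}) with determinantal varieties, hence a real algebraic variety; for $k=s$ it equals $P_s(\mathbf n,r-1)$, again a variety. Thus $R_s(\mathbf n,r)$ is Zariski open in $P_s(\mathbf n,r)$, and it is nonempty: the tensor $\sum_{i=1}^r\mathbf e_i^{(1)}\otimes\dots\otimes\mathbf e_i^{(k)}$, with $\mathbf e_i^{(l)}\in\mathbb R^{n_l}$ the $i$-th standard basis vector, lies in it (this uses $r\le\min_l n_l$). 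As $P_s(\mathbf n,r)$ is irreducible (Theorem~\ref{thm:P_s}), a nonempty Zariski open subset is Zariski dense; it is moreover Euclidean dense, since $R_s(\mathbf n,r)=\varphi_{\mathbf n,r}(D)$ where $D$ is the locus in the parameter space on which all $\sigma_i\ne 0$ and $A^{(s+1)},\dots,A^{(k)}$ have full rank — a Zariski open dense, hence Euclidean dense, subset of the irreducible parameter variety — and $\varphi_{\mathbf n,r}$ is continuous with image $P_s(\mathbf n,r)$. Since $R_s(\mathbf n,r)\subseteq Q_s(\mathbf n,r)$, the same set witnesses the second assertion.

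\textbf{Step 3 (degrees, and the main obstacle).} Assume $k>s\ge 3$. The degree-$2$ polynomials are the quadrics cutting out $P_s(\mathbf n,r)$, furnished by Theorem~\ref{thm:P_s} through the proof of Lemma~\ref{lem:P_s>1} and \cite[Proposition~31]{BDHR17}. The remaining polynomials must define $\bigcup_{j=s+1}^{k}\{\rank(A_j)\le r-1\}$; presenting this union as the zero set of a product of $k-s$ determinantal ideals makes the degrees multiply, and I expect the exponent $(r+1)^{k-s}$ to arise from an induction on the number $k-s$ of non-orthogonal modes in which these modes are peeled off one at a time as in Lemma~\ref{lemma:induction}, each step multiplying the degree by $r+1$. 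The genuinely delicate point is precisely this bookkeeping — identifying the exact generating set so that its degree is $(r+1)^{k-s}$ rather than the naive $r(k-s)$ coming from the $r\times r$ minors of the $A_j$, which presumably requires working on $P_s(\mathbf n,r)$ with an auxiliary matrix of linear forms whose \emph{generic} rank on $P_s(\mathbf n,r)$ equals $r+1$. Everything else — Steps 1 and 2, hence the variety structure, Zariski-openness, and both density statements — is routine.
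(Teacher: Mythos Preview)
Your Steps~1 and~2 are correct and coincide with the paper's argument in substance. The paper phrases Step~1 via subspace varieties rather than the Khatri--Rao factorization: it shows $P_s(\mathbf n,r)\cap\operatorname{Sub}_{\mathbf t^{(j)}}(\mathbf n)\subseteq P_s(\mathbf n,r)\setminus R_s(\mathbf n,r)$ by contracting a decomposition of $\mathcal T$ against $\mathbf a_i^{(1)}\otimes\dots\otimes\mathbf a_i^{(s)}$ (using orthonormality) to isolate $\mathbf a_i^{(j)}$ and see that it lands in the $(r-1)$-dimensional subspace $\mathbb E_j$. Your observation that the columns of $K_j$ are orthogonal (hence $\rank A_j=\rank A^{(j)}$) is a clean equivalent.

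Where you part ways with the paper is Step~3, and the resolution is simpler than you conjecture. The paper does not induct on $k-s$ via Lemma~\ref{lemma:induction}, and there is no auxiliary matrix of linear forms. Instead of cutting out each piece by the degree-$r$ minors of a single flattening $A_j$, the paper uses the full subspace variety
\[
\operatorname{Sub}_{\mathbf t^{(j)}}(\mathbf n)=\bigl\{\mathcal T:\mathcal T\in\mathbb E_1\otimes\dots\otimes\mathbb E_k,\ \dim\mathbb E_l=r\ (l\ne j),\ \dim\mathbb E_j=r-1\bigr\},
\]
a classical object (Landsberg) cut out by minors of size $t_l+1$ of the mode-$l$ flattenings, hence by polynomials of degree~$r+1$. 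On $P_s(\mathbf n,r)$ this set agrees with your $\{\rank A_j\le r-1\}$, so it equally describes $P_s\setminus R_s$; but the ambient defining equations now appear in degree $r+1$, which is exactly the ingredient you were missing. The paper then applies the ideal identities $V(I)\cup V(J)=V(IJ)$ to the $(k-s)$-fold union and $V(I)\cap V(J)=V(I+J)$ to the intersection with the quadrics of $P_s(\mathbf n,r)$ supplied by Lemma~\ref{lem:P_s>1}, and reads off the stated degrees from this product--sum arithmetic. So your instinct that the base of the exponent must be $r+1$ rather than $r$ is right; it enters through the subspace-variety description, not through any construction tailored to $P_s$.
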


\begin{proof}
We only need to prove that $P_s(\mathbf{n},r) \setminus R_s(\mathbf{n},r)$ is a real algebraic variety with the claimed defining equations. The case $k=s$ follows from Lemma~\ref{lem:P_s>1} and \cite{HY-19}. In the following, we assume $k>s$.
Let $\mathbf{t} := (t_1,\dots, t_k)$ be a sequence of positive integers and let $\operatorname{Sub}_{\mathbf{t}}(\mathbf{n})$ be the subset of $\mathbb{R}^{n_1} \otimes \cdots \otimes \mathbb{R}^{n_k}$ consisting of tensors $\T$ such that
\[
\T \in \mathbb{E}_1 \otimes \cdots \otimes \mathbb{E}_k,
\]
where $\mathbb{E}_j \subseteq \mathbb{R}^{n_j}$ is a subspace and $\dim \mathbb{E}_j = t_j$ for $j=1,\dots, k$. It is a well-known fact \cite{L-12} that $\operatorname{Sub}_{\mathbf{t}}(\mathbf{n})$ is a real algebraic variety defined by polynomials of degrees $t_1+1,\dots, t_k +1$.

Now for each $j=s+1,\dots, k$, we take $\mathbf{t}^{(j)}_i \in \mathbb{N}^k$ to be the sequence whose entries are all equal to $r$, except the $j$-th one, which is equal to $(r-1)$. Then by definition we have
\[
P_s(\mathbf{n},r) \setminus R_s(\mathbf{n},r) \subseteq \left( \bigcup_{j=s+1}^k \operatorname{Sub}_{\mathbf{t}^{(j)}}(\mathbf{n}) \right) \bigcap P_s(\mathbf{n},r).
\]
We claim that the reverse inclusion also holds. Indeed, an element $\T$ in $ P_s(\mathbf{n},r)$ can be decomposed as
\[
\T = \sum_{i=1}^r \lambda_i\mathbf{a}^{(1)}_i \otimes \cdots \otimes \mathbf{a}^{(s)}_i \otimes \mathbf{a}_i^{(s+1)} \otimes \cdots \otimes \mathbf{a}^{(k)}_i.
\]
If some $\lambda_i=0$, then definitely $\T\notin R_s(\mathbf{n},r)$. Thus, we can assume that $\lambda_i\neq 0$ for all $i$.
If $\T$ also lies in $ \bigcup_{j=s+1}^k \operatorname{Sub}_{\mathbf{t}^{(j)}}(\mathbf{n})$ then it lies in some $\operatorname{Sub}_{\mathbf{t}^{(j_0)}}(\mathbf{n})$ for some $j_0$. For simplicity, we may assume that $j_0 = s+1$. By definition, we must have
\[
\mathbf{a}^{(s+1)}_i =\frac{1}{\lambda_i}  \langle \mathcal{T}, \mathbf{a}^{(1)}_i \otimes \cdots \otimes \mathbf{a}^{(s)}_i \otimes \mathbf{a}_i^{(s+2)} \otimes \cdots \otimes \mathbf{a}^{(k)}_i \rangle \in \mathbb{E}_{s+1}
\]
for some subspace $\mathbb{E}_{s+1} \subseteq \mathbb{R}^{n_{s+1}}$ of dimension at most $(r-1)$. Hence the $(s+1)$-th factor matrix
\[
A^{(s+1)} =\begin{bmatrix}
\mathbf a^{(s+1)}_1,\dots,\mathbf a^{(s+1)}_r
\end{bmatrix}
\]
of $\T$ cannot have the full column rank and this implies again $\T\not\in R_s(\mathbf{n},r)$. Thus, $R_s(\mathbf{n},r)$ is Zariski open dense subset of $P_s(\mathbf{n},r) $.

Regarding degrees of polynomials defining $P_s(\mathbf{n},r) \setminus R_s(\mathbf{n},r)$ when $s\ge 3$, we recall that
\begin{equation}\label{cor:Q_s:eqn}
V(I) \bigcup V(J) = V(IJ),\quad V(I) \bigcap V(J) = V(I + J),
\end{equation}
where $I,J$ are ideals in a polynomials ring. Lemma~\ref{lem:P_s>1} and the fact that $\operatorname{Sub}_{\mathbf{t}^{(j)}}(\mathbf{n})$ is defined by polynomials of degree $(r+1)$, together with \eqref{cor:Q_s:eqn},  supply $P_s(\mathbf{n},r) \setminus R_s(\mathbf{n},r)$ with a set of defining equations of the desired degrees.
\end{proof}

We conclude this subsection by a remark: Corollary~\ref{cor:Q_s} implies that, roughly speaking,  $R_s(\mathbf{n},r)$ captures almost all points of $P_s(\mathbf{n},r)$, which is pictorially indicated in Figure~\ref{fig:nested structure}. It turns out that due to this density of $R_s(\mathbf{n},r)$ in $P_s(\mathbf{n},r)$, for almost all choices of $\mathcal{A}$, it is reasonable to analyse Algorithm~\ref{algo} in $R_s(\mathbf{n},r)$, which has a better differential geometric property than $P_s(\mathbf{n},r)$.

\subsection{Differential geometry of $Q_s(\mathbf{n},r)$ and $R_s(\mathbf{n},r)$}\label{sec:geometry} Let $k,n_1,\dots,n_k, r\le \min \{n_1,\dots,n_k\}$ be positive integers and let $\mathbf{n} = (n_1,\dots,n_k)$. We recall from \eqref{eqn:inclusion PQR} that
\[
R_s(\mathbf{n},r) \subsetneq Q_s(\mathbf{n},r) \subseteq P_s(\mathbf{n},r),\quad Q_s(\mathbf{n},r) = \bigsqcup_{t=0}^{{r}} R_{{s}}(\mathbf{n},{t}).
\]
The structure of $P_s(\mathbf n,r)$ is complicated. For instance, $P_s(\mathbf n,r)$ may contain tensors which are not identifiable and $P_s(\mathbf n,r)$ is not even smooth. Nonetheless, subsets $Q_s(\mathbf{n},r)$ and $R_s(\mathbf n, r)$ of $P_s(\mathbf n,r)$ have sufficiently nice geometric structures, which are essential to the analysis of Algorithm~\ref{algo}. This subsection is devoted to obtain these refined geometric structures. To do this, we first fix some notations. We denote by $\mathbb R_{\ast}$ (resp. $\mathbb R_+$, $\mathbb R_{++}$) the set of nonzero (reps. nonnegative, positive) real numbers and we define the following sets:
\begin{align}
V_{\mathbf n,r}& \coloneqq \V(r,n_1)\times\dots\times\V(r,n_s)\times \operatorname{B}(r,n_{s+1})\times\dots\times\operatorname{B}(r,n_k)\times\mathbb R^r, \label{eq:set-vnr} \\
W_{\mathbf n,r} & \coloneqq \V(r,n_1)\times\dots\times\V(r,n_s)\times \OB(r,n_{s+1})\times\dots\times\OB(r,n_k)\times\mathbb R^r, \label{eq:set-wnr} \\
W_{\mathbf n,r,\ast} & \coloneqq \V(r,n_1)\times\dots\times\V(r,n_s)\times \operatorname{OB}(r,n_{s+1})\times\dots\times\operatorname{OB}(r,n_k)\times\mathbb R_*^r,  \label{eq:set-unr} \\
W_{\mathbf n,r,+} &\coloneqq \V(r,n_1)\times\dots\times\V(r,n_s)\times \OB(r,n_{s+1})\times\dots\times\OB(r,n_k)\times\mathbb R_+^r, \label{eq:set-wnrp} \\
W_{\mathbf n,r,++} &\coloneqq \V(r,n_1)\times\dots\times\V(r,n_s)\times \operatorname{OB}(r,n_{s+1})\times\dots\times\operatorname{OB}(r,n_k)\times\mathbb R_{++}^r. \label{eq:set-unrp}
\end{align}
Here for given positive integers $r\le n$, $\V(r,n)$ is the Stiefel manifold defined in \eqref{eq:stf}, $\B(r,n)$ is the manifold defined in  \eqref{eq:fixed-norm} and $\OB(r,n)$ is the Oblique manifold defined in \eqref{eq:oblique}.

Let $D_r$
be the group of $r\times r$ diagonal matrices with $\pm 1$ diagonal elements and $S_r$ be the permutation group on $r$ elements. We define
\[
D_{r,k} \coloneqq \bigg\lbrace (E_1,\dots,E_k)\in D_r\times\dots\times D_r\colon \prod_{i=1}^k E_i=I_r \bigg\rbrace.
\]
For any positive integers $r \le n$, the group $S_r$ naturally acts on $\mathbb{R}^{n \times r}$ by permuting columns. More precisely, we have
\begin{align*}
 \mathbb{R}^{n \times r} \times S_r   &\to \mathbb{R}^{n \times r},\\
 (U,\sigma) &\mapsto U P_{\sigma},
\end{align*}
where $P_\sigma$ is the $r\times r$ permutation matrix determined by $\sigma\in S_r$. In particular, the action of $S_r$ on $\mathbb{R}^{n \times r}$ induces an action on $\V(r,n)$ and $\B(r,n)$ respectively.

{We consider the group homomorphism $\eta:S_r \to \Aut(D_{r,k})$ defined by
\[
\eta(\sigma) (E_1,\dots, E_k) \coloneqq (P_{\sigma} E_1 P_{\sigma^{-1}},\dots, P_{\sigma} E_k P_{\sigma^{-1}}),
\]
where $\Aut(D_{r,k})$ is the group of automorphisms of $D_{r,k}$. This homomorphism defines the semi-direct product $D_{r,k} \rtimes_{\eta} S_r$  \cite{L-02}.

It follows that the semi-direct product ${D_{r,k}} \rtimes_{\eta} S_r$ acts on $V_{\mathbf{n},r}$ via
\begin{align*}
V_{\mathbf{n},r} \times \left( {D_{r,k}}  \rtimes_{\eta} S_r \right)  &\to V_{\mathbf{n},r} \\
( (U,\lambda), (E,\sigma) ) &\mapsto (U,\lambda)\cdot (E,\sigma) \coloneqq  (U^{(1)}E_1P_{\sigma},\dots,U^{(k)}E_kP_{\sigma}, \lambda_{\sigma(1)},\dots, \lambda_{\sigma(r)}),
\end{align*}
where $(U,\lambda)= \left( U^{(1)},\dots, U^{(k)}, (\lambda_1,\dots,\lambda_r) \right) \in V_{\mathbf{n},r}$ and $(E,\sigma) = \left( E_1,\dots, E_k,\sigma \right) \in  {D_{r,k}} \rtimes_{\eta} S_r$.
}
\begin{proposition}\label{prop:smooth}
Let ${k\geq 3},n_1,\dots,n_k, r\le \min \{n_1,\dots,n_k\}, 1\le s \le k$ be positive integers and let $\mathbf{n} = (n_1,\dots,n_k)$. The map
\begin{align*}
\varphi_{\mathbf{n},r}: V_{\mathbf n,r} &\to {P_s(\mathbf{n},r)\subsetneq\mathbb{R}^{n_1} \otimes \cdots \otimes \mathbb{R}^{n_k}},\\
 (U^{(1)},\dots, U^{(k)}, (\lambda_1,\dots,\lambda_r)) &\mapsto   (U^{(1)},\dots,U^{(k)})\cdot \operatorname{diag}_k(\lambda_1,\dots, \lambda_r)
\end{align*}
is a smooth surjective map and we have the following:
\begin{enumerate}[label=(\roman*)]
\item {The group ${D_{r,k}}  \rtimes_{\eta}  S_r$ acts on $V_{\mathbf{n},r}$ and $\varphi_{\mathbf{n},r}$ is $D_{r,k}\rtimes_{\eta} S_r$-invariant.} \label{prop:smooth:item1}
\item Each tensor in $Q_s(\mathbf n,r)$ is identifiable and we have $\varphi_{\mathbf{n},r}^{-1} (Q_s(\mathbf n, r)) = W_{\mathbf{n},r}$ and $\varphi_{\mathbf{n},r}^{-1} (R_s(\mathbf n, r)) = W_{\mathbf n,r,\ast}$. \label{prop:smooth:item2}
\item $W_{\mathbf{n},r}$, $W_{\mathbf n,r,\ast}$ and  $W_{\mathbf n,r,++}$ are open submanifolds of $V_{\mathbf{n},r}$; $W_{\mathbf n,r,+}$ is a submanifold of $V_{\mathbf{n},r}$ with boundary. \label{prop:smooth:item3}
\item $W_{\mathbf{n},r}$, $W_{\mathbf n,r,\ast}$, $W_{\mathbf n,r,+}$, $W_{\mathbf n,r,++}$ are invariant under the induced action of $D_{r,k}\rtimes_{\eta} S_r$ on $V_{\mathbf{n},r}$ and in particular, $W_{\mathbf n,r,++}$ is a principal $D_{r,k}\rtimes_{\eta} S_r$-bundle on $R_s(\mathbf{n},r)$, i.e.,
\[
W_{\mathbf n,r,++}/ \left(D_{r,k}\rtimes_{\eta} S_r \right) \simeq R_s(\mathbf{n},r).
\]
\label{prop:smooth:item4}
\item $R_s(\mathbf n, r)$ is a smooth manifold of dimension
\[
d_{\mathbf{n},r} \coloneqq r\left( \sum_{i=1}^k n_i-{\frac{s(r-1)}{2}-k}+1\right).
\]
\label{prop:smooth:item5}
\item $Q_s(\mathbf{n},r) = \bigsqcup_{t = 0}^r R_s(\mathbf{n},t)$ is a dense subset of $P_s(\mathbf n,r)$ and $Q_s(\mathbf{n},r-1) = \bigsqcup_{t = 0}^{r-1} R_s(\mathbf{n},t)$ is the singular locus of $Q_s(\mathbf{n},r)$. \label{prop:smooth:item6}
\end{enumerate}
\end{proposition}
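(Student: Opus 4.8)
The plan is to reduce everything to the geometry of the parametrisation $\varphi_{\mathbf n,r}$. Smoothness of $\varphi_{\mathbf n,r}$ is immediate from the polynomial formula \eqref{eq:matrix-tensor1}, and surjectivity onto $P_s(\mathbf n,r)$ is the definition of that set together with Theorem~\ref{thm:P_s}. For item~(i) I would first verify that $(U,\lambda)\cdot(E,\sigma)$ is a well-defined action of $D_{r,k}\rtimes_\eta S_r$ on $V_{\mathbf n,r}$: right multiplication of $U^{(i)}$ by the orthogonal matrix $E_iP_\sigma$ preserves orthonormality of the first $s$ factor matrices and unit norm of the columns of the remaining ones, while associativity is exactly the compatibility encoded by $\eta$ in the semidirect product. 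Invariance of $\varphi_{\mathbf n,r}$ is then a one-line check: in the $j$-th rank-one summand the signs from $E_1,\dots,E_k$ multiply to the $j$-th diagonal entry of $\prod_iE_i=I_r$ and hence cancel, and $P_\sigma$ only relabels the summands.

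Next, item~(ii). Identifiability of every tensor in $Q_s(\mathbf n,r)$ is a direct application of Proposition~\ref{prop:unique} (automatic when $s\ge3$, and otherwise the defining full-rank condition on $A^{(s+1)},\dots,A^{(k)}$ is precisely the hypothesis of that proposition; here $k\ge3$ is used). The preimage identities then follow from identifiability: a point of $W_{\mathbf n,r,\ast}$ maps to a tensor whose displayed decomposition is independently partially orthogonal with all $r$ coefficients nonzero, of rank exactly $r$ by Proposition~\ref{prop:rank} when $s\ge2$ and by Proposition~\ref{prop:matrix rank} plus the fact that a Khatri--Rao product of full-column-rank matrices has full column rank when $s=1$; conversely, if $\varphi_{\mathbf n,r}(U,\lambda)\in R_s(\mathbf n,r)$ then essential uniqueness forces the $(U,\lambda)$-decomposition to agree, up to a permutation and $\pm1$ column rescalings, with an independently partially orthogonal rank decomposition, so $\lambda\in\mathbb R_*^r$ and $U^{(s+1)},\dots,U^{(k)}\in\OB(r,n_i)$; stratifying by the number of nonzero $\lambda_j$ and using $Q_s(\mathbf n,r)=\bigsqcup_{t=0}^{r}R_s(\mathbf n,t)$ yields the statement for $Q_s$. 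Item~(iii) is purely topological ($\OB(r,n_i)$ is the non-vanishing locus of $\det(A^\tp A)$ in $\B(r,n_i)$, hence open; $\mathbb R_*^r$ and $\mathbb R_{++}^r$ are open and $\mathbb R_+^r$ is a manifold with boundary), and item~(iv)'s invariance assertion holds since $E_iP_\sigma$ preserves rank and column norms and coordinate permutations preserve $\mathbb R_*^r,\mathbb R_+^r,\mathbb R_{++}^r$.

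The technical heart is items~(iv)--(v). One shows the finite group $D_{r,k}\rtimes_\eta S_r$ acts \emph{freely} on $W_{\mathbf n,r,++}$: from $U^{(i)}E_iP_\sigma=U^{(i)}$ and the left-invertibility of each $U^{(i)}$ (orthonormal for $i\le s$, full column rank for $i>s$) one gets $E_iP_\sigma=I_r$, so $P_\sigma$ is a diagonal permutation matrix, whence $\sigma=\mathrm{id}$ and all $E_i=I_r$. A free action of a finite group on a manifold has a smooth quotient with the quotient map a covering; moreover, by item~(ii) and identifiability — the scalar ambiguity $\prod_i\alpha^{(i)}_j=\pm1$ being pinned to $+1$ once $\lambda_j>0$ — the fibers of $\varphi_{\mathbf n,r}|_{W_{\mathbf n,r,++}}$ are exactly the $D_{r,k}\rtimes_\eta S_r$-orbits, so $\varphi_{\mathbf n,r}$ descends to a smooth bijection $\overline\varphi\colon W_{\mathbf n,r,++}/(D_{r,k}\rtimes_\eta S_r)\to R_s(\mathbf n,r)$. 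The main obstacle is showing $\overline\varphi$ is a local diffeomorphism; by Lemma~\ref{lem:local} this amounts to proving the differential of $\varphi_{\mathbf n,r}$ is injective at every point of $W_{\mathbf n,r,++}$, i.e.\ that a tangent vector with
\[
\sum_j\dot\lambda_j\,\mathbf u^{(1)}_j\otimes\cdots\otimes\mathbf u^{(k)}_j+\sum_j\lambda_j\sum_i\mathbf u^{(1)}_j\otimes\cdots\otimes\dot{\mathbf u}^{(i)}_j\otimes\cdots\otimes\mathbf u^{(k)}_j=0
\]
must vanish; this is obtained by contracting the identity against the rank-one factors and using both $\lambda_j\neq0$ and the full column rank of $U^{(s+1)},\dots,U^{(k)}$ (which is exactly where independence of the decomposition enters, and where the argument would break down on $P_s(\mathbf n,r)\setminus Q_s(\mathbf n,r)$). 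Granting this, $R_s(\mathbf n,r)$ is a smooth manifold diffeomorphic to $W_{\mathbf n,r,++}/(D_{r,k}\rtimes_\eta S_r)$, so $W_{\mathbf n,r,++}$ is a principal $D_{r,k}\rtimes_\eta S_r$-bundle over it, and a direct count
\[
\dim R_s(\mathbf n,r)=\dim W_{\mathbf n,r,++}=\sum_{i=1}^{s}\Bigl(rn_i-\tfrac{r(r+1)}{2}\Bigr)+\sum_{i=s+1}^{k}r(n_i-1)+r=d_{\mathbf n,r}.
\]

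Finally, item~(vi). Density of $Q_s(\mathbf n,r)\supseteq R_s(\mathbf n,r)$ in $P_s(\mathbf n,r)$ follows from Corollary~\ref{cor:Q_s}, since a Zariski open dense subset of an irreducible variety is Euclidean dense; the same corollary gives that $Q_s(\mathbf n,r-1)=Q_s(\mathbf n,r)\setminus R_s(\mathbf n,r)$ is closed in $Q_s(\mathbf n,r)$, so $R_s(\mathbf n,r)$ is an open dense smooth submanifold and lies in the smooth locus of $Q_s(\mathbf n,r)$. Conversely, a point of $Q_s(\mathbf n,r-1)$ lies in some $R_s(\mathbf n,t)$ with $t<r$, and in every neighbourhood it has points of the dense set $R_s(\mathbf n,r)$ whose local dimension $d_{\mathbf n,r}$ strictly exceeds $d_{\mathbf n,r-1}$ — indeed $d_{\mathbf n,r}-d_{\mathbf n,r-1}=\sum_{i=1}^{k}n_i-s(r-1)-k+1\ge(k-s)(r-1)+1\ge1$ — so $Q_s(\mathbf n,r)$ is not locally a manifold there, and $Q_s(\mathbf n,r-1)$ is exactly its singular locus. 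I expect the injectivity of $d\varphi_{\mathbf n,r}$ on $W_{\mathbf n,r,++}$ to be the only genuinely delicate step; everything else is bookkeeping with identifiability, openness, and dimension counts.
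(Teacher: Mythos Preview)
Your treatment of items (i)--(v) is essentially the paper's, with one noteworthy difference: for (iv)--(v) the paper simply asserts that freeness and transitivity on fibres make $W_{\mathbf n,r,++}\to R_s(\mathbf n,r)$ a principal bundle and then reads off the dimension from the quotient, whereas you correctly isolate the step that actually requires work --- the injectivity of $d\varphi_{\mathbf n,r}$ on $W_{\mathbf n,r,++}$ --- and invoke Lemma~\ref{lem:local}. The paper in fact establishes exactly this injectivity later (Corollary~\ref{cor:basis dphi 1} and its analogues), so your route and the paper's are complementary rather than divergent.

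Your argument for the singular locus in (vi), however, has a genuine gap. You argue that near a point $\mathcal B\in R_s(\mathbf n,t)$ with $t<r$ there are points of $R_s(\mathbf n,r)$, and that $d_{\mathbf n,r}>d_{\mathbf n,r-1}\ge d_{\mathbf n,t}$, concluding that $Q_s(\mathbf n,r)$ ``is not locally a manifold there''. But a strict dimension gap between a set and a closed stratum inside it does \emph{not} force singularity: the origin in $\mathbb R^n$ is a zero-dimensional stratum of a perfectly smooth $n$-manifold. What must be shown is that the tangent \emph{cone} of $Q_s(\mathbf n,r)$ at $\mathcal B$ is too large to be a $d_{\mathbf n,r}$-dimensional linear space. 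The paper does exactly this: taking $\mathcal B\in R_s(\mathbf n,r-1)$ for concreteness, it constructs curves $\mathcal B(t)=\mathcal B_0(t)+\mathcal B_1(t)$ with $\mathcal B_0(t)\in R_s(\mathbf n,r-1)$ and $\mathcal B_1(t)$ an independent rank-one term vanishing at $t=0$, and observes that $\mathcal B_1'(0)$ can be made to sweep out all of $\operatorname T_{Q_s(\mathbf n,1)}(0)$. This yields
\[
\operatorname T_{Q_s(\mathbf n,1)}(0)\hookrightarrow \operatorname T_{Q_s(\mathbf n,r)}(\mathcal B)\big/\operatorname T_{R_s(\mathbf n,r-1)}(\mathcal B),
\]
and since $d_{\mathbf n,1}+d_{\mathbf n,r-1}>d_{\mathbf n,r}$ (your computation of $d_{\mathbf n,r}-d_{\mathbf n,r-1}$ gives this immediately), the tangent cone cannot be a linear space of the correct dimension. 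The missing ingredient in your sketch is precisely this curve construction showing that the ``extra'' rank-one direction contributes transversally to the stratum $R_s(\mathbf n,r-1)$.
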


\begin{proof}
The smoothness and surjectivity of $\varphi_{\mathbf{n},r}$ follows directly from the definition of $V_{\mathbf{n},r}$ and $P_s(\mathbf{n},r)$. The rest of the proof is arranged item by item.
\begin{enumerate}[label=(\roman*)]
\item The invariance of $\varphi_{\mathbf{n},r}$ under the action of ${D_{r,k}}  \rtimes_{\eta} S_r$ is straightforward to verify.

\item The identifiability of a tensor in $Q_s(\mathbf{n},r)$ follows from Proposition~\ref{prop:unique}. By definition of $Q_s(\mathbf n, r)$ and $R_s(\mathbf n, r)$, we may conclude $\varphi_{\mathbf{n},r}^{-1} (Q_s(\mathbf n, r)) = W_{\mathbf{n},r}$ and $\varphi_{\mathbf{n},r}^{-1} (R_s(\mathbf n, r)) = W_{\mathbf{n},r,\ast}$.

\item It is direct to verify that $W_{\mathbf n,r}, W_{\mathbf n,r,\ast}$ are open subsets of $V_{\mathbf{n},r}$ and hence they are open submanifolds of $V_{\mathbf{n},r}$. We notice that {$\mathbb{R}_+ = [0,\infty)$} is a submanifold of $\mathbb{R}$ with boundary $\{0\}$. This implies that $W_{\mathbf n,r,+} $ is a submanifold of $V_{\mathbf{n},r}$ with boundary $\partial W_{\mathbf n,r,+} $ consisting of points $(U^{(1)},\dots, U^{(k)},(\lambda_1,\dots,\lambda_r)) \in {W_{\mathbf n,r,+}}$ such that $\prod_{i=1}^r \lambda_i = 0$.

\item The $D_{r,k}\rtimes_{\eta} S_r$-invariance of $W_{\mathbf{n},r}$, $W_{\mathbf n,r,\ast}$, $W_{\mathbf n,r,+}$, $W_{\mathbf n,r,++}$ is trivial to verify. Hence it is left to prove that $W_{\mathbf n,r,++}$ is a principal $D_{r,k}\rtimes_{\eta} S_r$-bundle on $R_s(\mathbf{n},r)$. To that end,  we first observe that the action of $D_{r,k}\rtimes_{\eta} S_r$ on $V_{\mathbf{n},r}$ induces a free action on $W_{\mathbf n,r,++}$, i.e., $(U,\lambda) \cdot (E,\sigma) = (U,\lambda)$ for some $(U,\lambda) \in W_{\mathbf n,r,++}$ implies that $(E,\sigma) = (I_r,\dots, I_r,e)\in {D_{r,k}}  \rtimes_{\eta} S_r$ where $e$ denotes the identity element in $S_r$. Indeed, since $(U,\lambda) = (U^{(1)},\dots, U^{(k)},(\lambda_1,\dots, \lambda_r)) \in W_{\mathbf n,r,++}$, column vectors of $U^{(i)}$ are linearly independent for each $i=1,\dots, k$. If $(U,\lambda) \cdot (E,\sigma) = (U,\lambda)$ holds, then we must have
\[
{
U^{(i)} E_i P_{\sigma} = U^{(i)},\quad i =1,\dots, k}
\]
and the linear independence of column vectors of $U^{(i)}$ forces $E_i = I_r$ and $\sigma = e$. Moreover, it is straightforward to verify that the action of $D_{r,k}\rtimes_{\eta} S_r$ on any fiber of $\varphi_{\mathbf{n},r}: W_{\mathbf{n},r,++} \to R_s(\mathbf{n},r)$ is also free and transitive. This implies that $W_{\mathbf n,r,++}$ is a principal $D_{r,k}\rtimes_{\eta} S_r$-bundle on $R_s(\mathbf{n},r)$.
\item
Since $D_{r,k}\rtimes_{{\eta}} S_r$ is a finite group, the dimension of $R_s(\mathbf{n},r) \simeq  W_{\mathbf n,r,++}/ (D_{r,k}\rtimes_{\eta} S_r)$ is the same as $\dim W_{\mathbf n,r,++} = \dim V_{\mathbf{n},r}$, which can be computed by
\begin{align*}
\dim (V_{\mathbf{n},r}) &= \sum_{i=1}^s \dim (\V(r,n_i) )+\sum_{j=s+1}^k\dim( \OB(r,n_j)) + \dim (\mathbb{R}^r) \\
&=\sum_{i=1}^s \left( r(n_i-r) + \binom{r}{2} \right) +  \sum_{j=s+1}^k \left( rn_j {-r}\right) + r \\
&= r\left( \sum_{i=1}^k n_i-{\frac{s(r-1)}{2}-k}+1\right).
\end{align*}

\item According to Corollary~\ref{cor:Q_s}, $Q_s(\mathbf n,r)$ is a dense subset of $P_s(\mathbf n,r)$ containing an open subset even in Zariski topology, hence it is in particular a dense subset of $P_s(\mathbf n,r)$ in the Euclidean topology. In the following, we show that $Q_s(\mathbf n,r-1) = \bigsqcup_{t = 0}^{r-1} R_s(\mathbf{n},t)$ is the singular locus of $Q_s(\mathbf n,r)$. To check that $Q_s(\mathbf n,r-1)$ is the singular locus of $Q_s(\mathbf{n},r)$, we calculate the dimension of $\operatorname{T}_{Q_s(\mathbf{n},r)}(\mathcal{B}) $ for $\mathcal{B} \in Q_s(\mathbf n,r-1)$. Without loss of generality, we may suppose that $\mathcal{B} \in R_s(\mathbf{n},r-1)$. If $r = 1$ then $\mathcal{B} = 0$ and it is the vertex and hence is the singularity of the cone $Q_s(\mathbf{n},1) =R_s(\mathbf{n},1) \bigsqcup \{ 0 \} \subsetneq \mathbb{R}^{n_1} \otimes \cdots \otimes \mathbb{R}^{n_k}$.

Next we assume that $r \ge 2$. A curve $\mathcal{B}(t)$ in $Q_s(\mathbf{n},r)$ such that $\mathcal{B}(0) = \mathcal{B}$ can be written as
\[
\mathcal{B}(t) = \left( U^{(1)}(t),\dots,U^{(k)}(t) \right) \cdot \operatorname{diag}_k(\lambda_1(t),\dots, \lambda_r(t)),\quad t\in (-\epsilon,\epsilon)
\]
for some small $\epsilon > 0$. We may also assume that {$\lambda_1(0)\geq \cdots\geq  \lambda_r(0)\ge 0$}. Here we must have $\lambda_r(0) = 0$. We observe that if $\mathcal{B}(t) \in R_s(\mathbf{n},r-1)$ for all $t\in (-\epsilon,\epsilon)$, i.e., $\lambda_r(t) \equiv 0$, then $\mathcal{B}'(0) \in \operatorname{T}_{R_s(\mathbf{n},r-1)}(\mathcal{B}) \subseteq \operatorname{T}_{Q_s(\mathbf{n},r)}(\mathcal{B})$. Moreover, if the curve has the form $\mathcal{B}(t) = \mathcal{B}_0(t) + \mathcal{B}_1(t) \in R_s(\mathbf{n},r) \subseteq Q_s(\mathbf{n},r)$ where $\mathcal{B}_0(t) \in R_s(\mathbf{n},r-1)$,
$\mathcal{B}_0(0) = \mathcal{B}$, $\mathcal{B}_1(t) = \lambda_r(t) \mathbf u^{(1)}_r(t) \otimes \cdots \otimes \mathbf u^{(k)}_r(t)\ne 0$ if $t \ne 0$ and $\mathcal{B}_1(0) = 0$, then $\mathcal{B}'(0)\in \operatorname{T}_{Q_s(\mathbf{n},r)}(\mathcal{B})$ satisfies
\[
\mathcal{B}'(0) = \mathcal{B}_0'(0) + \mathcal{B}_1'(0) \in  \operatorname{T}_{R_s(\mathbf{n},r-1)}(\mathcal{B}) + \operatorname{T}_{Q_s(\mathbf{n},1)} (0).
\]
It is also clear that we may vary the component $\mathcal{B}_1(t)$ of $\mathcal{B}(t)$ (and accordingly $\mathcal{B}_0(t)$) such that $ \mathcal{B}_1'(0)$ runs through the whole space $\operatorname{T}_{Q_s(\mathbf{n},1)} (0)$. This implies that
\[
\operatorname{T}_{Q_s(\mathbf{n},1)} (0) \hookrightarrow \operatorname{T}_{Q_s(\mathbf{n},r)}(\mathcal{B})/\operatorname{T}_{R_s(\mathbf{n},r-1)}(\mathcal{B}).
\]
If $\mathcal{B}$ is a smooth point of $Q_s(\mathbf{n},r)$, then $\operatorname{T}_{Q_s(\mathbf{n},r)}(\mathcal{B})$ is a vector space and $\dim\big(\operatorname{T}_{Q_s(\mathbf{n},r)}(\mathcal B)\big)= d_{\mathbf{n},r}$.
However, we also have
\[
\dim\big(\operatorname{T}_{Q_s(\mathbf{n},1)}(0)\big) + \dim\big(\operatorname{T}_{R_s(\mathbf{n},r-1)}(\mathcal{B})\big) \ge d_{\mathbf{n},1} + d_{\mathbf{n},r-1} > d_{\mathbf{n},r},
\]
which is a contradiction. Therefore we may conclude that $Q_s(\mathbf n,r-1)$ is the singular locus of $Q_s(\mathbf{n},r)$.
\end{enumerate}
\end{proof}

For reader's convenience, we summarize in Figure~\ref{fig:relations} relations among various sets discussed in this subsection.
\begin{figure}
  \begin{tikzcd}
      W_{\mathbf{n},r,++}  \arrow[d,symbol=\subseteq] \arrow[r,symbol=\subseteq] & W_{\mathbf{n},r,+} \arrow[d,symbol=\subseteq] &  \\
    W_{\mathbf{n},r,\ast}  \arrow{d}{\varphi_{\mathbf{n},r}} \arrow[r,symbol=\subseteq]{good} & W_{\mathbf{n},r} \arrow{d}{\varphi_{\mathbf{n},r}} \arrow[r,symbol=\subseteq] & V_{\mathbf{n},r} \arrow[r,symbol=\subseteq] \arrow{d}{\varphi_{\mathbf{n},r}} & \mathbb{R}^{n_1\times r}\times \cdots \times \mathbb{R}^{n_k \times r} \times \mathbb{R}^r \arrow{d}{\varphi_{\mathbf{n},r}} \\
     R_{s}(\mathbf{n},r) \arrow[r,symbol=\subseteq] \arrow[d,symbol=\simeq]   & Q_{s}(\mathbf{n},r)  \arrow[r,symbol=\subseteq] \arrow[d,symbol=\simeq]  & P_{s}(\mathbf{n},r) \arrow[r,symbol=\subseteq] & \mathbb{R}^{n_1} \otimes \cdots \otimes \mathbb{R}^{n_k}  \\
  W_{\mathbf n,r,++}/ (D_{r,k}\rtimes_{\eta} S_r) \arrow[r,symbol=\subseteq] & \bigsqcup_{t=1}^r W_{\mathbf n,t,++}/ (D_{t,k}\rtimes_{\eta} S_t) \\
  \end{tikzcd}
  \caption{Relations among tensor spaces and their parameter spaces}
    \label{fig:relations}
\end{figure}


\subsection{Geometry of the projection}\label{sec:projection}
We recall that problem~\eqref{eq:original} can be viewed as a constrained optimization problem in $\mathbb{R}^{N}$ where $N = \prod_{j=1}^k n_j$ and hence it is natural to solve problem~\eqref{eq:original} by methods for constrained optimization problems such as the Langrange multiplier method. On the one hand, Lemmas~\ref{lem:P_s>1}, \ref{lem:P_s=1} and Proposition~\ref{prop:smooth}-\eqref{prop:smooth:item5} imply that these methods are not practical. On the other hand, these methods are designed for KKT points \cite{RW-98} while Proposition~\ref{prop:smooth}-\eqref{prop:smooth:item6} indicates that a minimizer of problem~\eqref{eq:original} is not necessarily be a KKT point. Putting these issues aside, we briefly discuss  in this subsection some properties of KKT points of \eqref{eq:original}, which might be of independent interest.

Let $Y$ be a real algebraic variety in $\mathbb{R}^n$ and let $f(\x,\y)$ be the restriction of a differentiable function on $\mathbb{R}^n \times \mathbb{R}^n$ to $\mathbb{R}^n \times Y$. We suppose that the ideal $I(Y)$ of $Y$ is generated by polynomials $f_1,\dots, f_s \in \mathbb{R}[x_1,\dots,x_n]$. Let $\y_0\in Y$ and let
$J_{\y_0}(f_1,\dots, f_s)$ be the Jacobian matrix defined by
\[
J_{\y_0}(f_1,\dots, f_s) = \begin{bmatrix}
\frac{\partial f_1}{\partial x_1}(\y_0) &  \cdots & \frac{\partial f_1}{\partial x_n}(\y_0) \\
\vdots & \ddots & \vdots  \\
\frac{\partial f_s}{\partial x_1}(\y_0) & \cdots & \frac{\partial f_s}{\partial x_n}(\y_0) \\
\end{bmatrix} \in \mathbb{R}^{s \times n}.
\]
We denote by $\ker (J_{\y_0}(f_1,\dots, f_s))\subseteq \mathbb{R}^n$ the right null space of $J_{\y_0}(f_1,\dots, f_s)$, which can be identified with the \emph{Zariski tangent space} $\Tt_Y (\y_0)$ of $Y$ at $\y_0$\cite[Proposition~2.5]{L-02-A}.
Given a point $\x\in \mathbb{R}^n$, a point $\y_0\in Y$ is called a \emph{critical projection} of $\x$ to $Y$ (with respect to $f$) if
\begin{equation}\label{lem:distance minimizer:relation}
\langle \ker (J_{\y_0}(f_1,\dots, f_s)), \nabla_\y f(\x,\y_0) \rangle = 0.
\end{equation}
\begin{lemma}\label{lem:distance minimizer}
Let $Y_{\text{sm}}\subseteq Y$ be the smooth locus of $Y$. For a fixed $\x\in \mathbb{R}^n$, if $\y_0\in Y_{\text{sm}}$ is a minimizer of the function $f(\x,\cdot): Y\to \mathbb{R}$, then $\y_0$ is a critical projection of $\x$ to $Y$ with respect to $f$.
\end{lemma}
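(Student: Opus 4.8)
The plan is to reduce the statement to the classical first-order optimality condition on a smooth submanifold, using the identification of the Zariski tangent space at a smooth point with the differential-geometric tangent space. First I would recall from Proposition~\ref{prop:smooth locus}\ref{prop:smooth locus:item1} that since $\y_0 \in Y_{\text{sm}}$, there is an open semi-algebraic neighbourhood $V$ of $\y_0$ in $Y$ that is a smooth submanifold of $\mathbb{R}^n$ of dimension $d = \dim(Y)$, and that by the nonsingularity of $\y_0$ we have $\dim \ker\big(J_{\y_0}(f_1,\dots,f_s)\big) = d$. Consequently the Zariski tangent space $\Tt_Y(\y_0) = \ker\big(J_{\y_0}(f_1,\dots,f_s)\big)$ coincides with the ordinary tangent space $\operatorname{T}_V(\y_0)$ of the manifold $V$ at $\y_0$, since $\operatorname{T}_V(\y_0)$ is always contained in $\ker\big(J_{\y_0}(f_1,\dots,f_s)\big)$ and the two have the same dimension.

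Next I would apply the standard first-order necessary condition for a differentiable function restricted to a smooth manifold: if $\y_0$ is a local minimizer of $g := f(\x,\cdot)|_V : V \to \mathbb{R}$, then the Riemannian gradient $\operatorname{grad}(g)(\y_0)$ vanishes, which by \eqref{eqn:gradients} is equivalent to saying that the Euclidean gradient $\nabla_\y f(\x,\y_0)$ is orthogonal to $\operatorname{T}_V(\y_0)$. (Concretely, for any smooth curve $\gamma:(-\varepsilon,\varepsilon)\to V$ with $\gamma(0)=\y_0$, the function $t\mapsto f(\x,\gamma(t))$ has a local minimum at $t=0$, so its derivative $\langle \nabla_\y f(\x,\y_0), \gamma'(0)\rangle$ vanishes; as $\gamma$ ranges over such curves, $\gamma'(0)$ sweeps out all of $\operatorname{T}_V(\y_0)$.) Since $\y_0$ is a global minimizer of $f(\x,\cdot)$ on $Y$, it is in particular a local minimizer on the neighbourhood $V$, so this applies.

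Combining the two steps, $\nabla_\y f(\x,\y_0) \perp \operatorname{T}_V(\y_0) = \Tt_Y(\y_0) = \ker\big(J_{\y_0}(f_1,\dots,f_s)\big)$, which is exactly the relation \eqref{lem:distance minimizer:relation} defining a critical projection. I do not expect any serious obstacle here; the only point requiring a little care is the identification $\operatorname{T}_V(\y_0) = \ker\big(J_{\y_0}(f_1,\dots,f_s)\big)$ at a nonsingular point, which is precisely the content of the cited \cite[Proposition~2.5]{L-02-A} together with the dimension count from Proposition~\ref{prop:smooth locus}, and the (routine) fact that one may realize every tangent vector as the velocity of a curve lying entirely in $Y$. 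It is worth noting that the hypothesis $\y_0 \in Y_{\text{sm}}$ is essential: at a singular point the Zariski tangent space is strictly larger than any cone of curve-velocities, and the conclusion can fail.
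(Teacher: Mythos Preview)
Your proposal is correct and follows essentially the same approach as the paper: the paper's proof simply asserts that since $\y_0$ is a smooth point and a minimizer, $\nabla_\y f(\x,\y_0)$ must be orthogonal to the tangent space of $Y$ at $\y_0$, which is exactly \eqref{lem:distance minimizer:relation}. Your version is more detailed---explicitly invoking Proposition~\ref{prop:smooth locus}\ref{prop:smooth locus:item1} for the local manifold structure, carrying out the dimension count to identify $\ker\big(J_{\y_0}(f_1,\dots,f_s)\big)$ with $\operatorname{T}_V(\y_0)$, and spelling out the curve argument for the first-order condition---but the underlying idea is identical.
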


\begin{proof}
We consider the following optimization problem:
\begin{equation}\label{lem:distance minimizer:op problem}
\begin{array}{rl}\min & f(\x,\y) \\
 \text{s.t.} & f_1(\y) = \cdots = f_s(\y) = 0.
\end{array}
\end{equation}
Since $\y_0$ is a solution to \eqref{lem:distance minimizer:op problem} and $\y_0$ is a smooth point of $Y$, then $\nabla_\y f(\x,\y_0)$ must be orthogonal to the tangent space of $Y$ at $\y_0$. Thus, the desired relation \eqref{lem:distance minimizer:relation} is satisfied.
\end{proof}
Note that once the condition \eqref{lem:distance minimizer:relation} is satisfied,
there exist multipliers $\mu_1,\dots,\mu_s\in \mathbb{R}$ such that $\y_0$ is a critical point of $f(\x,\y) + \sum_{j=1}^s \mu_s f_s(\y)$, from which we can conclude that $\nabla_\y f(\x,\y_0)$ is a linear combination of $\nabla_\y f_1(\y_0),\dots, \nabla_\y f_s(\y_0)$ and hence the KKT condition holds for \eqref{lem:distance minimizer:op problem}. Conversely, if $y_0$ is a KKT point of \eqref{lem:distance minimizer:op problem}, then \eqref{lem:distance minimizer:relation} must be satisfied. Thus, critical projections are exactly KKT points of problem \eqref{lem:distance minimizer:op problem}. It is also notable that linear independence of $\nabla_\y f_1(\y_0),\dots, \nabla_\y f_s(\y_0)$ implies that $\y_0\in Y_\text{sm}$ but the converse may not be true.


According to Proposition~\ref{prop:smooth locus}--\ref{prop:smooth locus:item2}, if $\y\in Y_{\text{sm}}$, then  $\Tt_Y (\y)$ coincides with the tangent space $\Tt_{Y_\text{sm}} (\y)$ of the manifold $Y_\text{sm}$ at $\y$.
Geometrically, Lemma~\ref{lem:distance minimizer} means that if $\y\in Y_\text{sm}$ is a minimizer of $f(\x,\cdot)$, then the gradient $\nabla_\y f(\x,\y)$ is perpendicular to the Zariski tangent space $\Tt_Y (\y)\coloneqq \ker (J_{\y}(f_1,\dots, f_s))$ of $Y$ at $\y$. Moreover, the Jacobian criterion also implies that if $\y$ belongs to the singular locus $Y_{\operatorname{sing}}$ of $Y$, then $\dim \Tt_Y (\y) > \dim Y = \dim Y_{\text{sm}}$.

\begin{corollary}\label{cor:distance minimizer}
For a fixed $\x\in \mathbb{R}^n$, a point $\y_0\in Y_{\text{sm}}$ satisfies \eqref{lem:distance minimizer:relation} if and only if the Riemannian gradient of $f(\x,\cdot)$ on $Y_{\text{sm}}$ vanishes at $\y_0$.
\end{corollary}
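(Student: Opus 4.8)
The plan is to deduce the equivalence directly from the gradient formula \eqref{eqn:gradients}, once the Zariski tangent space $\Tt_Y(\y_0)$ is identified with the tangent space of the manifold $Y_{\text{sm}}$ at $\y_0$. The whole argument is local around $\y_0$, so there is no global subtlety.

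First I would record that $g \coloneqq f(\x,\cdot)$ restricts to a differentiable function on $Y_{\text{sm}}$, which by Proposition~\ref{prop:smooth locus}\ref{prop:smooth locus:item1} is, in a neighbourhood of $\y_0$, a smooth submanifold of $\mathbb{R}^n$ and hence a Riemannian embedded submanifold with the induced metric; in particular its Riemannian gradient at $\y_0$ is well defined. The Euclidean gradient of $g$ at $\y_0$ is exactly $\nabla_\y f(\x,\y_0)$, so \eqref{eqn:gradients} gives
\[
\operatorname{grad}(g)(\y_0) = \pi_{\Tt_{Y_{\text{sm}}}(\y_0)}\big(\nabla_\y f(\x,\y_0)\big).
\]

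Next I would invoke Proposition~\ref{prop:smooth locus}\ref{prop:smooth locus:item2}, in the form already recorded in the remark preceding the corollary: for $\y_0\in Y_{\text{sm}}$ the Zariski tangent space $\Tt_Y(\y_0)=\ker\big(J_{\y_0}(f_1,\dots, f_s)\big)$ coincides with the manifold tangent space $\Tt_{Y_{\text{sm}}}(\y_0)$. Since the orthogonal projection of a vector onto a linear subspace is $\mathbf{0}$ precisely when that vector is orthogonal to the subspace, applying this to $\nabla_\y f(\x,\y_0)$ yields
\[
\operatorname{grad}(g)(\y_0)=\mathbf{0}\iff \nabla_\y f(\x,\y_0)\perp \ker\big(J_{\y_0}(f_1,\dots, f_s)\big),
\]
and the right-hand side is exactly the defining relation \eqref{lem:distance minimizer:relation} of a critical projection. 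This establishes both implications at once.

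I do not expect a genuine obstacle here: the only place the hypothesis is actually used is the identification $\Tt_Y(\y_0)=\Tt_{Y_{\text{sm}}}(\y_0)$, which needs $\y_0$ to be a smooth point — at a singular point of $Y$ the Zariski tangent space strictly contains the tangent directions of $Y_{\text{sm}}$, and then neither the Riemannian gradient nor the claimed equivalence would be meaningful. Everything else is the elementary observation about projections onto subspaces together with the already-established formula relating $\nabla$ and $\operatorname{grad}$.
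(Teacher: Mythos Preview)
Your proof is correct and follows exactly the reasoning the paper intends: the corollary is stated without proof, but the surrounding text already records the identification $\Tt_Y(\y_0)=\Tt_{Y_{\text{sm}}}(\y_0)$ for smooth points and the gradient formula \eqref{eqn:gradients}, so your argument is precisely the implied one.
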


\begin{lemma}\label{lem:projection}
Let $Z$ be a closed proper subvariety of $Y$ and let $f$ be the restriction of the squared distance function to $\mathbb{R}^n \times Y$. For a generic $\x\in \mathbb{R}^n$, any critical projection $\y$ of $\x$ to $Y$ lies in $Y\setminus Z$. In other words, the set of $\x \in \mathbb{R}^n$ such that some critical projection of $\x$ lies in $Z$ is contained in a closed proper subvariety of $\mathbb{R}^n$.
\end{lemma}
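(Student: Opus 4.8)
The plan is to realize the set of \emph{bad} parameters $\x$ --- those admitting some critical projection lying in $Z$ --- as the image of a semi-algebraic set of dimension at most $n-1$. By Theorem~\ref{thm:image dimension} and Proposition~\ref{prop:closure dimension}, such an image then has Zariski closure a proper subvariety $W\subsetneq\mathbb R^n$, and every $\x\in\mathbb R^n\setminus W$ is generic with the required property.

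First I would unwind the critical projection condition. Since $f$ is the squared distance function, $\nabla_\y f(\x,\y)=2(\y-\x)$, so by \eqref{lem:distance minimizer:relation} a point $\y\in Y$ is a critical projection of $\x$ precisely when $\y-\x$ is orthogonal to $\ker J_\y(f_1,\dots,f_s)=\Tt_Y(\y)$; equivalently, when $\x-\y$ lies in the row space of $J_\y(f_1,\dots,f_s)$, i.e.\ when $\x=\y+J_\y(f_1,\dots,f_s)^{\tp}\mu$ for some $\mu\in\mathbb R^s$. We may and do assume $Y$ is irreducible, which is the case in every application of this lemma (Theorem~\ref{thm:P_s}); then $\dim Z\le\dim Y-1$. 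Set $c\coloneqq n-\dim Y$. By Proposition~\ref{prop:smooth locus} we have $\dim\Tt_Y(\y)\ge\dim Y$, hence $\rank J_\y(f_1,\dots,f_s)\le c$, for every $\y\in Z$.

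Next I would stratify $Z$ by Jacobian rank: for $0\le j\le c$ let $Z_j\coloneqq\{\y\in Z\colon\rank J_\y(f_1,\dots,f_s)=j\}$, a locally closed (in particular semi-algebraic) subset of $Z$, so that $Z=\bigsqcup_{j=0}^{c}Z_j$. Over $Z_j$ the row space of $J_\y(f_1,\dots,f_s)$ is a $j$-dimensional linear space depending semi-algebraically on $\y$, so the incidence set
\[
N_j\coloneqq\bigl\{(\y,\vv)\colon \y\in Z_j,\ \vv\in\bigl(\ker J_\y(f_1,\dots,f_s)\bigr)^{\perp}\bigr\}
\]
is a semi-algebraic set fibering over $Z_j$ with $j$-dimensional linear fibers; by the standard properties of the dimension of semi-algebraic sets \cite{BCR-98}, $\dim N_j=\dim Z_j+j\le\dim Z+c\le(\dim Y-1)+(n-\dim Y)=n-1$. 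The set of $\x$ admitting a critical projection in $Z$ is exactly $\bigcup_{j=0}^{c}g_j(N_j)$, where $g_j(\y,\vv)\coloneqq\y+\vv$ is a polynomial map; hence by Theorem~\ref{thm:image dimension} it is semi-algebraic of dimension at most $n-1$, and by Proposition~\ref{prop:closure dimension} its Zariski closure $W$ is a proper subvariety of $\mathbb R^n$. Every $\x\in\mathbb R^n\setminus W$ then has all its critical projections to $Y$ contained in $Y\setminus Z$, which is the claim.

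The main obstacle is that the Zariski tangent space $\Tt_Y(\y)=\ker J_\y(f_1,\dots,f_s)$, and hence the row space of $J_\y(f_1,\dots,f_s)$, changes dimension over $Z$, so it does not form a vector bundle on all of $Z$; this is what forces the rank stratification and forces one to estimate dimensions in the semi-algebraic rather than the purely Zariski category (the strata $Z_j$ and the sets $N_j$ are only semi-algebraic in general). Once the strata are separated the count is immediate, each stratum contributing a set of dimension at most $\dim Z+c\le n-1$, which is exactly the codimension-one bound needed for genericity.
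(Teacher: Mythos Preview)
Your proposal is correct and takes essentially the same approach as the paper: both identify the bad locus as the image under $(\y,\vv)\mapsto\y+\vv$ of the normal incidence set over $Z$ and bound its dimension by $\dim Z+(n-\dim Y)<n$. The paper argues by contradiction and asserts the fiber-dimension bound $\dim X\le\dim Z+\max_{\z\in Z}\rank J_\z(f_1,\dots,f_s)$ in one line, while your explicit rank stratification and your remark that $Y$ must be irreducible make the same count slightly more transparent; the differences are cosmetic.
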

\begin{proof}
We prove by contradiction. Suppose that there exists a dense subset $U\subseteq \mathbb{R}^n$ such that for any $\x\in U$,
\[
\{\y\in Y: \langle \ker (J_{\y}(f_1,\dots, f_s)), \x-\y\rangle = 0\} \cap Z \ne \emptyset.
\]
We consider the following set
\[
X \coloneqq \{\z + \vv\in \mathbb{R}^n: \z\in Z,\ \langle \ker (J_{\z}(f_1,\dots, f_s)), \vv \rangle = 0\},
\]
which is the image of $W \coloneqq \{(\z,\vv):\z\in Z, \langle \ker (J_{\z}(f_1,\dots, f_s)), \vv \rangle = 0\}$ under the polynomial map
\begin{align*}
\alpha: Z \times \mathbb{R}^n &\to \mathbb{R}^n,\\
(\z,\vv)&\mapsto \alpha(\z,\vv) = \z+ \vv.
\end{align*}

It is clear from the construction that $U\subseteq X$ and hence $\overline{X} = \mathbb{R}^n$. Since $W$ is a projection of an algebraic variety (hence it is a semi-algebraic set) and $\alpha$ is a polynomial map, Proposition~\ref{thm:image dimension} implies that $X = \alpha (W)$ is semi-algebraic. Moreover, according to Proposition~\ref{prop:closure dimension}, we may conclude that $\dim X = \dim \overline{X}  = n$.
However, we notice that
\[
\dim X \le \dim Z + \max_{\z\in Z} \{ \rank J_\z (f_1,\dots, f_s)\}  \le \dim Z + (n - \dim Y) < n,
\]
from which we obtain a contradiction. Here the first inequality follows from a simple dimension counting, the second is due to the fact that
\[
n - \rank J_\z(f_1,\dots, f_s) = \dim \Tt_Y(\z)\ge \dim Y,
\]
and the last inequality is obtained from the assumption that $Z$ is a proper closed subvariety of $Y$.
\end{proof}

\begin{theorem}\label{thm:projection}
For $k\ge 3$, $k\ge s \ge 1$, $\mathbf{n} = (n_1,\dots, n_k)$ and $r \le \min\{n_1,\dots, n_k\}$, any critical projection of a generic tensor $\T\in \mathbb{R}^{n_1}\otimes \cdots \otimes \mathbb{R}^{n_k}$ on $P_s(\mathbf{n},r)$ lies in $R_s(\mathbf{n},r)$.
\end{theorem}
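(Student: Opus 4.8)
The plan is to obtain Theorem~\ref{thm:projection} as a direct application of Lemma~\ref{lem:projection}, with the auxiliary subvariety supplied by Corollary~\ref{cor:Q_s}. Write $N \coloneqq \prod_{j=1}^k n_j$ and identify $\mathbb{R}^{n_1}\otimes\cdots\otimes\mathbb{R}^{n_k}$ with $\mathbb{R}^N$. By Theorem~\ref{thm:P_s}, $Y \coloneqq P_s(\mathbf{n},r)$ is an irreducible real algebraic variety in $\mathbb{R}^N$; let $f_1,\dots,f_s$ generate its ideal $I(Y)$. I would take $f$ to be the squared Euclidean distance function on $\mathbb{R}^N\times\mathbb{R}^N$ restricted to $\mathbb{R}^N\times Y$, which is, up to the harmless factor $\tfrac12$, the objective of problem~\eqref{eq:original}. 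Since $\nabla_{\mathcal Y} f(\mathcal X,\mathcal Y)$ is a nonzero scalar multiple of $\mathcal Y-\mathcal X$, the defining condition of a critical projection of $\mathcal X$ to $Y$ reduces to $\langle \ker(J_{\mathcal Y}(f_1,\dots,f_s)),\,\mathcal X-\mathcal Y\rangle=0$, which is exactly the orthogonality condition appearing in the proof of Lemma~\ref{lem:projection}.

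First I would check that $Z \coloneqq P_s(\mathbf{n},r)\setminus R_s(\mathbf{n},r)$ is a closed proper subvariety of $Y$. By Corollary~\ref{cor:Q_s}, $Z$ is a real algebraic variety (hence Zariski closed, and in particular Euclidean closed) and $R_s(\mathbf{n},r)$ is a Zariski open dense subset of $P_s(\mathbf{n},r)$. To conclude $Z\subsetneq Y$ it suffices to observe that $R_s(\mathbf{n},r)\neq\emptyset$: since $s\le k$ forces $r\le\min\{n_1,\dots,n_k\}\le\min\{n_1,\dots,n_s\}$, one can exhibit a tensor of the form \eqref{eq:low-rank-orth} with $\sigma_1=\cdots=\sigma_r=1$, with $A^{(1)},\dots,A^{(s)}$ the matrices formed by the first $r$ columns of identity matrices and $A^{(s+1)},\dots,A^{(k)}$ of full column rank, which by Proposition~\ref{prop:unique} is identifiable and therefore lies in $R_s(\mathbf{n},r)$.

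With these ingredients in place, Lemma~\ref{lem:projection} applied to $Y=P_s(\mathbf{n},r)$, $Z=P_s(\mathbf{n},r)\setminus R_s(\mathbf{n},r)$ and the squared distance function $f$ gives that the set of tensors $\mathcal T\in\mathbb{R}^N$ admitting a critical projection inside $Z$ is contained in a proper closed subvariety of $\mathbb{R}^N$; hence for a generic $\mathcal T$ every critical projection of $\mathcal T$ onto $P_s(\mathbf{n},r)$ lies in $Y\setminus Z=R_s(\mathbf{n},r)$, which is the assertion. I do not expect any genuine obstacle beyond assembling these pieces: the only points needing a word of care are that $R_s(\mathbf{n},r)$ is genuinely nonempty, so that $Z$ is a \emph{proper} subvariety, and that the notion of critical projection is intrinsic, i.e.\ independent of the chosen generators of $I(Y)$, because at a point of $Y$ the Zariski tangent space $\ker(J_{\mathcal Y}(f_1,\dots,f_s))$ depends only on the ideal $I(Y)$.
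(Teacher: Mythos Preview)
Your proposal is correct and follows essentially the same route as the paper's proof: set $Y=P_s(\mathbf{n},r)$ and $Z=P_s(\mathbf{n},r)\setminus R_s(\mathbf{n},r)$, invoke Theorem~\ref{thm:P_s} and Corollary~\ref{cor:Q_s} to see that $Z$ is a proper closed subvariety of $Y$, and then apply Lemma~\ref{lem:projection} with the squared distance function. Your extra care in explicitly verifying $R_s(\mathbf{n},r)\neq\emptyset$ (so that $Z$ is genuinely proper) and in noting that the critical-projection condition depends only on $I(Y)$ is welcome but not strictly needed beyond what the paper already uses.
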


\begin{proof}
Let $Y = P_s(\mathbf{n},r) \subseteq \mathbb{R}^{n_1} \otimes \cdots \otimes \mathbb{R}^{n_k}$ and let $Z =P_s(\mathbf{n},r) \setminus  R_s(\mathbf{n},r)$. According to Theorem~\ref{thm:P_s} and Corollary~\ref{cor:Q_s}, $Y$ is a real algebraic subvariety of $\mathbb{R}^N$ where $N = \prod_{j=1}^k n_j$ and $Z$ is a proper closed subvariety of $Y$. Lemma~\ref{lem:projection} applies and this completes the proof.
\end{proof}

We conclude this subsection with an emphasis on the relation between Theorem~\ref{thm:projection} and the LRPOTA problem formulated in \eqref{eq:original}. In the lingo of optimization theory, Theorem~\ref{thm:projection} means that {generically} all KKT points of problem~\eqref{eq:original} are contained in $R_s(\mathbf{n},r)$. However, frequently used constraint qualifications such as linear independence constraint qualification (LICQ) mentioned in Section~\ref{subsec:KKT} are not easy to verify for $P_s(\mathbf{n},r)$. Hence there is no guarantee that a minimizer of problem~\eqref{eq:original} must be a KKT point and finding KKT points of \eqref{eq:original} does not completely solve our original LRPOTA problem. Nonetheless, as we have mentioned at the beginning of Section~\ref{subsec:lowrank}, we may parametrize $P_s(\mathbf{n},r)$ by a smooth submanifold $V_{\mathbf{n},r} \subseteq \mathbb{R}^{n_1 \times r} \times \cdots \times \mathbb{R}^{n_k \times r} \times \mathbb{R}^r$. Consequently, problem \eqref{eq:original} is reformulated as \eqref{eq:sota}, by which above issues are resolved. The goal of the next subsection is to locate all KKT points of \eqref{eq:sota}.

\subsection{Geometry of the parametrized projection}\label{subsec:geometry of parametrized projection}
We observe that $V_{\mathbf{n},r}$ is a smooth real subvariety of $\left( \prod_{i=1}^k \mathbb{R}^{{n_i}\times r} \right) \times \mathbb{R}^r$ which is defined by some polynomials $g_1,\dots, g_{{t}}$ on $ \left( \prod_{i=1}^k \mathbb{R}^{{n_i}\times r} \right) \times \mathbb{R}^r$ where ${t} = \operatorname{codim} V_{\mathbf{n},r}$. For a given tensor $\mathcal{A} \in \mathbb{R}^{n_1} \otimes \cdots \otimes \mathbb{R}^{n_k} $, the function $g: V_{\mathbf{n},r} \to \mathbb{R}_+$ defined by
\[
g(y):=\frac{1}{2} \lVert \mathcal{A} - \varphi_{\mathbf{n},r} (y) \rVert^2
\]
is obviously a differentiable function, where $\varphi_{\mathbf{n},r}: V_{\mathbf{n},r} \to \mathbb{R}^{n_1} \otimes \cdots \otimes \mathbb{R}^{n_k}$ is the smooth map defined in Proposition~\ref{prop:smooth}. Here we denote a point $(U^{(1)},\dots, U^{(k)}, {(\lambda_1,\dots, \lambda_r)})$ in $V_{\mathbf{n},r}$ simply by $y$ when specific components of $y$ are not involved in the context.
\begin{lemma}\label{lem:KKT parameter}
A point $y_{\ast} \in V_{\mathbf{n},r}$ is a KKT point of the optimization problem
\begin{equation}\label{lem:KKT parameter:optproblem}
\begin{array}{rl}
\min & g(y)  \\
\text{s.t.}& y \in V_{\mathbf{n},r}
\end{array}
\end{equation}
if and only if $y_\ast$ satisfies the equation
\begin{equation}\label{eq:incidence}
\langle \mathcal{A} - \varphi_{\mathbf{n},r} (y_\ast), d_{{y_\ast}}\varphi_{\mathbf{n},r} (\Tt_{V_{\mathbf{n},r}} (y_\ast)) \rangle  = 0,
\end{equation}
where $d_{y_\ast} \varphi_{\mathbf{n},r}$ is the differential of $\varphi_{\mathbf{n},r}$ at the point $y_\ast$.
\end{lemma}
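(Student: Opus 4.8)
The statement is the standard characterization of KKT points of a constrained optimization problem, transported through the parametrization $\varphi_{\mathbf{n},r}$. The plan is to apply the general theory of KKT points recalled in Subsection~\ref{subsec:KKT} to the problem \eqref{lem:KKT parameter:optproblem}, and then rewrite the resulting first-order condition in terms of the differential of $\varphi_{\mathbf{n},r}$. Concretely, since $g$ is the restriction to the smooth subvariety $V_{\mathbf{n},r}$ of the differentiable function $y \mapsto \tfrac12\lVert \mathcal{A}-\varphi_{\mathbf{n},r}(y)\rVert^2$ on the ambient space $\bigl(\prod_{i=1}^k \mathbb{R}^{n_i\times r}\bigr)\times\mathbb{R}^r$, a point $y_\ast$ is a KKT point of \eqref{lem:KKT parameter:optproblem} exactly when the ambient Euclidean gradient of this function at $y_\ast$ lies in the span of $\nabla g_1(y_\ast),\dots,\nabla g_t(y_\ast)$, i.e.\ in the normal space $\operatorname{N}_{V_{\mathbf{n},r}}(y_\ast)$ (using that $V_{\mathbf{n},r}$ is smooth, so its Zariski tangent space equals its manifold tangent space $\Tt_{V_{\mathbf{n},r}}(y_\ast)$, and the $g_i$ can be chosen so their gradients span the normal space there). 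Equivalently, the ambient gradient is orthogonal to $\Tt_{V_{\mathbf{n},r}}(y_\ast)$.

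First I would compute the ambient Euclidean gradient. By the chain rule applied to $h(y) = \tfrac12\lVert \mathcal{A}-\varphi_{\mathbf{n},r}(y)\rVert^2$, for any tangent vector $v \in \Tt_{V_{\mathbf{n},r}}(y_\ast)$ we have
\[
\langle \nabla h(y_\ast), v\rangle = -\langle \mathcal{A}-\varphi_{\mathbf{n},r}(y_\ast),\, d_{y_\ast}\varphi_{\mathbf{n},r}(v)\rangle .
\]
Thus $\nabla h(y_\ast) \perp \Tt_{V_{\mathbf{n},r}}(y_\ast)$ if and only if $\langle \mathcal{A}-\varphi_{\mathbf{n},r}(y_\ast),\, d_{y_\ast}\varphi_{\mathbf{n},r}(v)\rangle = 0$ for every $v\in \Tt_{V_{\mathbf{n},r}}(y_\ast)$, which is precisely \eqref{eq:incidence}. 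Combining with the previous paragraph gives the desired equivalence.

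The only slightly delicate point — and the one I would state carefully — is that the abstract KKT condition \eqref{eq:abstract-kkt} a priori only says $\nabla h(y_\ast)$ is a linear combination of the $\nabla g_i(y_\ast)$, and one must know that these gradients span all of $\operatorname{N}_{V_{\mathbf{n},r}}(y_\ast)$. This is guaranteed because $V_{\mathbf{n},r}$ is a smooth submanifold: its defining polynomials $g_1,\dots,g_t$ with $t = \operatorname{codim} V_{\mathbf{n},r}$ have Jacobian of full rank $t$ at every point of $V_{\mathbf{n},r}$, so $\{\nabla g_i(y_\ast)\}$ is a basis of the $t$-dimensional orthogonal complement $\Tt_{V_{\mathbf{n},r}}(y_\ast)^\perp$; equivalently, one may invoke \eqref{eq:normal-sub} for the indicator function of the manifold $V_{\mathbf{n},r}$. (No genericity on $\mathcal A$ is needed here; the lemma is an exact equivalence.) With this in hand, "$\nabla h(y_\ast)\in\operatorname{span}\{\nabla g_i(y_\ast)\}$'' and "$\nabla h(y_\ast)\perp\Tt_{V_{\mathbf{n},r}}(y_\ast)$'' coincide, closing the argument.
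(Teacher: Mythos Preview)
Your proposal is correct and follows essentially the same route as the paper: identify the KKT condition with orthogonality of the ambient gradient $\nabla g(y_\ast)$ to the tangent space $\Tt_{V_{\mathbf{n},r}}(y_\ast)$ (using that $V_{\mathbf{n},r}$ is a smooth complete intersection so $\ker J_{y_\ast}(g_1,\dots,g_t)=\Tt_{V_{\mathbf{n},r}}(y_\ast)$), then apply the chain rule to $g=\tfrac12\lVert\mathcal A-\varphi_{\mathbf{n},r}(\cdot)\rVert^2$ to rewrite that orthogonality as \eqref{eq:incidence}. Your added remark that the $\nabla g_i(y_\ast)$ span the full normal space because the Jacobian has rank $t=\operatorname{codim}V_{\mathbf{n},r}$ is exactly the ``smooth complete intersection'' observation the paper invokes.
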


Before we proceed to the proof, we remind the reader that problem \eqref{lem:KKT parameter:optproblem} is our LRPOTA problem \eqref{eq:sota} written in a more compact form.
\begin{proof}
We recall that if $V_{\mathbf{n},r}$ is defined by polynomials $g_1,\dots, g_{t}$, then according to Lemma~\ref{lem:distance minimizer}, KKT points of \eqref{lem:KKT parameter:optproblem} are characterized by the equation
\begin{equation}\label{lem:KKT parameter:eqn1}
\langle \ker (J_y (g_1,\dots, g_t)), \nabla_y g \rangle = 0,
\end{equation}
since $V_{\mathbf{n},r}$ is smooth.

Considering that $g(y) =\frac{1}{2} \lVert \mathcal{A} - \varphi_{\mathbf{n},r} (y) \rVert^2$, by the chain rule we have
\[
\nabla_y g = - (\mathcal{A} - \varphi_{\mathbf{n},r}(y)) \cdot J_y(\varphi_{\mathbf{n},r}),
\]
from which we may rewrite \eqref{lem:KKT parameter:eqn1} as
 \begin{equation}\label{lem:KKT parameter:eqn2}
\langle \ker (J_y (g_1,\dots, g_t)), (\mathcal{A} - \varphi_{\mathbf{n},r}(y)) \cdot J_y(\varphi_{\mathbf{n},r})\rangle = 0.
\end{equation}
{We may further rewrite \eqref{lem:KKT parameter:eqn2} as}
 \begin{equation}\label{lem:KKT parameter:eqn3}
\langle \mathcal{A} - \varphi_{\mathbf{n},r}(y),  J_y(\varphi_{\mathbf{n},r}) \cdot \ker (J_y (g_1,\dots, g_t))  \rangle = 0.
\end{equation}
We notice that $V_{\mathbf{n},r}$ is a smooth complete intersection of $g_1,\dots, g_t$ {and hence $\ker (J_y (g_1,\dots, g_t))  = \Tt_{V_{\mathbf{n},r}} (y)$}, which implies that $J_y(\varphi_{\mathbf{n},r}) \cdot \ker (J_y (g_1,\dots, g_t))$ can be re-interpreted as $d_{{y}}\varphi_{\mathbf{n},r} (\Tt_{V_{\mathbf{n},r}} (y))$ and this completes the proof.
\end{proof}

Condition \eqref{eq:incidence} determines an incidence variety in the Cartesian product of the tensor space and the parameter space. In the following, we study this incidence variety and characterize the linear subspace $d_{{y}}\varphi_{\mathbf{n},r} (\Tt_{V_{\mathbf{n},r}} (y)) \subseteq \mathbb{R}^{n_1} \otimes \cdots \otimes \mathbb{R}^{n_k}$. To do that, we first recall from \eqref{eq:set-vnr} that
\[
V_{\mathbf n,r} = \V(r,n_1)\times\dots\times\V(r,n_s)\times \operatorname{B}(r,n_{s+1})\times\dots\times\operatorname{B}(r,n_k)\times\mathbb R^r.
\]
Hence the tangent space $\Tt_{V_{\mathbf{n},r}} (y)$ is spanned by bases of the following vector spaces:
\[
\Tt_{\V(r,n_1)} (U^{(1)}), \dots, \Tt_{\V(r,n_s)} (U^{(s)}), \Tt_{\B(r,n_{s+1})} (U^{(s+1)}), \dots, \Tt_{\B(r,n_{k})} (U^{(k)}),  \Tt_{\mathbb{R}^r} ( (\lambda_1,\dots, \lambda_r) ).
\]
The next two lemmas are devoted to determine images of the spanning vectors described above under the map $d_y \varphi_{\mathbf{n},r}$.
According to \cite{AMS-08}, {for $A\in\V(r,n)$}, we have
\[
\Tt_{\V(r,n)}(A) =A^\perp\cdot \mathbb{R}^{(n-r)\times r} + A \cdot \operatorname{Sk}^r,
\]
where $A^{
\perp}$ is any $n\times (n-r)$ matrix whose column vectors are orthonormal to those of $A$ and $\operatorname{Sk}^r:=\{B\in\mathbb{R}^{r\times r}\colon B=-B^\tp\}$ is the space of $r\times r$ skew-symmetric matrices.
\begin{lemma}\label{lem:differential phi}
Let $y = (U^{(1)},\dots, U^{(k)}, {(\lambda_1,\dots, \lambda_r)})$ be a point in $V_{\mathbf{n},r}$. For $1\le i \le s$ (resp. $s+1 \le i \le k$) and $1\le j \le r$, we denote by $\mathbf{x}^{(i)}_j$ a vector in $\mathbb{R}^{n_i}$ orthogonal to $\{ \mathbf{u}^{(i)}_1,\dots, \mathbf{u}^{(i)}_r\}$ (resp. $\mathbf{u}^{(i)}_j$).
{Let $z^{(i)}_j\in\mathbb{R}^{n_1\times r}\times\dots\times\mathbb{R}^{n_k\times r}\times\mathbb R^r$ be a vector whose components are all zero, except the $j$-th column of the $i$-th factor matrix that equals $\x^{(i)}_j$.}
Then
\begin{align*}
d_y \varphi_{\mathbf{n},r} ({z^{(i)}_j}) &= \lambda_j \mathbf{u}^{(1)}_j \otimes \mathbf{u}^{(i-1)}_j \otimes \mathbf{x}^{(i)}_j \otimes \mathbf{u}^{(i+1)}_j \otimes \cdots \otimes \mathbf{u}^{(k)}_j, \\
d_y \varphi_{\mathbf{n},r} (0,\dots, 0, {\ee_j}) &= \mathbf{u}^{(1)}_j \otimes \cdots \otimes \mathbf{u}^{(k)}_j.
\end{align*}
Here for each $1\le j \le r$, $\ee_j \in \mathbb{R}^r$ denotes the vector with a $1$ in the $j$-th coordinate and $0$'s elsewhere, i.e.,
$\ee_j = (0,\dots, 0, 1 ,0,\dots, 0)^\tp$.
\end{lemma}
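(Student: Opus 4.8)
The plan is to compute $d_y\varphi_{\mathbf{n},r}$ by differentiating $\varphi_{\mathbf{n},r}$ along explicit curves, after reducing to the polynomial extension of $\varphi_{\mathbf{n},r}$. The formula $\varphi_{\mathbf{n},r}(U^{(1)},\dots,U^{(k)},(\lambda_1,\dots,\lambda_r))=\sum_{l=1}^{r}\lambda_l\,\mathbf u^{(1)}_l\otimes\cdots\otimes\mathbf u^{(k)}_l$ makes sense verbatim on the ambient vector space $\mathbb{R}^{n_1\times r}\times\cdots\times\mathbb{R}^{n_k\times r}\times\mathbb{R}^r$ — this is precisely the map carried by the rightmost vertical arrow of Figure~\ref{fig:relations} — where it is a polynomial map, linear in each $\lambda_l$ and multilinear in the columns $\mathbf u^{(i)}_l$. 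Since $V_{\mathbf{n},r}$ is an embedded submanifold of this ambient space, the differential at $y$ of the restriction $\varphi_{\mathbf{n},r}\colon V_{\mathbf{n},r}\to\mathbb{R}^{n_1}\otimes\cdots\otimes\mathbb{R}^{n_k}$ is simply the restriction to $\Tt_{V_{\mathbf{n},r}}(y)$ of the ambient differential of the polynomial extension, and the latter can be evaluated by the product rule along any ambient curve having the prescribed velocity.

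First I would check that the two families of vectors in the statement are genuine elements of $\Tt_{V_{\mathbf{n},r}}(y)$. Since $V_{\mathbf{n},r}=\V(r,n_1)\times\cdots\times\V(r,n_s)\times\B(r,n_{s+1})\times\cdots\times\B(r,n_k)\times\mathbb{R}^r$, its tangent space at $y$ is the direct sum of the tangent spaces of the factors. For $1\le i\le s$, the description $\Tt_{\V(r,n_i)}(U^{(i)})=(U^{(i)})^{\perp}\cdot\mathbb{R}^{(n_i-r)\times r}+U^{(i)}\cdot\operatorname{Sk}^{r}$ recalled just above shows that the matrix whose $j$-th column is $\mathbf x^{(i)}_j\perp\{\mathbf u^{(i)}_1,\dots,\mathbf u^{(i)}_r\}$ and whose other columns vanish lies in the first summand; for $s+1\le i\le k$, the product structure \eqref{eq:prod-sp:oblique} identifies $\Tt_{\B(r,n_i)}(U^{(i)})$ with $\Tt_{\mathbb{S}^{n_i-1}}(\mathbf u^{(i)}_1)\oplus\cdots\oplus\Tt_{\mathbb{S}^{n_i-1}}(\mathbf u^{(i)}_r)$, and the hypothesis $\mathbf x^{(i)}_j\perp\mathbf u^{(i)}_j$ is exactly the statement that $\mathbf x^{(i)}_j\in\Tt_{\mathbb{S}^{n_i-1}}(\mathbf u^{(i)}_j)$; finally $(0,\dots,0,\ee_j)$ obviously lies in $\Tt_{\mathbb{R}^r}((\lambda_1,\dots,\lambda_r))=\mathbb{R}^r$. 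Hence $z^{(i)}_j$ and $(0,\dots,0,\ee_j)$ belong to $\Tt_{V_{\mathbf{n},r}}(y)$.

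Then I would differentiate along straight lines in the ambient space. For $z^{(i)}_j$, take the curve obtained from $y$ by replacing the $i$-th factor matrix $U^{(i)}$ with $U^{(i)}+t\,\mathbf x^{(i)}_j\ee_j^{\tp}$ and leaving the remaining components fixed; this curve passes through $y$ with velocity $z^{(i)}_j$, and by multilinearity only the $l=j$ summand of $\varphi_{\mathbf{n},r}$ depends on $t$, so differentiating at $t=0$ produces $\lambda_j\,\mathbf u^{(1)}_j\otimes\cdots\otimes\mathbf u^{(i-1)}_j\otimes\mathbf x^{(i)}_j\otimes\mathbf u^{(i+1)}_j\otimes\cdots\otimes\mathbf u^{(k)}_j$, which is the first asserted formula. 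For the $\ee_j$-direction, take the curve $(U^{(1)},\dots,U^{(k)},(\lambda_1,\dots,\lambda_j+t,\dots,\lambda_r))$; again only the $l=j$ summand depends on $t$, and it is linear in $t$ with coefficient $\mathbf u^{(1)}_j\otimes\cdots\otimes\mathbf u^{(k)}_j$, giving the second formula.

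I do not expect a substantive obstacle here: once the reduction to the polynomial extension is in place, everything is the product rule for $\otimes$. The only two points that merit a careful line are (a) invoking the product structure of $V_{\mathbf{n},r}$ and the recalled tangent-space formulas to confirm that the listed vectors are admissible tangent directions, and (b) observing that smoothness of the polynomial extension to the ambient vector space is what legitimizes differentiating along curves that do not stay inside $V_{\mathbf{n},r}$.
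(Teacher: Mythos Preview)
Your proposal is correct and follows the same core idea as the paper: compute $d_y\varphi_{\mathbf{n},r}$ by differentiating along curves with the prescribed velocities and invoking multilinearity of the tensor product. The one technical difference is that the paper keeps its curves inside $V_{\mathbf{n},r}$ --- for the $z^{(i)}_j$ direction it replaces the $j$-th column $\mathbf u^{(i)}_j$ by a curve $\mathbf y^{(i)}_j(t)$ on the unit sphere (and, for $i\le s$, in $(\operatorname{span}\{\mathbf u^{(i)}_1,\dots,\mathbf u^{(i)}_{r-1}\})^\perp$) with $\mathbf y^{(i)}_j(0)=\mathbf u^{(i)}_j$ and $\dot{\mathbf y}^{(i)}_j(0)=\mathbf x^{(i)}_j$ --- whereas you pass to the ambient polynomial extension and differentiate along straight lines that leave $V_{\mathbf{n},r}$. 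Your route is marginally slicker (no need to exhibit sphere curves with prescribed velocities), at the cost of the extra sentence justifying that the differential of the restriction is the restriction of the ambient differential; the paper's route is more intrinsic and sidesteps that remark entirely. Either way the computation is the same product-rule calculation and yields the stated formulas.
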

\begin{proof}
Without loss of generality, we may assume that $j = r$. We consider the following curves in $V_{\mathbf{n},r}$:
\begin{align*}
c_1(t) &:= (U^{(1)},\dots,U^{(i-1)}, [\widehat{U}^{(i)}, \mathbf{y}^{{(i)}}_r(t)],U^{(i+1)},\dots, U^{(k)},(\lambda_1,\dots, \lambda_r)), \\
c_2(t) &:= (U^{(1)},\dots, U^{(k)},(\lambda_1,\dots,\lambda_{j-1}, \lambda_j + t,\lambda_{j+1},\dots, \lambda_r)),
\end{align*}
where $\widehat{U}^{(i)}$ is the submatrix of ${U}^{(i)}$ obtained by taking the first $(r-1)$ columns and
\begin{enumerate}[label=(\roman*)]
\item If $1\le i \le s$, $\mathbf{y}^{(i)}_r(t)$ is a curve in $\left(\operatorname{span}(\mathbf{u}^{(i)}_1,\dots, \mathbf{u}^{(i)}_{r-1})\right)^\perp$ such that $\lVert \mathbf{y}^{(i)}_r(t) \rVert \equiv 1$, $\mathbf{y}^{(i)}_r(0) = \mathbf{u}^{(i)}_r$ and $\dot{\mathbf{y}}^{(i)}_r(0) = \mathbf{x}^{(i)}_r$.
\item If $s+1 \le i \le k$, $\mathbf{y}^{(i)}_r(t)$ is a curve in $\mathbb{R}^{n_i}$ such that $\lVert \mathbf{y}^{(i)}_r(t) \rVert \equiv 1$, $\mathbf{y}^{(i)}_r(0)= \mathbf{u}^{(i)}_r$ and $\dot{\mathbf{y}}^{(i)}_r(0) = \mathbf{x}^{(i)}_r$.
\end{enumerate}
It is obvious that $c_1(0) = c_2(0) = y$ and
\begin{align*}
\dot{c}_1(0) &= {z^{(i)}_j}, \\
\dot{c}_2(0) &= (0,\dots, 0, \ee_j).
\end{align*}
Using the formula $d_y \varphi_{\mathbf{n},r} (\dot{c}_i(0)) = \frac{d \varphi_{\mathbf{n},r}(c_i(t)) }{dt}\mid_{t = 0}$, we obtain the desired {expressions}.
\end{proof}

\begin{lemma}\label{lem:differential phi 1}
Let $y = (U^{(1)},\dots, U^{(k)},(\lambda_1,\dots, \lambda_r))$ be a point in $V_{\mathbf{n},r}$ and let $A$ be an $r\times r$ skew-symmetric matrix. For $1\le i \le s$, we have $U^{(i)} A \in \Tt_{\V(r,n_i)} ( U^{(i)} )$ and
\[
d_y \varphi_{\mathbf{n},r} (0,\dots, 0, U^{(i)}A, 0 ,\dots, 0, (0,\dots, 0) ) = \sum_{j=1}^{{r}} \lambda_j \mathbf{u}^{(1)}_j \otimes \cdots \mathbf{u}^{(i-1)}_j  \otimes \mathbf{v}^{(i)}_j \otimes \mathbf{u}^{(i+1)}_j \otimes \cdots \otimes \mathbf{u}^{(k)}_j,
\]
where $\mathbf{v}^{(i)}_j$ is the $j$-th column vector of $U^{(i)}A$.
\end{lemma}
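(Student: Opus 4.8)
The plan is to realize the given tangent vector by an explicit curve built from the matrix exponential of $A$ and then to differentiate the multilinear map $\varphi_{\mathbf{n},r}$ along that curve.

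First I would settle the membership claim. Since $1\le i\le s$, the $i$-th factor of $V_{\mathbf{n},r}$ is the Stiefel manifold $\V(r,n_i)$, and by the description recalled just before Lemma~\ref{lem:differential phi} (see \cite{AMS-08}) one has
\[
\Tt_{\V(r,n_i)}(U^{(i)}) = (U^{(i)})^{\perp}\cdot \mathbb{R}^{(n_i-r)\times r} + U^{(i)}\cdot \operatorname{Sk}^r .
\]
As $A=-A^{\tp}$ we have $A\in\operatorname{Sk}^r$, so $U^{(i)}A$ lies in the second summand and hence in $\Tt_{\V(r,n_i)}(U^{(i)})$; consequently $(0,\dots,0,U^{(i)}A,0,\dots,0,(0,\dots,0))$ is a tangent vector of $V_{\mathbf{n},r}$ at $y$.

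Next I would compute the differential. Because $A$ is skew-symmetric, $t\mapsto\exp(tA)$ is a smooth curve in $\O(r)$ with $\exp(0\cdot A)=I_r$ and $\tfrac{d}{dt}\exp(tA)\big|_{t=0}=A$; and since right multiplication by an element of $\O(r)$ preserves $\V(r,n_i)$, the curve
\[
c(t) := \big(U^{(1)},\dots,U^{(i-1)},\,U^{(i)}\exp(tA),\,U^{(i+1)},\dots,U^{(k)},(\lambda_1,\dots,\lambda_r)\big)
\]
lies in $V_{\mathbf{n},r}$, satisfies $c(0)=y$, and has $\dot c(0)=(0,\dots,0,U^{(i)}A,0,\dots,0,(0,\dots,0))$. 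Writing $(M)_j$ for the $j$-th column of a matrix $M$, the definitions of $\operatorname{diag}_k$ and of the matrix--tensor product give
\[
\varphi_{\mathbf{n},r}(c(t)) = \sum_{j=1}^{r} \lambda_j\, \mathbf{u}^{(1)}_j \otimes \cdots \otimes \mathbf{u}^{(i-1)}_j \otimes \big(U^{(i)}\exp(tA)\big)_j \otimes \mathbf{u}^{(i+1)}_j \otimes \cdots \otimes \mathbf{u}^{(k)}_j .
\]
Differentiating at $t=0$ by the Leibniz rule for the multilinear tensor product — only the $i$-th tensor factor depends on $t$ — and using $\tfrac{d}{dt}\big(U^{(i)}\exp(tA)\big)\big|_{t=0}=U^{(i)}A$, whose $j$-th column is $\mathbf{v}^{(i)}_j$, yields exactly the asserted formula for $d_y\varphi_{\mathbf{n},r}\big(\dot c(0)\big)$.

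I do not expect a serious obstacle here; the points deserving care are that the curve $c$ must be checked to stay inside $V_{\mathbf{n},r}$ — which is precisely where the hypothesis $i\le s$ enters, since right multiplication by an orthogonal matrix preserves a Stiefel factor but not a $\B$-factor — and that this lemma is genuinely complementary to Lemma~\ref{lem:differential phi}: each $\mathbf{v}^{(i)}_j=U^{(i)}A\,\ee_j$ is a linear combination of $\mathbf{u}^{(i)}_1,\dots,\mathbf{u}^{(i)}_r$ with vanishing $\mathbf{u}^{(i)}_j$-coefficient (as $A_{jj}=0$), so these ``vertical'' directions in $U^{(i)}\operatorname{Sk}^r$ are not among the ``horizontal'' directions $z^{(i)}_j$ handled in Lemma~\ref{lem:differential phi}; together the two lemmas describe all of $d_y\varphi_{\mathbf{n},r}\big(\Tt_{V_{\mathbf{n},r}}(y)\big)$.
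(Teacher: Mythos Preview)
Your proof is correct and follows essentially the same approach as the paper: construct the curve $c(t)=(U^{(1)},\dots,U^{(i-1)},U^{(i)}\exp(tA),U^{(i+1)},\dots,U^{(k)},(\lambda_1,\dots,\lambda_r))$ and differentiate $\varphi_{\mathbf{n},r}(c(t))$ at $t=0$. Your write-up is in fact more detailed than the paper's, which simply states that the tangent vector arises from this curve and that the formula follows by differentiation; your added remarks on why $i\le s$ is needed and on the complementarity with Lemma~\ref{lem:differential phi} are correct and helpful.
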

\begin{proof}
We notice that the tangent vector $(0,\dots, 0,  U^{(i)}A, 0 ,\dots, 0, (0,\dots, 0) ) \in \Tt_{V_{\mathbf{n},r}} (y)$ is obtained by differentiating the curve
\[
c(t) := (U^{(1)},\dots, U^{(i-1)},  U^{(i)}\exp(tA), U^{(i+1)}, \dots, U^{(k)}, (\lambda_1,\dots, \lambda_r))
\]
at $t = 0$. The expression for $d_y \varphi_{\mathbf{n},r} (0,\dots, 0,  U^{(i)}A, 0 ,\dots, 0, (0,\dots, 0) )$ can be derived by differentiating $\varphi_{\mathbf{n},r} (c(t))$ at $t= 0$.
\end{proof}

We observe that for $1\le l < m \le r$, if $A_{lm}$ denotes the $r\times r$ skew-symmetric matrix whose entries are all zeros except the $(l,m)$-th and $(m,l)$-th, which are $1$ and $-1$ respectively, then
\[
U^{(i)} A_{lm} = \begin{bmatrix}
\mathbf{u}^{(i)}_1,\dots, \mathbf{u}^{(i)}_{{r}}
\end{bmatrix} A_{lm} = \begin{bmatrix}
0,\dots, 0, -\mathbf{u}^{(i)}_{m},0,\dots,0, \mathbf{u}^{(i)}_l, 0,\dots, 0
\end{bmatrix}.
\]
This implies that $d_y \varphi_{\mathbf{n},r} (0,\dots, 0, U^{(i)} A_{lm}, 0 ,\dots, 0, (0,\dots, 0) ) $ is equal to
\begin{equation}\label{eqn:basis Ckl}
\lambda_m \mathbf{u}^{(1)}_m \otimes \cdots \mathbf{u}^{(i-1)}_m  \otimes \mathbf{u}^{(i)}_l \otimes \mathbf{u}^{(i+1)}_m \otimes \cdots \otimes \mathbf{u}^{(k)}_m - \lambda_l \mathbf{u}^{(1)}_l \otimes \cdots \otimes \mathbf{u}^{(i-1)}_l  \otimes \mathbf{u}^{(i)}_m \otimes \mathbf{u}^{(i+1)}_l \otimes \cdots \otimes \mathbf{u}^{(k)}_l.
\end{equation}
For $1\le l < m \le r$ and $1\le i \le s$, let $\mathcal{T}^{(i)}_{lm}$ be the tensor in \eqref{eqn:basis Ckl} and let $C_{lm}$ be the set $\{\mathcal{T}^{(i)}_{lm}: 1 \le i \le s \}$.

Let $y = (U^{(1)},\dots, U^{(k)},(\lambda_1,\dots, \lambda_{{r}}))$ be a point in $V_{\mathbf{n},r}$. Let $\{ \mathbf{x}^{(i)}_{p} \}_{p= 1}^{n_i - r}$ be an orthonormal basis of $\left(\operatorname{span}( \mathbf{u}^{(i)}_1,\dots, \mathbf{u}^{(i)}_r) \right)^\perp$ for $1 \le i \le s$ and let $\{ \mathbf{x}^{(i')}_{j,p'} \}_{p'=1}^{n_{i'}-1}$ be an orthonormal basis of $\left(\operatorname{span}( \mathbf{u}^{(i')}_j) \right)^\perp$ for $s+1 \le i' \le k $.

Let $J\coloneqq \{j: \lambda_j\neq 0, j =1,\dots,r \}$. We define a subset $B\subseteq \mathbb{R}^{n_1} \otimes \cdots \otimes \mathbb{R}^{n_k}$ to be
\begin{equation}\label{eqn:basis B}
\begin{split}
\big\{
&\mathbf{u}^{(1)}_j \otimes \cdots \otimes \mathbf{u}^{(i-1)}_j \otimes \mathbf{x}^{(i)}_{p} \otimes \mathbf{u}^{(i+1)}_j \otimes \cdots \otimes \mathbf{u}^{(k)}_j, \\
& \mathbf{u}^{(1)}_j \otimes \cdots \otimes \mathbf{u}^{(i'-1)}_j \otimes \mathbf{x}^{(i')}_{j,p'} \otimes \mathbf{u}^{(i'+1)}_j \otimes \cdots \otimes \mathbf{u}^{(k)}_j, \\
& \mathbf{u}^{(1)}_{j'} \otimes \cdots \otimes \mathbf{u}^{(k)}_{j'}
\big\},
\end{split}
\end{equation}
for all $1\le i \le s$, $s+1 \le i' \le k$, $1\le p \le n_i -r$, $1 \le p' \le n_{i'}-1$, $j\in J$ and $j'\in\{1,\dots,r\}$.
\begin{lemma}\label{lem:basis dphi}
For $s\ge 2$, we have the following:
\begin{enumerate}[label=(\roman*)]
\item $B $ is an orthonormal subset of $d_y \varphi_{\mathbf{n},r} (\Tt_{V_{\mathbf{n},r}})$. In particular,
\[
\dim (\operatorname{span} (B ) ) = \Big( \big( \sum_{i=1}^k n_i \big) - sr - (k - s)  \Big)q+r,
\]
where $q$ is the cardinality of the set $J$.
\label{lem:basis dphi:item1}

\item Elements in $B$ are orthogonal to those in $C_{lm}$. \label{lem:basis dphi:item2}

\item Elements in $C_{lm}$ are orthogonal to those in $C_{l_1 m _1}$ whenever $(l,m) \ne (l_1,m_1)$. \label{lem:basis dphi:item3}

\item Elements in $C_{lm} \setminus \{0\}$ are linearly independent unless $s = 2$ and $\lambda_l = \beta \lambda_m$ and $\mathbf{u}^{j}_l = \gamma_j \mathbf{u}^{j}_m$, where $\beta,\gamma_j \in \{-1,1\}$ for $3\le j \le k$.
 \label{lem:basis dphi:item4}

\item Assume that $(\lambda_l,\lambda_m) \ne (0,0)$. Then $\dim (\operatorname{span} (C_{lm})) = s$ if either {$s\geq 3$} or $s = 2$ with an exceptional case: $\lambda_l = \pm \lambda_m$ and $\mathbf{u}^{(j)}_l = \pm \mathbf{u}_m^{(j)}$ for all $3\le j \le k$, in which $\dim (\operatorname{span} (C_{lm})) = 1$.
 \label{lem:basis dphi:item5}

\end{enumerate}
\end{lemma}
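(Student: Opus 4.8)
The plan rests on one structural observation. Every tensor occurring in $B$, and every rank-one tensor appearing as a summand of some $\mathcal{T}^{(i)}_{lm}$, is a pure rank-one tensor $\mathbf{w}^{(1)}\otimes\cdots\otimes\mathbf{w}^{(k)}$ whose factors are unit vectors drawn from the ``libraries'' $\{\mathbf{u}^{(t)}_1,\dots,\mathbf{u}^{(t)}_r\}$ enlarged by the auxiliary $\mathbf{x}$-vectors. For $t\le s$ such a library is literally an orthonormal basis of $\mathbb{R}^{n_t}$ (the columns of $U^{(t)}\in\V(r,n_t)$ together with the orthonormal vectors $\mathbf{x}^{(t)}_p$), whereas for $t>s$ one only knows that each column of $U^{(t)}$ is a unit vector and that $\mathbf{x}^{(t)}_{j,p}\perp\mathbf{u}^{(t)}_j$. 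Since the Hilbert--Schmidt inner product of two rank-one tensors factors as $\prod_t\langle\mathbf{w}^{(t)},\mathbf{w}'^{(t)}\rangle$, the assertions \ref{lem:basis dphi:item1}--\ref{lem:basis dphi:item3} all reduce to exhibiting, for each relevant pair of labels, one mode $t\le s$ at which the two underlying ``column-index patterns'' disagree (or a mode carrying a separating $\mathbf{x}$-factor); the standing hypothesis $s\ge 2$ is exactly what guarantees that for any single distinguished mode $i\in\{1,\dots,s\}$ a second such mode $\ell\le s$ exists, and this is the only place $s\ge 2$ enters \ref{lem:basis dphi:item1}--\ref{lem:basis dphi:item3}.

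For \ref{lem:basis dphi:item1}, I would first apply Lemma~\ref{lem:differential phi}: for $j\in J$ the vector $\lambda_j^{-1}z^{(i)}_j$ still lies in $\Tt_{V_{\mathbf{n},r}}(y)$ and maps under $d_y\varphi_{\mathbf{n},r}$ to the first two families of tensors listed in $B$, while $(0,\dots,0,\ee_{j'})$ maps to the third family; hence $B\subseteq d_y\varphi_{\mathbf{n},r}(\Tt_{V_{\mathbf{n},r}})$. Orthonormality is then a short case check over the three types: two labels with different index $j$ are separated at some mode $\ell\le s$, while two labels agreeing in $j$ but differing in the position or identity of their $\mathbf{x}$-factor are separated by that factor's orthogonality to the matching $\mathbf{u}$-factor. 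Counting the three families yields $\lvert B\rvert=\big((\textstyle\sum_{t=1}^k n_t)-sr-(k-s)\big)q+r$, which equals $\dim(\operatorname{span}(B))$ since $B$ is orthonormal.

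For \ref{lem:basis dphi:item2} and \ref{lem:basis dphi:item3} I would write $\mathcal{T}^{(i)}_{lm}=\lambda_m A_i-\lambda_l B_i$, where $A_i$ has column-index pattern ``$m$ in every mode except $l$ in mode $i$'' and $B_i$ the mirrored pattern, and show each of $A_i,B_i$ is orthogonal to every $b\in B$ and to every $A_{i_1},B_{i_1}$ coming from a pair $(l_1,m_1)\ne(l,m)$. For \ref{lem:basis dphi:item2}, restricted to the modes $\le s$ the pattern of $A_i$ or $B_i$ is non-constant, whereas that of any $b\in B$ is constant in its index $j$ (or $b$ already carries an $\mathbf{x}$-factor at a mode $\le s$), so the product over modes $\le s$ vanishes. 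For \ref{lem:basis dphi:item3}, a nonzero inner product of two of these rank-one pieces would force their index patterns to agree on all modes $\le s$; since $\{l,m\}$ and $\{l_1,m_1\}$ are distinct two-element subsets of $\{1,\dots,r\}$, a case split on $i=i_1$ versus $i\ne i_1$ (the latter using $s\ge 2$, together with $l<m$ and $l_1<m_1$ to exclude $l=m_1,\ m=l_1$) produces a contradiction, so all four cross terms, hence the inner product, vanish.

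For \ref{lem:basis dphi:item4} and \ref{lem:basis dphi:item5} --- the crux --- I would separate $s\ge 3$ from $s=2$. When $s\ge 3$, a third mode $\le s$ kills $\langle A_i,B_{i'}\rangle$ for all $i,i'$, so $\{A_1,\dots,A_s,B_1,\dots,B_s\}$ is orthonormal; hence the $\mathcal{T}^{(i)}_{lm}=\lambda_m A_i-\lambda_l B_i$ are linearly independent whenever $(\lambda_l,\lambda_m)\ne(0,0)$ and are all zero otherwise, so no exceptional case arises and $\dim(\operatorname{span}(C_{lm}))=s$ under the hypothesis of \ref{lem:basis dphi:item5}. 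When $s=2$, I would flatten along the partition of modes $\{1,2\}\mid\{3,\dots,k\}$, writing $\mathcal{T}^{(1)}_{lm}=\lambda_m\,\mathbf{p}\otimes\mathbf{P}-\lambda_l\,\mathbf{q}\otimes\mathbf{Q}$ and $\mathcal{T}^{(2)}_{lm}=\lambda_m\,\mathbf{q}\otimes\mathbf{P}-\lambda_l\,\mathbf{p}\otimes\mathbf{Q}$ with $\mathbf{p}=\mathbf{u}^{(1)}_l\otimes\mathbf{u}^{(2)}_m$, $\mathbf{q}=\mathbf{u}^{(1)}_m\otimes\mathbf{u}^{(2)}_l$ (orthonormal, as modes $1,2$ have orthonormal columns) and $\mathbf{P}=\bigotimes_{t=3}^k\mathbf{u}^{(t)}_m$, $\mathbf{Q}=\bigotimes_{t=3}^k\mathbf{u}^{(t)}_l$ (unit). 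Reading these as matrices, a rank-one column-space/row-space comparison shows that $\mathcal{T}^{(2)}_{lm}\in\operatorname{span}(\mathcal{T}^{(1)}_{lm})$ forces some scalar $c$ with $\mathbf{q}-c\mathbf{p}\parallel\mathbf{p}-c\mathbf{q}$ and $\mathbf{P}\parallel\mathbf{Q}$; since $\mathbf{p}\perp\mathbf{q}$ this gives $c=\pm 1$, and $\mathbf{P}\parallel\mathbf{Q}$ with all vectors unit gives $\mathbf{u}^{(t)}_l=\pm\mathbf{u}^{(t)}_m$ for $3\le t\le k$, whence also $\lambda_l=\pm\lambda_m$ --- precisely the stated exceptional configuration; conversely, in that configuration a direct sign computation gives $\mathcal{T}^{(2)}_{lm}=\pm\mathcal{T}^{(1)}_{lm}$. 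Combining this with the fact that $(\lambda_l,\lambda_m)\ne(0,0)$ makes both $\mathcal{T}^{(i)}_{lm}$ nonzero yields $\dim(\operatorname{span}(C_{lm}))=1$ in the exceptional case and $2$ otherwise. The \emph{main obstacle} is exactly this $s=2$ flattening analysis: it is where the $s\ge 3$ versus $s=2$ dichotomy of the lemma originates, and establishing both directions of the exceptional case requires careful bookkeeping of the $\pm 1$ signs.
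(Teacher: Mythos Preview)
Your proposal is correct and follows the same overall architecture as the paper's proof: a direct orthogonality check for \ref{lem:basis dphi:item1}--\ref{lem:basis dphi:item3} exploiting that any two relevant rank-one tensors can be separated at some orthonormal mode $t\le s$, followed by a separate $s\ge 3$ / $s=2$ analysis for \ref{lem:basis dphi:item4}--\ref{lem:basis dphi:item5}. Your organising principle (compare ``column-index patterns'' on the first $s$ modes) is exactly what the paper's case-by-case computations encode, and your explicit remark that $s\ge 2$ is precisely what supplies the second separating mode matches the paper's use of the hypothesis.

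The one place you diverge is the $s=2$ case of \ref{lem:basis dphi:item4}. The paper argues via norms and the single scalar $\langle \mathcal{T}^{(1)}_{lm},\mathcal{T}^{(2)}_{lm}\rangle$: since $\lVert \mathcal{T}^{(1)}_{lm}\rVert^2=\lVert \mathcal{T}^{(2)}_{lm}\rVert^2=\lambda_l^2+\lambda_m^2$, any proportionality constant is $\pm 1$, and then evaluating the cross inner product $-2\lambda_l\lambda_m\prod_{t\ge 3}\langle\mathbf{u}^{(t)}_l,\mathbf{u}^{(t)}_m\rangle$ forces $\lambda_l=\pm\lambda_m$ and $\mathbf{u}^{(t)}_l=\pm\mathbf{u}^{(t)}_m$. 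Your flattening $\{1,2\}\mid\{3,\dots,k\}$ and rank comparison on $\lambda_m(\mathbf{q}-c\mathbf{p})\otimes\mathbf{P}=\lambda_l(\mathbf{p}-c\mathbf{q})\otimes\mathbf{Q}$ reaches the same conclusion by a more structural route (column-space forces $c^2=1$, row-space forces $\mathbf{P}\parallel\mathbf{Q}$). Both are short; the paper's computation is slightly quicker to write down, while your flattening makes it more transparent \emph{why} $s=2$ is the borderline case---only two orthonormal modes means the $\{1,2\}$-flattening of each $\mathcal{T}^{(i)}_{lm}$ has rank at most two, so a one-dimensional coincidence is possible.
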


\begin{proof}

{Unless otherwise stated, all indices in the following argument range over those sets specified in \eqref{eqn:basis B} and indices in distinct notations should be understood to take distinct values.}
\begin{enumerate}[label=(\roman*)]
\item It is clear that
\[
\left( \mathbf{u}^{(1)}_j {\otimes\cdots}\otimes \mathbf{u}^{(i-1)}_j \otimes \mathbf{x}^{(i)}_{p} \otimes \mathbf{u}^{(i+1)}_j \otimes \cdots \otimes \mathbf{u}^{(k)}_j \right) \perp  \left( \mathbf{u}^{(1)}_{j'} \otimes \cdots \otimes \mathbf{u}^{(k)}_{j'} \right)
\]
and
\[
\left( \mathbf{u}^{(1)}_j {\otimes\cdots}\otimes \mathbf{u}^{(i'-1)}_j \otimes \mathbf{x}^{(i')}_{j,p} \otimes \mathbf{u}^{(i'+1)}_j \otimes \cdots \otimes \mathbf{u}^{(k)}_j \right) \perp  \left( \mathbf{u}^{(1)}_{j'} \otimes \cdots \otimes \mathbf{u}^{(k)}_{j'} \right)
\]
since $\langle \mathbf{u}^{({i})}_{j}, \mathbf{u}^{({i})}_{{j'}} \rangle = {\delta_{j{j'}}}$, $\langle \mathbf{x}^{(i)}_{p}, \mathbf{u}^{(i)}_{{j}} \rangle = 0$ and $\langle \mathbf{x}^{(i')}_{j,p}, \mathbf{u}^{(i')}_{j} \rangle = 0$. By the same reason, we also have
\footnotesize
\[
\left( \mathbf{u}^{(1)}_j {\otimes\cdots}\otimes \mathbf{u}^{(i-1)}_j \otimes \mathbf{x}^{(i)}_{p} \otimes \mathbf{u}^{(i+1)}_j \otimes \cdots \otimes \mathbf{u}^{(k)}_j  \right) \perp \left( \mathbf{u}^{(1)}_{j_1} {\otimes\cdots}\otimes \mathbf{u}^{(i'-1)}_{j_1} \otimes \mathbf{x}^{(i')}_{{j_1},p'} \otimes \mathbf{u}^{(i'+1)}_{j_1} \otimes \cdots \otimes \mathbf{u}^{(k)}_{j_1} \right)
\]
\normalsize
for all $j,j_1 \in J$.
By the choice of $\mathbf{x}^{(i')}_{j,p}$, we may conclude that
\footnotesize
\[
\left(  \mathbf{u}^{(1)}_j {\otimes\cdots}\otimes \mathbf{u}^{(i'-1)}_j \otimes \mathbf{x}^{(i')}_{j,p} \otimes \mathbf{u}^{(i'+1)}_j \otimes \cdots \otimes \mathbf{u}^{(k)}_j  \right) \perp \left(  \mathbf{u}^{(1)}_{j_1} {\otimes\cdots} \otimes \mathbf{u}^{(i'_1-1)}_{j_1} \otimes \mathbf{x}^{(i'_1)}_{{j_1},p_1} \otimes \mathbf{u}^{(i_1+1)}_{j_1} \otimes \cdots \otimes \mathbf{u}^{(k)}_{j_1}  \right).
\]
\normalsize
By the choice of $\mathbf{x}^{(i)}_{p}$, it is obvious that
\footnotesize
\[
\left( \mathbf{u}^{(1)}_j {\otimes\cdots}\otimes \mathbf{u}^{(i-1)}_j \otimes \mathbf{x}^{(i)}_{p} \otimes \mathbf{u}^{(i+1)}_j \otimes \cdots \otimes \mathbf{u}^{(k)}_j  \right) \perp \left(  \mathbf{u}^{(1)}_{j_1}{\otimes\cdots} \otimes \mathbf{u}^{(i_1-1)}_{j_1} \otimes \mathbf{x}^{(i_1)}_{p_1} \otimes \mathbf{u}^{(i_1+1)}_{j_1} \otimes \cdots \otimes \mathbf{u}^{(k)}_{j_1}  \right).
\]
\normalsize
Thus, the vectors given in \eqref{eqn:basis B} are mutually orthogonal, and hence the conclusion follows.

\item According to \eqref{eqn:basis Ckl}, for $1\le l < m \le r$, $\mathcal{T}^{(i)}_{lm} $ is simply
\small
\[
\lambda_m \mathbf{u}^{(1)}_m \otimes \cdots \mathbf{u}^{(i-1)}_m  \otimes \mathbf{u}^{(i)}_l \otimes \mathbf{u}^{(i+1)}_m \otimes \cdots \otimes \mathbf{u}^{(k)}_m - \lambda_l \mathbf{u}^{(1)}_l \otimes \cdots \mathbf{u}^{(i-1)}_l  \otimes \mathbf{u}^{(i)}_m \otimes \mathbf{u}^{(i+1)}_l \otimes \cdots \otimes \mathbf{u}^{(k)}_l.
\]
\normalsize
For notational simplicity, we denote $\mathcal{A}_m^{(i)} := \mathbf{u}^{(1)}_m \otimes \cdots \mathbf{u}^{(i-1)}_m   \otimes \mathbf{u}^{(i+1)}_m \otimes \cdots \otimes \mathbf{u}^{(k)}_m$ so that
\[
\mathcal{T}^{(i)}_{lm} \simeq \lambda_m \mathcal{A}_m^{(i)} \otimes \mathbf{u}^{(i)}_l - \lambda_l \mathcal{A}_l^{(i)} \otimes \mathbf{u}^{(i)}_m.
\]
Relations
\[
\left( \mathbf{u}^{(1)}_j \otimes \cdots \otimes \mathbf{u}^{(i-1)}_j \otimes \mathbf{x}^{(i)}_{p} \otimes \mathbf{u}^{(i+1)}_j \otimes \cdots \otimes \mathbf{u}^{(k)}_j \right) \perp \left( \lambda_m \mathcal{A}_m^{(i_1)} \otimes \mathbf{u}^{(i_1)}_l - \lambda_l \mathcal{A}_l^{(i_1)} \otimes \mathbf{u}^{(i_1)}_m \right)
\]
and
\[
\left( \mathbf{u}^{(1)}_j \otimes \cdots \otimes \mathbf{u}^{(i'-1)}_j \otimes \mathbf{x}^{(i')}_{j,p'} \otimes \mathbf{u}^{(i'+1)}_j \otimes \cdots \otimes \mathbf{u}^{(k)}_j \right) \perp \left( \lambda_m \mathcal{A}_m^{(i_1)} \otimes \mathbf{u}^{(i_1)}_l - \lambda_l \mathcal{A}_l^{(i_1)} \otimes \mathbf{u}^{(i_1)}_m \right)
\]
easily follow from the choice of $\mathbf{x}^{(i)}_{p}$ and $\mathbf{x}^{(i')}_{j,p'}$ respectively. Moreover, we notice that
\[
\langle\mathbf{u}^{(1)}_{j'} \otimes \cdots \otimes \mathbf{u}^{(k)}_{j'},\mathcal{A}_m^{(i_1)} \otimes \mathbf{u}^{(i_1)}_l\rangle = \delta_{j'm}\delta_{j'l} = 0
\]
since $l<m$. This implies
\[
\left( \mathbf{u}^{(1)}_{j'} \otimes \cdots \otimes \mathbf{u}^{(k)}_{j'} \right) \perp \left( \lambda_m \mathcal{A}_m^{(i_1)} \otimes \mathbf{u}^{(i_1)}_l - \lambda_l \mathcal{A}_l^{(i_1)} \otimes \mathbf{u}^{(i_1)}_m \right)
\]
and therefore the conclusion follows.

\item For $\mathcal{T}^{(i)}_{lm} \in C_{lm}$ and $\mathcal{T}^{(i_1)}_{l_1 m_1} \in C_{l_1 m_1}$, we recall that
\begin{align*}
\mathcal{T}^{(i)}_{lm} &\simeq \lambda_m \mathcal{A}_m^{(i)} \otimes \mathbf{u}^{(i)}_l - \lambda_l \mathcal{A}_l^{(i)} \otimes \mathbf{u}^{(i)}_m, \\
\mathcal{T}^{(i_1)}_{l_1m_1} &\simeq \lambda_{m_1} \mathcal{A}_{m_1}^{(i_1)} \otimes \mathbf{u}^{(i_1)}_{l_1} - \lambda_{l_1} \mathcal{A}_{l_1}^{(i_1)} \otimes \mathbf{u}^{(i_1)}_{m_1}.
\end{align*}
We discuss with respect to the following cases:
\begin{enumerate}[label=(\roman*)]
\item $l \ne m_1, l \ne l_1, m \ne l_1, m \ne m_1$: this case is obvious.
\item $l = m_1$: this implies that $l_1 < m_1 = l < m$.
\begin{enumerate}[label=(\roman*)]
\item $i = i_1$: we have $\mathcal{A}_m^{(i)} \otimes \mathbf{u}^{(i)}_l\perp \mathcal{A}_{l}^{(i)} \otimes \mathbf{u}^{(i)}_{l_1}$ since $\mathbf{u}^{(i)}_l \perp \mathbf{u}^{(i)}_{l_1}$; $\mathcal{A}_m^{(i)} \otimes \mathbf{u}^{(i)}_l \perp \mathcal{A}_{l_1}^{(i)} \otimes \mathbf{u}^{(i)}_{l}$ since $\mathcal{A}_m^{(i)} \perp \mathcal{A}_{l_1}^{(i)}$; $\mathcal{A}_l^{(i)} \otimes \mathbf{u}^{(i)}_m \perp \mathcal{A}_{l}^{(i)} \otimes \mathbf{u}^{(i)}_{l_1}$ since $\mathbf{u}^{(i)}_m \perp \mathbf{u}^{(i)}_{l_1}$; $\mathcal{A}_l^{(i)} \otimes \mathbf{u}^{(i)}_m \perp \mathcal{A}_{l_1}^{(i)} \otimes \mathbf{u}^{(i)}_{l}$ since $\mathbf{u}^{(i)}_m \perp \mathbf{u}^{(i)}_{l}$.
\item $i \ne i _1$: we have $\mathcal{A}_m^{(i)} \otimes \mathbf{u}^{(i)}_l \perp \mathcal{A}_{l}^{(i_1)} \otimes \mathbf{u}^{(i_1)}_{l_1}$ since $\mathbf{u}^{(i_1)}_m \perp \mathbf{u}^{(i_1)}_{l_1}$; $\mathcal{A}_m^{(i)} \otimes \mathbf{u}^{(i)}_l \perp \mathcal{A}_{l_1}^{(i_1)} \otimes \mathbf{u}^{(i_1)}_{l}$ since $\mathbf{u}^{(i)}_l \perp \mathbf{u}^{(i)}_{l_1}$; $\mathcal{A}_l^{(i)} \otimes \mathbf{u}^{(i)}_m \perp \mathcal{A}_{l}^{(i_1)} \otimes \mathbf{u}^{(i_1)}_{l_1}$ since $\mathbf{u}^{(i)}_m \perp \mathbf{u}^{(i)}_{l}$; $\mathcal{A}_l^{(i)} \otimes \mathbf{u}^{(i)}_m \perp \mathcal{A}_{l_1}^{(i_1)} \otimes \mathbf{u}^{(i_1)}_{l}$ since $\mathbf{u}^{(i)}_m \perp \mathbf{u}^{(i)}_{l_1}$.
\end{enumerate}
\item $l = l_1$: this implies that $l_1 = l < m, m_1$ and $m \ne m_1$.
\begin{enumerate}[label=(\roman*)]
\item $i = i_1$: we have $\mathcal{A}_m^{(i)} \otimes \mathbf{u}^{(i)}_l \perp \mathcal{A}_{m_1}^{(i)} \otimes \mathbf{u}^{(i)}_{l}$ since $\mathcal{A}_m^{(i)}  \perp \mathcal{A}_{m_1}^{(i)} $;
$\mathcal{A}_m^{(i)} \otimes \mathbf{u}^{(i)}_l \perp \mathcal{A}_{l}^{(i)} \otimes \mathbf{u}^{(i)}_{m_1}$ since $\mathbf{u}^{(i)}_l  \perp \mathbf{u}^{(i)}_{m_1}$; $\mathcal{A}_l^{(i)} \otimes \mathbf{u}^{(i)}_m \perp \mathcal{A}_{m_1}^{(i)} \otimes \mathbf{u}^{(i)}_{l}$ since $\mathbf{u}^{(i)}_{m} \perp \mathbf{u}^{(i)}_{l}$; $\mathcal{A}_l^{(i)} \otimes \mathbf{u}^{(i)}_m \perp \mathcal{A}_{l}^{(i)} \otimes \mathbf{u}^{(i)}_{m_1}$ since $\mathbf{u}^{(i)}_m \perp \mathbf{u}^{(i)}_{m_1}$.
\item $i \ne i _1$: we have $\mathcal{A}_m^{(i)} \otimes \mathbf{u}^{(i)}_l \perp \mathcal{A}_{m_1}^{(i_1)} \otimes \mathbf{u}^{(i_1)}_{l}$ since $\mathbf{u}^{(i_1)}_m \perp \mathbf{u}^{(i_1)}_{l}$;
$\mathcal{A}_m^{(i)} \otimes \mathbf{u}^{(i)}_l \perp \mathcal{A}_{l}^{(i_1)} \otimes \mathbf{u}^{(i_1)}_{m_1}$ since $\mathbf{u}^{({i_1})}_{{m_1}} \perp \mathbf{u}^{({i_1})}_{{m}}$; $\mathcal{A}_l^{(i)} \otimes \mathbf{u}^{(i)}_m \perp \mathcal{A}_{m_1}^{(i_1)} \otimes \mathbf{u}^{(i_1)}_{l}$ since $\mathbf{u}^{(i)}_m \perp \mathbf{u}^{(i)}_{m_1}$; $\mathcal{A}_l^{(i)} \otimes \mathbf{u}^{(i)}_m \perp \mathcal{A}_{l}^{(i_1)} \otimes \mathbf{u}^{(i_1)}_{m_1}$ since $\mathbf{u}^{(i)}_m \perp \mathbf{u}^{(i)}_{l}$.
\end{enumerate}
\item $m = m_1$: this implies that $l,l_1 < m = m_1$ and $l \ne l _1$. This case can be proved in a similar pattern as the last case.
\end{enumerate}
Hence we have $\mathcal{T}^{(i)}_{lm} \perp \mathcal{T}^{(i)}_{l_1 m_1}$ whenever $(l,m) \ne (l_1,m_1)$.

\item  For $\mathcal{T}^{(i)}_{lm}, \mathcal{T}^{(i_1)}_{l m} \in C_{lm}, i \ne i_1$, we have
\begin{align*}
\mathcal{T}^{(i)}_{lm} &\simeq \lambda_m \mathcal{A}_m^{(i)} \otimes \mathbf{u}^{(i)}_l - \lambda_l \mathcal{A}_l^{(i)} \otimes \mathbf{u}^{(i)}_m, \\
\mathcal{T}^{(i_1)}_{lm} &\simeq \lambda_{m} \mathcal{A}_{m}^{(i_1)} \otimes \mathbf{u}^{(i_1)}_{l} - \lambda_{l} \mathcal{A}_{l}^{(i_1)} \otimes \mathbf{u}^{(i_1)}_{m}.
\end{align*}
If $s\ge 3$, then it is not hard to check that the four summands in the above are pairwise orthogonal. If $s = 2$, then $i = 1$ and $i_1 = 2$. Suppose that $\mathcal{T}^{(2)}_{lm} = c \mathcal{T}^{(1)}_{lm}$ for some $c \ne 0$. Since $\lVert \mathcal{T}^{(1)}_{lm} \rVert^2 = \lVert \mathcal{T}^{(2)}_{lm} \rVert^2 = \lambda_l^2 + \lambda_m^2$, it is clear that $c = \pm 1$. We also have
\[
c (\lambda_l^2 + \lambda_m^2) = \langle \mathcal{T}^{(1)}_{lm}, \mathcal{T}^{(2)}_{lm} \rangle = -2 \lambda_l \lambda_m \prod_{j =3}^k \langle \mathbf{u}^{(j)}_m, \mathbf{u}^{(j)}_l \rangle,
\]
from which we may derive that $\lambda_l = \beta \lambda_m$ and $\prod_{j =3}^k \langle \mathbf{u}^{(j)}_m, \mathbf{u}^{(j)}_l \rangle = -c\beta$ {for some $\beta=\pm 1$}. This implies that $\mathbf{u}^{(j)}_l = \gamma_j \mathbf{u}^{(j)}_m$ for some $\gamma_j \in \{-1,1\}$, $3 \le j \le k$. It is easily checked from \eqref{eqn:basis Ckl} that this condition is also sufficient.

\item {It follows from \eqref{eqn:basis Ckl} that $\mathcal{T}^{(i)}_{lm} \ne 0$ if $(\lambda_l,  \lambda_m) \ne (0,0)$.} Hence we must have $\#(C_{lm}) = s$. According to \eqref{lem:basis dphi:item4}, we have $\dim (\operatorname{span} ( C_{lm})) = s$ if either {$s\geq 3$} or $s= 2$ but not in the exceptional case. For $s = 2$ with the exceptional case, we have $\dim (\operatorname{span} ( C_{lm})) = 1$.
\end{enumerate}
\end{proof}

\begin{corollary}\label{cor:basis dphi 1}
If $s \ge 3$, then
\begin{align*}
\dim (d_y \varphi_{\mathbf{n},r} (\Tt_{V_{\mathbf{n},r}} (y)) ) &= \Big(  \big(\sum_{i=1}^k n_i\big)  -\frac{q-1}{2}s - k    \Big) q+r, \\
\dim (\ker(d_y \varphi_{\mathbf{n},r}) ) &=  \Big( \big( \sum_{i=1}^k n_i \big) - \frac{(r + q -1)s}{2}-k \Big)(r - q) ,
\end{align*}
where $q$ is the cardinality of the set $J = \{j: \lambda_j \ne 0, j =1,\dots, r\}$. In particular if $(\lambda_1,\dots, \lambda_r) \in \mathbb{R}_{\ast}^r$, then $d_y \varphi_{\mathbf{n},r} : \Tt_{V_{\mathbf{n},r}} (y) \to \Tt_{\varphi_{\mathbf{n},r} (y)} (\mathbb{R}^{n_1} \otimes \cdots \otimes \mathbb{R}^{n_k})$ is injective.
\end{corollary}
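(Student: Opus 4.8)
The plan is to assemble the orthogonality and independence statements of Lemma~\ref{lem:basis dphi} into a single dimension count. First I would record the spanning set of the relevant tangent space. From the product structure \eqref{eq:set-vnr} of $V_{\mathbf n,r}$, together with $\Tt_{\V(r,n)}(A)=A^{\perp}\cdot\mathbb R^{(n-r)\times r}+A\cdot\operatorname{Sk}^r$ and $\Tt_{\B(r,n)}(A)=\bigoplus_{j=1}^r\Tt_{\mathbb S^{n-1}}(\mathbf a_j)$, the space $\Tt_{V_{\mathbf n,r}}(y)$ is spanned by the vectors $z^{(i)}_j$ ($1\le i\le k$, $1\le j\le r$, with $\mathbf x^{(i)}_j$ ranging over an orthonormal basis of the appropriate normal space), the vectors $(0,\dots,0,\ee_j)$ ($1\le j\le r$), and the skew-symmetric directions attached to $U^{(i)}A_{lm}$ ($1\le i\le s$, $1\le l<m\le r$). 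Applying $d_y\varphi_{\mathbf n,r}$ and invoking Lemmas~\ref{lem:differential phi} and \ref{lem:differential phi 1}, the images of these generators are exactly the elements of the set $B$ of \eqref{eqn:basis B} together with the tensors $\mathcal T^{(i)}_{lm}\in C_{lm}$; here one notes that $d_y\varphi_{\mathbf n,r}(z^{(i)}_j)=0$ whenever $\lambda_j=0$, so only the $q=\#J$ indices in $J$ contribute to the $z$-type generators, while all $r$ of the rank-one tensors $\mathbf u^{(1)}_{j'}\otimes\cdots\otimes\mathbf u^{(k)}_{j'}$ still arise from the $\ee_{j'}$ directions. Hence $d_y\varphi_{\mathbf n,r}(\Tt_{V_{\mathbf n,r}}(y))=\operatorname{span}\bigl(B\cup\bigcup_{1\le l<m\le r}C_{lm}\bigr)$.

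For $s\ge 3$ I would then convert this into a dimension. By Lemma~\ref{lem:basis dphi}, part (i), the set $B$ is orthonormal with $\dim(\operatorname{span}(B))=\bigl((\sum_{i=1}^k n_i)-sr-(k-s)\bigr)q+r$; by part (v), $\dim(\operatorname{span}(C_{lm}))=s$ whenever $(\lambda_l,\lambda_m)\ne(0,0)$ and $C_{lm}=\{0\}$ otherwise; and by parts (ii)--(iii) the subspaces $\operatorname{span}(B)$ and the various $\operatorname{span}(C_{lm})$ are pairwise orthogonal. Thus the dimension of the image is the sum of these contributions, and since the number of pairs $l<m$ with $(\lambda_l,\lambda_m)\ne(0,0)$ equals $\binom{r}{2}-\binom{r-q}{2}=\tfrac12 q(2r-q-1)$, I obtain
\[
\dim\bigl(d_y\varphi_{\mathbf n,r}(\Tt_{V_{\mathbf n,r}}(y))\bigr)=\Bigl(\sum_{i=1}^k n_i-sr-(k-s)\Bigr)q+r+\frac{s\,q(2r-q-1)}{2}.
\]
Collecting the $s$-terms via $-sr+s+\tfrac12 s(2r-q-1)=-\tfrac12 s(q-1)$ rewrites the right-hand side as $\bigl(\sum_{i=1}^k n_i-\tfrac{q-1}{2}s-k\bigr)q+r$, which is the first claimed formula.

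The kernel formula follows by rank--nullity. Since $V_{\mathbf n,r}$ is smooth, $\dim\Tt_{V_{\mathbf n,r}}(y)=\dim V_{\mathbf n,r}=d_{\mathbf n,r}=r\bigl(\sum_{i=1}^k n_i-\tfrac{s(r-1)}{2}-k+1\bigr)$, the value computed in the proof of Proposition~\ref{prop:smooth}-\eqref{prop:smooth:item5}; subtracting the dimension of the image gives
\[
\dim\ker(d_y\varphi_{\mathbf n,r})=(r-q)\sum_{i=1}^k n_i-\frac{s}{2}\bigl(r(r-1)-q(q-1)\bigr)-k(r-q),
\]
and the identity $r(r-1)-q(q-1)=(r-q)(r+q-1)$ turns this into $\bigl(\sum_{i=1}^k n_i-\tfrac{(r+q-1)s}{2}-k\bigr)(r-q)$. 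Finally, when $(\lambda_1,\dots,\lambda_r)\in\mathbb R_*^r$ we have $q=r$, so $\dim\ker(d_y\varphi_{\mathbf n,r})=0$ and $d_y\varphi_{\mathbf n,r}$ is injective on $\Tt_{V_{\mathbf n,r}}(y)$, with codomain $\Tt_{\varphi_{\mathbf n,r}(y)}(\mathbb R^{n_1}\otimes\cdots\otimes\mathbb R^{n_k})$ simply the ambient tensor space. The only real work is the two algebraic simplifications; the structural content is carried entirely by Lemma~\ref{lem:basis dphi}, and the single subtle point is that the $\lambda_j=0$ directions lie in the kernel, which is both why $q$ rather than $r$ appears in the answer and why rank--nullity is a cleaner route to $\ker(d_y\varphi_{\mathbf n,r})$ than a direct description.
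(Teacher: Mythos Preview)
Your proof is correct and follows essentially the same approach as the paper: you identify $d_y\varphi_{\mathbf n,r}(\Tt_{V_{\mathbf n,r}}(y))$ as the direct sum $\operatorname{span}(B)\oplus\bigoplus_{l<m}\operatorname{span}(C_{lm})$ via Lemmas~\ref{lem:differential phi}, \ref{lem:differential phi 1} and \ref{lem:basis dphi}, count the $\binom{r}{2}-\binom{r-q}{2}$ nonempty blocks $C_{lm}$, and then obtain the kernel by rank--nullity. The paper's own argument is identical in structure but terser, omitting the explicit algebraic simplifications that you spell out.
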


\begin{proof}
According to Lemmas~\ref{lem:differential phi}, \ref{lem:differential phi 1} and \ref{lem:basis dphi}, we have
\begin{equation}\label{eq:direct}
d_y \varphi_{\mathbf{n},r} (\Tt_{V_{\mathbf{n},r}} (y) ) = \operatorname{span}(B) \bigoplus \Big( \bigoplus_{1\le l < m \le r} \operatorname{span} (C_{lm}) \Big).
\end{equation}
Note that there are
\[
{r\choose 2}-{r-q\choose 2}
\]
nonempty {$C_{lm}$'s} in the right hand side of \eqref{eq:direct}. Thus, we may obtain the desired formula for $\dim (d_y \varphi_{\mathbf{n},r} (\Tt_{V_{\mathbf{n},r} }(y)) )$ from Lemma~\ref{lem:basis dphi}--\eqref{lem:basis dphi:item5}. The dimension of $\ker(d_y \varphi_{\mathbf{n},r})$ is hence obtained {by} recalling that
\[
\dim V_{\mathbf{n},r} = \Big( \big( \sum_{i=1}^k n_i \big) + 1 - k - \frac{(r-1)s}{2} \Big) r.
\]
If in addition that $(\lambda_1,\dots, \lambda_r) \in \mathbb{R}_{\ast}^r$, then ${q} = r$ and hence
$\dim (d_y \varphi_{\mathbf{n},r} (\Tt_{V_{\mathbf{n},r}} (y)) ) = \dim V_{\mathbf{n},r}$ which implies the injectivity of $d_y \varphi_{\mathbf{n},r}$.
\end{proof}

We say that $y\in V_{\mathbf{n},r}$ with $(\lambda_l,\lambda_m) \ne (0,0)$ is \emph{$(l,m)$-exceptional} if $s=2$ and $y$ satisfies the exceptional condition described in Lemma~\ref{lem:basis dphi}--\eqref{lem:basis dphi:item5}. Let $E$ be a subset of
\[
\lbrace (l,m) \in J \times J: l < m \rbrace.
\]
We say that $y$ is \emph{exactly $E$-exceptional} if $s = 2$ and $y$ is $(l,m)$-exceptional if and only if $(l,m) \in E$. According to Lemma~\ref{lem:basis dphi}--\eqref{lem:basis dphi:item5}, if $y$ is both $(l,m)$-exceptional and $(m,{p})$-exceptional for $l < m < p$, then $y$ must also be $(l,p)$-exceptional. This implies that for a fixed exactly-$E$ exceptional $y$, $E$ is a disjoint union of sets of the form
\begin{equation}\label{eq:set-e}
\{(t_i,t_j) \in J \times J: 1 \le i < j \le u \},
\end{equation}
where $2 \le u \le r$ and $1 \le t_1 < \cdots < t_u \le r$ are some fixed integers.
\begin{corollary}\label{cor:basis dphi 2}
If $s = 2$ and $y$ is exactly $E$-exceptional, then
\begin{align*}
\dim (d_y \varphi_{\mathbf{n},r} (T_{V_{\mathbf{n},r} (y)}) ) &= \left( \left( \sum_{i=1}^k n_i \right) + {1} - k - q \right)q+{r} - e, \\
\dim (\ker(d_y \varphi_{\mathbf{n},r}) ) &=  \left( \left( \sum_{i=1}^k n_i \right) + {1}- k - {q} - r \right)(r -{q})  + e,
\end{align*}
where $e$ denotes the cardinality of $E$.
\end{corollary}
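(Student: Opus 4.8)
The plan is to mimic the proof of Corollary~\ref{cor:basis dphi 1}, replacing the uniform bound $\dim(\operatorname{span}(C_{lm})) = s$ by the refined dimension count that Lemma~\ref{lem:basis dphi}--\eqref{lem:basis dphi:item5} provides when $s = 2$. The backbone is the orthogonal direct sum decomposition
\[
d_y \varphi_{\mathbf{n},r}(\Tt_{V_{\mathbf{n},r}}(y)) = \operatorname{span}(B) \bigoplus \Big( \bigoplus_{1 \le l < m \le r} \operatorname{span}(C_{lm}) \Big),
\]
established in \eqref{eq:direct} from Lemmas~\ref{lem:differential phi}, \ref{lem:differential phi 1} and \ref{lem:basis dphi}. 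Since Lemma~\ref{lem:basis dphi} is stated for every $s \ge 2$, this decomposition — and in particular the mutual orthogonality of $\operatorname{span}(B)$ and the $\operatorname{span}(C_{lm})$'s coming from parts \eqref{lem:basis dphi:item2} and \eqref{lem:basis dphi:item3} — persists for $s = 2$, including in the exceptional situation; hence $\dim\big(d_y\varphi_{\mathbf{n},r}(\Tt_{V_{\mathbf{n},r}}(y))\big)$ is the sum of the dimensions of the summands on the right, and only the values of the individual $\dim(\operatorname{span}(C_{lm}))$ change.

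First I would substitute $s = 2$ in Lemma~\ref{lem:basis dphi}--\eqref{lem:basis dphi:item1} to get $\dim(\operatorname{span}(B)) = \big((\sum_{i=1}^k n_i) - 2r - (k-2)\big)q + r$. Next I would count the $C_{lm}$ contribution: exactly $\binom{r}{2} - \binom{r-q}{2}$ pairs $(l,m)$ with $l < m$ satisfy $(\lambda_l,\lambda_m) \ne (0,0)$, i.e.\ $C_{lm} \ne \emptyset$; by the definition of an exactly $E$-exceptional point together with Lemma~\ref{lem:basis dphi}--\eqref{lem:basis dphi:item5}, precisely $e$ of these have $\dim(\operatorname{span}(C_{lm})) = 1$ while the remaining $\binom{r}{2} - \binom{r-q}{2} - e$ have $\dim(\operatorname{span}(C_{lm})) = 2$. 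Therefore $\sum_{1\le l<m\le r}\dim(\operatorname{span}(C_{lm})) = 2\big(\binom{r}{2} - \binom{r-q}{2}\big) - e$, and adding $\dim(\operatorname{span}(B))$ and simplifying with $\binom{r}{2} - \binom{r-q}{2} = rq - \binom{q+1}{2}$ produces the stated formula for $\dim\big(d_y\varphi_{\mathbf{n},r}(T_{V_{\mathbf{n},r}}(y))\big)$.

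For the kernel I would apply rank--nullity, $\dim\ker(d_y\varphi_{\mathbf{n},r}) = \dim V_{\mathbf{n},r} - \dim\big(d_y\varphi_{\mathbf{n},r}(\Tt_{V_{\mathbf{n},r}}(y))\big)$, using $\dim V_{\mathbf{n},r} = \big((\sum_{i=1}^k n_i) + 2 - k - r\big)r$ — the value from the computation in the proof of Proposition~\ref{prop:smooth}--\eqref{prop:smooth:item5} specialized to $s = 2$ — and then perform the algebraic simplification, which factors into the claimed form.

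This is essentially a bookkeeping argument, so I do not anticipate a genuine obstacle. The one place that needs care is to confirm that the orthogonal splitting \eqref{eq:direct} really does survive in the exceptional case: what degenerates there is only that a summand $\operatorname{span}(C_{lm})$ drops from dimension $2$ to $1$, while the orthogonality pattern — hence the fact that the dimensions add — is untouched. The remaining care is combinatorial: keeping straight which pairs $(l,m)$ are counted as nonempty, as exceptional, or as identically zero while carrying out the final arithmetic.
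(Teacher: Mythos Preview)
Your proposal is correct and follows essentially the same route as the paper: the paper's proof simply says the argument is the one for Corollary~\ref{cor:basis dphi 1} with the only change that exactly $e$ of the nonempty summands $\operatorname{span}(C_{lm})$ in \eqref{eq:direct} now have dimension $1$ instead of $2$. Your write-up spells out the bookkeeping (the count $\binom{r}{2}-\binom{r-q}{2}$ of nonempty $C_{lm}$'s, the split into $e$ exceptional and the rest, and the rank--nullity step) in more detail than the paper does, but the strategy is identical.
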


\begin{proof}
The argument is similar to that in the proof of Corollary~\ref{cor:basis dphi 1}. The only difference is that now there are some summands in right hand side of \eqref{eq:direct} having dimension one instead of two and the number of such summands is exactly $e$.
\end{proof}

Now we consider the incidence variety \footnote{The fact that the set $K$ in Figure~\ref{fig:incidence variety} is an incidence variety follows easily from the observation that the function $\langle \mathcal{A} - \varphi_{\mathbf{n},r} (y), d_y\varphi_{\mathbf{n},r} (\Tt_{V_{\mathbf{n},r}} (y)) \rangle$ is a polynomial in $\mathcal{A}$ and $y$.
\if discussion in Section~\ref{sec:kkt} and Lemma~\ref{lem:KKT parameter}, which imply that at a KKT point the Lagrange multiplier is unique and can be expressed as polynomial functions of the given tensor and the KKT point. Thus, the set $K$ is indeed defined by polynomial equations over the product of the tensor space and the parameter space.\fi} $K \subseteq (\mathbb{R}^{n_1} \otimes \cdots \otimes \mathbb{R}^n_k ) \times V_{\mathbf{n},r}$ together with its projection maps $\pi_1$ and $\pi_2$, which are interoperated in Figure~\ref{fig:incidence variety}.
\begin{figure}[ht]
  \begin{tikzcd}
     K \coloneqq \{(\mathcal{A},y): \langle \mathcal{A} - \varphi_{\mathbf{n},r} (y), d_y\varphi_{\mathbf{n},r} (\Tt_{V_{\mathbf{n},r}} (y)) \rangle = 0 \} \arrow[r,symbol=\subseteq] \arrow{d}{\pi_1}  \arrow{rd}{\pi_2}  & (\mathbb{R}^{n_1} \otimes \cdots \otimes \mathbb{R}^n_k ) \times V_{\mathbf{n},r}    \\
   \mathbb{R}^{n_1} \otimes \cdots \otimes \mathbb{R}^n_k \supseteq \pi_1(\pi_2^{-1}(Y)) & V_{\mathbf{n},r} \supseteq  Y\\
  \end{tikzcd}
  \caption{Incidence variety and its projections}
  \label{fig:incidence variety}
\end{figure}
We remark that for each $Y \subseteq V_{\mathbf{n},r}$, the  set $\pi_1(\pi_2^{-1}(Y))$ consists of all tensors $\mathcal{A}\in \mathbb{R}^{n_1} \otimes \cdots \otimes \mathbb{R}^{n_k}$ such that \eqref{lem:KKT parameter:optproblem} has at least one KKT point in $Y$. Moreover, given $\mathcal{A}_0 \in \mathbb{R}^{n_1} \otimes \cdots \otimes \mathbb{R}^{n_k}$ and $y_0 \in V_{\mathbf{n},r}$, we respectively have
\begin{equation}\label{eqn:fiber pi_1}
\pi_1^{-1}(\mathcal{A}_0) = \{\mathcal{A}_0\} \times  \{y\in V_{\mathbf{n},r}:
\mathcal{A}_0 \in \varphi_{\mathbf{n},r}(y) + \left( d_y\varphi_{\mathbf{n},r} (\Tt_{V_{\mathbf{n},r}} (y)) \right)^{\perp}
\}
\end{equation}
and
\begin{equation}\label{eqn:fiber pi_2}
\pi_2^{-1}(y_0) = \left( \varphi_{\mathbf{n},r}(y_0) + \left( d_{y_0}\varphi_{\mathbf{n},r} (\Tt_{V_{\mathbf{n},r}} (y_0)) \right)^{\perp} \right) \times \{y_0\}.
\end{equation}

The rest part of this subsection is concerned with $\dim (\pi_1(\pi_2^{-1}(Y)))$ for some specific subset $Y \subseteq V_{\mathbf{n},r}$, from which we may obtain an estimate of locations of KKT points of problem \eqref{lem:KKT parameter:optproblem}. The previous discussion already indicates that $\dim (d_y \varphi_{\mathbf{n},r} (\Tt_{V_{\mathbf{n},r} }(y)))$ varies with respect to different values of $s$. It turns out that the case of $s=1$ is more subtle than the other cases and hence we have to deal with them separately. 

In the following, we proceed with the following three different cases: $s \ge 3$, $s=2$ and $s=1$.
\subsubsection{{The case $s\ge 3$}}\label{sec:generic-sg3}
For integers $i_1,\dots, i_p$ such that $1\le \lvert i_1 \rvert  < \cdots < \lvert i_p \rvert \le r$, we define the subset $V_{\mathbf{n},r} (i_1,\dots, i_p)$ consisting of points $(U,\lambda) {\in V_{\mathbf{n},r}}$ satisfying
\begin{enumerate}[label=(\roman*)]
\item $\lambda_l \ne 0$ if and only if $l\in \{i_1,\dots, i_p\}$.
\item $\sgn(\lambda_l) = \sgn(l)$ for all $l\in \{i_1,\dots, i_p\}$.
\end{enumerate}
In particular, if $p = 0$ then $\{i_1,\dots, i_p\} = \emptyset$ and
\[
V_{\mathbf{n},r}({\emptyset}) = \V(r,n_1)\times\dots\times\V(r,n_s)\times \operatorname{B}(r,n_{s+1})\times\dots\times\operatorname{B}(r,n_k)\times \{0\}^r.
\]
It is clear that
\[
V_{\mathbf{n},r} = \bigcup_{1\le \lvert i_1 \rvert < \cdots < \lvert i_p \rvert \le r} V_{\mathbf{n},r} (i_1,\dots, i_p),
\]
and each $V_{\mathbf{n},r} (i_1,\dots, i_p)$ is a smooth submanifold of $V_{\mathbf{n},r}$.


\begin{lemma}\label{lem:fiber pi2 1}
Let $s\ge 3$ and let $V_{\mathbf{n},r} (i_1,\dots, i_p)$ be defined as before. For any $y \in V_{\mathbf{n},r} (i_1,\dots, i_p)$, the dimension of the fiber $\pi_2^{-1}(y)$ is a constant and hence for any irreducible subvariety $Z \subseteq V_{\mathbf{n},r} (i_1,\dots, i_p)$, $\pi_2^{-1}(Z)$ is irreducible of dimension
\[
\dim (Z) + \prod_{i=1}^k n_i -  \dim (d_y \varphi_{\mathbf{n},r} (\Tt_{V_{\mathbf{n},r}} (y))).
\]
\end{lemma}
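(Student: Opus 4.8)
The plan is to read everything off the explicit description of the fibre of $\pi_2$ in \eqref{eqn:fiber pi_2} together with the dimension count of Corollary~\ref{cor:basis dphi 1}. For the first assertion, recall from \eqref{eqn:fiber pi_2} that
\[
\pi_2^{-1}(y) = \Bigl( \varphi_{\mathbf n,r}(y) + \bigl( d_y\varphi_{\mathbf n,r}(\Tt_{V_{\mathbf n,r}}(y)) \bigr)^{\perp} \Bigr) \times \{y\},
\]
which is a translate of a linear subspace of the tensor space $\mathbb R^{n_1}\otimes\cdots\otimes\mathbb R^{n_k}$, so
\[
\dim \pi_2^{-1}(y) = \prod_{i=1}^k n_i - \dim \bigl( d_y\varphi_{\mathbf n,r}(\Tt_{V_{\mathbf n,r}}(y)) \bigr).
\]
On the stratum $V_{\mathbf n,r}(i_1,\dots,i_p)$ the cardinality $q$ of the index set $J = \{j : \lambda_j \ne 0\}$ is identically $p$; since $s\ge 3$, Corollary~\ref{cor:basis dphi 1} expresses $\dim\bigl(d_y\varphi_{\mathbf n,r}(\Tt_{V_{\mathbf n,r}}(y))\bigr)$ as a function of $q$ and the fixed data $\mathbf n,r,s,k$ alone. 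Hence $\dim\pi_2^{-1}(y)$ is the same constant, say $f$, for every $y\in V_{\mathbf n,r}(i_1,\dots,i_p)$, which proves the first half.

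For the ``hence'' part, fix an irreducible subvariety $Z\subseteq V_{\mathbf n,r}(i_1,\dots,i_p)$. The semi-algebraic map $(\mathcal A,y)\mapsto (y,\mathcal A-\varphi_{\mathbf n,r}(y))$ is a homeomorphism from $\pi_2^{-1}(Z)$ onto
\[
\mathcal E := \bigl\{ (y,\vv) : y\in Z,\ \vv \perp d_y\varphi_{\mathbf n,r}(\Tt_{V_{\mathbf n,r}}(y)) \bigr\},
\]
so it suffices to show $\mathcal E$ is irreducible of dimension $\dim Z + f$. Projection to $Z$ exhibits $\mathcal E$ as a family of linear subspaces over $Z$, the fibre over $y$ being $\bigl(d_y\varphi_{\mathbf n,r}(\Tt_{V_{\mathbf n,r}}(y))\bigr)^{\perp}$, of constant dimension $f$ on all of the stratum. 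Since $d_y\varphi_{\mathbf n,r}(\Tt_{V_{\mathbf n,r}}(y)) = J_y(\varphi_{\mathbf n,r})\cdot\ker J_y(g_1,\dots,g_t)$ with $g_1,\dots,g_t$ the polynomials defining $V_{\mathbf n,r}$, on a dense Zariski-open $U\subseteq Z$ one may produce, by picking a nonvanishing maximal minor and using cofactor formulas, a polynomial frame for $\ker J_y(g_1,\dots,g_t)$, hence a polynomial matrix whose columns span $d_y\varphi_{\mathbf n,r}(\Tt_{V_{\mathbf n,r}}(y))$; its orthogonal complement is then the kernel of a constant-rank polynomial matrix, which over the reduced base $U$ is a locally free sheaf, i.e.\ an algebraic vector bundle. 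The total space $\mathcal E|_U$ of a vector bundle over the irreducible base $U$ is irreducible of dimension $\dim U + f = \dim Z + f$. Finally, because the fibre $\bigl(d_y\varphi_{\mathbf n,r}(\Tt_{V_{\mathbf n,r}}(y))\bigr)^{\perp}$ varies continuously over all of $Z$ (its dimension being constant there) and $U$ is dense in $Z$, the set $\mathcal E|_U$ is dense in $\mathcal E$, so $\mathcal E$, and therefore $\pi_2^{-1}(Z)$, is irreducible of dimension $\dim Z + f = \dim Z + \prod_{i=1}^k n_i - \dim\bigl(d_y\varphi_{\mathbf n,r}(\Tt_{V_{\mathbf n,r}}(y))\bigr)$.

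I expect the only delicate point to be this last bundle-theoretic step: one must check that $\mathcal E$ genuinely is (at least locally) the total space of an algebraic vector bundle over $Z$, so that both irreducibility and the additivity of dimensions are legitimate; everything else is immediate once the constancy of $q$ on the stratum is fed into Corollary~\ref{cor:basis dphi 1}. An alternative that avoids vector-bundle language is to obtain $\dim\pi_2^{-1}(Z) = \dim Z + f$ directly from the semi-algebraic fibre-dimension theorem (constant fibre dimension, Theorems~\ref{thm:image dimension} and \ref{thm:decomp}) and to get irreducibility by restricting to the smooth locus $Z_{\mathrm{sm}}$, over which $\mathcal E$ is manifestly a vector bundle on an irreducible smooth base, hence irreducible, and then passing to the Euclidean closure, using once more the density of $Z_{\mathrm{sm}}$ in $Z$ and the continuity of $y\mapsto\bigl(d_y\varphi_{\mathbf n,r}(\Tt_{V_{\mathbf n,r}}(y))\bigr)^{\perp}$ on all of $Z$.
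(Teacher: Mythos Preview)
Your proposal is correct and follows essentially the same route as the paper: use \eqref{eqn:fiber pi_2} to see each fibre is an affine subspace, invoke Corollary~\ref{cor:basis dphi 1} (with $q=p$ constant on the stratum) for the constancy of its dimension, and then conclude irreducibility and the dimension formula from the resulting vector-bundle structure of $\pi_2^{-1}(Z)\to Z$. The paper dispatches this last step in one line by citing \cite[Lemma~5.8.13]{S-21}, whereas you spell out the local trivialization and the density argument; both are fine.
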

\begin{proof}
Without loss of generality, we may assume that $\{i_1,\dots, i_p\} = \{1,\dots, p\}$.\footnote{Here we adopt the convention that if $p=0$ then $\{1,\dots, p\} = \emptyset$.} For simplicity, we denote $Y := V_{\mathbf{n},r} (1,\dots, p)$, $N := \prod_{i=1}^k n_i$ and $\mathbb{R}^N {\simeq} \mathbb{R}^{n_1} \otimes \cdots \otimes \mathbb{R}^{n_k}$.

First by \eqref{eqn:fiber pi_2}, $\pi_2^{-1}(y)$ is an affine linear subspace of $\mathbb{R}^N$, hence $\pi_2^{-1}(y)$ is irreducible for any $y\in V_{\mathbf{n},r}$. Next we compute the dimension of $\pi_2^{-1}(y)$ for each $y \in Y$. We have
\[
\dim (\pi_2^{-1}(y)) = N - \dim (d_y \varphi_{\mathbf{n},r} (\Tt_{V_{\mathbf{n},r}} (y))) .
\]
We notice that
$Y = \V(r,n_1) \times \cdots \times \V(r,n_s) \times \B(r,n_{s+1}) \times \cdots \times \B(r,n_k) \times \mathbb{R}_{{++}}^p \times \{0\}^{r- p}$, hence $Y$ is irreducible and for each $y \in Y$, $d_y \varphi_{\mathbf{n},r} (\Tt_{V_{\mathbf{n},r}} (y))$ has the same dimension due to Corollary~\ref{cor:basis dphi 1}. This implies $\dim (\pi_2^{-1}(y))$ is a constant $d$ for any $y\in Y$.

Now if $Z\subseteq Y$ is an irreducible subvariety, then $\pi_2^{-1}(Z)$ is homeomorphic to a rank $d$ vector bundle on $Z$. Hence by \cite[Lemma~5.8.13]{S-21}
we must have that $\pi_2^{-1}(Z)$ is irreducible of dimension
\begin{equation*}
\dim (\pi_2^{-1}(Z))) = \dim (Z) + d = \dim (Z) + N - \dim (d_y \varphi_{\mathbf{n},r} (T_{V_{\mathbf{n},r}} (y))).
\end{equation*}
\end{proof}

\begin{proposition}\label{prop:KKT location s=3}
Suppose $s\ge 3$. For a generic $\mathcal{A}$, all KKT points of \eqref{lem:KKT parameter:optproblem} are contained in $W_{\mathbf{n},r}$.
\end{proposition}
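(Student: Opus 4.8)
The plan is to realise the set of tensors for which \eqref{lem:KKT parameter:optproblem} has a KKT point outside $W_{\mathbf n,r}$ as the image under $\pi_1$ of a suitable piece of the incidence variety $K$ of Figure~\ref{fig:incidence variety}, and to bound its dimension. By Lemma~\ref{lem:KKT parameter}, a point $y\in V_{\mathbf n,r}$ is a KKT point of \eqref{lem:KKT parameter:optproblem} for the tensor $\mathcal A$ exactly when $(\mathcal A,y)\in K$, so for any $Y\subseteq V_{\mathbf n,r}$ the set of $\mathcal A$ for which \eqref{lem:KKT parameter:optproblem} has a KKT point in $Y$ equals $\pi_1(\pi_2^{-1}(Y))$. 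Writing $N:=\prod_{i=1}^k n_i$, it therefore suffices to prove
\[
\dim\bigl(\pi_1(\pi_2^{-1}(V_{\mathbf n,r}\setminus W_{\mathbf n,r}))\bigr)<N ,
\]
because then the complement of the Zariski closure of this (semi-algebraic, by Theorem~\ref{thm:image dimension}) set is a dense set of tensors all of whose KKT points lie in $W_{\mathbf n,r}$.

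Next I would decompose the bad set. By Proposition~\ref{prop:smooth}\eqref{prop:smooth:item3}, $V_{\mathbf n,r}\setminus W_{\mathbf n,r}=\bigcup_{j=s+1}^k Z_j$ with $Z_j:=\{y\in V_{\mathbf n,r}:\rank U^{(j)}<r\}$, a proper closed subvariety of $V_{\mathbf n,r}$; the rank-deficiency locus in $\B(r,n_j)$ has codimension $n_j-r+1\ge 1$ (here one uses the standing hypothesis $r\le n_j$). Intersecting with the smooth strata $V_{\mathbf n,r}(i_1,\dots,i_p)$ of Section~\ref{sec:generic-sg3}, I get $Z_j=\bigcup Z_j(i_1,\dots,i_p)$ with $\dim Z_j(i_1,\dots,i_p)\le\dim V_{\mathbf n,r}+p-n_j-1$, where $\dim V_{\mathbf n,r}=r\bigl(\sum_i n_i-\tfrac{s(r-1)}{2}-k+1\bigr)$ by Proposition~\ref{prop:smooth}\eqref{prop:smooth:item5}. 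On each such stratum the number of nonzero $\lambda_i$'s is the constant $q=p$, so Corollary~\ref{cor:basis dphi 1} gives $\dim\bigl(d_y\varphi_{\mathbf n,r}(\Tt_{V_{\mathbf n,r}}(y))\bigr)=\bigl((\sum_i n_i)-\tfrac{p-1}{2}s-k\bigr)p+r$ for every $y$ in it, and Lemma~\ref{lem:fiber pi2 1} (applied to each irreducible component) yields
\[
\dim\pi_2^{-1}\!\bigl(Z_j(i_1,\dots,i_p)\bigr)=\dim Z_j(i_1,\dots,i_p)+N-\Bigl(\bigl(\textstyle\sum_i n_i\bigr)-\tfrac{p-1}{2}s-k\Bigr)p-r .
\]

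Then I would carry out the dimension bookkeeping. Feeding the two displayed bounds into $\dim\pi_1(\pi_2^{-1}(\cdot))\le\dim\pi_2^{-1}(\cdot)$ and simplifying, the inequality $\dim\pi_1(\pi_2^{-1}(Z_j(i_1,\dots,i_p)))<N$ reduces, after cancelling an $r$ and factoring out $r-p$, to
\[
(r-p)\Bigl(\bigl(\textstyle\sum_i n_i\bigr)-\tfrac{s}{2}(r+p-1)-k\Bigr)+p-n_j-1<0 .
\]
For full support $p=r$ this is exactly the hypothesis $n_j\ge r$, so those strata are dispatched at once, and this is the heart of the "generic KKT points have full rank" phenomenon.

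The case $p<r$ is the genuine difficulty, and I expect it to be the main obstacle: there $\varphi_{\mathbf n,r}(y)$ is a singular point of $P_s(\mathbf n,r)$ (Proposition~\ref{prop:smooth}\eqref{prop:smooth:item6}) and the crude estimate $\dim\pi_1(\pi_2^{-1}(\cdot))\le\dim\pi_2^{-1}(\cdot)$ is not sharp, since $\pi_2^{-1}$ of these strata can already exceed dimension $N$. The way around it is to exploit, via Lemmas~\ref{lem:differential phi} and \ref{lem:differential phi 1}, that the columns of the factor matrices indexed by the vanishing $\lambda_l$'s enter the KKT equation \eqref{eq:incidence} only through the rank-one tensors $\mathbf u^{(1)}_l\otimes\dots\otimes\mathbf u^{(k)}_l$; consequently $\pi_1$ restricted to $\pi_2^{-1}(Z_j(i_1,\dots,i_p))$ has positive-dimensional fibres, so its image loses at least the surplus appearing above. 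Equivalently, one argues by induction on $r$: the ``active part'' $\tilde y\in V_{\mathbf n,p}$ of such a $y$ is a KKT point of the reduced problem mLRPOTA($p$), to which the tensor-space analysis of Section~\ref{sec:projection} (Theorem~\ref{thm:projection}) applies, while the inactive columns are then cut down by only finitely many further generic conditions. In all cases $\pi_1(\pi_2^{-1}(V_{\mathbf n,r}\setminus W_{\mathbf n,r}))$ is contained in a proper closed subvariety of $\mathbb R^{n_1}\otimes\dots\otimes\mathbb R^{n_k}$, which is the assertion.
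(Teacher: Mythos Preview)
Your overall strategy (incidence variety, stratification by the sign pattern of $\lambda$, dimension count) matches the paper's, and your treatment of the full-support strata $p=r$ is essentially the paper's argument specialized to that case.  The genuine gap is in the strata with $p<r$, where you yourself flag the difficulty but do not close it.

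Your two proposed fixes for $p<r$ do not work as stated.  The induction-on-$r$ idea fails because the hypothesis does not descend: if $y\notin W_{\mathbf n,r}$ because $\rank U^{(j)}<r$, the submatrix $\tilde U^{(j)}$ formed by the $p$ active columns may well have full rank $p$, so the reduced point $\tilde y$ need not lie in $V_{\mathbf n,p}\setminus W_{\mathbf n,p}$ and the induction hypothesis gives nothing.  Invoking Theorem~\ref{thm:projection} is also off target: that result concerns critical projections onto $P_s(\mathbf n,r)$ in the tensor space, not KKT points in the parameter space, and the two notions differ precisely at the singular locus where $p<r$.  Your ``positive-dimensional $\pi_1$-fibres'' heuristic is morally correct, but you do not quantify the fibre dimension, and without that the bound $\dim\pi_1(\pi_2^{-1}(Z_j(i_1,\dots,i_p)))<N$ remains unproved.

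The paper sidesteps all of this with a \emph{relative} argument that works uniformly in $p$.  One never estimates $\dim\pi_1(\pi_2^{-1}(Y))$ directly; instead one assumes for contradiction that it equals $N$.  Then a generic $\mathcal T_1$ in $\pi_1(\pi_2^{-1}(Y))$ is also generic in $\pi_1(\pi_2^{-1}(V_{\mathbf n,r}(i_1,\dots,i_p)))$, so the generic-fibre formula \eqref{eq:generic}--\eqref{eq:generic1} applies: $\dim\pi_1^{-1}(\mathcal T_1)=\dim V_{\mathbf n,r}(i_1,\dots,i_p)-\dim d_y\varphi_{\mathbf n,r}(\Tt_{V_{\mathbf n,r}}(y))$.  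Feeding this into $N=\dim\pi_2^{-1}(Y)-\dim\pi_1^{-1}(\mathcal T_1)$ and using Lemma~\ref{lem:fiber pi2 1} (constancy of $\dim\pi_2^{-1}(y)$ on each stratum) together with $\dim Y<\dim V_{\mathbf n,r}(i_1,\dots,i_p)$ yields $N<N$.  The point is that the possibly large $\pi_1$-fibre dimension on the low-$p$ strata is accounted for automatically by comparing with the ambient stratum, rather than being estimated from scratch.
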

\begin{proof}
Let $Z$ be the complement of $W_{\mathbf{n},r}$ in $V_{\mathbf{n},r}$. Since
\[
W_{\mathbf{n},r} = \V(r,n_1) \times \cdots \times \V(r,n_s) \times \OB(r,n_{s+1}) \times \cdots \times\OB(r,n_k) \times \mathbb{R}^r,
\]
we have $Z  = V_{\mathbf{n},r} \setminus W_{\mathbf{n},r} = \bigcup_{i=s+1}^{k} Y_i$, where $Y_i$ denotes the set
\footnotesize
\[
\V(r,n_1) \times \cdots \times \V(r,n_s) \times \B(r,n_{s+1})  \times \cdots \times \B(r,n_{i-1})  \times (\B(r,n_i)\setminus \OB(r,n_i)) \times \B(r,n_{i+1})  \times \cdots \times \B(r,n_{k}) \times \mathbb{R}^r.
\]
\normalsize
Moreover, we partition $Y_i$ as
\begin{align*}
Y_i &= Y_i \bigcap \Big(  \bigcup_{1\le \lvert i_1 \rvert < \cdots < \lvert i_p \rvert \le r} V_{\mathbf{n},r} (i_1,\dots, i_p) \Big) \\
&= \bigcup_{1\le \lvert i_1 \rvert < \cdots < \lvert i_p \rvert \le r} \left( Y_i \cap V_{\mathbf{n},r} (i_1,\dots, i_p) \right).
\end{align*}
Hence it is sufficient to prove that $\pi_1 (\pi^{-1}_2(Y_i \cap V_{\mathbf{n},r} (i_1,\dots, i_p) ) )$ has dimension strictly smaller than $N$.

The case that $\overline{\pi_1 (\pi^{-1}_2(V_{\mathbf{n},r} (i_1,\dots, i_p) ) )}$ is a proper subvariety of $\mathbb{R}^N$ is trivial. In the following, we assume that $\overline{\pi_1 (\pi^{-1}_2(V_{\mathbf{n},r} (i_1,\dots, i_p) ) ) }=\mathbb{R}^N$. In this case, for a generic $\mathcal{T}\in\pi_1 (\pi^{-1}_2(V_{\mathbf{n},r} (i_1,\dots, i_p) ) $, scheme theoretically we have
\begin{equation}\label{eq:generic}
\dim(\pi_1^{-1}(\mathcal{T}))=\dim(\pi^{-1}_2(V_{\mathbf{n},r} (i_1,\dots, i_p) ) )-N=\dim V_{\mathbf{n},r}(i_1,\dots, i_p) -\dim (d_y \varphi_{\mathbf{n},r} (T_{V_{\mathbf{n},r}} (y))),
\end{equation}
where the first equality follows from \cite[Corollary~14.5]{E-95} and the second equality follows from Lemma~\ref{lem:fiber pi2 1}. We may rewrite \eqref{eq:generic} as
\begin{equation}\label{eq:generic1}
\dim ( V_{\mathbf{n},r}(i_1,\dots, i_p) ) = \dim(\pi_1^{-1}(\mathcal{T})) + \dim (d_y \varphi_{\mathbf{n},r} (T_{V_{\mathbf{n},r}} (y))).
\end{equation}

For simplicity, we denote $Y \coloneqq Y_i \cap V_{\mathbf{n},r} (i_1,\dots, i_p)$. Suppose on the contrary that $\dim \left( \pi_1 (\pi^{-1}_2( Y ) )  \right) =N$. Then for a generic $\mathcal{T}_1 \in \pi_1 (\pi^{-1}_2( Y ) )$, the relation \eqref{eq:generic1} still holds. We notice that $\dim (d_y \varphi_{\mathbf{n},r} (T_{V_{\mathbf{n},r}} (y)))$ and hence $\dim (\pi_2^{-1}(y))$ is a constant for any $y\in V_{\mathbf{n},r} (i_1,\dots, i_p)$ and that $\dim \left( Y \right) < \dim ( V_{\mathbf{n},r} (i_1,\dots, i_p) )$. Hence we may derive
\begin{align*}
N = \dim \left( \pi_1 (\pi^{-1}_2( Y ) )  \right)  &= \dim  (\pi_2^{-1} ( Y) ) - \dim (\pi_1^{-1} (\mathcal{T}_1)) \\
&= \dim (Y) + \dim (\pi_2^{-1}(y)) - \dim (\pi_1^{-1} (\mathcal{T}_1))  \\
&< \dim (V_{\mathbf{n},r} (i_1,\dots, i_p) ) + \dim (\pi_2^{-1}(y)) - \dim (\pi_1^{-1} (\mathcal{T}_1)) \\
&=  \dim (V_{\mathbf{n},r} (i_1,\dots, i_p) )  + (N - \dim (d_y \varphi_{\mathbf{n},r} (T_{V_{\mathbf{n},r}} (y))) ) - \dim (\pi_1^{-1} (\mathcal{T}_1)) \\
&= N + \left( \dim (V_{\mathbf{n},r} (i_1,\dots, i_p) ) - \dim (\pi_1^{-1} (\mathcal{T}_1)) - \dim (d_y \varphi_{\mathbf{n},r} (T_{V_{\mathbf{n},r}} (y)))  \right) \\
&= N,
\end{align*}
which is absurd and this completes the proof.
\end{proof}

\subsubsection{{The case $s=2$}}\label{sec:generic-s2}
We next discuss locations of  KKT points of the problem \eqref{lem:KKT parameter:optproblem} for  $s= 2$. Let $i_1,\dots, i_p$ be integers such that $1 \le \lvert i_1 \rvert < \cdots < \lvert i_p \rvert \le r$ and let $E$ be a subset of $\{(s,t) \in \mathbb{N} \times \mathbb{N}: 1\le s < t \le r, (\lambda_s,\lambda_t) \ne (0,0)\}$ which is a union of disjoint sets $E_1,\dots, E_m$ defined by
\begin{equation}\label{eq:set-u}
E_q \coloneqq \{(t_{{q,i}},t_{{q,j}}): 1 \le i < j \le u_q\},
\end{equation}
where $q = 1,\dots, m$, $2 \le u_q \le r$ and  $1\le t_{{q,1}} < \cdots < t_{q,u_q} \le r$.
We denote by $V_{\mathbf{n},r} (i_1,\dots,i_p;E)$ the subset of $V_{\mathbf{n},r}(i_1,\dots, i_p)$ consisting of $y$ which is exactly $E$-exceptional. Note that $E$ must be a subset of $\{|i_1|,\dots,|i_p|\}\times \{|i_1|,\dots,|i_p|\}$ by Lemma~\ref{lem:basis dphi}-\eqref{lem:basis dphi:item5}. In particular, we have $V_{\mathbf{n},r} (i_1,\dots,i_p;\emptyset) = V_{\mathbf{n},r}(i_1,\dots, i_p)$.

\begin{lemma}\label{lem:fiber pi2 2}
Let $s = 2$ and let $V_{\mathbf{n},r} (i_1,\dots, i_p;E)$ be defined as before. For any $y \in V_{\mathbf{n},r} (i_1,\dots, i_p;E)$, the dimension of the fiber $\pi_2^{-1}(y)$ is a constant. Hence for an irreducible subvariety $Z$  of  $V_{\mathbf{n},r} (i_1,\dots, i_p;E)$, $\pi_2^{-1}(Z)$ is irreducible of dimension
\[
\dim (Z) + \prod_{i=1}^k n_i -  \dim (d_y \varphi_{\mathbf{n},r} (\Tt_{V_{\mathbf{n},r}} (y))).
\]
\end{lemma}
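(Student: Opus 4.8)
The plan is to run, essentially verbatim, the argument that proves Lemma~\ref{lem:fiber pi2 1} for the case $s\ge 3$, the one genuine change being that the appeal to Corollary~\ref{cor:basis dphi 1} there is now replaced by an appeal to Corollary~\ref{cor:basis dphi 2}. Write $Y \coloneqq V_{\mathbf{n},r}(i_1,\dots,i_p;E)$, set $N \coloneqq \prod_{i=1}^k n_i$, and identify $\mathbb{R}^N \simeq \mathbb{R}^{n_1}\otimes\cdots\otimes\mathbb{R}^{n_k}$. The first step is to quote \eqref{eqn:fiber pi_2}: for every $y\in V_{\mathbf{n},r}$ the fiber $\pi_2^{-1}(y)$ is the affine linear subspace $\varphi_{\mathbf{n},r}(y) + \big(d_y\varphi_{\mathbf{n},r}(\Tt_{V_{\mathbf{n},r}}(y))\big)^{\perp}$ of $\mathbb{R}^N$; in particular it is irreducible and $\dim\pi_2^{-1}(y) = N - \dim\big(d_y\varphi_{\mathbf{n},r}(\Tt_{V_{\mathbf{n},r}}(y))\big)$.

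The second step is to see that this dimension is constant on $Y$. By the definition of $V_{\mathbf{n},r}(i_1,\dots,i_p)$, every $y\in Y$ has $\lambda_l\neq 0$ precisely for $l\in\{i_1,\dots,i_p\}$, so the cardinality $q$ of $J$ equals $p$; and by the definition of exactly $E$-exceptional, the cardinality $e$ of $E$ is likewise fixed along $Y$. Hence Corollary~\ref{cor:basis dphi 2} gives $\dim\big(d_y\varphi_{\mathbf{n},r}(\Tt_{V_{\mathbf{n},r}}(y))\big) = \big(\big(\sum_{i=1}^k n_i\big)+1-k-p\big)p + r - e$ for every $y\in Y$, a number that does not depend on $y$. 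Consequently $\dim\pi_2^{-1}(y)$ equals a constant $d$ for all $y\in Y$.

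The third step handles an arbitrary irreducible subvariety $Z\subseteq Y$. Since $Z$ lies inside the single stratum $Y$, along which $d_y\varphi_{\mathbf{n},r}$ has constant rank on $\Tt_{V_{\mathbf{n},r}}(y)$, the restriction of $\pi_2$ exhibits $\pi_2^{-1}(Z)$ as (a space homeomorphic to) a rank-$d$ vector bundle over $Z$ with fibers $\big(d_y\varphi_{\mathbf{n},r}(\Tt_{V_{\mathbf{n},r}}(y))\big)^{\perp}$ translated by $\varphi_{\mathbf{n},r}(y)$. Invoking \cite[Lemma~5.8.13]{S-21} then shows $\pi_2^{-1}(Z)$ is irreducible of dimension $\dim(Z)+d = \dim(Z) + \prod_{i=1}^k n_i - \dim\big(d_y\varphi_{\mathbf{n},r}(\Tt_{V_{\mathbf{n},r}}(y))\big)$, which is the assertion. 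I expect the only non-routine point to be Corollary~\ref{cor:basis dphi 2}, i.e.\ the constancy of the rank of $d_y\varphi_{\mathbf{n},r}$ on the stratum $V_{\mathbf{n},r}(i_1,\dots,i_p;E)$: this is exactly why, unlike the $s\ge 3$ case, one must refine the stratification $V_{\mathbf{n},r}(i_1,\dots,i_p)$ by the exceptional data $E$, since for $s=2$ the rank drops according to the number of exceptional pairs and is locally constant only after $E$ is fixed. Once that is granted, the fiber-dimension and irreducibility bookkeeping is verbatim the same as in Lemma~\ref{lem:fiber pi2 1}.
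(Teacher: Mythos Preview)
Your proposal is correct and follows essentially the same approach as the paper: the paper's proof simply says the argument is similar to Lemma~\ref{lem:fiber pi2 1}, invokes Corollary~\ref{cor:basis dphi 2} in place of Corollary~\ref{cor:basis dphi 1} to get constancy of the fiber dimension, and then cites \cite[Lemma~5.8.13]{S-21} for the vector bundle/irreducibility step. Your write-up is more detailed but matches the paper's argument in every essential respect.
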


\begin{proof}
The proof is similar to the one for Lemma~\ref{lem:fiber pi2 1}. By Corollary~\ref{cor:basis dphi 2}, the dimension of $\pi_2^{-1}(y)$ is a constant for any $y\in V_{\mathbf{n},r} (i_1,\dots, i_p;E)$. Since $\pi_2^{-1}(y)$ is an affine linear subspace of $\mathbb{R}^{n_1} \otimes \cdots \otimes \mathbb{R}^{n_k}$, we see that $\pi_2^{-1} (Z)$ is homeomorphic to a vector bundle on $Z$ and hence by \cite[Lemma~5.8.13]{S-21}, it must be irreducible of dimension
\[
\dim (Z) + \prod_{i=1}^k n_i -  \dim (d_y \varphi_{\mathbf{n},r} (\Tt_{V_{\mathbf{n},r}} (y))).
\]
\end{proof}

\begin{lemma}\label{lem:s=2 nonempty E}
Let $s = 2$ and let $V_{\mathbf{n},r} (i_1,\dots, i_p;E)$ be defined as before. Assume that $E$ is nonempty. We have
\[
\dim (\pi_1 (\pi_2^{-1}(V_{\mathbf{n},r} (i_1,\dots, i_p;E)) )) < \prod_{i=1}^k n_i.
\]
\end{lemma}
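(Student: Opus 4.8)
The idea is to imitate the incidence-variety method used in the proof of Proposition~\ref{prop:KKT location s=3}. Write $N\coloneqq\prod_{i=1}^k n_i$ and let $K$, $\pi_1$, $\pi_2$ be as in Figure~\ref{fig:incidence variety}. The semi-algebraic set $Y\coloneqq V_{\mathbf{n},r}(i_1,\dots,i_p;E)$ is a finite union of smooth connected semi-algebraic strata $Z$ — indexed by the decomposition of $E$ into cliques, by the sign data entering the exceptional conditions of Lemma~\ref{lem:basis dphi}--\eqref{lem:basis dphi:item5}, and by the (already fixed) signs of $\lambda_{i_1},\dots,\lambda_{i_p}$ — each of which is irreducible and on which $c_E\coloneqq\dim\big(d_y\varphi_{\mathbf{n},r}(\Tt_{V_{\mathbf{n},r}}(y))\big)$ is constant by Corollary~\ref{cor:basis dphi 2}; so by Theorem~\ref{thm:image dimension} and Proposition~\ref{prop:dim} it suffices to prove $\dim(\pi_1(\pi_2^{-1}(Z)))<N$ for each $Z$. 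By Lemma~\ref{lem:fiber pi2 2}, $\pi_2^{-1}(Z)$ is irreducible with $\dim(\pi_2^{-1}(Z))=\dim Z+N-c_E$. Hence whenever $\dim Z<c_E$ we are done immediately, since $\dim(\pi_1(\pi_2^{-1}(Z)))\le\dim(\pi_2^{-1}(Z))<N$; using the formula for $\dim V_{\mathbf{n},r}$, Corollary~\ref{cor:basis dphi 2} and the elementary bound $u_q\le r\le\min_i n_i\le\big(\sum_{j>s}(n_j-1)\big)+1$, one checks this already disposes of the case $p=r$ (all multipliers nonzero).

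So assume $p<r$ and, for contradiction, $\dim(\pi_1(\pi_2^{-1}(Z)))=N$. Since $\pi_2^{-1}(Z)$ is irreducible, $\overline{\pi_1(\pi_2^{-1}(Z))}=\mathbb{R}^N$, so by \cite[Corollary~14.5]{E-95} (as in Proposition~\ref{prop:KKT location s=3}) the fibre of $\pi_1|_{\pi_2^{-1}(Z)}$ over a generic $\mathcal{A}$ is nonempty of dimension exactly $\dim Z-c_E$; by \eqref{eqn:fiber pi_1} this fibre is the set $F_{\mathcal A}$ of those KKT points of problem~\eqref{lem:KKT parameter:optproblem} (for the given $\mathcal{A}$) that lie in $Z$. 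I would contradict this by exhibiting a subfamily of $F_{\mathcal A}$ of dimension $>\dim Z-c_E$. Two disjoint sources of freedom are available at any $y_0\in F_{\mathcal A}$. First, \emph{block symmetry}: the group $G\coloneqq\prod_{q=1}^m\O(u_q)$ acts on $V_{\mathbf{n},r}$ by simultaneously rotating, within each clique $E_q$, the $u_q$ columns of the two orthonormal factor matrices indexed by $E_q$, and trivially on everything else; from \eqref{eq:matrix-tensor1} one checks $\varphi_{\mathbf{n},r}$ is $G$-invariant and $G$ acts by diffeomorphisms of $V_{\mathbf{n},r}$ preserving $Z$, whence $d_{gy}\varphi_{\mathbf{n},r}(\Tt_{V_{\mathbf{n},r}}(gy))=d_y\varphi_{\mathbf{n},r}(\Tt_{V_{\mathbf{n},r}}(y))$ and the KKT equation \eqref{eq:incidence} is $G$-invariant, so $G\cdot y_0\subseteq F_{\mathcal A}$; as the clique columns are linearly independent the stabiliser of $y_0$ is finite, giving $\dim(G\cdot y_0)=\sum_q\binom{u_q}{2}=e$. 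Second, \emph{vanishing-multiplier slack}: for each index $l$ with $\lambda_l=0$ the rank-one summand $\lambda_l\mathbf{u}^{(1)}_l\otimes\cdots\otimes\mathbf{u}^{(k)}_l$ is absent from $\varphi_{\mathbf{n},r}(y)$, so $\varphi_{\mathbf{n},r}$ is insensitive to the columns $\mathbf{u}^{(1)}_l,\dots,\mathbf{u}^{(k)}_l$, and inspecting which vectors of \eqref{eqn:basis B} and \eqref{eqn:basis Ckl} involve $\mathbf{u}^{(i)}_l$ shows the KKT equation constrains those columns only through finitely many orthogonality relations against vectors/subspaces determined by $\mathcal{A}$ and by the components of $y_0$ carrying nonzero multipliers; a Stiefel-type count then shows they sweep out a positive-dimensional set. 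Combining these two with the identity $\dim Z-c_E=(r-p)\big(\sum_i n_i-k-r-p\big)-\delta\sum_q(u_q-1)+e$ (with $\delta\coloneqq\sum_{j>s}(n_j-1)+1$), read off from Corollary~\ref{cor:basis dphi 2} and $\dim V_{\mathbf{n},r}$, should give $\dim F_{\mathcal A}>\dim Z-c_E$, the contradiction.

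The main obstacle is exactly this slack count: the orthogonality relations that the KKT equation imposes on the columns attached to vanishing multipliers are coupled — $\mathbf{u}^{(i)}_l$ with $i\le s$ occurs both in the rank-one vector $\mathbf{u}^{(1)}_l\otimes\cdots\otimes\mathbf{u}^{(k)}_l$ of \eqref{eqn:basis B} and in the tensors $\mathcal{T}^{(i)}_{l',l}$ of \eqref{eqn:basis Ckl}, and similarly across the oblique factors — so the count must be organised as a chain of nested Stiefel-manifold problems and then verified, case by case in $p$ and in the clique sizes $u_q$, to dominate the displayed expression for $\dim Z-c_E$ whenever $E\neq\emptyset$. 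This is the analogue of, but strictly more delicate than, the fibre-comparison step in the proof of Proposition~\ref{prop:KKT location s=3}, and essentially all the work of the lemma lies here.
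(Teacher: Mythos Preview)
Your plan is plausible in outline, but it takes a detour that the paper avoids entirely. You try to bound the $\pi_1$-fibre from below by exhibiting explicit degrees of freedom (the block symmetry $G=\prod_q\O(u_q)$ plus ``vanishing-multiplier slack''), and you correctly flag that the slack count is delicate and left unfinished. The paper's argument needs neither the group action nor any slack analysis, and no case split on $p=r$ versus $p<r$.

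Here is the paper's route. Write $Y\coloneqq V_{\mathbf{n},r}(i_1,\dots,i_p)$ and $Z\coloneqq V_{\mathbf{n},r}(i_1,\dots,i_p;E)\subseteq Y$. Pick $y\in Z$, a non-exceptional $y'\in Y$, and $\mathcal T$ generic in $\pi_1(\pi_2^{-1}(Z))$. From Lemma~\ref{lem:fiber pi2 2},
\[
\dim\pi_1(\pi_2^{-1}(Z)) \;=\; \dim Z + \dim\pi_2^{-1}(y) - \dim\pi_1^{-1}(\mathcal T),
\]
and adding and subtracting the corresponding quantities for $Y$, together with Corollary~\ref{cor:basis dphi 2} which gives $\dim\pi_2^{-1}(y)-\dim\pi_2^{-1}(y')=e$, one obtains
\[
\dim\pi_1(\pi_2^{-1}(Z)) \;=\; \big[\dim Y + \dim\pi_2^{-1}(y') - \dim\pi_1^{-1}(\mathcal T)\big] \;+\; \big[\,e - (\dim Y - \dim Z)\,\big].
\]
The first bracket is at most $\dim\pi_1(\pi_2^{-1}(Y))\le N$. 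So the whole lemma reduces to the single inequality $\dim Y - \dim Z > e$, which is an elementary count: the exceptional condition on a clique of size $u_q$ forces $u_q-1$ of the $\lambda$'s and, for each $3\le j\le k$, $u_q-1$ of the unit vectors $\mathbf u^{(j)}$ to agree up to sign with a chosen representative, contributing $(u_q-1)\big(1+\sum_{j=3}^k(n_j-1)\big)$ to the codimension while contributing only $\binom{u_q}{2}$ to $e$. Since $1+\sum_{j=3}^k(n_j-1)\ge 1+(k-2)(r-1)>r/2\ge u_q/2$, summing over $q$ gives $\dim Y-\dim Z>e$ and hence $\dim\pi_1(\pi_2^{-1}(Z))<N$.

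Comparing: your block-symmetry orbit recovers exactly the number $e$, but as a lower bound on the $\pi_1$-fibre; the paper routes the same $e$ through the $\pi_2$-fibre defect, which converts the problem into a pure codimension estimate on the parameter side. The ``slack'' you were trying to manufacture is then absorbed for free into the crude bound $\dim\pi_1(\pi_2^{-1}(Y))\le N$, so the coupled Stiefel-type analysis you anticipated never has to be carried out.
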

\begin{proof}
We again assume that $(i_1,\dots, i_p) = (1,\dots, p)$ and denote
\[
Y \coloneqq  V_{\mathbf{n},r} (i_1,\dots, i_p) \supseteq Z \coloneqq V_{\mathbf{n},r} (i_1,\dots, i_p;E),\quad N \coloneqq \prod_{i=1}^k n_i.
\]
Let $y$ (resp. $y'$) be a point in $Z$ (resp. $Y\setminus \big( \bigsqcup_{{E_i} \neq \emptyset} V_{\mathbf{n},r}(i_1,\dots, i_p; {E_i}) \big)$) and let $\mathcal{T}$ be a generic point in $\pi_1 (\pi_2^{-1}(Z))$. We have
\footnotesize
\begin{align*}
\dim (\pi_1 (\pi_2^{-1} (Z)))
&= \dim (\pi_2^{-1} (Z)) - \dim (\pi_1^{-1} (\mathcal{T})) \\
&= \dim(Z) + \dim (\pi_2^{-1}(y)) -  \dim (\pi_1^{-1} (\mathcal{T}))\\
&=\left[ \dim (Y) + \dim(\pi_2^{-1}(y')) - \dim (\pi_1^{-1}(\mathcal{T})) \right] +\left[ ( \dim(\pi_2^{-1}(y)) - \dim (\pi_2^{-1}(y')) ) -( \dim (Y) - \dim(Z)) \right] \\
&=\left[ \dim (Y) + \dim(\pi_2^{-1}(y')) - \dim (\pi_1^{-1}(\mathcal{T})) \right]\\
 &\quad\quad\quad +\big[ ( \dim(d_{{y'}} \varphi_{\mathbf{n},r}{(\Tt_{V(\mathbf{n},r)}(y'))}) - \dim (d_{{y}} \varphi_{\mathbf{n},r}{(\Tt_{V(\mathbf{n},r)}(y))}) )  -( \dim (Y) - \dim(Z)) \big] \\
&\le N + \left[ e  - ( \dim (Y) - \dim(Z)) \right],
\end{align*}
\normalsize
where the first term in the last line follows from the inequality
\[
\dim (Y) + \dim(\pi_2^{-1}(y')) - \dim (\pi_1^{-1}(\mathcal{T})) \le \dim  \pi_1(\pi_2^{-1} (Y)) \le N.
\]

We notice that $E = \bigsqcup_{q =1}^m E_q$ where each $E_q$ is of form \eqref{eq:set-u}. This implies
\[
e =\sum_{q=1}^m \binom{u_q}{2}.
\]
Moreover, according to Lemma~\ref{lem:basis dphi}--\eqref{lem:basis dphi:item5}, we know that
\small
\[
\dim (Y) - \dim(Z) = \sum_{q=1}^m \big( (u_q -1) +  \sum_{j=3}^k (u_q-1)(n_j - 1) \big) =  \sum_{q=1}^m\Big[  (u_q-1) \big( 1 + \sum_{j=3}^k(n_j - 1) \big) \Big].
\]
\normalsize
Since
\[
1 + \sum_{j=3}^k (n_j - 1) \ge  1 + (k-2)(r-1) > \frac{r}{2} \geq \frac{u_q}{2},
\]
we obtain that $\dim (Y) - \dim(Z) > e$ and thus $\dim (\pi_1 (\pi_2^{-1}(V_{\mathbf{n},r} (i_1,\dots, i_p;E)) )) < N$.
\end{proof}

\begin{proposition}\label{prop:KKT location s=2}
Suppose $s = 2$. For a generic $\mathcal{A}$, all KKT points of \eqref{lem:KKT parameter:optproblem} are contained in $W_{\mathbf{n},r}$.
\end{proposition}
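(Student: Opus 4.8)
The plan is to follow the proof of Proposition~\ref{prop:KKT location s=3} wherever possible, the only extra care being to absorb the exceptional strata that occur when $s=2$. Write $N \coloneqq \prod_{j=1}^{k} n_j$ and, as there, decompose $Z \coloneqq V_{\mathbf{n},r} \setminus W_{\mathbf{n},r} = \bigcup_{i=s+1}^{k} Y_i$, where $Y_i$ is the subset of $V_{\mathbf{n},r}$ whose $i$-th factor lies in $\B(r,n_i) \setminus \OB(r,n_i)$. By Lemma~\ref{lem:KKT parameter} (see also the remark following Figure~\ref{fig:incidence variety}), a tensor $\mathcal{A}$ has a KKT point of \eqref{lem:KKT parameter:optproblem} outside $W_{\mathbf{n},r}$ precisely when $\mathcal{A} \in \pi_1(\pi_2^{-1}(Z))$, so it suffices to show $\dim\big(\pi_1(\pi_2^{-1}(Y_i))\big) < N$ for every $i$; each such set is semi-algebraic by Theorem~\ref{thm:image dimension}, and there are finitely many of them, so their union is then contained in a proper subvariety of $\mathbb{R}^N$ which a generic $\mathcal{A}$ avoids. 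Refining with the support stratification $V_{\mathbf{n},r} = \bigcup_{1 \le |i_1| < \cdots < |i_p| \le r} V_{\mathbf{n},r}(i_1,\dots,i_p)$ and, inside each $V_{\mathbf{n},r}(i_1,\dots,i_p)$, its splitting into the non-exceptional stratum $V_{\mathbf{n},r}(i_1,\dots,i_p;\emptyset)$ and the exceptional strata $V_{\mathbf{n},r}(i_1,\dots,i_p;E)$ with $E \neq \emptyset$, it is enough to bound $\dim\big(\pi_1(\pi_2^{-1}(Y_i \cap V_{\mathbf{n},r}(i_1,\dots,i_p;E)))\big)$ for each fixed $i$, $(i_1,\dots,i_p)$ and $E$.

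For $E \neq \emptyset$ nothing new is needed: Lemma~\ref{lem:s=2 nonempty E} already gives $\dim\big(\pi_1(\pi_2^{-1}(V_{\mathbf{n},r}(i_1,\dots,i_p;E)))\big) < N$, and since $Y_i \cap V_{\mathbf{n},r}(i_1,\dots,i_p;E) \subseteq V_{\mathbf{n},r}(i_1,\dots,i_p;E)$ we have the inclusion $\pi_1(\pi_2^{-1}(Y_i \cap V_{\mathbf{n},r}(i_1,\dots,i_p;E))) \subseteq \pi_1(\pi_2^{-1}(V_{\mathbf{n},r}(i_1,\dots,i_p;E)))$, whence the same strict bound. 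For $E = \emptyset$ the argument is the one carried out for Proposition~\ref{prop:KKT location s=3}, with Corollary~\ref{cor:basis dphi 1} and Lemma~\ref{lem:fiber pi2 1} replaced by their $s=2$ analogues Corollary~\ref{cor:basis dphi 2} (taken with $e=0$) and Lemma~\ref{lem:fiber pi2 2}. Concretely: by Corollary~\ref{cor:basis dphi 2} the dimension of $d_y \varphi_{\mathbf{n},r}(\Tt_{V_{\mathbf{n},r}}(y))$, and hence that of the affine fiber $\pi_2^{-1}(y)$, is constant as $y$ ranges over $V_{\mathbf{n},r}(i_1,\dots,i_p;\emptyset)$; on the other hand $\OB(r,n_i)$ is a nonempty Zariski open subset of the irreducible variety $\B(r,n_i)$, hence dense, so $\B(r,n_i) \setminus \OB(r,n_i)$ is a proper subvariety and $Y_i \cap V_{\mathbf{n},r}(i_1,\dots,i_p;\emptyset)$ has strictly smaller dimension than $V_{\mathbf{n},r}(i_1,\dots,i_p;\emptyset)$. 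Decomposing the former into irreducible components if necessary and assuming for contradiction that $\pi_1(\pi_2^{-1}(Y_i \cap V_{\mathbf{n},r}(i_1,\dots,i_p;\emptyset)))$ were Zariski dense in $\mathbb{R}^N$, the generic fiber dimension theorem \cite[Corollary~14.5]{E-95}, together with the constancy of $\dim \pi_2^{-1}(y)$ and the strict dimension drop, reproduces the displayed computation at the end of the proof of Proposition~\ref{prop:KKT location s=3} and yields the absurdity $N < N$; therefore $\dim\big(\pi_1(\pi_2^{-1}(Y_i \cap V_{\mathbf{n},r}(i_1,\dots,i_p;\emptyset)))\big) < N$, as desired.

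The only real subtlety is keeping the two stratifications --- by the support $J = \{j : \lambda_j \neq 0\}$ of the weight vector and by the exceptional set $E$ --- properly aligned, so that the fiber $\pi_2^{-1}(y)$ has locally constant dimension on every stratum; this is exactly the role of Corollaries~\ref{cor:basis dphi 1}, \ref{cor:basis dphi 2} and Lemmas~\ref{lem:fiber pi2 1}, \ref{lem:fiber pi2 2}. The single substantively new geometric fact --- that the $s=2$ exceptional strata are too thin to dominate the tensor space --- has already been extracted in Lemma~\ref{lem:s=2 nonempty E}, so once that is granted I expect no further obstacle beyond routine dimension bookkeeping.
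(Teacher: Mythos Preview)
Your proposal is correct and follows essentially the same approach as the paper: stratify $V_{\mathbf{n},r}\setminus W_{\mathbf{n},r}$ by the support $(i_1,\dots,i_p)$ and the exceptional set $E$, dispose of the $E\neq\emptyset$ strata via Lemma~\ref{lem:s=2 nonempty E}, and for $E=\emptyset$ rerun the dimension count from Proposition~\ref{prop:KKT location s=3} using Corollary~\ref{cor:basis dphi 2} (with $e=0$) and Lemma~\ref{lem:fiber pi2 2} in place of their $s\ge 3$ counterparts. Your write-up is in fact a bit more explicit than the paper's, which just says ``by a similar argument as Proposition~\ref{prop:KKT location s=3}'' for the non-exceptional part.
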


\begin{proof}
Let $Z = V_{\mathbf{n},r} \setminus W_{\mathbf{n},r}$. We notice that
\small
\[
V_{\mathbf{n},r} = \bigg( \bigcup_{\substack{E\neq \emptyset\\ {1\le |i_1| < \cdots < |i_p| \le r}}}  V_{\mathbf{n},r} (i_1,\dots, i_p; E) \bigg) \bigcup \bigg(
\bigcup_{{1\le |i_1| < \cdots < |i_p| \le r}} \Big( V_{\mathbf{n},r} (i_1,\dots, i_p) \setminus  \big( \bigcup_{E\neq \emptyset}  V_{\mathbf{n},r} (i_1,\dots, i_p; E) \big) \Big)
\bigg).
\]
\normalsize
Therefore, it is sufficient to prove that
\begin{align*}
\dim \pi_1\left( \pi_2^{-1} \left( Z \cap V_{\mathbf{n},r} (i_1,\dots, i_p; E) \right) \right) &< N, \quad E \neq\emptyset \\
\dim \pi_1\bigg( \pi_2^{-1} \Big( Z \bigcap  \big(
V_{\mathbf{n},r} (i_1,\dots, i_p) \setminus  \big( \bigcup_{E\neq \emptyset}  V_{\mathbf{n},r} (i_1,\dots, i_p; E) \big)
\big)
\Big)
\bigg)&<N.
\end{align*}
The first inequality follows from Lemma~\ref{lem:s=2 nonempty E} and the second is obtained by {a similar argument as Proposition~\ref{prop:KKT location s=3}}.
\end{proof}

\subsubsection{{The case $s=1$}}\label{sec:s=1} If $s=1$, the situation is more complicated than $s \ge 2$. In this case, we let   $y = (U^{(1)},\dots, U^{(k)},(\lambda_1,\dots, \lambda_r))$ be a fixed point in $V_{\mathbf{n},r} (i_1,\dots, i_p)$ where $i_1,\dots, i_p$ are some integers such that $1\le \lvert i_1 \rvert < \cdots < \lvert i_p \rvert \le r$. We define
\begin{align}
B_1 &:= \{ \mathbf{x}_{q} \otimes  \mathbf{u}^{(2)}_j \otimes \cdots \otimes \mathbf{u}^{(k)}_j\}, \label{eqn:basis B1} \\
B_2&:= \{ \mathbf{u}^{(1)}_j \otimes \cdots \otimes \mathbf{u}^{(i'-1)}_j \otimes \mathbf{x}^{(i')}_{j,q'} \otimes \mathbf{u}^{(i'+1)}_j \otimes \cdots \otimes \mathbf{u}^{(k)}_j \}, \label{eqn:basis B2} \\
B_3 &:= \{ \mathbf{u}^{(1)}_{j} \otimes \cdots \otimes \mathbf{u}^{(k)}_{j} \}, \label{eqn:basis B3}   \\
B_4 &:= \{ \mathbf{u}^{(1)}_{j'} \otimes \cdots \otimes \mathbf{u}^{(k)}_{j'} \}. \label{eqn:basis B4}
\end{align}
where $j\in J\coloneqq \{t: \lambda_t \ne 0\}$, $j' \in J^c \coloneqq \{t: \lambda_t = 0\}$, $1\le q \le n_{1} - r$,  $2 \le i' \le k $ and $1\le q' \le n_{i'} - 1$, $\{ \mathbf{x}_{q} \}_{q= 1}^{n_1- r}$ is an orthonormal basis of $\left(\operatorname{span}( \mathbf{u}^{(1)}_1,\dots, \mathbf{u}^{(1)}_r) \right)^\perp$ and $\{ \mathbf{x}^{(i')}_{j,q'} \}_{q'=1}^{n_{i'}-1}$ is an orthonormal basis of $\left(\operatorname{span}( \mathbf{u}^{(i')}_j) \right)^\perp$.

For each pair $1\le l < m \le r$, we also let $\mathcal{T}_{lm}$ be the tensor defined by
\[
\mathcal{T}_{lm} := \lambda_m \mathbf{u}^{(1)}_l \otimes \mathbf{u}^{(2)}_m \otimes  \cdots \otimes \mathbf{u}^{(k)}_m - \lambda_l \mathbf{u}^{(1)}_m \otimes \mathbf{u}^{(2)}_l  \otimes \cdots  \otimes \mathbf{u}^{(k)}_l.
\]
If we denote $\mathcal{A}_l \coloneqq \mathbf{u}_l^{(2)} \otimes \cdots \otimes \mathbf{u}_l^{(k)}$, then we can rewrite $\mathcal{T}_{lm} $ as
\[
\mathcal{T}_{lm}  =  \lambda_m \mathbf{u}^{(1)}_l \otimes \mathcal{A}_m - \lambda_l \mathbf{u}^{(1)}_m \otimes \mathcal{A}_l.
\]
We let
\begin{align*}
C_1 &= \{
\mathcal{T}_{lm}: \lambda_l \ne 0, \lambda_m = 0~\text{or}~
 \lambda_l = 0, \lambda_m \ne 0
\} =\{ -\lambda_l \mathbf{u}_m^{(1)} \otimes \mathcal{A}_l:  \lambda_l \ne 0  \} \cup \{ \lambda_m \mathbf{u}_l^{(1)} \otimes \mathcal{A}_m:  \lambda_m \ne 0  \} , \\
C_2 &= \{
\mathcal{T}_{lm}:  \lambda_l \ne 0, \lambda_m \ne 0
\} = \{\lambda_m \mathbf{u}^{(1)}_l \otimes \mathcal{A}_m - \lambda_l \mathbf{u}^{(1)}_m \otimes \mathcal{A}_l: \lambda_l \ne 0, \lambda_m \ne 0\}.
\end{align*}
\begin{lemma}\label{lem:basis dphi s=1}
The union of $B_1,B_2,B_3,B_4,C_1$ and $C_2$ is a spanning set of $d_y \varphi_{\mathbf{n},r} (\Tt_{V_{\mathbf{n},r}} (y))$ satisfying the following properties:
\begin{enumerate}[label=(\roman*)]
\item elements in $B_1$ are orthogonal to those in $B_2 \cup B_3 \cup B_4 \cup C_1 \cup C_2$; \label{lem:basis dphi s=1:item1}
\item elements in $B_2 \cup B_3 \cup B_4$ are pairwise orthogonal; \label{lem:basis dphi s=1:item2}
\item elements in $C_1$ are orthogonal to those in $B_2 \cup B_3  \cup C_2$; \label{lem:basis dphi s=1:item3}
\item elements in $C_2$ are orthogonal to those in $B_4$; \label{lem:basis dphi s=1:item4}
\item The vector space $d_y \varphi_{\mathbf{n},r} (\Tt_{V_{\mathbf{n},r}} (y))$ can be written as
\begin{equation}\label{eq:decomposition}
d_y \varphi_{\mathbf{n},r} (\Tt_{V_{\mathbf{n},r}} (y)) = \operatorname{span} (B_1) \oplus \left( \operatorname{span} (C_1) + \operatorname{span} (B_4)  \right) \ \oplus  \left( \operatorname{span} (B_2 \sqcup B_3) + \operatorname{ span}(C_2) \right).
\end{equation}
\label{lem:basis dphi s=1:item5}
\end{enumerate}
\end{lemma}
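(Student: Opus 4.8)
The plan is to first pin down an explicit spanning set of $d_y\varphi_{\mathbf{n},r}(\Tt_{V_{\mathbf{n},r}}(y))$ via Lemmas~\ref{lem:differential phi} and \ref{lem:differential phi 1}, then verify the orthogonality relations (i)--(iv) by direct inner-product computations organized around which tensor factors lie in $J$ versus $J^c$, and finally deduce the decomposition \eqref{eq:decomposition} formally from those relations.

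For the spanning set: since $s=1$, the tangent space $\Tt_{V_{\mathbf{n},r}}(y)$ is the span of bases of $\Tt_{\V(r,n_1)}(U^{(1)})$, $\Tt_{\B(r,n_2)}(U^{(2)}),\dots,\Tt_{\B(r,n_k)}(U^{(k)})$ and $\Tt_{\mathbb{R}^r}((\lambda_1,\dots,\lambda_r))$. Writing $\Tt_{\V(r,n_1)}(U^{(1)}) = (U^{(1)})^{\perp}\cdot\mathbb{R}^{(n_1-r)\times r} + U^{(1)}\cdot\operatorname{Sk}^r$, Lemma~\ref{lem:differential phi} applied with $i=1$ sends the first summand onto $\operatorname{span}(B_1)$ (the directions attached to indices $j$ with $\lambda_j=0$ map to $0$), while Lemma~\ref{lem:differential phi 1} applied with $i=1$ to the elementary skew-symmetric matrices $A_{lm}$ produces exactly the tensors $\mathcal{T}_{lm}$, which according to the value of $(\lambda_l,\lambda_m)$ lie in $C_1$, lie in $C_2$, or vanish. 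Lemma~\ref{lem:differential phi} applied with $2\le i\le k$ maps the $\B(r,n_i)$-directions onto $\operatorname{span}(B_2)$, and the canonical directions $\ee_j$ in $\Tt_{\mathbb{R}^r}$ map to the rank-one tensors $\mathbf{u}^{(1)}_j\otimes\cdots\otimes\mathbf{u}^{(k)}_j$, which constitute $B_3$ for $j\in J$ and $B_4$ for $j\in J^c$. Hence $B_1\cup B_2\cup B_3\cup B_4\cup C_1\cup C_2$ spans $d_y\varphi_{\mathbf{n},r}(\Tt_{V_{\mathbf{n},r}}(y))$.

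For the orthogonality relations, the organizing observation is that every element of $B_2\cup B_3\cup B_4\cup C_1\cup C_2$ is a linear combination of rank-one tensors whose \emph{first} factor is one of the orthonormal vectors $\mathbf{u}^{(1)}_1,\dots,\mathbf{u}^{(1)}_r$, and the index occurring there is constrained: it lies in $J$ for elements of $B_2$ and $B_3$, in $J^c$ for elements of $B_4$ and for the single rank-one summand comprising each element of $C_1$, and in $J$ (for both summands) in each element of $C_2$. Since the first factor of every element of $B_1$ is orthogonal to $\operatorname{span}(\mathbf{u}^{(1)}_1,\dots,\mathbf{u}^{(1)}_r)$, relation (i) is immediate. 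For the remaining comparisons one first inspects the first factor: using $J\cap J^c=\emptyset$ together with the orthonormality of $U^{(1)}$, the first-factor indices are distinct — so the inner product already vanishes at the first slot — whenever one pairs $B_3$ with $B_4$, $B_2$ with $B_4$, $C_1$ with any of $B_2,B_3,C_2$, or $C_2$ with $B_4$, and also for any two distinct elements of $B_3$ or of $B_4$. The only genuinely delicate pairs are $B_2$ against $B_2$ and $B_2$ against $B_3$: there the first-factor indices may coincide, and in that event one passes to the remaining factors and invokes $\langle\mathbf{x}^{(i')}_{j,q'},\mathbf{u}^{(i')}_j\rangle=0$ and $\langle\mathbf{x}^{(i')}_{j,q'},\mathbf{x}^{(i')}_{j,q'_1}\rangle=\delta_{q'q'_1}$, while the untouched slots contribute $\lVert\mathbf{u}^{(i)}_j\rVert^2=1$. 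This establishes (i)--(iv).

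Relation (v) then follows formally: by the spanning claim the three subspaces $\operatorname{span}(B_1)$, $\operatorname{span}(C_1)+\operatorname{span}(B_4)$ and $\operatorname{span}(B_2\sqcup B_3)+\operatorname{span}(C_2)$ together span $d_y\varphi_{\mathbf{n},r}(\Tt_{V_{\mathbf{n},r}}(y))$; relations (i)--(iv) assert precisely that these three subspaces are pairwise orthogonal, and pairwise orthogonality of subspaces of a finite-dimensional inner-product space forces the sum to be direct. The main point requiring care is thus the index bookkeeping for $B_2$ paired with $B_2$ and with $B_3$, where one must remember that the vectors $\mathbf{u}^{(i)}_1,\dots,\mathbf{u}^{(i)}_r$ with $i\ge 2$ are merely unit vectors and need not be mutually orthogonal; everything else is a routine index chase.
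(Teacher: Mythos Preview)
Your proof is correct and follows essentially the same approach as the paper's: the spanning claim is deduced from Lemmas~\ref{lem:differential phi} and \ref{lem:differential phi 1}, and the orthogonality relations (i)--(iv) are verified by direct inner-product checks exploiting the orthonormality of the columns of $U^{(1)}$ together with $\langle \mathbf{x}^{(i')}_{j,q'},\mathbf{u}^{(i')}_j\rangle=0$. Your organizing principle---tracking whether the first-factor index lies in $J$ or $J^c$---is a clean way to package the case analysis that the paper carries out more explicitly, and your derivation of (v) from (i)--(iv) via pairwise orthogonality of the three displayed subspaces is exactly what the paper means by ``obviously obtained.''
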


\begin{proof}
The fact that the set $B_1\cup B_2 \cup B_3 \cup B_4 \cup C_1 \cup C_2$ spans $d_y \varphi_{\mathbf{n},r} (\Tt_{V_{\mathbf{n},r}} (y))$ follows from Lemmas~\ref{lem:differential phi} and \ref{lem:differential phi 1}. In the following, we fix positive integers $j,j_1\in \{t: \lambda_t \ne 0\}$, $2 \le i',i'_1  \le k$, $1\le q \le n_1 -r$, $1\le q' \le n_{i'} - 1$ and $1\le l,m \le r$.
\begin{enumerate}[label=(\roman*)]
\item We observe that
\begin{align*}
(\mathbf{x}_{q} \otimes  \mathbf{u}^{(2)}_j \otimes \cdots \otimes \mathbf{u}^{(k)}_j) &\perp (\mathbf{u}^{(1)}_{j_1} \otimes \cdots \otimes \mathbf{u}^{(i'-1)}_{j_1} \otimes \mathbf{x}^{(i')}_{j_1,q'} \otimes \mathbf{u}^{(i'+1)}_{j_1} \otimes \cdots \otimes \mathbf{u}^{(k)}_{j_1}), \\
(\mathbf{x}_{q} \otimes  \mathbf{u}^{(2)}_j \otimes \cdots \otimes \mathbf{u}^{(k)}_j)  &\perp (\mathbf{u}^{(1)}_{j_1} \otimes \cdots \otimes \mathbf{u}^{(k)}_{j_1}), \\
(\mathbf{x}_{q} \otimes  \mathbf{u}^{(2)}_j \otimes \cdots \otimes \mathbf{u}^{(k)}_j)  &\perp (\lambda_m \mathbf{u}^{(1)}_l \otimes \mathbf{u}^{(2)}_m \otimes  \cdots \otimes \mathbf{u}^{(k)}_m - \lambda_l \mathbf{u}^{(1)}_m \otimes \mathbf{u}^{(2)}_l  \otimes \cdots  \otimes \mathbf{u}^{(k)}_l),
\end{align*}
since $\mathbf{x}_{q}$ is orthogonal to $\mathbf{u}^{(1)}_{a}$ for any $1\le a  \le r$.
\item The relation
\footnotesize{
\[
( \mathbf{u}^{(1)}_j \otimes \cdots \otimes \mathbf{u}^{(i'-1)}_j \otimes \mathbf{x}^{(i')}_{j,q'} \otimes \mathbf{u}^{(i'+1)}_j \otimes \cdots \otimes \mathbf{u}^{(k)}_j ) \perp ( \mathbf{u}^{(1)}_{j_1} \otimes \cdots \otimes \mathbf{u}^{(i_1'-1)}_{j_1} \otimes \mathbf{x}^{(i_1')}_{j_1,q_1'} \otimes \mathbf{u}^{(i_1'+1)}_{j_1} \otimes \cdots \otimes \mathbf{u}^{(k)}_j )
\]}
\normalsize
follows from the facts
\begin{enumerate}[label=(\alph*)]
\item $j \ne j_1$: $\mathbf{u}^{(1)}_j \perp \mathbf{u}^{(1)}_{j_1}$;
\item $j = j_1, i' \ne i'_1$: $\mathbf{x}^{(i')}_{j,q'} \perp \mathbf{u}^{(i')}_{j}$; and
\item $j = j_1, i' = i'_1, q' \ne q'_1$: $\mathbf{x}^{(i')}_{j,q'} \perp \mathbf{x}^{(i')}_{j,q_1'}$.
\end{enumerate}
The orthogonality between $\mathbf{u}^{(1)}_j \otimes \cdots \otimes \mathbf{u}^{(k)}_j$ and $\mathbf{u}^{(1)}_{j_1} \otimes \cdots \otimes \mathbf{u}^{(k)}_{j_1}$ for $j \ne j_1$ can be obtained from the orthogonality between $\mathbf{u}^{(1)}_j$ and $\mathbf{u}^{(1)}_{j_1}$. {Finally, the relation
\[
( \mathbf{u}^{(1)}_j \otimes \cdots \otimes \mathbf{u}^{(i'-1)}_j \otimes \mathbf{x}^{(i')}_{j,q'} \otimes \mathbf{u}^{(i'+1)}_j \otimes \cdots \otimes \mathbf{u}^{(k)}_j ) \perp  (\mathbf{u}^{(1)}_{j_1} \otimes \cdots \otimes \mathbf{u}^{(k)}_{j_1})
\]
follows from the facts
\begin{enumerate}[label=(\alph*)]
\item $j = j_1$: $ \mathbf{x}^{(i')}_{j,q'} \perp \mathbf{u}^{(i')}_{j}$, and
\item $j \ne j _1$: $\mathbf{u}^{(1)}_j \perp \mathbf{u}^{(1)}_{j_1}$.
\end{enumerate}}

\item We have
\footnotesize{
\[
(\lambda_l \mathbf{u}_m^{(1)} \otimes \mathcal{A}_l) \perp ( \mathbf{u}^{(1)}_j \otimes \cdots \otimes \mathbf{u}^{(i'-1)}_j \otimes \mathbf{x}^{(i')}_{j,q'} \otimes \mathbf{u}^{(i'+1)}_j \otimes \cdots \otimes \mathbf{u}^{(k)}_j ),\quad (\lambda_l \mathbf{u}_m^{(1)} \otimes \mathcal{A}_l) \perp (\mathbf{u}^{(1)}_{j} \otimes \cdots \otimes \mathbf{u}^{(k)}_{j})
\]}
\normalsize
and
\[
(\lambda_l \mathbf{u}_m^{(1)} \otimes \mathcal{A}_l) \perp (\lambda_{m_1} \mathbf{u}_{l_1}^{(1)} \otimes \mathcal{A}_{m_1} - \lambda_{l_1} \mathbf{u}^{(1)}_{m_1} \otimes \mathcal{A}_{l_1})
\]
if $\lambda_m = 0$, $\lambda_j \ne 0$, $\lambda_{l_1}\ne 0$ and $\lambda_{m_1} \ne 0$ (in particular, $j,l_1,m_1 \ne m$), from which we can derive $\mathbf{u}_m^{(1)} \perp \mathbf{u}^{(1)}_j
$, $\mathbf{u}_m^{(1)} \perp \mathbf{u}^{(1)}_{l_1}
$ and $\mathbf{u}_m^{(1)} \perp \mathbf{u}^{(1)}_{m_1}$.
\item Since $\mathbf{u}^{(1)}_l \perp \mathbf{u}^{(1)}_{j'}$ whenever {$\lambda_l\ne 0$ and $\lambda_{j'}=0$}, the statement can be easily verified.
\item This is obviously obtained by orthogonal relations described in Items \eqref{lem:basis dphi s=1:item1}--\eqref{lem:basis dphi s=1:item4}.
\end{enumerate}
\end{proof}

According to \eqref{eq:decomposition}, we observe that the defectivity of $\dim (d_y \varphi_{\mathbf{n},r} (\Tt_{V_{\mathbf{n},r}} (y)))$ with respect to $\dim (V_{\mathbf{n},r} )$ consists of four parts: $\alpha$ from $\operatorname{span}(B_1)$; $\beta_1$ from $\operatorname{span}(C_1)$; $\beta_2$ from $\left( \operatorname{span}(C_1) \cap \operatorname{span}(B_4)  \right)$; and $\gamma$ from $\left( \operatorname{span} (B_2 \sqcup B_3) + \operatorname{ span}(C_2) \right)$. We assume that
\begin{equation}\label{eq:linear relation}
\dim \left(
\operatorname{span}(\mathcal{A}_{\lvert i_1 \rvert},\dots, \mathcal{A}_{\lvert i_p \rvert}) \right) = p - \delta_0,
\end{equation}
where $0 \le \delta_0 \le p-1$. It is clear that
\begin{equation}\label{eq:alpha+beta1}
\alpha + \beta_1 = \delta_0(n_1 - p).
\end{equation}
It is straightforward to verify that
\begin{equation}\label{eq:beta2}
\beta_2 = \# \lbrace  t: \lambda_{t} = 0, \mathcal{A}_t \in \operatorname{span}(\mathcal{A}_{\lvert i_1 \rvert},\dots, \mathcal{A}_{\lvert i_p \rvert}) \rbrace
\end{equation}
and $\gamma$ is equal to the dimension of the linear space of vectors $(c_{lm})\in \mathbb{R}^{\binom{p}{2}}$ satisfying the linear system
\begin{multline}\label{eq:gamma}
\sum_{j=i_1,\dots, i_p} \sum_{i'=2}^k  \mathbf{u}^{(1)}_j \otimes \mathbf{u}^{(2)}_j \otimes \cdots \otimes \mathbf{u}^{(i'-1)}_j \otimes \mathbf{y}_j^{(i')} \otimes \mathbf{u}^{(i'+1)}_j \otimes \cdots \otimes \mathbf{u}^{(k)}_j \\
= \sum_{\substack{l,m = i_1,\dots,i_p \\l < m}} c_{lm} (\lambda_m \mathbf{u}_l^{(1)} \otimes \mathcal{A}_m - \lambda_l \mathbf{u}_m^{(1)} \otimes \mathcal{A}_l)
\end{multline}
for some $\mathbf{y}^{(i')}_j \in \mathbb{R}^{n_{i'}}, i' = 2,\dots k, j = i_1,\dots, i_p$.

Let $\delta$ be a nonnegative integer
and let $V_{\mathbf{n},r}(i_1,\dots, i_p;\delta)$ be the subset of $V_{\mathbf{n},r}(i_1,\dots, i_p)$ consisting of points $y$ such that the defectivity of $\dim (d_y \varphi_{\mathbf{n},r} (T_{V_{\mathbf{n},r}} (y)))$ is $\delta$, i.e.,
\[
\delta = \alpha + \beta_1 + \beta_2 + \gamma= \delta_0(n_1-p) + \beta_2 + \gamma.
\]
We also denote
\[
\operatorname{codim}(V_{\mathbf{n},r}(i_1,\dots, i_p;\delta)) \coloneqq \dim (V_{\mathbf{n},r}(i_1,\dots, i_p)) - \dim((V_{\mathbf{n},r}(i_1,\dots, i_p;\delta))).
\]
The following technical lemma compares $\operatorname{codim}(V_{\mathbf{n},r}(i_1,\dots, i_p;\delta)) $ with $\delta$.
\begin{lemma}\label{lem:defectivity s=1}
If positive integers {$k\ge 3, n_1,\dots, n_k \ge 2$ satisfy} the following inequalities:
\begin{align}
\sum_{i'=2}^k n_{i'} &\geq n_1 +k-1, \label{lem:defectivity s=1:eq01}\\
\sum_{i'=2}^k n_{i'} &\ge n_{i''} + k,\quad 2 \le i'' \le k, \label{lem:defectivity s=1:eq02}
\end{align}
then for any $\delta >0$, we have
\[
\operatorname{codim}(V_{\mathbf{n},r}(i_1,\dots, i_p;\delta))  > \delta.
\]
\end{lemma}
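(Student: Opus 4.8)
The plan is to stratify $V_{\mathbf{n},r}(i_1,\dots,i_p)$ into locally closed pieces on which the ``degeneracy type'' of a point $y$ is constant, and to show that every piece on which the defectivity equals $\delta>0$ has codimension strictly larger than $\delta$. The organizing principle is the decomposition of the defect recorded in \eqref{eq:alpha+beta1}, \eqref{eq:beta2} and \eqref{eq:gamma}, namely $\delta=\delta_0(n_1-p)+\beta_2+\gamma$, where $\delta_0$ is the drop in $\dim\operatorname{span}(\mathcal{A}_{|i_1|},\dots,\mathcal{A}_{|i_p|})$, $\beta_2$ counts the indices $t$ with $\lambda_t=0$ for which $\mathcal{A}_t$ lands in that span, and $\gamma$ is the dimension of the solution space of \eqref{eq:gamma}. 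I would bound from below the codimension forced by each of these three phenomena separately, and then add the bounds, using that $\delta$ itself is the corresponding sum.

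\emph{The $\delta_0$ and $\beta_2$ contributions.} For $s=1$ each slice $\mathcal{A}_\ell=\mathbf{u}_\ell^{(2)}\otimes\cdots\otimes\mathbf{u}_\ell^{(k)}$ is a unit rank-one tensor. Since $p\le r\le\min\{n_i\}$ keeps us well inside the range where the span of $p$ generic points of the affine cone over the Segre variety $\Seg\subseteq\mathbb{R}^{n_2}\otimes\cdots\otimes\mathbb{R}^{n_k}$ meets that cone only in the $p$ corresponding rays, a linear dependence lowering $\dim\operatorname{span}$ by $\delta_0$ forces, on the generic point of the relevant stratum, $\delta_0$ genuine coincidences $\mathcal{A}_{i_a}=\pm\mathcal{A}_{i_b}$. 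Each such coincidence imposes $\mathbf{u}^{(i')}_{i_a}=\pm\mathbf{u}^{(i')}_{i_b}$ for all $2\le i'\le k$, hence costs codimension $\sum_{i'=2}^k(n_{i'}-1)$ in the product of spheres. So this part of the stratum has codimension at least $\delta_0\sum_{i'=2}^k(n_{i'}-1)=\delta_0\bigl(\sum_{i'=2}^k n_{i'}-(k-1)\bigr)$, which by \eqref{lem:defectivity s=1:eq01} is $\ge\delta_0 n_1>\delta_0(n_1-p)$. The same mechanism applied to the indices $t\in J^c$ whose $\mathcal{A}_t$ falls into the span yields codimension at least $\beta_2\sum_{i'=2}^k(n_{i'}-1)>\beta_2$, using $\sum_{i'=2}^k(n_{i'}-1)\ge k-1\ge 2$.

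\emph{The $\gamma$ contribution.} This is the delicate part, and it is where \eqref{lem:defectivity s=1:eq02} enters. Exploiting that $U^{(1)}\in\V(r,n_1)$ has orthonormal columns, I would rewrite \eqref{eq:gamma} by collecting the $\mathbf{u}_j^{(1)}$-components: a nonzero solution $(c_{lm})$ exists precisely when, for every $j\in J$, the tensor $\mathcal{B}_j:=\sum_{m>j}c_{jm}\lambda_m\mathcal{A}_m-\sum_{l<j}c_{lj}\lambda_l\mathcal{A}_l$ lies in the affine tangent cone $\Tt_{\Seg}(\mathcal{A}_j)$. On the stratum where $\delta_0=0$ the tensors $\mathcal{A}_m$ ($m\in J$) are independent, and a dimension count gives generically $\Tt_{\Seg}(\mathcal{A}_j)\cap\operatorname{span}(\mathcal{A}_m:m\in J,\ m\neq j)=\{0\}$; hence $\gamma>0$ forces a genuine tangency $\mathcal{A}_m\in\Tt_{\Seg}(\mathcal{A}_j)$ for some $m\neq j$ (or an even more special configuration, of higher codimension). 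Because $\Tt_{\Seg}(\mathcal{A}_j)\cap\Seg$ is the union, over $2\le i'_0\le k$, of the linear spaces of rank-one tensors agreeing with $\mathcal{A}_j$ in every factor except the $i'_0$-th, each such tangency costs codimension at least $\min_{2\le i'_0\le k}\sum_{i'\neq i'_0}(n_{i'}-1)=\sum_{i'=2}^k n_{i'}-k+2-\max_{2\le i''\le k}n_{i''}$, which is $>1$ by \eqref{lem:defectivity s=1:eq02}; since $\gamma$ units of defect require $\gamma$ independent tangencies, the codimension strictly dominates $\gamma$.

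\emph{Assembling and the main obstacle.} Finally I would combine the three estimates: the degeneracies recorded by $\delta_0$ (coincidences among the spherical factors of the columns indexed by $J$), $\gamma$ (tangencies among those same columns), and $\beta_2$ (memberships of the columns indexed by $J^c$) are cut out by essentially independent systems of equations, so the codimension of the $\delta$-stratum is at least the sum of the three lower bounds, while $\delta$ is the matching sum by \eqref{eq:alpha+beta1}, \eqref{eq:beta2} and \eqref{eq:gamma}; the three strict inequalities then give $\operatorname{codim}(V_{\mathbf{n},r}(i_1,\dots,i_p;\delta))>\delta$. \textbf{The main obstacle} is precisely this last bookkeeping step: one must check that distinct elementary degeneracies impose independent constraints (so their codimensions really add), and that a single coincidence cannot simultaneously decrease $\delta_0$ and create extra $\gamma$ faster than it raises codimension. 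The slack built into \eqref{lem:defectivity s=1:eq01} and \eqref{lem:defectivity s=1:eq02} — both strict-by-a-margin once $\delta>0$ — is what absorbs such overlaps, so the argument should go through by a careful but routine case analysis of the degeneracy types.
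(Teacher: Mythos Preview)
Your overall architecture matches the paper's, but there is a genuine gap in the $\delta_0$ step that propagates into the rest of the argument.

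You claim that a drop $\delta_0$ in $\dim\operatorname{span}(\mathcal{A}_{|i_1|},\dots,\mathcal{A}_{|i_p|})$ generically forces $\delta_0$ full coincidences $\mathcal{A}_{i_a}=\pm\mathcal{A}_{i_b}$, costing codimension $\delta_0\sum_{i'=2}^k(n_{i'}-1)=\delta_0\beta$. This is false. A rank-one tensor can lie in the span of others without being proportional to any of them: if $\mathbf{u}^{(i')}_1=\pm\mathbf{u}^{(i')}_2$ for all $i'\neq i'_0$, then every $\mathbf{u}^{(2)}_1\otimes\cdots\otimes\mathbf{w}\otimes\cdots\otimes\mathbf{u}^{(k)}_1$ with $\mathbf{w}\in\operatorname{span}(\mathbf{u}^{(i'_0)}_1,\mathbf{u}^{(i'_0)}_2)$ is rank one and in $\operatorname{span}(\mathcal{A}_1,\mathcal{A}_2)$. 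The paper shows that the \emph{minimum}-codimension stratum with a given $\delta_0$ arises exactly this way (coincidence in all factors but one, with the remaining factor merely in a $(p-\delta_0)$-dimensional subspace), and the correct lower bound per dependency is $\beta-(p-\delta_0)+1$, not $\beta$. With the true bound, \eqref{lem:defectivity s=1:eq01} only gives $\delta_0(\beta-n_1+\delta_0+1)$ after subtracting $\delta_0(n_1-p)$, which no longer strictly dominates by itself.

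More seriously, the three contributions are \emph{not} independent. The paper proves that whenever $\delta_0>0$, the linear system \eqref{eq:gamma} automatically has a kernel of dimension $\binom{\delta_0}{2}$ (computed via an explicit map $\eta$ on skew-symmetric matrices), so $\gamma\ge\binom{\delta_0}{2}$ comes for free with no extra codimension cost. Your ``bookkeeping'' step, which adds the three codimension bounds as if the constraints were independent, therefore overcounts; the paper has to split into the cases $\gamma<\binom{\delta_0}{2}+\delta_0$ and $\gamma\ge\binom{\delta_0}{2}+\delta_0$ and do a genuinely joint estimate in the second case. The slack in \eqref{lem:defectivity s=1:eq01}--\eqref{lem:defectivity s=1:eq02} is used precisely to absorb the $\binom{\delta_0}{2}+\delta_0$ correction, not merely ``overlaps'' of the kind you anticipate. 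Without this interaction term your final inequality does not close.
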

\begin{proof}
To prove the claimed inequality, we estimate contributions of \eqref{eq:linear relation}, \eqref{eq:beta2} and \eqref{eq:gamma} to the codimension $\operatorname{codim}(V_{\mathbf{n},r}(i_1,\dots, i_p;\delta)) $ and divide the proof into three parts accordingly. 
\begin{enumerate}[label= (\roman*)]
\item Contribution of \eqref{eq:linear relation}: For simplicity, we may assume that $i_1 = 1,\dots, i_p = p$. In this case, we have
\[
\dim \left( \operatorname{span}(\{\mathcal{A}_1, \dots, \mathcal{A}_p \}) \right) = p - \delta_0.
\]
We may also assume that $\mathcal{A}_1,\dots, \mathcal{A}_{p - \delta_0}$ are linearly independent. Hence for each $p-\delta_0 +1 \le l \le p$, there exists some $a_{lj}\in \mathbb{R}$ with $j=1,\dots,p-\delta_0$ such that
\begin{equation}\label{lem:defectivity s=1:eq1}
\mathbf{u}^{(2)}_l \otimes \cdots \otimes \mathbf{u}^{(k)}_l = \sum_{j=1}^{p - \delta_0} a_{lj} \mathbf{u}^{(2)}_j \otimes \cdots \otimes \mathbf{u}^{(k)}_j.
\end{equation}
For each $i' = 2,\dots, k$, we denote
\[
r_{i'} = \rank \left( [ \mathbf{u}^{(i')}_1,\dots, \mathbf{u}^{(i')}_{p-\delta_0}] \right) \ge 1.
\]
We first address the extreme case where $r_{i_0'} = p-\delta_0$ for some $2 \le i_0' \le k$. The $(p-\delta_0)$ tensors
\begin{equation}\label{lem:defectivity s=1:eq3}
{a_{l1}} \mathbf{u}^{(2)}_1 \otimes \cdots \otimes  \widehat{\mathbf{u}^{(i_0')}_1} \otimes \cdots  \otimes \mathbf{u}^{(k)}_1,\dots, {a_{l,p-\delta_0}} \mathbf{u}^{(2)}_{p-\delta_0} \otimes \cdots \otimes  \widehat{\mathbf{u}^{(i_0')}_{p-\delta_0}} \otimes \cdots  \otimes \mathbf{u}^{(k)}_{p-\delta_0}
\end{equation}
are pairwise differed by a multiple scalar. We may infer from the combination of \eqref{lem:defectivity s=1:eq3} and \eqref{lem:defectivity s=1:eq1} that for each $p - \delta_0 + 1 \le l \le p$, there exists some $1\le j \le p - \delta_0$ such that $\mathbf{u}_l^{(i'_0)} \in {\operatorname{span} (\mathbf{u}_{1}^{(i'_0)},\dots,\dots, \mathbf{u}_{p-\delta_0}^{(i'_0)})}$ and 
\begin{equation}\label{lem:defectivity s=1:eq5}
\mathbf{u}_l^{(2)} = \pm \mathbf{u}_{j}^{(2)},\dots, \mathbf{u}_l^{(i'_0-1)} = \pm \mathbf{u}_{j}^{(i'_0-1)}, \mathbf{u}_l^{(i'_0+1)} = \pm \mathbf{u}_{j}^{(i'_0+1)},\dots, \mathbf{u}_{l}^{(k)} = \pm \mathbf{u}_{j}^{(k)}.
\end{equation}
\normalsize
Hence the contribution of \eqref{lem:defectivity s=1:eq1} to $\operatorname{codim} (V_{\mathbf{n},r}(i_1,\dots, i_p;\delta) )$ is at least
\begin{equation}\label{lem:defectivity s=1:eq6}
 (n_2 -1) + \cdots + (n_{i'_0-1} -1) + (n_{i'_0}- (p - \delta_0) ) + (n_{i'_0+1} -1) + \cdots + (n_k -1).
\end{equation}

Next we deal with the general case where $r_{i'} < p-\delta_0$ for all $2\le i' \le k$. It suffices to consider $i' = 2$. Without loss of generality, we suppose that $\mathbf{u}_1^{(2)}, \dots, \mathbf{u}_{r_{2}}^{(2)}$ are linearly independent, from which we obtain 
\[
\mathbf{u}^{(2)}_l \otimes \cdots \otimes \mathbf{u}^{(k)}_l = \sum_{t=1}^{r_2}  \mathbf{u}_t^{(2)} \otimes \mathcal{B}_t,
\]
where for each $1\le t \le r_2$, $\mathcal{B}_t$ is a scalar multiple of $\mathbf{u}^{(3)}_l \otimes \cdots \otimes \mathbf{u}^{(k)}_l$ and moreover it is a linear combination of $(p-\delta_0 - r_2+1)$ tensors: 
\[
a_{lt} \mathbf{u}^{(3)}_t \otimes \cdots \mathbf{u}^{(k)}_t, \quad a_{l,r_2+1} \mathbf{u}^{(3)}_{r_2+1} \otimes \cdots \mathbf{u}^{(k)}_{r_2+1},\quad \dots, \quad a_{l,p-\delta_0} \mathbf{u}^{(3)}_{p-\delta_0} \otimes \cdots \mathbf{u}^{(k)}_{p-\delta_0}.
\]
By an induction on $k$ and the fact that the contribution of the constraint 
\[
\rank \left( [ \mathbf{u}^{(2)}_1,\dots, \mathbf{u}^{(2)}_{p-\delta_0}] \right) = r_2
\] 
to $\operatorname{codim} (V_{\mathbf{n},r}(i_1,\dots, i_p;\delta) )$ is $(n_2 - r_2)(p-\delta_0 - r_2)$, we obtain that the contribution of \eqref{lem:defectivity s=1:eq1} to $\operatorname{codim} (V_{\mathbf{n},r}(i_1,\dots, i_p;\delta) )$ in this case is greater than the quantity displayed in the following \eqref{lem:defectivity s=1:eq6}. 

Lastly, we let $l$ run through the index set $\{p-\delta_0 + 1,\dots, p\}$ and conclude that the total contribution of \eqref{lem:defectivity s=1:eq1} to $\operatorname{codim} (V_{\mathbf{n},r}(i_1,\dots, i_p;\delta) )$ is at least 
\begin{equation}\label{lem:defectivity s=1:eq6}
\delta_0 \left(  (n_2 -1) + \cdots + (n_{i'_0-1} -1) + (n_{i'_0}- (p - \delta_0) ) + (n_{i'_0+1} -1) + \cdots + (n_k -1)  \right),
\end{equation}
which occurs in the case $r_{i'_0} = p - \delta_0$ for some $2 \le i'_0 \le k$.
\item Contribution of \eqref{eq:beta2}: We derive by a similar argument that the contribution of the set
\[
\lbrace
\mathcal{A}_t \in \operatorname{span}(\mathcal{A}_{1},\dots, \mathcal{A}_{p-\delta_0}): p+1 \le t \le r
\rbrace
\]
to $\operatorname{codim} (V_{\mathbf{n},r}(i_1,\dots, i_p;\delta) )$ is at least
\begin{equation}\label{lem:defectivity s=1:eq7}
\beta_2 \left(  (n_2 -1) + \cdots + (n_{i'_0-1} -1) + (n_{i'_0}- (p - \delta_0) ) + (n_{i'_0+1} -1) + \cdots + (n_k -1) \right).
\end{equation}
We notice that the quantities in \eqref{lem:defectivity s=1:eq6} and \eqref{lem:defectivity s=1:eq7} are independent of the index $i_0'$ since the factor in parentheses can be rewritten as 
\[
\left( \sum_{j=2}^k (n_j -1)  \right) + 1 - (p-\delta_0).
\]
\item Contribution of \eqref{eq:gamma}: Since $\mathbf{u}^{(1)}_1,\dots, \mathbf{u}^{(1)}_p$ are pairwise orthogonal, we may split \eqref{eq:gamma} as
\begin{equation}\label{lem:defectivity s=1:eq8}
\sum_{i'=2}^k \mathbf{u}^{(2)}_l \otimes \cdots \otimes \mathbf{u}^{(i'-1)}_l \otimes \mathbf{y}_l^{(i')} \otimes \mathbf{u}^{(i'+1)}_l \otimes \cdots \otimes \mathbf{u}^{(k)}_l = \sum_{m = 1}^p c_{lm} \lambda_m \mathcal{A}_m,   \quad l =1,\dots, p.
\end{equation}
Here we adopt the convention that $c_{ml}  = - c_{lm}$ for $1\le l < m \le p$ and $c_{11} = \cdots = c_{pp} = 0$. We recall from \eqref{lem:defectivity s=1:eq1} that $\mathcal{A}_l = \sum_{m=1}^{p - \delta_0} a_{lm} \mathcal{A}_m$ for each $p - \delta_0 +1 \le l \le p$. {Hence \eqref{lem:defectivity s=1:eq8}} can be written as
\begin{equation}\label{lem:defectivity s=1:eq9}
\sum_{i'=2}^k \mathbf{u}^{(2)}_l \otimes \cdots \otimes \mathbf{u}^{(i'-1)}_l \otimes \mathbf{y}_l^{(i')} \otimes \mathbf{u}^{(i'+1)}_l \otimes \cdots \otimes \mathbf{u}^{(k)}_l = \sum_{m = 1}^{p - \delta_0} \widetilde{c}_{lm} \mathcal{A}_m,\quad \quad l =1,\dots, p,
\end{equation}
\normalsize
where $\widetilde{c}_{lm}$ is defined for each $l =1,\dots, p $ and $m=1, \dots, p-\delta_0$ by
\[
\widetilde{c}_{lm} \coloneqq c_{lm} \lambda_m + \sum_{j=p -\delta_0 + 1}^p c_{lj}\lambda_j a_{jm}.
\]
We define a linear transformation
\[
\eta: C = \begin{bmatrix}
0 & c_{12} & \cdots & c_{1p}\\
-c_{12} & 0 & \cdots & c_{2p} \\
\vdots & \vdots & \ddots & \vdots \\
-c_{1p} & -c_{2p} & \cdots & 0
\end{bmatrix}   \mapsto \widetilde{C} = \begin{bmatrix}
0 & c_{12} & \cdots & c_{1p}\\
-c_{12} & 0 & \cdots & c_{2p} \\
\vdots & \vdots & \ddots & \vdots \\
-c_{1p} & -c_{2p} & \cdots & 0
\end{bmatrix}  \begin{bmatrix}
\lambda_1 & 0 & \cdots & 0\\
0 & \lambda_2 & \cdots & 0 \\
\vdots & \vdots & \ddots & \vdots \\
0 & 0 & \cdots & \lambda_p
\end{bmatrix}
\begin{bmatrix}
I_{p-\delta_0}   \\
A
\end{bmatrix},
\]
where $A$ is the $\delta_0 \times (p-\delta_0)$ matrix defined by
\[
A = \begin{bmatrix}
a_{p-\delta_0 + 1,1} &  \cdots & a_{p-\delta_0 + 1,p-\delta_0} \\
\vdots & \ddots & \vdots \\
a_{p,1} &  \cdots & a_{p,p-\delta_0}
\end{bmatrix}.
\]
It is clear that the $(l,m)$-th element of $\widetilde{C}$ is $\widetilde{c}_{lm}$ for $1 \le l \le p$ and $1\le m \le p-\delta_0$. We partition $C$ (resp. $\operatorname{diag}_2(\lambda_1,\dots, \lambda_p)$) as $\begin{bmatrix}
C_1 & C_2 \\
-C_2^\tp & C_3
\end{bmatrix}$ (resp. $\begin{bmatrix}
\Lambda_1 & 0 \\
0 & \Lambda_2
\end{bmatrix}$) so that
\[
\eta(C) = \begin{bmatrix}
C_1\Lambda_1 + C_2\Lambda_2 A \\
-C_2^\tp\Lambda_1 + C_3\Lambda_2A
\end{bmatrix}.
\]
This implies that the kernel of $\eta$ is given by
\begin{equation}\label{eq:defectivity-kernel}
\ker(\eta) = \left\lbrace \begin{bmatrix}{\Lambda_1^{-1} A^\tp \Lambda_2 C_3^\tp \Lambda_2 A \Lambda_1^{-1}} & {-\Lambda_1^{-1} A^\tp \Lambda_2 C_3^\tp} \\
-C_3 \Lambda_2 A \Lambda_1^{-1} & C_3
\end{bmatrix}: C_3 \in \mathbb{R}^{\delta_0 \times \delta_0}, C_3^\tp = - C_3
\right\rbrace.
\end{equation}
In particular, $\gamma \ge \dim (\ker (\eta)) =\binom{\delta_0}{2}$. We may assume that $\gamma \ge \binom{\delta_0}{2}+ \delta_0$ since otherwise the conclusion follows immediately. In fact, if we denote $\beta \coloneqq \sum_{i'=2}^k (n_{i'} - 1)$, then we have
\footnotesize
\begin{align*}
\operatorname{codim} (V_{\mathbf{n},r}(i_1,\dots, i_p;\delta)) - \delta
&\geq
(\delta_0 + \beta_2) (\beta - (n_{i_0'} - 1) + (n_{i_0'} - (p - \delta_0))) - \left( \delta_0(n_1 - p) + \beta_2 + \gamma \right) \\
&\ge \delta_0 (\beta - (p-\delta_0) + 1) - \delta_0(n_1 - p) - \gamma \\
 & = \delta_0(\beta - n_1 + \delta_0 + 1)-\gamma\\
 &\geq \delta_0(\delta_0+1)-\gamma.
\end{align*}
\normalsize
Thus the inequality $\gamma<\binom{\delta_0}{2} + \delta_0$ implies that $\operatorname{codim} (V_{\mathbf{n},r}(i_1,\dots, i_p;\delta)) > \delta$.

{In the following, we assume that $\gamma \ge \binom{\delta_0}{2} + \delta_0$.} {We first claim that} the contribution of the linear subspace $\left( \operatorname{span} (B_2 \sqcup B_3) + \operatorname{ span}(C_2) \right)$ to $\operatorname{codim} (V_{\mathbf{n},r}(i_1,\dots, i_p;\delta))$ is bounded below by
\begin{equation}\label{lem:defectivity s=1:eq10}
\left( \gamma - \binom{\delta_0}{2} - \delta_0 \right) \left( \beta+1-\max_{2\le i' \le k} \{ n_{i'} \} \right).
\end{equation}
To this end, for each $C\notin\ker(\eta)$ such that $\widetilde{C} = \eta(C)$ satisfies \eqref{lem:defectivity s=1:eq9}, we count the contribution to $\operatorname{codim} (V_{\mathbf{n},r}(i_1,\dots, i_p;\delta))$ of each nonzero entry of $\widetilde{C}$. 

By the same argument as before, the least contribution occurs when 
\[
\rank([\mathbf{u}^{(i')}_1,\dots, \mathbf{u}^{(i')}_{p - \delta_0}, \mathbf{u}^{(i')}_l])= p-\delta_0 + 1
\]
for some $2\le i' \le k$. We rewrite \eqref{lem:defectivity s=1:eq9} as
\begin{align*}
\mathbf{y}^{(2)}_l \otimes \mathbf{u}^{(3)}_l \otimes \cdots \otimes \mathbf{u}^{(k)}_l &= \sum_{m = 1}^{p-\delta_0} \mathbf{u}^{(2)}_m \otimes \left( \widetilde{c}_{lm} \mathbf{u}_m^{(3)} \otimes \cdots \otimes \mathbf{u}_m^{(k)} \right) \\
&- \mathbf{u}^{(2)}_l \otimes \left(
\sum_{i'=3}^k \mathbf{u}^{(3)}_l \otimes \cdots \otimes \mathbf{u}^{(i'-1)}_l \otimes \mathbf{y}^{(i')}_l \otimes \mathbf{u}^{(i'+1)}_l \otimes \cdots \otimes \mathbf{u}^{(k)}_l
\right),
\end{align*}
from which we obtain that tensors
\footnotesize
\[
\widetilde{c}_{l1}  \mathbf{u}_1^{(3)} \otimes \cdots \otimes \mathbf{u}_1^{(k)},  \dots, \widetilde{c}_{l,p-\delta_0}  \mathbf{u}_{{p-\delta_0}}^{(3)} \otimes \cdots \otimes \mathbf{u}_{{p-\delta_0}}^{(k)},
\sum_{i'=3}^k \mathbf{u}^{(3)}_l \otimes \cdots \otimes \mathbf{u}^{(i'-1)}_l \otimes \mathbf{y}^{(i')}_l \otimes \mathbf{u}^{(i'+1)}_l \otimes \cdots \otimes \mathbf{u}^{(k)}_l
\]
\normalsize
are only differed by a scalar multiple. Next we discuss with respect to different cases:
\begin{enumerate}[label= (\roman*)]
\item There exist $1\le m_1 < \cdots < m_q \le p-\delta_0, q \ge 2$ such that $\widetilde{c}_{lm_1},\dots,  \widetilde{c}_{lm_q} \ne 0$. In this case, we have
\[
\mathbf{u}^{(3)}_{m_1} =  \cdots = \pm \mathbf{u}^{(3)}_{m_q},\dots, \mathbf{u}^{(k)}_{m_1} = \cdots  = \pm \mathbf{u}^{(k)}_{m_q}.
\]
\item There exist $1\le m_0 \le p-\delta_0$ such that $\widetilde{c}_{lm_0} \ne 0$ but all other $\widetilde{c}_{lm} = 0$. In this case, we have
\begin{enumerate}[label=(\alph*)]
\item If $\sum_{i'=3}^k \mathbf{u}^{(3)}_l \otimes \cdots \otimes \mathbf{u}^{(i'-1)}_l \otimes \mathbf{y}^{(i')}_l \otimes \mathbf{u}^{(i'+1)}_l \otimes \cdots \otimes \mathbf{u}^{(k)}_l = 0$, then we have
\[
\mathbf{y}^{(2)}_l \otimes \mathbf{u}^{(3)}_l \otimes \cdots \otimes \mathbf{u}^{(k)}_l = \widetilde{c}_{lm_0} \mathbf{u}^{(2)}_{m_0} \otimes  \mathbf{u}_{m_0}^{(3)} \otimes \cdots \otimes \mathbf{u}_{m_0}^{(k)},
\]
from which we may derive
\[
\mathbf{u}^{(3)}_l = \pm \mathbf{u}^{(3)}_{m_0},\dots, \mathbf{u}^{(k)}_l = \pm \mathbf{u}^{(k)}_{m_0}.
\]
\item If $\sum_{i'=3}^k \mathbf{u}^{(3)}_l \otimes \cdots \otimes \mathbf{u}^{(i'-1)}_l \otimes \mathbf{y}^{(i')}_l \otimes \mathbf{u}^{(i'+1)}_l \otimes \cdots \otimes \mathbf{u}^{(k)}_l \ne 0$, then either $ \mathbf{u}^{(2)}_l = \pm \mathbf{u}^{(2)}_{m_0}$ or
\[
\mathbf{u}^{(3)}_l = \pm \mathbf{u}^{(3)}_{m_0},\dots, \mathbf{u}^{(k)}_l = \pm \mathbf{u}^{(k)}_{m_0}.
\]
If $\mathbf{u}^{(2)}_l = \pm \mathbf{u}^{(2)}_{m_0}$ holds, then we have
\footnotesize
\begin{equation}\label{eq:induction}
\sum_{i'=3}^k \mathbf{u}^{(3)}_l \otimes \cdots \otimes \mathbf{u}^{(i'-1)}_l \otimes \widetilde{\mathbf{y}}^{(i')}_l \otimes \mathbf{u}^{(i'+1)}_l \otimes \cdots \otimes \mathbf{u}^{(k)}_l = {\widetilde{c}'_{lm_0}}  \mathbf{u}_{m_0}^{(3)} \otimes \cdots \otimes \mathbf{u}_{m_0}^{(k)}
\end{equation}
\normalsize
for some nonzero $\widetilde{c}'_{lm_0}$ and $\widetilde{\mathbf{y}}^{(i')}_l$'s, which can be further discussed by an induction on the number of factors of tensors {in \eqref{eq:induction}}.
\end{enumerate}
{In summary, we obtain that the contribution of each nonzero entry {$\widetilde{c}_{lm}$ ($l \neq m$) of $\widetilde{C}$} to $\operatorname{codim} (V_{\mathbf{n},r}(i_1,\dots, i_p;\delta))$ is at least
\[
\beta+1-\max_{i'} \{ n_{i'} \}.
\]
Moreover, we recall that there are already $\delta_0$ pairs of $(l,i'_0)$ satisfying \eqref{lem:defectivity s=1:eq5} and ${\operatorname{dim}(\im(\eta))} \ge \gamma - \binom{\delta_0}{2}$, from which we obtain the desired lower bound  \eqref{lem:defectivity s=1:eq10} for the overall contribution of $\left( \operatorname{span} (B_2 \sqcup B_3) + \operatorname{ span}(C_2) \right)$ to $\operatorname{codim} (V_{\mathbf{n},r}(i_1,\dots, i_p;\delta))$.}
\end{enumerate}
\end{enumerate}

\if 
We either have
\begin{equation}\label{lem:defectivity s=1:eq2}
\rank \left( [\mathbf{u}^{(i')}_1,\dots, \mathbf{u}^{(i')}_{p-\delta_0}] \right) = 1,\quad i' = 2,\dots, k,
\end{equation}
or there exists some $2 \le i'_0 \le k$ such that the $(p-\delta_0)$ tensors
\begin{equation}\label{lem:defectivity s=1:eq3}
{a_{l1}} \mathbf{u}^{(2)}_1 \otimes \cdots \otimes  \widehat{\mathbf{u}^{(i_0')}_1} \otimes \cdots  \otimes \mathbf{u}^{(k)}_1,\dots, {a_{l,p-\delta_0}} \mathbf{u}^{(2)}_{p-\delta_0} \otimes \cdots \otimes  \widehat{\mathbf{u}^{(i_0')}_{p-\delta_0}} \otimes \cdots  \otimes \mathbf{u}^{(k)}_{p-\delta_0}
\end{equation}
are pairwise differed by a multiple scalar. Indeed, if $\rank \left( [\mathbf{u}^{(i_0')}_1,\dots, \mathbf{u}^{(i_0')}_{p-\delta_0}] \right) \ge 2$ for some $2 \le i_0' \le k$, then by comparing ranks of the $i'_0$-th flattening of both sides of \eqref{lem:defectivity s=1:eq1}, we may obtain the desired conclusion.

The former case \eqref{lem:defectivity s=1:eq2} together with \eqref{lem:defectivity s=1:eq1} can further imply that
\begin{equation}\label{lem:defectivity s=1:eq4}
\rank \left( [\mathbf{u}^{(i')}_1,\dots, \mathbf{u}^{(i')}_{p}] \right) = 1,\quad i' = 2,\dots, k.
\end{equation}
For the latter case, we may infer from the combination of \eqref{lem:defectivity s=1:eq3} and \eqref{lem:defectivity s=1:eq1} that for each $p - \delta_0 + 1 \le l \le p$, there exists some $1\le j \le p - \delta_0$ such that
\footnotesize
\begin{equation}\label{lem:defectivity s=1:eq5}
\mathbf{u}_l^{(i'_0)} \in {\operatorname{span} (\mathbf{u}_{1}^{(i'_0)},\dots,\dots, \mathbf{u}_{p-\delta_0}^{(i'_0)})}, \mathbf{u}_l^{(2)} = \pm \mathbf{u}_{j}^{(2)},\dots, \mathbf{u}_l^{(i'_0-1)} = \pm \mathbf{u}_{j}^{(i'_0-1)}, \mathbf{u}_l^{(i'_0+1)} = \pm \mathbf{u}_{j}^{(i'_0+1)},\dots, \mathbf{u}_{l}^{(k)} = \pm \mathbf{u}_{j}^{(k)}.
\end{equation}
\normalsize
Hence the contribution of \eqref{lem:defectivity s=1:eq1} to $\operatorname{codim} (V_{\mathbf{n},r}(i_1,\dots, i_p;\delta) )$ is at least
\begin{equation}\label{lem:defectivity s=1:eq6}
\delta_0 \left(  (n_2 -1) + \cdots + (n_{i'_0-1} -1) + (n_{i'_0}- (p - \delta_0) ) + (n_{i'_0+1} -1) + \cdots + (n_k -1) \right).
\end{equation}
\fi
We notice that contributions \eqref{lem:defectivity s=1:eq6}, \eqref{lem:defectivity s=1:eq7} and \eqref{lem:defectivity s=1:eq10} are all disjoint. Hence $\left( \operatorname{codim} (V_{\mathbf{n},r}(i_1,\dots, i_p;\delta)) - \delta \right)$ is at least
\small
\begin{align*}
&\left( \gamma - \binom{\delta_0}{2} - \delta_0 \right) {(\beta +1-\max_{i'} \{ n_{i'} \} )}  + (\delta_0 + \beta_2) (\beta {- (p-\delta_0) + 1}) - \left( \delta_0(n_1 - p) + \beta_2 + \gamma \right) \\
&\ge \left( \gamma - \binom{\delta_0}{2} - \delta_0 \right) {(\beta +1-\max_{i'} \{n_{i'} \} )} + \delta_0 (\beta - (p-\delta_0) + 1) - \delta_0(n_1 - p) - \gamma \\
&{\ge} {\left( \gamma- \binom{\delta_0}{2}-\delta_0 \right)} (\beta {-\max_{i'} \{n_{i'} \} }) - {\left( \binom{\delta_0}{2}+\delta_0 \right) + \delta_0  (\beta+1 - n_1+\delta_0) }\\
 &{\ge \delta_0(\beta - n_1 + \frac{\delta_0 + 1}{2})},
\end{align*}
\normalsize
since $p\leq r\leq\max\{n_{i'}\} < \beta$ according to \eqref{lem:defectivity s=1:eq02}. If $\delta_0 > 0$, then \eqref{lem:defectivity s=1:eq01} guarantees that
\[
\delta_0(\beta - n_1 + \frac{\delta_0 + 1}{2})  > 0.
\]
If $\delta_0 = 0$, then $\delta = \beta_ 2 + \gamma >0
$ implies either $\beta_2 > 0$ or $\gamma > 0$. Hence we have
\begin{align*}
\operatorname{codim} (V_{\mathbf{n},r}(i_1,\dots, i_p;\delta)) - \delta &\ge \gamma {(\beta +1-\max_{i'} \{n_{i'} \} )} + \beta_2 (\beta - p +1) - (\beta_2 + \gamma) \\
&= \gamma(\beta - {\max_{i'} \{n_{i'}\} }) + \beta_2 (\beta - p) \\
&> 0,
\end{align*}
which follows from \eqref{lem:defectivity s=1:eq02}.
\end{proof}

The rest part of the discussion is similar to the one in Subsection~\ref{sec:generic-s2}.

\begin{lemma}\label{lem:fiber pi1 2}
Let $s = 1$ and $V_{\mathbf{n},r} (i_1,\dots, i_p;\delta)$ be defined as before. For any $y \in V_{\mathbf{n},r} (i_1,\dots, i_p;\delta)$, the dimension of the fiber $\pi_2^{-1}(y)$ is a constant. Thus for an irreducible subvariety $Z$ of  $V_{\mathbf{n},r} (i_1,\dots, i_p;\delta)$, $\pi_2^{-1}(Z)$ is irreducible of dimension
\[
\dim (Z) + \prod_{i=1}^k n_i -  \dim (d_y \varphi_{\mathbf{n},r} (\Tt_{V_{\mathbf{n},r}} (y))).
\]
\end{lemma}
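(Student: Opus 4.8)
The plan is to repeat, almost verbatim, the argument used for Lemmas~\ref{lem:fiber pi2 1} and \ref{lem:fiber pi2 2}; the only genuinely new ingredient is the observation that $\dim\bigl(d_y\varphi_{\mathbf{n},r}(\Tt_{V_{\mathbf{n},r}}(y))\bigr)$ is constant on each stratum $V_{\mathbf{n},r}(i_1,\dots,i_p;\delta)$, which is built into the very definition of these strata via the defectivity stratification set up before Lemma~\ref{lem:defectivity s=1}.

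First I would recall from \eqref{eqn:fiber pi_2} that for a fixed $y \in V_{\mathbf{n},r}$ the fiber is
\[
\pi_2^{-1}(y) = \bigl( \varphi_{\mathbf{n},r}(y) + (d_y\varphi_{\mathbf{n},r}(\Tt_{V_{\mathbf{n},r}}(y)))^{\perp} \bigr) \times \{y\},
\]
an affine-linear subspace of $\mathbb{R}^{n_1}\otimes\cdots\otimes\mathbb{R}^{n_k}$; in particular it is irreducible, and setting $N \coloneqq \prod_{i=1}^k n_i$ we have $\dim(\pi_2^{-1}(y)) = N - \dim\bigl(d_y\varphi_{\mathbf{n},r}(\Tt_{V_{\mathbf{n},r}}(y))\bigr)$. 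By definition of $V_{\mathbf{n},r}(i_1,\dots,i_p;\delta)$ the defectivity $\dim(V_{\mathbf{n},r}) - \dim\bigl(d_y\varphi_{\mathbf{n},r}(\Tt_{V_{\mathbf{n},r}}(y))\bigr)$ is identically $\delta$ on this set, and since $\dim(V_{\mathbf{n},r})$ is a fixed number (it is a smooth manifold), the quantity $\dim\bigl(d_y\varphi_{\mathbf{n},r}(\Tt_{V_{\mathbf{n},r}}(y))\bigr)$, and hence $\dim(\pi_2^{-1}(y))$, is a constant $d$ as $y$ ranges over $V_{\mathbf{n},r}(i_1,\dots,i_p;\delta)$.

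Next, given an irreducible subvariety $Z \subseteq V_{\mathbf{n},r}(i_1,\dots,i_p;\delta)$, I would note that $\pi_2^{-1}(Z) = \bigsqcup_{y\in Z}\pi_2^{-1}(y)$ projects onto $Z$ with all fibers affine-linear of the same dimension $d$, so $\pi_2^{-1}(Z)$ is homeomorphic to a rank-$d$ vector bundle on $Z$; applying \cite[Lemma~5.8.13]{S-21} exactly as in the proofs of Lemmas~\ref{lem:fiber pi2 1} and \ref{lem:fiber pi2 2} then yields that $\pi_2^{-1}(Z)$ is irreducible of dimension
\[
\dim(Z) + d = \dim(Z) + N - \dim\bigl(d_y\varphi_{\mathbf{n},r}(\Tt_{V_{\mathbf{n},r}}(y))\bigr),
\]
which is the claim. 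The step I expect to need the most care is the passage from ``constant fiber dimension'' to ``vector bundle'' in the sense required by \cite[Lemma~5.8.13]{S-21}, i.e.\ checking that $y\mapsto (d_y\varphi_{\mathbf{n},r}(\Tt_{V_{\mathbf{n},r}}(y)))^{\perp}$ is a well-behaved (semi-algebraic) map into the appropriate Grassmannian of constant-rank subspaces; this is precisely why the lemma is phrased for the strata $V_{\mathbf{n},r}(i_1,\dots,i_p;\delta)$ and not for $V_{\mathbf{n},r}(i_1,\dots,i_p)$ itself, and once constancy of $d$ is in hand the argument is identical to the $s\ge 2$ cases.
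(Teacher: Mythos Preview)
Your proposal is correct and follows essentially the same approach as the paper: the paper does not write out a proof for this lemma at all, only noting that ``the rest part of the discussion is similar to the one in Subsection~\ref{sec:generic-s2},'' and your argument faithfully reproduces the proof of Lemmas~\ref{lem:fiber pi2 1} and \ref{lem:fiber pi2 2} with the one necessary modification, namely that constancy of $\dim\bigl(d_y\varphi_{\mathbf{n},r}(\Tt_{V_{\mathbf{n},r}}(y))\bigr)$ now comes directly from the definition of the stratum $V_{\mathbf{n},r}(i_1,\dots,i_p;\delta)$ rather than from an explicit dimension formula as in Corollaries~\ref{cor:basis dphi 1} and \ref{cor:basis dphi 2}.
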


\begin{lemma}\label{lem:s=1 positive delta}
Let $s = 1$ and let $V_{\mathbf{n},r} (i_1,\dots, i_p;\delta)$ be defined as before. Assume that $\delta>0$, and \eqref{lem:defectivity s=1:eq01} and \eqref{lem:defectivity s=1:eq02} hold, then we have
\[
\dim (\pi_1 (\pi_2^{-1}(V_{\mathbf{n},r} (i_1,\dots, i_p;\delta)) )) < {\prod_{i=1}^k n_i}.
\]
\end{lemma}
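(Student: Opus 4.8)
The plan is to transcribe the dimension-counting argument from the proof of Lemma~\ref{lem:s=2 nonempty E}, with the exceptional locus $V_{\mathbf{n},r}(i_1,\dots,i_p;E)$ replaced by the defectivity stratum $V_{\mathbf{n},r}(i_1,\dots,i_p;\delta)$, the integer $e$ replaced by $\delta$, and the explicit count of Lemma~\ref{lem:basis dphi}--\eqref{lem:basis dphi:item5} replaced by the (already proved) estimate of Lemma~\ref{lem:defectivity s=1}. First I would normalize $(i_1,\dots,i_p) = (1,\dots,p)$ and set $N \coloneqq \prod_{i=1}^k n_i$, $Y \coloneqq V_{\mathbf{n},r}(1,\dots,p)$, $Z \coloneqq V_{\mathbf{n},r}(1,\dots,p;\delta)$, and $Y_0 \coloneqq V_{\mathbf{n},r}(1,\dots,p;0)$. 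A short genericity check (linear independence of $\mathcal{A}_1,\dots,\mathcal{A}_p$ and vanishing of the quantities $\beta_2,\gamma$ from \eqref{eq:beta2}--\eqref{eq:gamma}) shows that $Y_0$ is a dense subset of $Y$, so $\dim Y_0 = \dim Y$. If $\overline{\pi_1(\pi_2^{-1}(Z))}$ is a proper subvariety of $\mathbb{R}^N$ there is nothing to prove, so I assume it is dense.

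Next I would record the two fibre-dimension facts. By Lemma~\ref{lem:fiber pi1 2}, $\pi_2^{-1}(Z)$ is irreducible and $\dim\pi_2^{-1}(y)$ is constant for $y\in Z$; by \eqref{eqn:fiber pi_2}, for $y\in Z$ the fibre $\pi_2^{-1}(y)$ is the affine subspace $\varphi_{\mathbf{n},r}(y) + (d_y\varphi_{\mathbf{n},r}(\Tt_{V_{\mathbf{n},r}}(y)))^{\perp}$, of dimension $N - \dim(d_y\varphi_{\mathbf{n},r}(\Tt_{V_{\mathbf{n},r}}(y))) = N - \dim V_{\mathbf{n},r} + \delta$, whereas for a non-defective point $y'\in Y_0$ one has $\dim\pi_2^{-1}(y') = N - \dim V_{\mathbf{n},r}$; hence $\dim\pi_2^{-1}(y) - \dim\pi_2^{-1}(y') = \delta$.

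Then I would run the dimension chain: for a generic $\mathcal{T}\in\pi_1(\pi_2^{-1}(Z))$, applying the fibre-dimension theorem (\cite[Corollary~14.5]{E-95}, \cite[Lemma~5.8.13]{S-21}) to the irreducible set $\pi_2^{-1}(Z)$,
\begin{align*}
\dim(\pi_1(\pi_2^{-1}(Z)))
&= \dim(\pi_2^{-1}(Z)) - \dim(\pi_1^{-1}(\mathcal{T})) \\
&= \dim(Z) + \dim(\pi_2^{-1}(y)) - \dim(\pi_1^{-1}(\mathcal{T})) \\
&= \big[\dim(Y) + \dim(\pi_2^{-1}(y')) - \dim(\pi_1^{-1}(\mathcal{T}))\big] + \big[\delta - (\dim(Y) - \dim(Z))\big] \\
&\le N + \big[\delta - (\dim(Y) - \dim(Z))\big],
\end{align*}
where the last inequality uses $\dim(Y) + \dim(\pi_2^{-1}(y')) - \dim(\pi_1^{-1}(\mathcal{T})) \le \dim\pi_1(\pi_2^{-1}(Y_0)) \le N$, exactly as in Lemma~\ref{lem:s=2 nonempty E}. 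Since $\delta>0$ and \eqref{lem:defectivity s=1:eq01}, \eqref{lem:defectivity s=1:eq02} hold, Lemma~\ref{lem:defectivity s=1} gives $\dim(Y) - \dim(Z) = \operatorname{codim}(V_{\mathbf{n},r}(1,\dots,p;\delta)) > \delta$, whence $\dim(\pi_1(\pi_2^{-1}(Z))) < N$.

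The substantive work — the codimension estimate $\operatorname{codim}(V_{\mathbf{n},r}(i_1,\dots,i_p;\delta)) > \delta$ — is entirely contained in Lemma~\ref{lem:defectivity s=1}, so I expect no real obstacle here: the remaining steps are the routine genericity check that $Y_0$ is dense in $Y$, and the verification that the fibre-dimension manipulations are being applied to honestly irreducible sets with constant generic fibre, which is precisely the content of Lemma~\ref{lem:fiber pi1 2}. In other words, once Lemma~\ref{lem:defectivity s=1} is in hand, this lemma is just the packaging of that estimate into a statement about the image of the incidence variety, and the proof is a faithful copy of the $s=2$ case.
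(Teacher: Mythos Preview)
Your proposal is correct and follows essentially the same approach as the paper: the paper's own proof simply records the inequality $\delta = \dim V_{\mathbf{n},r} - \dim(d_y\varphi_{\mathbf{n},r}(\Tt_{V_{\mathbf{n},r}}(y))) < \dim V_{\mathbf{n},r} - \dim V_{\mathbf{n},r}(i_1,\dots,i_p;\delta)$ from Lemma~\ref{lem:defectivity s=1} and then says ``the rest of the proof is similar to that of Lemma~\ref{lem:s=2 nonempty E}'', which is precisely the dimension-counting chain you have transcribed.
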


\begin{proof}
By Lemma~\ref{lem:defectivity s=1}, we have
\[
\dim (V_{\mathbf{n},r} ) -  \dim (d_y \varphi_{\mathbf{n},r}(\Tt_{V(\mathbf{n},r)}(y))) = \delta  < \dim (V_{\mathbf{n},r} )  - \dim (V_{\mathbf{n},r} (i_1,\dots, i_p;\delta)).
\]
The rest of the proof is similar to that of Lemma~\ref{lem:s=2 nonempty E}.
\end{proof}

Equipped with Lemmas~\ref{lem:fiber pi1 2} and \ref{lem:s=1 positive delta}, one can prove the following result by the same argument employed in the proof of Proposition~\ref{prop:KKT location s=2}.
\begin{proposition}\label{prop:KKT location s=1}
Assume that $s = 1$. If \eqref{lem:defectivity s=1:eq01} and \eqref{lem:defectivity s=1:eq02} hold, then for a generic $\mathcal{A}$, all KKT points of \eqref{lem:KKT parameter:optproblem} are contained in $W_{\mathbf{n},r}$.
\end{proposition}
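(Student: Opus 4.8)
The plan is to follow the blueprint of the proof of Proposition~\ref{prop:KKT location s=2}, with the exceptional strata $V_{\mathbf{n},r}(i_1,\dots,i_p;E)$ replaced by the defectivity strata $V_{\mathbf{n},r}(i_1,\dots,i_p;\delta)$ introduced above. Set $Z \coloneqq V_{\mathbf{n},r}\setminus W_{\mathbf{n},r}$ and $N \coloneqq \prod_{i=1}^k n_i$. Since for $s=1$ only the factors $U^{(2)},\dots,U^{(k)}$ are constrained merely to lie in the respective $\B(r,n_i)$, a point of $V_{\mathbf{n},r}$ fails to lie in $W_{\mathbf{n},r}$ exactly when some $U^{(i)}$ ($2\le i\le k$) is rank deficient; hence $Z=\bigcup_{i=2}^k Y_i$ with
\[
Y_i \coloneqq \V(r,n_1)\times\B(r,n_2)\times\cdots\times\bigl(\B(r,n_i)\setminus\OB(r,n_i)\bigr)\times\cdots\times\B(r,n_k)\times\mathbb R^r,
\]
each a proper closed subvariety of $V_{\mathbf{n},r}$, cut out inside $V_{\mathbf{n},r}$ by $\det\bigl((U^{(i)})^\tp U^{(i)}\bigr)=0$. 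Using the finite disjoint decomposition $V_{\mathbf{n},r}=\bigsqcup V_{\mathbf{n},r}(i_1,\dots,i_p;\delta)$ over all admissible $(i_1,\dots,i_p)$ and all $\delta\ge 0$, and recalling that the tensors $\mathcal A$ admitting a KKT point of \eqref{lem:KKT parameter:optproblem} in $Z$ form the set $\pi_1(\pi_2^{-1}(Z))$, it suffices to prove
\[
\dim\Bigl(\pi_1\bigl(\pi_2^{-1}\bigl(Y_i\cap V_{\mathbf{n},r}(i_1,\dots,i_p;\delta)\bigr)\bigr)\Bigr)<N
\]
for every $2\le i\le k$, every admissible $(i_1,\dots,i_p)$ and every $\delta\ge 0$; then $\pi_1(\pi_2^{-1}(Z))$ lies in a finite union of proper closed subvarieties of $\mathbb R^{n_1}\otimes\cdots\otimes\mathbb R^{n_k}$, and a generic $\mathcal A$ avoids it.

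For $\delta>0$ the bound is immediate, since $Y_i\cap V_{\mathbf{n},r}(i_1,\dots,i_p;\delta)\subseteq V_{\mathbf{n},r}(i_1,\dots,i_p;\delta)$ and Lemma~\ref{lem:s=1 positive delta} already gives $\dim\pi_1(\pi_2^{-1}(V_{\mathbf{n},r}(i_1,\dots,i_p;\delta)))<N$ under the hypotheses \eqref{lem:defectivity s=1:eq01} and \eqref{lem:defectivity s=1:eq02} assumed here. For $\delta=0$, the stratum $V_{\mathbf{n},r}(i_1,\dots,i_p;0)$ is the complement in the irreducible variety $V_{\mathbf{n},r}(i_1,\dots,i_p)$ of the proper closed subvarieties carrying positive defectivity (proper by Lemma~\ref{lem:defectivity s=1}), hence is itself irreducible; moreover a generic point of it has all factor matrices of full rank, so $Y_i\cap V_{\mathbf{n},r}(i_1,\dots,i_p;0)$ is a proper closed subvariety of it and therefore has strictly smaller dimension. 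By Lemma~\ref{lem:fiber pi1 2} the fibre $\pi_2^{-1}(y)$ has constant dimension $N-\dim\bigl(d_y\varphi_{\mathbf{n},r}(\Tt_{V_{\mathbf{n},r}}(y))\bigr)$ over $V_{\mathbf{n},r}(i_1,\dots,i_p;0)$, so the generic-fibre-dimension bookkeeping carried out in the proof of Proposition~\ref{prop:KKT location s=3}---distinguishing whether $\overline{\pi_1(\pi_2^{-1}(V_{\mathbf{n},r}(i_1,\dots,i_p;0)))}$ is all of $\mathbb R^N$ or a proper subvariety, and invoking \cite[Corollary~14.5]{E-95}---applies word for word with the pair $\bigl(V_{\mathbf{n},r}(i_1,\dots,i_p),\,Y_i\cap V_{\mathbf{n},r}(i_1,\dots,i_p)\bigr)$ replaced by $\bigl(V_{\mathbf{n},r}(i_1,\dots,i_p;0),\,Y_i\cap V_{\mathbf{n},r}(i_1,\dots,i_p;0)\bigr)$. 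This yields the required strict inequality and completes the proof.

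The genuine difficulty has already been overcome before reaching this proposition: it is the codimension estimate $\operatorname{codim}(V_{\mathbf{n},r}(i_1,\dots,i_p;\delta))>\delta$ of Lemma~\ref{lem:defectivity s=1}, precisely the statement that fails for $s=1$ in the absence of the dimension restrictions \eqref{lem:defectivity s=1:eq01}--\eqref{lem:defectivity s=1:eq02}, and this is why those restrictions must be imposed in the hypotheses. Granting that lemma together with Lemmas~\ref{lem:fiber pi1 2} and \ref{lem:s=1 positive delta}, the remaining work is organisational. The only points requiring care are: (i) checking that each stratum $V_{\mathbf{n},r}(i_1,\dots,i_p;\delta)$ is irreducible, so that ``proper closed subset'' forces a drop in dimension; (ii) reproducing faithfully the dichotomy and the scheme-theoretic generic-fibre-dimension argument of Proposition~\ref{prop:KKT location s=3}; and (iii) observing that the union over the finitely many strata and the finitely many indices $i$ keeps us within a finite union of proper subvarieties, so that genericity of $\mathcal A$ is preserved.
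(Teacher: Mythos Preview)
Your proposal is correct and follows essentially the same approach as the paper, which proves this proposition in a single sentence: ``Equipped with Lemmas~\ref{lem:fiber pi1 2} and \ref{lem:s=1 positive delta}, one can prove the following result by the same argument employed in the proof of Proposition~\ref{prop:KKT location s=2}.'' You have correctly unpacked that sentence---splitting into the $\delta>0$ strata (handled by Lemma~\ref{lem:s=1 positive delta}) and the $\delta=0$ stratum (handled by the constant-fibre argument of Proposition~\ref{prop:KKT location s=3} via Lemma~\ref{lem:fiber pi1 2})---and your care points (i)--(iii) are apt.
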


We notice that in most scenarios, we may assume $n_1 = \cdots = n_k = n$. In this case, conditions \eqref{lem:defectivity s=1:eq01} and \eqref{lem:defectivity s=1:eq02} reduce to $n \ge 1 +\frac{2}{k-2}$, which can be further rewritten as $n \ge 2 + \delta_{k,3}$, where $\delta_{k,3}$ is the Kronecker delta function. Hence we obtain the following
\begin{corollary}\label{cor:KKT location s=1}
Assume that $s = 1$, $k\ge 3$ and $n_1 = \cdots = n_k \ge 2$. For a generic $\mathcal{A}$, all KKT points of \eqref{lem:KKT parameter:optproblem} are contained in $W_{\mathbf{n},r}$.
\end{corollary}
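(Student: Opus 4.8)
The plan is to deduce Corollary~\ref{cor:KKT location s=1} directly from Proposition~\ref{prop:KKT location s=1}: that proposition already asserts exactly the desired conclusion --- for a generic $\mathcal{A}$, every KKT point of \eqref{lem:KKT parameter:optproblem} lies in $W_{\mathbf{n},r}$ --- whenever $s=1$ and the two numerical hypotheses \eqref{lem:defectivity s=1:eq01} and \eqref{lem:defectivity s=1:eq02} are met (the standing assumptions $k\ge 3$ and $n_1,\dots,n_k\ge 2$ being already in force, cf.\ Lemma~\ref{lem:defectivity s=1}). Hence nothing is left to prove except to verify that these two inequalities hold once we specialise to the equidimensional case $n_1=\dots=n_k=n$.

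First I would substitute $n_{i'}=n$ into \eqref{lem:defectivity s=1:eq01}, which becomes $(k-1)n\ge n+(k-1)$, i.e.\ $(k-2)n\ge k-1$; and into \eqref{lem:defectivity s=1:eq02}, which becomes $(k-1)n\ge n+k$, i.e.\ $(k-2)n\ge k$ (the same inequality for every admissible $i''$, since all the $n_{i''}$ coincide). Because $k\ge 3$ forces $k-2\ge 1>0$, both are lower bounds on $n$: the first reads $n\ge 1+\frac{1}{k-2}$ and the second, which dominates it, reads $n\ge 1+\frac{2}{k-2}$. For an integer $n$ this last condition is equivalent to $n\ge 2+\delta_{k,3}$, since $\frac{2}{k-2}\le 2$ with equality precisely when $k=3$. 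Thus for $k\ge 4$ the hypothesis $n\ge 2$ already guarantees both \eqref{lem:defectivity s=1:eq01} and \eqref{lem:defectivity s=1:eq02}, and Proposition~\ref{prop:KKT location s=1} applies verbatim.

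The only delicate point --- and the only real obstacle --- is the boundary case $k=3$, $n=2$, where $(k-2)n=2<3=k$, so \eqref{lem:defectivity s=1:eq02} fails and the proposition cannot be invoked as stated. I would dispose of it by hand: when $r=1$ one has $\OB(1,n)=\B(1,n)$, hence $W_{\mathbf{n},1}=V_{\mathbf{n},1}$ and the inclusion of all KKT points in $W_{\mathbf{n},1}$ is automatic, so the genuine subcase is $r=2$, $\mathbf{n}=(2,2,2)$. Here the parameter space is low-dimensional, the subspace $d_y\varphi_{\mathbf{n},r}(\Tt_{V_{\mathbf{n},r}}(y))$ and its defect $\delta$ can be computed explicitly on each stratum $V_{\mathbf{n},r}(i_1,\dots,i_p;\delta)$, and one repeats the incidence-variety dimension count of Lemmas~\ref{lem:fiber pi1 2} and \ref{lem:s=1 positive delta} directly; alternatively, one simply records the sharper statement $n_1=\dots=n_k\ge 2+\delta_{k,3}$, which is precisely what the arithmetic above yields and which matches the remark preceding the corollary. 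Either route requires no idea beyond Proposition~\ref{prop:KKT location s=1} together with elementary estimates.
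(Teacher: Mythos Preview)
Your proposal is correct and takes essentially the same approach as the paper: reduce to Proposition~\ref{prop:KKT location s=1} by checking that \eqref{lem:defectivity s=1:eq01} and \eqref{lem:defectivity s=1:eq02} hold in the equidimensional case whenever $k\ge 4$ or $n\ge 3$, and handle the residual case $k=3$, $n=2$ by a direct repetition of the defectivity/dimension-count analysis (the paper points to Lemma~\ref{lem:defectivity s=1}, you point to Lemmas~\ref{lem:fiber pi1 2} and \ref{lem:s=1 positive delta}, which amounts to the same thing). One caveat: your ``alternative'' of merely recording the hypothesis $n\ge 2+\delta_{k,3}$ does not prove the corollary as stated, since the corollary explicitly includes $k=3$, $n=2$; only your first route constitutes an actual proof.
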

\begin{proof}
Let $n$ be the common value of $n_1,\dots, n_k$. If $n =2, k \ge 4$ or $n\ge 3, k \ge 3$, then the conclusion directly follows from Proposition~\ref{prop:KKT location s=1}. Therefore, it is sufficient to deal with the remaining case where $n = 2, k =3$, which can be easily addressed by the same analysis we have done in Lemma~\ref{lem:defectivity s=1}.
\end{proof}

\subsection{Local differemorphism}\label{sec:local}
We recall that Propositions~\ref{prop:KKT location s=3}, \ref{prop:KKT location s=2} and \ref{prop:KKT location s=1} locate all KKT points of our original problem \eqref{eq:sota} in a smaller parameter space $W_{\mathbf{n},r}$, for a generic tensor $\mathcal{A}$. Since the tensor space parametrized by $W_{\mathbf{n},r}$ is $Q_{s} (\mathbf{n},r)$, we may shrink accordingly the feasible domain of \eqref{eq:sota} and \eqref{eq:original} to $W_{\mathbf{n},r}$ and $Q_{s} (\mathbf{n},r)$ respectively. Moreover, according to Proposition~\ref{prop:smooth}-\eqref{prop:smooth:item6} and Figure~\ref{fig:relations}, $Q_s(\mathbf{n},r) = \bigsqcup_{t = 0}^r R_s(\mathbf{n},t)$ and each $R_s(\mathbf{n},t)$ is parametrized by $W_{\mathbf{n},t,\ast}$. Therefore, the analysis of convergent behaviour of Algorithm~\ref{algo} relies heavily on the relation between $R_s(\mathbf{n},t)$ and $W_{\mathbf{n},t,\ast}$, which is the focus of this subsection.

To begin with, we prove in the following lemma that $R_s(\mathbf n,r)$ and $W_{\mathbf n,r,\ast}$ are locally the same.
\begin{lemma}[Local Diffeomorphism]\label{lem:localdiff}
For any positive integers $n_1,\dots,n_k$ and $r\leq \min\{n_1,\dots,n_k\}$, the set $W_{\mathbf n,r,\ast}$ is a smooth manifold and is locally diffeomorphic to the manifold $R_s(\mathbf n,r)$.
\end{lemma}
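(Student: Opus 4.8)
The plan is to verify the two assertions separately, using the structures already assembled in the excerpt. First I would show that $W_{\mathbf n,r,\ast}$ is a smooth manifold: by definition \eqref{eq:set-unr} it is the product $\V(r,n_1)\times\cdots\times\V(r,n_s)\times\OB(r,n_{s+1})\times\cdots\times\OB(r,n_k)\times\mathbb R_\ast^r$, a finite product of smooth manifolds (Stiefel manifolds, Oblique manifolds, and the open submanifold $\mathbb R_\ast^r\subseteq\mathbb R^r$), hence smooth; this is essentially a restatement of Proposition~\ref{prop:smooth}-\eqref{prop:smooth:item3}. Likewise $R_s(\mathbf n,r)$ is a smooth manifold of dimension $d_{\mathbf n,r}$ by Proposition~\ref{prop:smooth}-\eqref{prop:smooth:item5}, and $\dim W_{\mathbf n,r,\ast}=\dim V_{\mathbf n,r}=d_{\mathbf n,r}$ by the computation in the proof of Proposition~\ref{prop:smooth}-\eqref{prop:smooth:item5}, so both manifolds already have the same dimension.

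The core task is to exhibit a local diffeomorphism $\varphi_{\mathbf n,r}\colon W_{\mathbf n,r,\ast}\to R_s(\mathbf n,r)$. By Proposition~\ref{prop:smooth}-\eqref{prop:smooth:item2}, $\varphi_{\mathbf n,r}$ maps $W_{\mathbf n,r,\ast}$ surjectively onto $R_s(\mathbf n,r)$, and by \eqref{prop:smooth:item4} its fibers are exactly the (finite, free) orbits of the group $D_{r,k}\rtimes_\eta S_r$. The plan is to invoke Lemma~\ref{lem:local}: since $\varphi_{\mathbf n,r}$ is not injective, I would first restrict to a suitable slice. Concretely, around a point $y\in W_{\mathbf n,r,\ast}$ the finite group acting freely means there is an open neighbourhood $U\ni y$ whose translates under $D_{r,k}\rtimes_\eta S_r$ are pairwise disjoint, so $\varphi_{\mathbf n,r}|_U$ is injective onto its image. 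It then remains to check that the differential $d_y\varphi_{\mathbf n,r}\colon \Tt_{W_{\mathbf n,r,\ast}}(y)\to \Tt_{R_s(\mathbf n,r)}(\varphi_{\mathbf n,r}(y))$ is an isomorphism, and here I would cite Corollary~\ref{cor:basis dphi 1} (for $s\ge 3$) together with the analogous injectivity statements underlying Corollaries~\ref{cor:basis dphi 1}/\ref{cor:basis dphi 2} and the $s=1$ analysis: when all $\lambda_j\ne 0$, i.e. $y\in W_{\mathbf n,r,\ast}$, we have $q=r$ and $d_y\varphi_{\mathbf n,r}$ is injective on $\Tt_{V_{\mathbf n,r}}(y)$; since $\Tt_{W_{\mathbf n,r,\ast}}(y)=\Tt_{V_{\mathbf n,r}}(y)$ ($W_{\mathbf n,r,\ast}$ being open in $V_{\mathbf n,r}$) and the dimensions of source and target agree, $d_y\varphi_{\mathbf n,r}$ is a linear isomorphism onto $\Tt_{R_s(\mathbf n,r)}(\varphi_{\mathbf n,r}(y))$. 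Lemma~\ref{lem:local} then upgrades $\varphi_{\mathbf n,r}|_U$ to a diffeomorphism onto an open subset of $R_s(\mathbf n,r)$, which is precisely the assertion that $W_{\mathbf n,r,\ast}$ is locally diffeomorphic to $R_s(\mathbf n,r)$.

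A point requiring care is the uniformity of the injectivity of $d_y\varphi_{\mathbf n,r}$ across \emph{all} values of $s$: the excerpt proves it cleanly for $s\ge 3$ via Corollary~\ref{cor:basis dphi 1}, but for $s=2$ and $s=1$ one must observe that the "exceptional"/defective phenomena of Corollary~\ref{cor:basis dphi 2} and Lemma~\ref{lem:defectivity s=1} all require some $\lambda_j=0$ (they are phrased in terms of the set $J=\{j:\lambda_j\ne 0\}$ having $q<r$), hence are vacuous on $W_{\mathbf n,r,\ast}$ where $q=r$; so in every case $\ker(d_y\varphi_{\mathbf n,r})=0$ for $y\in W_{\mathbf n,r,\ast}$. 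I expect this bookkeeping — confirming that the defect $\delta$ vanishes identically on $W_{\mathbf n,r,\ast}$ for $s\le 2$ — to be the main (though not deep) obstacle, together with the minor point of selecting the slice neighbourhood $U$ on which $\varphi_{\mathbf n,r}$ becomes injective, which follows from freeness and finiteness of the $D_{r,k}\rtimes_\eta S_r$-action established in Proposition~\ref{prop:smooth}-\eqref{prop:smooth:item4}.
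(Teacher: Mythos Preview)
Your overall strategy—restrict $\varphi_{\mathbf n,r}$ to a slice on which the finite group action separates points, check that $d_y\varphi_{\mathbf n,r}$ is a linear isomorphism via the explicit tangent-space computations, and conclude by Lemma~\ref{lem:local}—is a valid alternative to the paper's argument, but it is more laborious and your handling of the $s\le 2$ cases contains an error.

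The paper's proof is much shorter. It invokes only Proposition~\ref{prop:smooth}-\eqref{prop:smooth:item4}: since $W_{\mathbf n,r,++}$ is a principal $D_{r,k}\rtimes_\eta S_r$-bundle over $R_s(\mathbf n,r)$ and the structure group is \emph{finite}, the bundle map $\varphi_{\mathbf n,r}|_{W_{\mathbf n,r,++}}$ is automatically a finite smooth covering, hence a local diffeomorphism; one then observes that every connected component of $W_{\mathbf n,r,\ast}$ is diffeomorphic to $W_{\mathbf n,r,++}$. No direct analysis of the differential is needed—the injectivity of $d_y\varphi_{\mathbf n,r}$ is a \emph{consequence} of the bundle structure rather than an input to the argument.

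Your gap: you claim that the exceptional/defective phenomena for $s\le 2$ ``all require some $\lambda_j=0$''. This is false. For $s=2$, the $(l,m)$-exceptional condition in Lemma~\ref{lem:basis dphi}-\eqref{lem:basis dphi:item5} is $\lambda_l=\pm\lambda_m$ together with $\mathbf u^{(j)}_l=\pm\mathbf u^{(j)}_m$ for all $3\le j\le k$; both $\lambda_l,\lambda_m$ may be nonzero, so $q=r$ does not by itself force $e=0$ in Corollary~\ref{cor:basis dphi 2}. Likewise for $s=1$, the defect $\delta_0$ in \eqref{eq:linear relation} can be positive with all $\lambda_j\ne 0$. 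What actually rules these out on $W_{\mathbf n,r,\ast}$ is not the condition $\lambda\in\mathbb R_\ast^r$ but the \emph{full-rank} requirement $U^{(i')}\in\OB(r,n_{i'})$ for $i'>s$: for $s=2$, the relation $\mathbf u^{(j)}_l=\pm\mathbf u^{(j)}_m$ makes two columns of $U^{(j)}$ linearly dependent, contradicting $U^{(j)}\in\OB(r,n_j)$; for $s=1$, full rank of $U^{(2)}$ already forces the rank-one tensors $\mathcal A_j=\mathbf u^{(2)}_j\otimes\cdots\otimes\mathbf u^{(k)}_j$ to be linearly independent, so $\delta_0=0$. With this correction your argument can be completed, but the paper's covering-space route sidesteps the case analysis entirely.
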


\begin{proof}
We recall from Proposition~\ref{prop:smooth} that $W_{\mathbf n,r,++}$ is a principle {$D_{r,k}\rtimes_{\eta} S_r$-bundle on $R_s(\mathbf{n},r)$ and each connected components \footnote{There are $2^{r + c}$ connected components where $c = \# \{i: n_i = r, 1\le i \le s\}$ since $\mathbb{R}_{\ast}$ has two connected components and $V(r,n_i) = \O(n_i)$ also has two connected components if $r = n_i$.} of $W_{\mathbf n,r,\ast}$ is diffeomorphic to $W_{\mathbf n,r,++}$}. In particular, since $D_{r,k}\rtimes_{\eta} S_r$ is a finite group, for any $\mathcal{T}\in R_s(\mathbf{n},r)$, the fiber $\varphi_{\mathbf{n},r}^{-1}(\mathcal{T})$ of the map
\[
\varphi_{\mathbf{n},r}: W_{\mathbf n,r,\ast} \to R_s(\mathbf{n},r)
\]
consists of $e \coloneqq 2^{rk} r!$ points. Therefore, for a small enough neighbourhood $V \subseteq R_s(\mathbf{n},r)$ of $\mathcal{T}$, the inverse image $\varphi_{\mathbf{n},r}^{-1}(V)$ is the disjoint union of $e$ open subsets ${W_1,\dots, W_{e}}\subseteq W_{\mathbf n,r,\ast}$ and for each $j=1,\dots, e$, we have that
\[
\varphi_{\mathbf{n},r}|_{W_j}: W_j \to {V}
\]
is a diffeomorphism.
\end{proof}

We recall that the objective function of \eqref{eq:original} is the half of the squared distance of a given tensor $\mathcal{A}$ to $P_s(\mathbf{n},r)$. The restriction of this function to $R_s(\mathbf n,r)$ can be parametrized by $W_{\mathbf n,r,\ast}$ as
\begin{equation}\label{eq:objective-p}
g( U,\mathbf x)=\frac{1}{2}\|\mathcal A-(U^{(1)},\dots,U^{(k)})\cdot\operatorname{diag}_k(\mathbf x)\|^2,\quad (U,\mathbf x) \in W_{\mathbf n,r,\ast}.
\end{equation}
By applying Proposition~\ref{prop:critical}, Lemmas~\ref{lem:generic-morse} and \ref{lem:localdiff}, we obtain the following
\begin{proposition}[Generic Nondegeneracy]\label{prop:nondegnerate}
For a generic tensor $\mathcal A$, each critical point of the function $g$ in $W_{\mathbf n,r,\ast}$ is nondegenerate.
\end{proposition}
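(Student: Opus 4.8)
The plan is to reduce the nondegeneracy statement on the parameter space $W_{\mathbf n,r,\ast}$ to a generic Morse property downstairs on the tensor manifold $R_s(\mathbf n,r)$, using the local diffeomorphism of Lemma~\ref{lem:localdiff} together with Proposition~\ref{prop:critical}. First I would observe that the function $g$ of \eqref{eq:objective-p} factors as $g = \widetilde g \circ \varphi_{\mathbf n,r}$, where $\varphi_{\mathbf n,r}\colon W_{\mathbf n,r,\ast}\to R_s(\mathbf n,r)$ is the restriction of the smooth surjection from Proposition~\ref{prop:smooth}, and $\widetilde g\colon R_s(\mathbf n,r)\to\mathbb R$ is the restriction to $R_s(\mathbf n,r)$ of the half-squared Euclidean distance $\mathcal B\mapsto \tfrac12\|\mathcal A-\mathcal B\|^2$ on the tensor space $\mathbb R^{n_1}\otimes\dots\otimes\mathbb R^{n_k}$. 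This factorization holds by the very definition of $\varphi_{\mathbf n,r}$ and $g$, since $\varphi_{\mathbf n,r}(U,\mathbf x)=(U^{(1)},\dots,U^{(k)})\cdot\operatorname{diag}_k(\mathbf x)$.

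Next I would apply Lemma~\ref{lem:localdiff}: $W_{\mathbf n,r,\ast}$ is a smooth manifold locally diffeomorphic to $R_s(\mathbf n,r)$, with $\varphi_{\mathbf n,r}$ serving as the local diffeomorphism. Since $\widetilde g$ is smooth on the manifold $R_s(\mathbf n,r)$, Proposition~\ref{prop:critical} applies and gives: a point $(U,\mathbf x)\in W_{\mathbf n,r,\ast}$ is a nondegenerate critical point of $g=\widetilde g\circ\varphi_{\mathbf n,r}$ if and only if $\varphi_{\mathbf n,r}(U,\mathbf x)\in R_s(\mathbf n,r)$ is a nondegenerate critical point of $\widetilde g$. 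Hence it suffices to prove that for a generic $\mathcal A$, every critical point of $\widetilde g$ on $R_s(\mathbf n,r)$ is nondegenerate, i.e.\ that $\widetilde g$ is a Morse function on $R_s(\mathbf n,r)$.

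For this last point I would invoke Lemma~\ref{lem:generic-morse} (Projection is Generically Morse): since $R_s(\mathbf n,r)$ is a submanifold of the tensor space $\mathbb R^{n_1}\otimes\dots\otimes\mathbb R^{n_k}\simeq\mathbb R^N$ (this is exactly Proposition~\ref{prop:smooth}-\eqref{prop:smooth:item5}, which asserts $R_s(\mathbf n,r)$ is a smooth manifold), the squared Euclidean distance function $\mathcal B\mapsto\|\mathcal B-\mathcal A\|^2$ restricts to a Morse function on $R_s(\mathbf n,r)$ for a generic choice of $\mathcal A\in\mathbb R^N$; scaling by $\tfrac12$ does not affect the Morse property. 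Therefore $\widetilde g$ is generically Morse on $R_s(\mathbf n,r)$, and pulling this back through $\varphi_{\mathbf n,r}$ by the argument of the previous paragraph yields that every critical point of $g$ in $W_{\mathbf n,r,\ast}$ is nondegenerate for a generic $\mathcal A$.

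The only point requiring a little care — and the step I would flag as the main obstacle — is matching the quantifier ``generic'' in the conclusion with the genericity produced by Lemma~\ref{lem:generic-morse}: the latter gives a full-measure (or residual) set of $\mathbf a\in\mathbb R^N$, whereas elsewhere in the paper ``generic $\mathcal A$'' typically means outside a proper (closed) subvariety. Here there is no conflict, because the statement of Proposition~\ref{prop:nondegnerate} is formulated with the same ``generic'' as Lemma~\ref{lem:generic-morse}; one simply notes that the exceptional set for which $\widetilde g$ fails to be Morse is contained in the exceptional set of Lemma~\ref{lem:generic-morse}, hence is of the required (small) type. I would also remark that the local-diffeomorphism hypothesis of Proposition~\ref{prop:critical} is met globally on each connected component of $W_{\mathbf n,r,\ast}$ by Lemma~\ref{lem:localdiff}, so the correspondence of (nondegenerate) critical points is valid at every point of $W_{\mathbf n,r,\ast}$, which is all that is needed.
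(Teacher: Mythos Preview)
Your proposal is correct and follows essentially the same route as the paper, which simply cites Proposition~\ref{prop:critical}, Lemma~\ref{lem:generic-morse}, and Lemma~\ref{lem:localdiff} without spelling out the factorization $g=\widetilde g\circ\varphi_{\mathbf n,r}$. Your write-up is a faithful expansion of that one-line argument.
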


We remind the reader that a critical point $( U,\mathbf x)$ of $g$ is a point at which the Riemannian gradient $\operatorname{grad} (g)( U,\mathbf x)$ of $g$ at {$( U,\mathbf{x})$} is zero. If we embed $W_{\mathbf n,r,\ast}$ into $\mathbb R^{n_1\times r}\times\dots\times\mathbb R^{n_k\times r}\times\mathbb R^r$ in a tutorial way such that $W_{\mathbf n,r,\ast}$ becomes an embedded submanifold, then the vanishing of $\operatorname{grad} (g)( U,\mathbf x)$ is equivalent to {the fact that} the projection of the Euclidean gradient $\nabla g( U,\mathbf x)$ onto the tangent space of $W_{\mathbf n,r,\ast}$ at $( U,\mathbf x)$ is zero. Thus to characterize critical points of $g$, the tangent space of $W_{\mathbf n,r,\ast}$ is a necessary ingredient. Since $W_{\mathbf n,r,\ast}$ is the product of Stiefel manifolds, Oblique manifolds and Euclidean lines, the tangent space of $W_{\mathbf n,r,\ast}$ can be easily computed.
\begin{proposition}[Tangent Space of $W_{\mathbf n,r,\ast}$]\cite{AMS-08,EAT-98}\label{prop:tangent}
The tangent space of $W_{\mathbf n,r,\ast}$ at $( U,\mathbf x) \in W_{\mathbf n,r,\ast}$ is
\begin{multline}\label{eq:tangent-para}
\operatorname{T}_{W_{\mathbf n,r,\ast}}( U,\mathbf x)
=\operatorname{T}_{\V(r,n_1)}(U^{(1)}) \times\dots\times\operatorname{T}_{\V(r,n_s)}(U^{(s)})\times\\
\operatorname{T}_{\OB(r,n_{s+1})}(U^{(s+1)})
\times\dots\times\operatorname{T}_{\OB(r,n_{k})}(U^{(k)}) \times\mathbb R^r,
\end{multline}
where
\begin{equation}\label{eq:stiefel-tangent}
\operatorname{T}_{\V(r,n_i)}(U^{(i)})=\{Z\in\mathbb R^{n_i\times r}\colon (U^{(i)})^\tp Z+Z^\tp U^{(i)}=0\},\quad i=1,\dots,s,
\end{equation}
and
\begin{equation}\label{eq:oblique-tangent}
\operatorname{T}_{\OB(r,n_i)}(U^{(i)})=\{Z\in\mathbb R^{n_i\times r}\colon \big((U^{(i)})^\tp Z\big)_{11} = \cdots = \big((U^{(i)})^\tp Z\big)_{rr} =  0\}, \quad i=s+1,\dots,k.
\end{equation}
\end{proposition}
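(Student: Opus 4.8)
The plan is to exploit the product structure of $W_{\mathbf n,r,\ast}$. By \eqref{eq:set-unr},
\[
W_{\mathbf n,r,\ast} = \V(r,n_1)\times\dots\times\V(r,n_s)\times \OB(r,n_{s+1})\times\dots\times\OB(r,n_k)\times\mathbb{R}_\ast^r ,
\]
a Cartesian product of manifolds of three kinds: Stiefel manifolds $\V(r,n_i)$, Oblique manifolds $\OB(r,n_i)$, and the open set $\mathbb{R}_\ast^r\subseteq\mathbb R^r$. First I would observe that each $\OB(r,n_i)$ is an open submanifold of the manifold $\B(r,n_i)$ (see \eqref{eq:oblique} and \eqref{eq:prod-sp}), so $W_{\mathbf n,r,\ast}$ is a smooth manifold. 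Since the tangent space of a product manifold at a point is the direct product of the tangent spaces of its factors at the corresponding components, the product decomposition \eqref{eq:tangent-para} is then immediate, and the last factor $\mathbb{R}_\ast^r$, being open in $\mathbb R^r$, contributes $\mathbb R^r$. It remains to identify the tangent spaces of the Stiefel and Oblique factors, i.e.\ to establish \eqref{eq:stiefel-tangent} and \eqref{eq:oblique-tangent}.

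For the Stiefel factor, I would realize $\V(r,n_i)$ as the zero set of the smooth map $F\colon\mathbb R^{n_i\times r}\to\SS^r$, $F(A) = A^\tp A - I_r$, whose differential at $A$ is $d_AF(Z) = A^\tp Z + Z^\tp A$. The key point is that $0$ is a regular value of $F$: given $A\in\V(r,n_i)$ and $S\in\SS^r$, the matrix $Z = \frac12 AS$ satisfies $d_AF(Z) = \frac12(A^\tp A S + S A^\tp A) = S$ because $A^\tp A = I_r$, so $d_AF$ is surjective onto $\SS^r$. The regular value theorem then shows that $\V(r,n_i)$ is an embedded submanifold of $\mathbb R^{n_i\times r}$ with $\operatorname{T}_{\V(r,n_i)}(A) = \ker d_AF = \{Z\colon A^\tp Z + Z^\tp A = 0\}$, which is exactly \eqref{eq:stiefel-tangent}.

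For the Oblique factor, I would use the product-of-spheres description \eqref{eq:prod-sp}, namely $\B(r,n_i) = \mathbb S^{n_i-1}\times\dots\times\mathbb S^{n_i-1}$ with $r$ factors, together with the fact that $\OB(r,n_i)$ is open in $\B(r,n_i)$ and hence has the same tangent space at each of its points. Since $\operatorname{T}_{\mathbb S^{n_i-1}}(\mathbf a) = \{\mathbf z\colon\langle\mathbf a,\mathbf z\rangle = 0\}$, assembling columnwise as in \eqref{eq:prod-sp:oblique}--\eqref{q:tangent:oblique} gives, for $U^{(i)} = [\mathbf u^{(i)}_1,\dots,\mathbf u^{(i)}_r]$,
\[
\operatorname{T}_{\OB(r,n_i)}(U^{(i)}) = \{Z = [\mathbf z_1,\dots,\mathbf z_r]\colon \langle\mathbf u^{(i)}_j,\mathbf z_j\rangle = 0,\ j = 1,\dots,r\},
\]
and the identity $\langle\mathbf u^{(i)}_j,\mathbf z_j\rangle = ((U^{(i)})^\tp Z)_{jj}$ rewrites this as \eqref{eq:oblique-tangent}. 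None of these steps is genuinely hard; the only point that needs a moment's care is the surjectivity computation showing $0$ is a regular value of $F$ (equivalently, that the $\frac{r(r+1)}{2}$ quadratic constraints defining $\V(r,n_i)$ have linearly independent differentials), so I expect that to be the main obstacle, and it is settled by the one-line argument above.
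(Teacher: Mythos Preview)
Your proposal is correct. The paper does not actually supply its own proof of this proposition; it is stated as a known fact with citations to \cite{AMS-08,EAT-98}, and your argument---splitting along the product structure, using the regular value theorem for the Stiefel factor, and the product-of-spheres description \eqref{eq:prod-sp}--\eqref{q:tangent:oblique} for the Oblique factor---is exactly the standard derivation one finds in those references.
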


{Based on \eqref{eq:tangent-para} and the previous discussion, the vanishing condition of $\operatorname{grad} (g)( U,\mathbf x)$ can be explicitly characterized by the following two general facts, which we record for easy reference.}
\begin{lemma}\cite{HY-19}\label{lem:grand}
Let $A\in \V(r,n)$ and let $f : \V(r,n)\subseteq \mathbb R^{n\times r}\rightarrow \mathbb R$ be a smooth function. Then $\operatorname{grad}(f)(A)=0$ if and only if
\begin{equation}\label{eq:symmetry}
\nabla f(A) = A (\nabla f(A))^\tp A,
\end{equation}
which is also equivalent to $\nabla f(A)=AP$ for some $r\times r$ symmetric matrix $P$. In particular, $A^\tp \nabla f(A)$ is a symmetric matrix.
\end{lemma}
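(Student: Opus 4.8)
The plan is to reduce the vanishing of the Riemannian gradient to a pure linear-algebra statement by invoking the explicit projection formula onto the tangent space of the Stiefel manifold, and then to strip off that projection using the invertibility of an elementary matrix. First I would recall from \eqref{eqn:gradients} that $\operatorname{grad}(f)(A) = \pi_{\operatorname{T}_{\V(r,n)}(A)}(\nabla f(A))$, and then substitute the closed form \eqref{eq:tangent-form} (applied with $m = r$), which yields
\[
\operatorname{grad}(f)(A) = \left( I - \tfrac{1}{2} A A^\tp \right)\!\left( \nabla f(A) - A\, \nabla f(A)^\tp A \right).
\]
Since $A \in \V(r,n)$ we have $A^\tp A = I_r$, so $A A^\tp \in \mathbb{R}^{n\times n}$ is the orthogonal projector onto the column space of $A$; consequently $I - \tfrac{1}{2} A A^\tp$ has eigenvalues $\tfrac{1}{2}$ (on $\operatorname{col}(A)$) and $1$ (on its orthogonal complement), hence is invertible. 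Therefore $\operatorname{grad}(f)(A) = 0$ if and only if $\nabla f(A) - A\,\nabla f(A)^\tp A = 0$, which is exactly \eqref{eq:symmetry}.

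Next I would establish the equivalence between \eqref{eq:symmetry} and the existence of a symmetric $P \in \mathbb{R}^{r\times r}$ with $\nabla f(A) = AP$. For the forward implication, set $P \coloneqq \nabla f(A)^\tp A$; then \eqref{eq:symmetry} reads $\nabla f(A) = AP$ directly, and $P^\tp = A^\tp \nabla f(A) = A^\tp (AP) = P$ using $A^\tp A = I_r$, so $P$ is symmetric. For the converse, if $\nabla f(A) = AP$ with $P = P^\tp$, then $A\,\nabla f(A)^\tp A = A (AP)^\tp A = A P^\tp A^\tp A = A P^\tp = AP = \nabla f(A)$, which is \eqref{eq:symmetry}. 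The final assertion follows at once: from $\nabla f(A) = AP$ with $P$ symmetric, $A^\tp \nabla f(A) = A^\tp A P = P$ is symmetric.

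The argument is essentially mechanical and I do not anticipate a genuine obstacle; the only point requiring a moment's care is the invertibility of $I - \tfrac{1}{2} A A^\tp$, which hinges solely on $A A^\tp$ being an orthogonal projector — a consequence of $A^\tp A = I_r$. Everything else is a direct manipulation, and the statement is in effect the standard characterization of critical points of a smooth function on $\V(r,n)$ used in \cite{HY-19,AMS-08}; I would present it compactly along the lines above.
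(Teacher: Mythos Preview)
Your proof is correct and complete. Note, however, that the paper does not actually supply its own proof of this lemma: it is stated with a citation to \cite{HY-19} and used as a black box. Your argument is the standard one --- invoking the projection formula \eqref{eq:tangent-form} and the invertibility of $I - \tfrac{1}{2}AA^\tp$ --- and is precisely what one would find in the cited reference or in \cite{AMS-08,EAT-98}, so there is nothing to compare against in the paper itself.
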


{We notice that $\OB(r,n)$ is an open submanifold of $\B(r,n)$, which is a product of $r$ copies of $\mathbb S^{n-1}=\V(1,n)$. Thus, an application of Lemma~\ref{lem:grand} gives the following}
\begin{lemma}\label{lem:grand-oblique}
Let $A\in \OB(r,n)$ and let $f : \OB(r,n)\subseteq \mathbb R^{n\times r}\rightarrow \mathbb R$ be a smooth function. Then $\operatorname{grad}(f)(A)=0$ if and only if
\begin{equation}\label{eq:symmetry}
\nabla f(A) = A \diag_{{2}}\big((\nabla f(A))^\tp A\big).
\end{equation}
\end{lemma}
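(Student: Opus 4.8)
The plan is to reduce the assertion to the case of a single sphere, where it is exactly Lemma~\ref{lem:grand} with $r=1$, and then exploit the product structure of the ambient manifold. First I would recall that $\OB(r,n)$ is an open submanifold of $\B(r,n)$, and that by \eqref{eq:prod-sp} the latter is the $r$-fold Cartesian product $\mathbb S^{n-1}\times\cdots\times\mathbb S^{n-1}$, the $j$-th factor recording the $j$-th column of a matrix in $\mathbb R^{n\times r}$. Writing $A=[\mathbf a_1,\dots,\mathbf a_r]$ with each $\mathbf a_j$ a unit vector, the tangent space therefore decomposes columnwise as $\operatorname{T}_{\OB(r,n)}(A)=\bigoplus_{j=1}^r\operatorname{T}_{\mathbb S^{n-1}}(\mathbf a_j)$ (cf.\ \eqref{eq:prod-sp:oblique}, \eqref{q:tangent:oblique} and the explicit description \eqref{eq:oblique-tangent}), and its orthogonal complement, the normal space, splits as $\operatorname{N}_{\OB(r,n)}(A)=\bigoplus_{j=1}^r\operatorname{N}_{\mathbb S^{n-1}}(\mathbf a_j)=\bigoplus_{j=1}^r\mathbb R\,\mathbf a_j$.

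Next, by \eqref{eqn:gradients}, $\operatorname{grad}(f)(A)$ is the orthogonal projection of the Euclidean gradient $\nabla f(A)$ onto $\operatorname{T}_{\OB(r,n)}(A)$, so $\operatorname{grad}(f)(A)=0$ if and only if $\nabla f(A)$ lies in the normal space $\operatorname{N}_{\OB(r,n)}(A)$. By the columnwise description above, this happens precisely when the $j$-th column of $\nabla f(A)$ is a scalar multiple of $\mathbf a_j$ for every $j$, equivalently when $\nabla f(A)=A\Lambda$ for some diagonal matrix $\Lambda=\operatorname{diag}_2(\lambda_1,\dots,\lambda_r)$. (Alternatively, one may simply invoke Lemma~\ref{lem:grand} with $r=1$ on each of the $r$ sphere factors $\V(1,n)=\mathbb S^{n-1}$, since Riemannian gradients are compatible with the open inclusion $\OB(r,n)\hookrightarrow\B(r,n)$ and with products.)

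Finally I would pin down $\Lambda$. Multiplying $\nabla f(A)=A\Lambda$ on the left by $A^\tp$ and reading off the $j$-th diagonal entry gives $(A^\tp\nabla f(A))_{jj}=(A^\tp A\Lambda)_{jj}=(A^\tp A)_{jj}\lambda_j=\lambda_j$, where the last step uses that the columns of $A$ are unit vectors, i.e.\ $(A^\tp A)_{jj}=1$. Hence $\lambda_j=\big((\nabla f(A))^\tp A\big)_{jj}$, that is $\Lambda=\operatorname{diag}_2\big((\nabla f(A))^\tp A\big)$, which yields the asserted identity $\nabla f(A)=A\operatorname{diag}_2\big((\nabla f(A))^\tp A\big)$. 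The converse is immediate: if this identity holds then $\nabla f(A)=A\Lambda$ with $\Lambda$ diagonal, so $\nabla f(A)\in\operatorname{N}_{\OB(r,n)}(A)$ and therefore $\operatorname{grad}(f)(A)=0$. There is no genuine obstacle here; the only point deserving a moment's care is the compatibility of tangent and normal spaces (and of the Riemannian gradient) with both the open-submanifold inclusion $\OB(r,n)\hookrightarrow\B(r,n)$ and the product structure of $\B(r,n)$, which is already recorded in \eqref{eq:prod-sp:oblique}, \eqref{q:tangent:oblique} and Proposition~\ref{prop:tangent}.
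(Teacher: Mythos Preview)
Your argument is correct and follows essentially the same approach as the paper: the paper simply observes that $\OB(r,n)$ is an open submanifold of $\B(r,n)$, which is a product of $r$ copies of $\mathbb S^{n-1}=\V(1,n)$, and then invokes Lemma~\ref{lem:grand} factor by factor. You have spelled out the details (the columnwise splitting of the normal space and the identification of the diagonal multipliers) more explicitly than the paper does, but the strategy is identical.
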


\section{KKT Points}\label{sec:kkt}
In this section, we characterize KKT points of problem \eqref{eq:sota} and study the connection between these points and critical points of the squared distance function of a given tensor to $R_s(\mathbf n,r)$. By Proposition~\ref{prop:sota-max} and Lemma~\ref{lem:kkt-equiv}, \eqref{eq:sota} is equivalent to the maximization problem \eqref{eq:sota-max} and their respective {KKT} points are in a one-to-one correspondence. Therefore our goal can be achieved by understanding KKT points of \eqref{eq:sota-max}, which can be written more explicitly as
\begin{flalign}\label{eq:sota-max-matrix}
&\text{(mLRPOTA{(r)})}\
\begin{array}{rl}
\max& \sum_{j=1}^r\Big(\big(\big(U^{(1)}\big)^\tp ,\dots,\big(U^{(k)}\big)^\tp \big)\cdot\mathcal A\Big)_{{j,\dots,j}}^2\\
 \text{s.t.} &\big(U^{(i)}\big)^\tp U^{(i)}=I_r \  \text{for all }i\in\{1,\dots,s\},\\
& \big((U^{(i)})^\tp U^{(i)}\big)_{11} = \cdots = \big((U^{(i)})^\tp U^{(i)}\big)_{rr} =1 \ \text{for all }i\in\{s+1,\dots,k\}.
\end{array}&
\end{flalign}
As we have remarked after the proof of Proposition~\ref{prop:sota-max}, the discussion in this section involves the problem \eqref{eq:sota-max} or equivalently \eqref{eq:sota-max-matrix} for different values of $r$, hence we name the problem by mLRPOTA($r$) to indicate the specific problem we are working with.

Before we proceed, we fix some notations. Let $ U := (U^{(1)},\dots,U^{(k)})$ be the $k$-tuple of  variable matrices in \eqref{eq:sota-max-matrix}. For each $1 \le i \le k$ and $1 \le j \le r$, let $\mathbf u^{(i)}_j$ be the $j$-th column of the matrix $U^{(i)}$ and define
\begin{align}
\mathbf v^{(i)}_j &\coloneqq \mathcal{A}\tau_i(\mathbf u^{(1)}_j,\dots,\mathbf u^{(k)}_j)\in \mathbb{R}^{n_i}, \\
V^{(i)} &\coloneqq \begin{bmatrix}\mathbf v^{(i)}_1&\dots&\mathbf v^{(i)}_r\end{bmatrix} \in \mathbb{R}^{n_i \times r}, \label{eq:critial}\\
\lambda_j( U) &\coloneqq \Big(\big(\big(U^{(1)}\big)^\tp ,\dots,\big(U^{(k)}\big)^\tp \big)\cdot\mathcal A\Big)_{j\dots j} =\langle\mathcal A,\mathbf u^{(1)}_j\otimes\dots\otimes\mathbf u^{(k)}_j\rangle \in \mathbb{R}, \label{eq:lambda} \\
\Lambda &\coloneqq \operatorname{diag}_2 \left(\lambda_1( U),\dots, \lambda_r( U)  \right) \in \mathbb{R}^{r\times r}. \label{eq:lambda-tensor}
\end{align}
%

The objective function of \eqref{eq:sota-max-matrix} is thus written as
\begin{equation}\label{eq:objective}
f( U) \coloneqq
\sum_{j=1}^r\Big(\big(\big(U^{(1)}\big)^\tp ,\dots,\big(U^{(k)}\big)^\tp \big)\cdot\mathcal A\Big)_{j\dots j}^2 = \sum_{j=1}^r\lambda_j( U)^2.
\end{equation}
\subsection{Basic properties}\label{sec:kkt-basics}

Since \eqref{eq:sota-max-matrix} is a nonlinear optimization problem with equality constraints of continuously differentiable functions, the general theory discussed in Subsection~\ref{subsec:KKT} applies to \eqref{eq:sota-max-matrix} and the KKT condition can be explicitly obtained. 
The following characterization of a KKT point of \eqref{eq:sota-max-matrix} can be obtained by a direct computation.
\begin{proposition}[KKT Point]\label{def:kkt}
A feasible point $U = (U^{(1)},\dots,U^{(k)})$ of \eqref{eq:sota-max-matrix} is a KKT point with a Lagrange multiplier $ P=  (P_1,\dots,P_s,{\mathbf p})$ where $P_i$ is an $r\times r$ symmetric matrix for $i=1,\dots,s$ and ${\mathbf p} \in\mathbb R^r$, if and only if $U$ and $P$ satisfy the following system
\begin{equation}\label{eq:kkt}
\begin{cases}V^{(i)}\Lambda-U^{(i)}P_i=0,& \text{if }i=1,\dots,s,\\
V^{(i)}\Lambda-U^{(i)} \operatorname{diag}_2({\mathbf p})=0,& \text{if } i=s+1,\dots,k.
\end{cases}
\end{equation}
\end{proposition}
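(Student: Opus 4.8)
The plan is to carry out the standard Lagrangian computation for the equality-constrained maximization problem \eqref{eq:sota-max-matrix}, using the notation $\mathbf{v}^{(i)}_j$, $V^{(i)}$, $\lambda_j(U)$, $\Lambda$ introduced in \eqref{eq:critial}--\eqref{eq:lambda-tensor}. First I would spell the constraints out scalarwise: for $i\le s$ the constraint is $\bigl((U^{(i)})^\tp U^{(i)}\bigr)_{ab}=\delta_{ab}$ for $1\le a\le b\le r$ (the matrix is symmetric, so the lower-triangular equations are redundant), and for $i>s$ it is $\bigl((U^{(i)})^\tp U^{(i)}\bigr)_{aa}=1$ for $1\le a\le r$. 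Collecting the multipliers of the first family into a symmetric matrix $P_i$ and those of the second into a vector $\mathbf{p}^{(i)}\in\mathbb R^r$, the Lagrangian takes the form
\[
L(U,P)=f(U)-\sum_{i=1}^s\bigl\langle P_i,(U^{(i)})^\tp U^{(i)}-I_r\bigr\rangle-\sum_{i=s+1}^k\bigl\langle \operatorname{diag}_2(\mathbf{p}^{(i)}),(U^{(i)})^\tp U^{(i)}-I_r\bigr\rangle ,
\]
and by the general theory of Subsection~\ref{subsec:KKT}, $U$ is a KKT point precisely when $\nabla_{U^{(i)}}L=0$ for all $i$.

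Next I would compute the Euclidean gradients. Since $\lambda_j(U)=\langle\mathcal A,\mathbf{u}^{(1)}_j\otimes\cdots\otimes\mathbf{u}^{(k)}_j\rangle$ is linear in the $j$-th column $\mathbf{u}^{(i)}_j$ of $U^{(i)}$ and does not involve any other column of $U^{(i)}$, the chain rule gives $\partial\lambda_j/\partial\mathbf{u}^{(i)}_j=\mathbf{v}^{(i)}_j$ and $\partial\lambda_l/\partial\mathbf{u}^{(i)}_j=0$ for $l\ne j$; hence from $f(U)=\sum_j\lambda_j(U)^2$ the $j$-th column of $\nabla_{U^{(i)}}f$ is $2\lambda_j(U)\mathbf{v}^{(i)}_j$, i.e.\ $\nabla_{U^{(i)}}f=2V^{(i)}\Lambda$. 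The routine identity $\nabla_{X}\tr(QX^\tp X)=X(Q+Q^\tp)$ then gives $\nabla_{U^{(i)}}\langle Q,(U^{(i)})^\tp U^{(i)}\rangle=U^{(i)}(Q+Q^\tp)$, which equals $2U^{(i)}P_i$ for $Q=P_i$ symmetric and $2U^{(i)}\operatorname{diag}_2(\mathbf{p}^{(i)})$ for $Q=\operatorname{diag}_2(\mathbf{p}^{(i)})$. Setting $\nabla_{U^{(i)}}L=0$ and cancelling the common factor $2$ therefore yields exactly $V^{(i)}\Lambda-U^{(i)}P_i=0$ for $i\le s$ and $V^{(i)}\Lambda-U^{(i)}\operatorname{diag}_2(\mathbf{p}^{(i)})=0$ for $i>s$, which is \eqref{eq:kkt} except that a priori the vectors $\mathbf{p}^{(i)}$ could depend on $i$.

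The one point that is not purely mechanical is reconciling these $\mathbf{p}^{(i)}$'s into a single vector $\mathbf{p}$, so that the multiplier has the announced shape $P=(P_1,\dots,P_s,\mathbf{p})$. For $i>s$ I would read off the $j$-th column of $V^{(i)}\Lambda=U^{(i)}\operatorname{diag}_2(\mathbf{p}^{(i)})$, namely $\lambda_j(U)\mathbf{v}^{(i)}_j=p^{(i)}_j\mathbf{u}^{(i)}_j$, and pair it with the unit vector $\mathbf{u}^{(i)}_j$; using \eqref{eq:segre and partial segre}, $\langle\mathbf{v}^{(i)}_j,\mathbf{u}^{(i)}_j\rangle=\mathcal A\tau(\mathbf{u}^{(1)}_j,\dots,\mathbf{u}^{(k)}_j)=\lambda_j(U)$, this forces $p^{(i)}_j=\lambda_j(U)^2$ independently of $i$, so all $\mathbf{p}^{(i)}$ coincide with $\mathbf{p}:=(\lambda_1(U)^2,\dots,\lambda_r(U)^2)^\tp$; this establishes the forward implication. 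Conversely, if $U$ together with some $(P_1,\dots,P_s,\mathbf{p})$ satisfies \eqref{eq:kkt}, then taking $\mathbf{p}^{(i)}:=\mathbf{p}$ for every $i>s$ reconstitutes a legitimate set of Lagrange multipliers for which $\nabla_{U^{(i)}}L=0$ holds for all $i$, so $U$ is a KKT point of \eqref{eq:sota-max-matrix} (and, via Proposition~\ref{prop:sota-max} and Lemma~\ref{lem:kkt-equiv}, of \eqref{eq:sota}). The only places requiring care are the column-by-column chain-rule computation of $\nabla_{U^{(i)}}f$ and keeping track of which columns of $U^{(i)}$ each $\lambda_j$ depends on; there is no genuine obstacle.
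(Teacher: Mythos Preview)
Your proof is correct and follows essentially the same route as the paper's: compute $\nabla_{U^{(i)}}f=2V^{(i)}\Lambda$, write out the Lagrangian stationarity conditions block by block, and then observe (via the inner product with $\mathbf{u}^{(i)}_j$, equivalently $\Diag_2((U^{(i)})^\tp V^{(i)}\Lambda)=\Diag_2(\Lambda^2)$) that the multiplier vectors $\mathbf{p}^{(i)}$ for $i>s$ all coincide. The only cosmetic difference is that the paper starts with general $P_i\in\mathbb R^{r\times r}$ and then replaces it by its symmetric part, whereas you package the multipliers symmetrically from the outset.
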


\begin{proof}
With notations as above, we have
\[
\nabla_{U^{(i)}} f(U)=2V^{(i)}\Lambda\ \text{for all }i=1,\dots,k.
\]
By the KKT condition \eqref{eq:abstract-kkt} and the separability of constraints of $U^{(i)}$'s in \eqref{eq:sota-max-matrix}, we have that at a KKT point $U$, there exist matrices $P_i\in\mathbb R^{r\times r}$ for $i=1,\dots,s$ and vectors $\mathbf p_j\in\mathbb R^r$ for $j=s+1,\dots,k$ such that
\[
\nabla_{U^{(i)}} f(U)-U^{(i)}(P_i+P_i^\tp)=0\ \text{if }i=1,\dots,s,\ \text{and }\nabla_{U^{(j)}} f(U)-2U^{(j)}\operatorname{diag}_2({\mathbf p_j})=0\ \text{if }j=s+1,\dots,k.
\]
Therefore, we can assume without loss of generality that $P_i$ is symmetric for all $i=1,\dots,s$. By the fact that $U^{(j)}\in\B(r,n_j)$ and \eqref{eq:lambda-tensor}, we also have that for all $j=s+1,\dots,k$
\[
\mathbf p_j=\frac{1}{2}\Diag_2((U^{(j)})^\tp \nabla_{U^{(j)}} f(U))=\Diag_2((U^{(j)})^\tp V^{(j)}\Lambda)=\Diag_2(\Lambda^2).
\]
Consequently, $\mathbf p_{s+1}=\dots=\mathbf p_k$, which can be denoted as $\mathbf p$.
In summary, the KKT condition for problem \eqref{eq:sota-max-matrix} can be explicitly written as \eqref{eq:kkt}.
\end{proof}

It follows immediately from the system \eqref{eq:kkt} that for all $1\le i \le s$,
\begin{equation}\label{eq:kkt-symmetry}
(U^{(i)})^\tp V^{(i)}\Lambda=
\big(V^{(i)}\Lambda\big)^\tp U^{(i)}.
\end{equation}
In fact, the equations \eqref{eq:kkt-symmetry} and those in the first half of \eqref{eq:kkt} are equivalent \cite{AMS-08,EAT-98}. We denote by $\operatorname{M}( U)$ the set of all multipliers associated to a feasible point $U$ of \eqref{eq:sota-max-matrix}. The theory of {LICQ} mentioned in Section~\ref{subsec:KKT} can be applied to determine the size of $\operatorname{M}(U)$ when $U$ is a local maximizer of \eqref{eq:sota-max-matrix}.
\begin{proposition}[LICQ]\label{prop:licq}
At any feasible point of \eqref{eq:sota-max-matrix}, the {LICQ} is satisfied. In particular, at any local maximizer of \eqref{eq:sota-max-matrix}, the system of KKT condition holds and $\operatorname{M}( U)$ is a singleton.
\end{proposition}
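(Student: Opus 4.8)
The plan is to exploit the block structure of the constraints together with an explicit gradient computation, and then invoke the standard LICQ theory recalled in Subsection~\ref{subsec:KKT}. First I would observe that the constraint functions of \eqref{eq:sota-max-matrix} fall into $k$ groups, the $i$-th group depending only on the entries of the factor matrix $U^{(i)}$: for $1\le i\le s$ the group consists of the $\tfrac{r(r+1)}{2}$ functions $g^{(i)}_{pq}(U)\coloneqq\big((U^{(i)})^\tp U^{(i)}\big)_{pq}-\delta_{pq}$ with $1\le p\le q\le r$, while for $s+1\le i\le k$ it consists of the $r$ functions $h^{(i)}_j(U)\coloneqq\big((U^{(i)})^\tp U^{(i)}\big)_{jj}-1$ with $1\le j\le r$; these are exactly the scalar equations cutting out the feasible set, the sub-diagonal entries of $(U^{(i)})^\tp U^{(i)}$ being redundant by symmetry. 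Since the variable supports of distinct groups are disjoint, the gradient of any function in the $i$-th group is supported in the $U^{(i)}$-block of variables, so verifying LICQ at a feasible point $U$ amounts to checking, block by block, that the gradients within each group are linearly independent.

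Next I would compute the gradients explicitly. Differentiating entrywise gives, for $A\in\mathbb{R}^{n\times r}$,
\[
\nabla_A\big((A^\tp A)_{pq}\big)=A\big(\ee_p\ee_q^\tp+\ee_q\ee_p^\tp\big)\ (p<q),\qquad \nabla_A\big((A^\tp A)_{pp}\big)=2A\,\ee_p\ee_p^\tp,
\]
where $\ee_1,\dots,\ee_r$ is the standard basis of $\mathbb{R}^r$. For $1\le i\le s$, a vanishing linear combination of the $\nabla_{U^{(i)}}g^{(i)}_{pq}(U)$ thus reads $U^{(i)}S=0$ for some $S\in\SS^r$ (the symmetric matrix assembled from the coefficients, with the diagonal terms doubled); since $(U^{(i)})^\tp U^{(i)}=I_r$ forces $U^{(i)}$ to have full column rank, $S=(U^{(i)})^\tp U^{(i)}S=0$, so the combination is trivial. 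For $s+1\le i\le k$, a vanishing combination $\sum_j c_j\,\nabla_{U^{(i)}}h^{(i)}_j(U)=0$ forces $c_j\,U^{(i)}\ee_j=0$ for each $j$, and the columns $U^{(i)}\ee_j$ of $U^{(i)}\in\B(r,n_i)$ are unit vectors, hence nonzero, so again $c_j=0$ for all $j$. This establishes LICQ at every feasible point; the feasibility hypothesis $r\le\min\{n_1,\dots,n_s\}$ is what guarantees the Stiefel factors are nonempty and the full-rank step legitimate.

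Finally, the two remaining assertions follow from the general theory recalled in Subsection~\ref{subsec:KKT}. Because LICQ (the specialization of Robinson's constraint qualification \cite{R-76,B-99}) is a valid constraint qualification, every local maximizer of \eqref{eq:sota-max-matrix} is a KKT point; and when LICQ holds the Lagrange multiplier solving the KKT system is uniquely determined, so $\operatorname{M}(U)$ is a singleton --- concretely, the symmetric matrices $P_1,\dots,P_s$ and the vector $\mathbf{p}$ of Proposition~\ref{def:kkt} are pinned down by $U$ via \eqref{eq:kkt}. I do not anticipate a genuine obstacle here: the computation is routine, and the only point demanding a little care is the bookkeeping that the displayed scalar equations form a complete and non-redundant defining system for the feasible set, which is precisely what the two rank arguments certify.
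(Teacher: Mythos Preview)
Your proof is correct and follows essentially the same approach as the paper: both exploit the block structure of the constraints, reduce the Stiefel case to $U^{(i)}S=0$ with $S\in\SS^r$ and invoke full column rank, reduce the $\B(r,n_i)$ case to the columns being nonzero unit vectors, and then appeal to the general LICQ theory of Subsection~\ref{subsec:KKT} for the KKT and uniqueness conclusions. Your presentation is slightly more explicit in enumerating the individual scalar constraints and their gradients, whereas the paper packages the same computation as the vanishing of $U^{(i)}P_i$ and $U^{(j)}\diag_2(\mathbf{p})$ via matrix inner products, but the substance is identical.
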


\begin{proof}
{Once the LICQ is satisfied, the uniqueness of multipliers follows immediately from the discussion in Section~\ref{subsec:KKT}.}
Let {$P=  (P_1,\dots,P_s,\mathbf p) \in (\SS^r)^{\times s} \times \mathbb R^r$} be a  multiplier for the equality constraints in \eqref{eq:sota-max-matrix}. We assume that they satisfy the relation
\begin{equation}\label{prop:licq:eq1}
\nabla_{ U} \bigg(\sum_{i=1}^s\langle \big(U^{(i)}\big)^\tp U^{(i)}-I,P_i\rangle + \sum_{j=s+1}^k\langle\operatorname{Diag}_2\big(\big(U^{(j)}\big)^\tp U^{(j)}\big)-{\mathbf e},{\mathbf p}\rangle\bigg)=0.
\end{equation}
Here $\SS^r$ denotes the space of $r\times r$ symmetric matrices and $\mathbf e$ denotes the $r$-dimensional vector whose elements are all equal to one. To verify the LICQ, we prove that {$P_i=0$  for all $1\le i \le s$ and $\mathbf{p} = 0$}. Since $U$ is partitioned as $U=(U^{(1)},\dots, U^{(k)})$, we may separate \eqref{prop:licq:eq1} accordingly as
\begin{equation}\label{eq:kkt-2}
\langle U^{(i)}P_i,M^{(i)}\rangle=0\ \text{for all }M^{(i)}\in\mathbb R^{n_i\times r},\ \text{for all }i=1,\dots,s,
\end{equation}
and
\begin{equation}\label{eq:kkt-1}
\langle U^{(j)}\diag_2({\mathbf p}),M^{(j)}\rangle =0\ \text{for all }M^{(j)}\in\mathbb R^{n_j\times r},\ \text{for all }j=s+1,\dots,k.
\end{equation}
We see immediately from \eqref{eq:kkt-2} that $U^{(i)}P_i=0$ and hence $P_i=0$ by the orthogonality of column vectors of $U^{(i)}$ for $i=1,\dots,s$. Smilarly from \eqref{eq:kkt-1} that $\operatorname{diag}_2({\mathbf p})=0$  since each column of $U^{(j)}$ is a unit vector for all $j=s+1,\dots,k$.
\end{proof}
\subsection{Primitive KKT points and essential KKT points}\label{sec:degenerate-kkt}
It is possible that for some $1 \le j \le r$, $\lambda_j$ approaches to zero along iterations of an algorithm solving the problem \eqref{eq:sota}. In this case, the limiting partially orthogonal tensor has a parametrization with factor matrices of fewer columns. We discuss such a reduced case in this subsection. To do so, we need to compare KKT points of mLRPOTA($r$) and those of mLRPOTA($r-1$), where for each positive integer $t$, mLRPOTA($t$) denotes the problem \eqref{eq:sota-max-matrix} with $r = t$.
\begin{proposition}[KKT Reduction]\label{prop:degenerate}
Let
\[
 U=(U^{(1)},\dots,U^{(k)})\in \V(r,n_1)\times\dots\times \V(r,n_s)\times\B(r,n_{s+1})\times\dots\times\B(r,n_k)
\]
be a KKT point of the problem mLRPOTA($r$) and let $1 \le j \le r$ be a fixed integer. Set
\[
\hat{ U} \coloneqq (\hat U^{(1)},\dots,\hat U^{(k)})\in \V(r-1,n_1)\times\dots\times \V(r-1,n_s)\times\B(r-1,n_{s+1})\times\dots\times\B(r-1,n_k),
\]
where for each $1\le i \le k$, $\hat U^{(i)}$ is the matrix obtained by deleting the $j$-th column of $U^{(i)}$. If $\lambda_j(U) = 0$,
then $\hat{U}$ is a KKT point of the problem mLRPOTA($r-1$).
\end{proposition}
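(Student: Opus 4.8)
The plan is to unwind the KKT system \eqref{eq:kkt} for $U$ column by column, discarding the $j$-th column everywhere, and check that the resulting equations are precisely the KKT system \eqref{eq:kkt} for $\hat U$ at the reduced size $r-1$. The essential point is that the quantities $\mathbf v^{(i)}_l$, $\lambda_l(U)$ and the diagonal of $\Lambda$ are all \emph{columnwise} (equivalently: $\mathbf v^{(i)}_l$ depends only on the $l$-th columns $\mathbf u^{(1)}_l,\dots,\mathbf u^{(k)}_l$, and $\lambda_l(U)=\langle\mathcal A,\mathbf u^{(1)}_l\otimes\dots\otimes\mathbf u^{(k)}_l\rangle$ depends only on those as well). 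Consequently, after deleting the $j$-th column, for every $l\neq j$ the vector $\mathbf v^{(i)}_l$ and the scalar $\lambda_l(U)$ are unchanged, so $\hat V^{(i)}$ is obtained from $V^{(i)}$ by deleting its $j$-th column and $\hat\Lambda=\operatorname{diag}_2\big((\lambda_l(U))_{l\neq j}\big)$ is obtained from $\Lambda$ by deleting its $j$-th row and column. Feasibility of $\hat U$ is immediate: deleting a column of a matrix in $\V(r,n_i)$ gives a matrix in $\V(r-1,n_i)$, and deleting a column of a matrix in $\B(r,n_i)$ gives one in $\B(r-1,n_i)$, since these constraints are themselves columnwise.

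First I would record, for $s+1\le i\le k$, the second half of \eqref{eq:kkt}: $V^{(i)}\Lambda=U^{(i)}\operatorname{diag}_2(\mathbf p)$ with $\mathbf p=\Diag_2(\Lambda^2)$ (this identity for $\mathbf p$ is established inside the proof of Proposition~\ref{def:kkt}). Reading off the $l$-th column for $l\neq j$ gives $\lambda_l(U)\,\mathbf v^{(i)}_l=\lambda_l(U)^2\,\mathbf u^{(i)}_l$, which is exactly the $l$-th column of the reduced equation $\hat V^{(i)}\hat\Lambda=\hat U^{(i)}\operatorname{diag}_2(\hat{\mathbf p})$ with $\hat{\mathbf p}=\Diag_2(\hat\Lambda^2)$. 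The $j$-th column of the original equation reads $\lambda_j(U)\mathbf v^{(i)}_j=\lambda_j(U)^2\mathbf u^{(i)}_j$, which is the trivial identity $0=0$ once $\lambda_j(U)=0$; hence no information is lost by deleting it. For $1\le i\le s$, the first half of \eqref{eq:kkt} reads $V^{(i)}\Lambda=U^{(i)}P_i$ with $P_i\in\SS^r$. Post-multiplying by $\Lambda$ yields $V^{(i)}\Lambda^2=U^{(i)}P_i\Lambda$; since the $j$-th column of $\Lambda$ is zero, the $j$-th column of $U^{(i)}P_i\Lambda$ vanishes, and because the columns of $U^{(i)}$ are linearly independent this forces the $j$-th column of $P_i\Lambda$ to vanish — and symmetrically (taking transpose of \eqref{eq:kkt-symmetry}, or post-multiplying the transpose relation) the $j$-th row of $\Lambda P_i$ vanishes. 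Thus $P_i\Lambda$ has zero $j$-th row and column; equivalently $P_i\Lambda=\begin{bmatrix}\hat P_i\hat\Lambda & 0\\ 0 & 0\end{bmatrix}$ up to the permutation moving index $j$ to the end, where $\hat P_i\in\SS^{r-1}$ is the principal submatrix of $P_i$ obtained by deleting row and column $j$. Deleting the $j$-th column from $V^{(i)}\Lambda=U^{(i)}P_i$ and using that the $j$-th column of $P_i$ contributes nothing to the remaining columns of $U^{(i)}P_i$ (because $\Lambda$ kills the $j$-th column on the left and the relevant entries cancel), I would get $\hat V^{(i)}\hat\Lambda=\hat U^{(i)}\hat P_i$. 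This is the reduced KKT equation for $1\le i\le s$.

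Carried out in that order — (1) columnwise invariance of $\mathbf v^{(i)}_l$, $\lambda_l$ under column deletion; (2) feasibility of $\hat U$; (3) the $i>s$ equations via reading off columns; (4) the $i\le s$ equations via the post-multiplication-by-$\Lambda$ trick plus linear independence of the columns of $U^{(i)}$ — we conclude that $\hat U$ satisfies \eqref{eq:kkt} at size $r-1$ with multiplier $\hat P=(\hat P_1,\dots,\hat P_s,\hat{\mathbf p})$, hence is a KKT point of mLRPOTA($r-1$) by Proposition~\ref{def:kkt}. The only slightly delicate step is (4): one must be careful that the original symmetric multiplier $P_i$ need not itself have zero $j$-th row and column (only $P_i\Lambda$ does), so the argument must extract the reduced equation from $V^{(i)}\Lambda=U^{(i)}P_i$ after multiplying by $\Lambda$ rather than naively deleting the $j$-th column of $P_i$; the linear independence of the columns of $U^{(i)}$ (valid since $U^{(i)}\in\V(r,n_i)$ for $i\le s$ and, for $i>s$, not even needed because the $i>s$ equations are handled by the direct columnwise reading) is what makes this rigorous. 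I do not expect any genuine obstruction beyond this bookkeeping.
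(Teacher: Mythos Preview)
Your overall plan, the columnwise bookkeeping, and the treatment of the $i>s$ equations are all fine and agree with the paper. The gap is in step~(4), the case $1\le i\le s$.

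Post-multiplying by $\Lambda$ only tells you that the $j$-th \emph{column} of $P_i\Lambda$ vanishes (trivially: it is $P_i\cdot 0$) and, ``symmetrically,'' that the $j$-th \emph{row} of $\Lambda P_i$ vanishes (equally trivially). Neither statement yields the $j$-th \emph{row} of $P_i\Lambda$, so your claimed block form $P_i\Lambda=\begin{bmatrix}\hat P_i\hat\Lambda&0\\0&0\end{bmatrix}$ is unjustified. And this is exactly what matters: for $l\neq j$, the $l$-th column of $U^{(i)}P_i$ is $\sum_m (P_i)_{ml}\mathbf u^{(i)}_m$, which contains the term $(P_i)_{jl}\,\mathbf u^{(i)}_j$; to delete the $j$-th column of $U^{(i)}$ and land on $\hat U^{(i)}\hat P_i$ you need $(P_i)_{jl}=0$. (Your sentence ``the $j$-th column of $P_i$ contributes nothing to the remaining columns of $U^{(i)}P_i$'' is vacuously true but not the point; what is needed is that the $j$-th \emph{row} of $P_i$ contributes nothing.) You then explicitly write that ``$P_i$ need not itself have zero $j$-th row and column,'' but in fact it \emph{does}, and that is precisely what closes the argument.

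The paper's fix is immediate and avoids the post-multiplication detour: since $U^{(i)}\in\V(r,n_i)$, left-multiplying $V^{(i)}\Lambda=U^{(i)}P_i$ by $(U^{(i)})^\tp$ gives $P_i=(U^{(i)})^\tp V^{(i)}\Lambda$. The $j$-th column of $\Lambda$ is zero, hence so is the $j$-th column of $P_i$; symmetry of $P_i$ then kills its $j$-th row as well, so (after permuting $j$ to the last slot) $P_i=\begin{bmatrix}\hat P_i&0\\0&0\end{bmatrix}$ with $\hat P_i\in\SS^{r-1}$. Now deleting the $j$-th column of $V^{(i)}\Lambda=U^{(i)}P_i$ is literal and gives $\hat V^{(i)}\hat\Lambda=\hat U^{(i)}\hat P_i$. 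The same computation shows the $j$-th entry of $\mathbf p=\Diag_2(\Lambda^2)$ is $\lambda_j(U)^2=0$, so $\hat{\mathbf p}$ is $\mathbf p$ with that entry removed.
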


\begin{proof}
By Proposition~\ref{def:kkt}, the KKT system of problem \eqref{eq:sota-max-matrix} is
\begin{equation*}
\begin{cases}V^{(i)}\Lambda-U^{(i)}P_i=0,& \text{if }i=1,\dots,s,\\
V^{(i)}\Lambda-U^{(i)} \operatorname{diag}_2({\mathbf p})=0,& \text{if } i=s+1,\dots,k,
\end{cases}
\end{equation*}
where $ P = (P_1,\dots,P_s,{\mathbf p})$ 
is the Lagrange multiplier associated to $U$. Without loss of generality, we may assume that $j=r$, which implies that the last diagonal element of $\Lambda$ is zero. Thus,
\[
(U^{(i)})^\tp \begin{bmatrix}\mathbf v^{(i)}_1&\dots&\mathbf v^{(i)}_{r-1}&\mathbf v^{(i)}_r\end{bmatrix}\begin{bmatrix}\hat \Lambda& 0\\  0& 0\end{bmatrix}=P_i,\quad 1\le i \le s,
\]
and
\[
\begin{bmatrix}\mathbf v^{(i)}_1&\dots&\mathbf v^{(i)}_{r-1}&\mathbf v^{(i)}_r\end{bmatrix}\begin{bmatrix}\hat \Lambda& 0\\  0& 0\end{bmatrix}=U^{(i)}\diag_2({\mathbf p}),\quad s+1\leq i\leq k,
\]
where $\hat\Lambda$ is the leading $(r-1)\times (r-1)$ principal submatrix of $\Lambda$.
This implies that the last column of $P_i$ is zero. By the symmetry of $P_i$, we conclude that
\[
P_i=\begin{bmatrix}\hat P_i& 0\\  0& 0\end{bmatrix}, \quad 1\le i \le s,
\]
{where $\hat P_i$ is the leading $(r-1)\times (r-1)$ principal submatrix of $P_i$.}
Likewise, we have
\[
{\mathbf p}= \begin{bmatrix}
\hat{\mathbf p} \\
0
\end{bmatrix}.
\]
Therefore we have
\[
\begin{bmatrix}\mathbf v^{(i)}_1&\dots&\mathbf v^{(i)}_{r-1}&   \mathbf{v}^{(i)}_r\end{bmatrix}\begin{bmatrix}\hat \Lambda& 0\\  0&0\end{bmatrix}=\begin{bmatrix}\hat U^{(i)}&\mathbf u^{(i)}_r\end{bmatrix}\begin{bmatrix}\hat P_i& 0\\  0& 0\end{bmatrix},\quad 1\le i \le s,
\]
and
\[
\begin{bmatrix}\mathbf v^{(i)}_1&\dots&\mathbf v^{(i)}_{r-1}& \mathbf{v}^{(i)}_r \end{bmatrix}\begin{bmatrix}\hat \Lambda& 0\\  0& 0\end{bmatrix}=\begin{bmatrix}\hat U^{(i)}&\mathbf u^{(i)}_r\end{bmatrix}\diag_2(\begin{bmatrix}
\hat{\mathbf p} \\
0
\end{bmatrix} ),\quad s+1\leq i\leq k,
\]
which simplifies to
\begin{equation*}
\begin{cases}
\begin{bmatrix}
\mathbf v^{(i)}_1&\dots&\mathbf v^{(i)}_{r-1}
\end{bmatrix}
\hat\Lambda- \hat U^{(i)}\hat P_i =0,& \text{if }i=1,\dots,s,\\
\begin{bmatrix}\mathbf v^{(i)}_1&\dots&\mathbf v^{(i)}_{r-1}\end{bmatrix}\hat\Lambda- \hat U^{(i)}\diag_2(\hat{\mathbf p})= 0 ,& \text{if } i=s+1,\dots,k.
\end{cases}
\end{equation*}
%
Consequently, we may conclude that $\hat{ U}$ is a KKT point of mLRPOTA($r-1$) {by Proposition~\ref{def:kkt}}.
\end{proof}

A KKT point $ U=(U^{(1)},\dots,U^{(k)})$ of {mLRPOTA($r$)} 
with $\lambda_j(U) \ne 0$
for all $1\le j \le r$ is called a \textit{primitive KKT point}. Iteratively applying Proposition~\ref{prop:degenerate}, we obtain the following
\begin{corollary}\label{cor:essential-primitive}
Let $T$ be a subset of $\{1,\dots,r\}$ of cardinality $t=|T|<r$ and let $ U=(U^{(1)},\dots,U^{(k)})$ be a KKT point of {mLRPOTA($r$)}. Set
\[
\hat{ U} \coloneqq (\hat U^{(1)},\dots,\hat U^{(k)})\in \V(r-t,n_1)\times\dots\times \V(r-t,n_s)\times\B(r-t,n_{s+1})\times\dots\times\B(r-t,n_k),
\]
where for each $1 \le i \le k$, $\hat U^{(i)}$ is obtained by deleting columns {of $U^{(i)}$} indexed by $T$. If $\lambda_j(U) = 0$ exactly for $j\in T$, then $\hat{U}$ is a primitive KKT point of {mLRPOTA($r-t$)}.
\end{corollary}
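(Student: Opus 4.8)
The plan is to derive Corollary~\ref{cor:essential-primitive} from Proposition~\ref{prop:degenerate} by a straightforward induction on $t = |T|$, together with the observation that the reduction operation is compatible with iteration. First I would set up the induction on $t$. The base case $t = 1$ is exactly Proposition~\ref{prop:degenerate}: if $\lambda_j(U) = 0$ for a single index $j$, deleting the $j$-th column of each $U^{(i)}$ produces a KKT point of mLRPOTA($r-1$), and since there are no remaining vanishing $\lambda$'s, this point is primitive by definition.

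For the inductive step, suppose the statement holds whenever the vanishing set has cardinality less than $t$, and let $T \subseteq \{1,\dots,r\}$ with $|T| = t < r$ such that $\lambda_j(U) = 0$ exactly for $j \in T$. Pick any single index $j_0 \in T$. Applying Proposition~\ref{prop:degenerate} to $U$ at $j_0$, we obtain a KKT point $U' = ({U'}^{(1)},\dots,{U'}^{(k)})$ of mLRPOTA($r-1$), where ${U'}^{(i)}$ is $U^{(i)}$ with the $j_0$-th column deleted. The key bookkeeping point is that after this deletion, the columns originally indexed by $T \setminus \{j_0\}$ survive, now re-indexed inside $\{1,\dots,r-1\}$; call the resulting index set $T'$, so $|T'| = t - 1$. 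Moreover, the scalars $\lambda_l(U')$ for the surviving columns coincide with the corresponding $\lambda_l(U)$, since $\lambda_l$ depends only on the $l$-th columns $\mathbf{u}^{(1)}_l,\dots,\mathbf{u}^{(k)}_l$ (see \eqref{eq:lambda}), which are unchanged by deleting a different column. Hence $\lambda_l(U') = 0$ exactly for $l \in T'$. By the inductive hypothesis applied to $U'$ and $T'$, deleting the columns of $U'$ indexed by $T'$ yields a primitive KKT point $\hat U$ of mLRPOTA($r - 1 - (t-1)$) = mLRPOTA($r - t$). Finally, one checks that deleting the $j_0$-th column of $U$ and then the columns indexed by $T'$ produces the same $k$-tuple $\hat U$ as deleting all columns of $U$ indexed by $T$ at once; this is just the associativity of successive column deletions, the point being that the set-valued operation ``delete columns indexed by $T$'' factors as ``delete $j_0$, then delete the images of $T \setminus \{j_0\}$.''

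I do not anticipate a genuine obstacle here; the only thing requiring care is the re-indexing of the surviving columns and the verification that the $\lambda$-values are preserved under the partial deletions, so that the hypothesis ``$\lambda_j(U) = 0$ exactly for $j \in T$'' is correctly inherited as ``$\lambda_l(U') = 0$ exactly for $l \in T'$'' at each stage. If one prefers to avoid explicit re-indexing, an alternative is to iterate Proposition~\ref{prop:degenerate} $t$ times, at each step deleting one of the remaining columns whose $\lambda$ vanishes, and to note that the hypothesis guarantees that after deleting any proper subset of $T$ the remaining $\lambda$'s indexed by the rest of $T$ are still zero while all others are nonzero; after $t$ steps no vanishing $\lambda$ remains, which is precisely the definition of a primitive KKT point. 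Either formulation reduces the corollary to a finite iteration of the already-established proposition.
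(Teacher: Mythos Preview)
Your proposal is correct and matches the paper's approach: the paper simply states that the corollary follows by ``iteratively applying Proposition~\ref{prop:degenerate},'' and your induction on $t=|T|$ is exactly a careful unpacking of that iteration, including the bookkeeping on re-indexing and the preservation of the $\lambda_j$'s that the paper leaves implicit.
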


It would happen that several KKT points of mLRPOTA($r$) reduce in this way to the same primitive KKT point of {mLRPOTA($r-t$)}. We call the set of such KKT points an \emph{essential KKT point}. Therefore, except for the essential KKT point with all $\lambda_j(U)=0$ (i.e., the trivial case), there is a one to one correspondence between essential KKT points of {mLRPOTA($r$)} and primitive KKT points of {mLRPOTA($t$)} for $1 \le t \le r$.
\subsection{Critical points are KKT points}\label{sec:critical-kkt}
In this subsection, we establish the relation between KKT points of problem {\eqref{eq:sota}} and critical points of {$g$ on the manifold $W_{\mathbf n,r,\ast}$, where $g$ is the function defined in \eqref{eq:objective-p}.} To achieve this, we recall from \eqref{eq:tangent-form} and Lemmas~\ref{lem:grand} and \ref{lem:grand-oblique} that components of the gradient of $g$ at a point $( U,\mathbf x)\in W_{\mathbf n,r,\ast}$ are respectively given by
\begin{align}
\operatorname{grad}_{U^{(i)}}g( U,\mathbf x)&=- (I_{n_i}-\frac{1}{2}U^{(i)}(U^{(i)})^\tp )\big(V^{(i)} \Gamma-U^{(i)}(V^{(i)}\Gamma)^\tp  U^{(i)}\big),\ i=1,\dots,s,\label{eq:gradient-ui}\\
\operatorname{grad}_{U^{(i)}}g( U,\mathbf x)&=-\Big( V^{(i)}\Gamma-U^{(i)} {\diag_2\big((U^{(i)})^\tp V^{(i)}\Gamma\big)} \Big),\ i=s+1,\dots,k,\label{eq:gradient-ui-ob} \\
\operatorname{grad}_{\mathbf x}g( U,\mathbf x)&= \nabla_{\mathbf x} g( U,\mathbf x) = \mathbf x-\operatorname{Diag}_k\big(((U^{(1)})^\tp ,\dots,(U^{(k)})^\tp )\cdot\mathcal A\big),\label{eq:gradient-x}
\end{align}
where $\Gamma=\operatorname{diag}_2(\mathbf x)$ is the diagonal matrix formed by the vector $\mathbf x$.

\begin{proposition}\label{prop:critical-equivalence}
A point $( U,\mathbf x)\in W_{\mathbf n,r,\ast}$ is a critical point of $g$ in $W_{\mathbf n,r,\ast}$ if and only if $( U,\operatorname{diag}_k(\mathbf x))$ is a KKT point of problem \eqref{eq:sota}.
\end{proposition}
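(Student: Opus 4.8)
The plan is to show that the vanishing of the Riemannian gradient of $g$ on $W_{\mathbf{n},r,\ast}$ at $(U,\mathbf{x})$ is equivalent to the KKT system of \eqref{eq:sota}. By Proposition~\ref{prop:sota-max} and Lemma~\ref{lem:kkt-equiv}, a point $(U,\operatorname{diag}_k(\mathbf{x}))$ is a KKT point of \eqref{eq:sota} if and only if $U$ is a KKT point of \eqref{eq:sota-max-matrix} \emph{and} $\mathbf{x}$ is the uniquely determined vector $\mathbf{x} = \operatorname{Diag}_k\big(((U^{(1)})^\tp,\dots,(U^{(k)})^\tp)\cdot\mathcal{A}\big)$; and by Proposition~\ref{def:kkt} the latter KKT condition for $U$ is the system \eqref{eq:kkt}. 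So the statement amounts to: $\operatorname{grad}(g)(U,\mathbf{x}) = 0$ iff \eqref{eq:kkt} holds with the diagonal entries of $\Lambda$ being the components of $\mathbf{x}$. Since $\operatorname{grad}(g)$ splits over the product structure of $W_{\mathbf{n},r,\ast}$, I would check the three blocks \eqref{eq:gradient-ui}, \eqref{eq:gradient-ui-ob}, \eqref{eq:gradient-x} separately.

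First I would handle the $\mathbf{x}$-block. Equation \eqref{eq:gradient-x} gives $\operatorname{grad}_{\mathbf{x}}g(U,\mathbf{x}) = \mathbf{x} - \operatorname{Diag}_k\big(((U^{(1)})^\tp,\dots,(U^{(k)})^\tp)\cdot\mathcal{A}\big)$, so its vanishing is exactly the relation $x_j = \lambda_j(U)$ for all $j$ (cf.\ \eqref{eq:lambda}), i.e., $\Gamma = \Lambda$ in the notation of \eqref{eq:lambda-tensor} and \eqref{eq:gradient-ui}. This identification is the bridge: once $\mathbf{x}$ is forced to equal the vector of $\lambda_j(U)$'s, the matrix $\Gamma$ appearing in \eqref{eq:gradient-ui}--\eqref{eq:gradient-ui-ob} becomes $\Lambda$, and $\operatorname{diag}_k(\mathbf{x})$ is precisely the $\mathcal{D}$ whose existence makes $(U,\mathcal{D})$ feasible and matches the formula in Lemma~\ref{lem:kkt-equiv}.

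Next, for $1 \le i \le s$: substituting $\Gamma = \Lambda$, \eqref{eq:gradient-ui} reads $\operatorname{grad}_{U^{(i)}}g = -(I_{n_i} - \tfrac12 U^{(i)}(U^{(i)})^\tp)(V^{(i)}\Lambda - U^{(i)}(V^{(i)}\Lambda)^\tp U^{(i)})$. I would invoke Lemma~\ref{lem:grand} (with $f = g$, $A = U^{(i)}$, $\nabla f(A) = V^{(i)}\Lambda$): the projection onto $\operatorname{T}_{\V(r,n_i)}(U^{(i)})$ vanishes iff $V^{(i)}\Lambda = U^{(i)}(V^{(i)}\Lambda)^\tp U^{(i)}$, equivalently $V^{(i)}\Lambda = U^{(i)}P_i$ for some symmetric $P_i$ (explicitly $P_i = (U^{(i)})^\tp V^{(i)}\Lambda$, symmetric by \eqref{eq:kkt-symmetry}). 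That is exactly the first half of \eqref{eq:kkt}. Similarly, for $s+1 \le i \le k$, \eqref{eq:gradient-ui-ob} with $\Gamma = \Lambda$ and Lemma~\ref{lem:grand-oblique} give vanishing iff $V^{(i)}\Lambda = U^{(i)}\operatorname{diag}_2\big((U^{(i)})^\tp V^{(i)}\Lambda\big)$; using the feasibility $(U^{(i)})^\tp U^{(i)} \in \B$-type constraint together with $x_j = \lambda_j(U)$ one identifies $\operatorname{diag}_2\big((U^{(i)})^\tp V^{(i)}\Lambda\big) = \operatorname{diag}_2(\mathbf{p})$ with $\mathbf{p} = \operatorname{Diag}_2(\Lambda^2)$, recovering the second half of \eqref{eq:kkt}. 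Conversely, given a KKT point $U$ with multiplier $(P_1,\dots,P_s,\mathbf{p})$, Proposition~\ref{def:kkt} and \eqref{eq:kkt-symmetry} let me read off that all three gradient blocks vanish once I set $\mathbf{x} = (\lambda_1(U),\dots,\lambda_r(U))$; note this $\mathbf{x}$ lies in $\mathbb{R}_\ast^r$ precisely because we work in $W_{\mathbf{n},r,\ast}$, so the ambient manifold is the right one and no boundary issue arises.

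The only genuinely delicate point — the "main obstacle," though it is modest — is the bookkeeping that ties the Lagrange multipliers of \eqref{eq:sota} (a constrained problem in the \emph{parameter} space $V_{\mathbf{n},r}$, whose defining equations also constrain the $\lambda_j$'s trivially) to those of \eqref{eq:sota-max-matrix}: one must be careful that the unconstrained $\lambda_j$'s force the $\mathbf{x}$-component of any KKT point to be the critical value $\lambda_j(U)$, which is exactly the content of Lemma~\ref{lem:kkt-equiv}. With that lemma and Proposition~\ref{prop:sota-max} in hand, the proof is just the three-block computation above together with Lemmas~\ref{lem:grand} and \ref{lem:grand-oblique}; I would write it by asserting the equivalence for each block and citing these results, rather than recomputing gradients.
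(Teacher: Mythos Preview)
Your proposal is correct and follows essentially the same route as the paper: split the Riemannian gradient of $g$ over the product structure, use \eqref{eq:gradient-x} to force $\Gamma=\Lambda$, then invoke Lemmas~\ref{lem:grand} and \ref{lem:grand-oblique} on the Stiefel and Oblique blocks to recover \eqref{eq:kkt}, and finally connect to \eqref{eq:sota} via Proposition~\ref{def:kkt} and Lemma~\ref{lem:kkt-equiv}. One small imprecision: the Euclidean partial gradient is actually $\nabla_{U^{(i)}}g(U,\mathbf{x})=-V^{(i)}\Gamma+U^{(i)}\Gamma^2$, not $V^{(i)}\Lambda$; but since $U^{(i)}\Gamma^2$ lies in the normal space the extra term is absorbed into the symmetric multiplier $P_i$, so your block-wise equivalence goes through unchanged.
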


\begin{proof}
We recall that a critical point $(U,\mathbf x)\in W_{\mathbf n,r,\ast}$ of $g$ is defined by $\operatorname{grad}(g)(U,\mathbf x)=0$. It follows from {\eqref{eq:gradient-ui}, \eqref{eq:gradient-ui-ob} and \eqref{eq:gradient-x},} Lemmas~\ref{lem:grand} and \ref{lem:grand-oblique}
that these critical points are characterized by
\[
\begin{cases}
\nabla_{U^{(i)}} g( U,\mathbf x)=U^{(i)}P_i, &\text{if }1\le i \le s, \\
\nabla_{U^{(i)}} g( U,\mathbf x)=U^{(i)} {\diag_2(\mathbf p)},&\text{if }s + 1 \le i \le k, \\
\nabla_{\mathbf x} g( U,\mathbf x) =0,
\end{cases}
\]
where $P_i$ is some $r\times r$ symmetric matrix and ${\mathbf p}\in\mathbb R^r$. By \eqref{eq:gradient-x}, we have
\[
\mathbf x = \operatorname{Diag}_k\big(((U^{(1)})^\tp ,\dots,(U^{(k)})^\tp )\cdot\mathcal A\big),
\]
and according to \eqref{eq:objective-p}, we derive
\[
\nabla_{U^{(i)}} g( U,\mathbf x)=-V^{(i)} {\Gamma}+U^{(i)}{\Gamma^2}.
\]
Therefore, {Proposition~\ref{def:kkt}} implies that a critical point of $g$ on $W_{\mathbf n,r,\ast}$ must come from a KKT point of problem \eqref{eq:sota}. The converse is obvious and this completes the proof.
\end{proof}

\begin{definition}[Nondegenerate KKT Point]\label{def:nondegenerate}
A KKT point $( U,\mathcal{D})$ of problem \eqref{eq:sota} is nondegenerate  if $( U,\mathbf x)\in W_{\mathbf n,r,\ast}$ is a nondegenerate critical point of $g$ with $\operatorname{diag}_k(\mathbf x)=\mathcal{D}$.
\end{definition}

Let $U$ be a primitive KKT point of problem \eqref{eq:sota-max} which uniquely determines the KKT point $(U,\mathcal \diag_k(\x))$ of problem \eqref{eq:sota}. If $( U,\mathbf x) \in W_{\mathbf n,r}$ then $U$ is called a \emph{smooth KKT point} and the corresponding essential KKT point is called a \emph{smooth essential KKT point}. {Note that by definition, a smooth primitive KKT point actually lies in {$W_{\mathbf n,r,*}$.}
\begin{theorem}[Finite Smooth Essential Critical Points]\label{thm:kkt}
For a generic tensor, there are only finitely many smooth essential KKT points for problem~\eqref{eq:sota}, and for any positive integers $r \ge s > 0$,
a smooth primitive KKT point of  {mLRPOTA($s$)} corresponding to a smooth essential KKT point of {mLRPOTA($r$)} is nondegenerate.
\end{theorem}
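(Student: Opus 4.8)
The plan is to reduce the statement to a genericity argument about Morse functions, using the local diffeomorphism between the parameter space and the tensor variety together with the KKT--critical point correspondence already established. First I would recall that by Proposition~\ref{prop:critical-equivalence}, a smooth primitive KKT point of mLRPOTA($s$) (for $s \le r$) corresponds bijectively to a critical point of the function $g$ defined in \eqref{eq:objective-p} on the manifold $W_{\mathbf{n},s,\ast}$; moreover, by Corollary~\ref{cor:essential-primitive}, every smooth essential KKT point of mLRPOTA($r$) reduces to a unique smooth primitive KKT point of mLRPOTA($t$) for some $1 \le t \le r$, and there are only finitely many values of $t$ to consider. So it suffices to prove: for each fixed $t$ with $1 \le t \le r$ and for a generic tensor $\mathcal{A}$, the function $g$ on $W_{\mathbf{n},t,\ast}$ has only finitely many critical points, all of them nondegenerate.

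The key step is to invoke Proposition~\ref{prop:nondegnerate}, which already asserts that for a generic $\mathcal{A}$ every critical point of $g$ on $W_{\mathbf{n},t,\ast}$ is nondegenerate. This in turn follows from Lemma~\ref{lem:localdiff} (the local diffeomorphism $\varphi_{\mathbf{n},t}\colon W_{\mathbf{n},t,\ast} \to R_s(\mathbf{n},t)$), Proposition~\ref{prop:critical} (critical points and their nondegeneracy are preserved under a local diffeomorphism), and Lemma~\ref{lem:generic-morse} (for a generic $\mathbf{a}$, the squared Euclidean distance to a fixed point is a Morse function on the submanifold $R_s(\mathbf{n},t) \subseteq \mathbb{R}^{n_1}\otimes\cdots\otimes\mathbb{R}^{n_k}$): indeed $g = \tfrac12\|\mathcal{A} - \varphi_{\mathbf{n},t}(\cdot)\|^2$ is exactly the pullback along $\varphi_{\mathbf{n},t}$ of the restricted squared distance function $\mathcal{B} \mapsto \tfrac12\|\mathcal{A}-\mathcal{B}\|^2$ on $R_s(\mathbf{n},t)$. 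Once every critical point is nondegenerate, Lemma~\ref{lem:isolated critical points} shows that the critical points are isolated. To upgrade "isolated" to "finite", I would argue that the critical set is a semi-algebraic set: it is cut out inside $W_{\mathbf{n},t,\ast}$ by the polynomial equations $\operatorname{grad}(g) = 0$ (equivalently, by the KKT system \eqref{eq:kkt} written out in the parameter coordinates, which is polynomial in $\mathcal{A}$ and the entries of $U$), and $W_{\mathbf{n},t,\ast}$ is itself semi-algebraic. A semi-algebraic set all of whose points are isolated is zero-dimensional, hence finite by Corollary~\ref{cor:0-dim}. Taking the intersection of the finitely many genericity conditions (one for each $t = 1,\dots,r$) gives a single dense open set of tensors $\mathcal{A}$ on which the conclusion holds.

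Finally I would assemble these pieces: for a generic $\mathcal{A}$, each mLRPOTA($t$) has finitely many smooth primitive KKT points, all nondegenerate; each smooth essential KKT point of mLRPOTA($r$) is determined by the subset $T \subseteq \{1,\dots,r\}$ of vanishing $\lambda_j$'s together with a smooth primitive KKT point of mLRPOTA($r - |T|$); since there are $2^r$ subsets $T$ and finitely many smooth primitive KKT points for each of the finitely many reduced problems, the total number of smooth essential KKT points of mLRPOTA($r$) is finite. Nondegeneracy of the associated smooth primitive KKT point is precisely what Proposition~\ref{prop:nondegnerate} and Definition~\ref{def:nondegenerate} deliver.

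The main obstacle I anticipate is the bookkeeping around the word "smooth": one must check that the reduction of Corollary~\ref{cor:essential-primitive} actually lands a smooth essential KKT point of mLRPOTA($r$) on a \emph{smooth} primitive KKT point of mLRPOTA($t$)---i.e. that the reduced point lies in $W_{\mathbf{n},t,\ast}$ and not merely in $V_{\mathbf{n},t}$---so that Proposition~\ref{prop:critical-equivalence} and Proposition~\ref{prop:nondegnerate} apply. This is where the definition of "smooth KKT point" (the corresponding parameter point lies in $W_{\mathbf{n},r,\ast}$, equivalently all $\lambda_j \ne 0$ after deleting the zero components) must be used carefully; deleting the columns indexed by $T$ turns factor matrices in $\operatorname{B}$ into factor matrices in $\operatorname{OB}$ precisely because of the full-rank/identifiability hypothesis built into the notion of a smooth (hence $W_{\mathbf{n},r,\ast}$-parametrized) KKT point, and this should be spelled out. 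The genericity statement of Proposition~\ref{prop:nondegnerate} itself is a given, so no new transversality argument is needed; the work is purely in combining the cited results and verifying the finiteness via semi-algebraic zero-dimensionality.
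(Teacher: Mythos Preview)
Your proposal is correct and follows essentially the same route as the paper's proof: reduce to smooth primitive KKT points at each level $t \le r$, identify them via Proposition~\ref{prop:critical-equivalence} with critical points of $g$ on $W_{\mathbf{n},t,\ast}$, invoke generic nondegeneracy (Proposition~\ref{prop:nondegnerate}) and isolatedness (Lemma~\ref{lem:isolated critical points}), and deduce finiteness from the (semi-)algebraic structure of the KKT set together with Corollary~\ref{cor:0-dim}. The only cosmetic differences are that the paper phrases the finiteness step as a contradiction via a positive-dimensional irreducible component whereas you argue directly that an isolated semi-algebraic set is zero-dimensional, and your final $2^r$-count conflates individual KKT points with essential KKT points (which are equivalence classes, already in bijection with primitive KKT points of the reduced problems), but neither point affects correctness.
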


\begin{proof}
To prove the first statement, it is sufficient to show that there are only finitely many smooth primitive KKT points in $W_{\mathbf n,r,\ast}$, and the finiteness of smooth essential KKT points follows from Proposition~\ref{prop:degenerate} and the layer structure of the set $W_{\mathbf n,r}$ (induced from that of $Q_s(\mathbf n,r)$ given by Proposition~\ref{prop:smooth}--\eqref{prop:smooth:item6}). We recall that KKT points on $W_{\mathbf n,r,\ast}$ are defined by \eqref{eq:kkt}, which is a system of polynomial equations, which implies that the set $K_{\mathbf{n},r}$ of KKT points of problem \eqref{eq:sota-max} on $W_{\mathbf n,r,\ast}$ is a closed subvariety of the quasi-variety $W_{\mathbf n,r,\ast}$. We also note that there are finitely many irreducible components of $K_{\mathbf n,r}$ \cite{H-77} and hence it suffices to prove that each irreducible component of $K_{\mathbf{n},r}$ is a singleton. Now let $Z\subseteq K_{\mathbf{n},r}$ be an irreducible component of $K_{\mathbf{n},r}$. If $Z$ contains infinitely many points, then $\dim (Z) \ge 1$ according to Corollary~\ref{cor:0-dim}. However, according to Proposition~\ref{prop:critical-equivalence}, each point in $Z$ determines a critical point of the function $g$ on the manifold $W_{\mathbf n,r,\ast}$. This implies that the set of critical points of $g$ on $W_{\mathbf n,r,\ast}$ has a positive dimension and hence by Proposition~\ref{prop:smooth locus}, its smooth locus is a manifold of positive dimension, which contradicts Lemma~\ref{lem:isolated critical points} and Proposition~\ref{prop:nondegnerate}.

Next, by Corollary~\ref{cor:essential-primitive}, given a non-primitive smooth KKT point $ U$ of {mLRPOTA($r$)}, we obtain a primitive smooth KKT point $\hat{ U}$ of {mLRPOTA($s$)} with some $s < r$.
Obviously we have $(\hat{ U},\mathbf x)\in W_{\mathbf n,s,\ast}$, where $\mathbf x$ is determined by $\lambda_j(\hat{ U})$'s. Since for a generic tensor the function $g$ has only nondegenerate critical points on $W_{\mathbf n,s,\ast}$ by Proposition~\ref{prop:nondegnerate}, the second assertion follows from Proposition~\ref{prop:critical-equivalence} and Corollary~\ref{cor:essential-primitive}.
\end{proof}
\section{Convergence Analysis of the iAPD-ALS Algorithm}\label{sec:convergence}
With all the preparations in Sections~\ref{sec:podt} and \ref{sec:kkt}, we are ready to carry out a convergence analysis for Algorithm~\ref{algo}. To that end, we denote the collection of the $k$ factor matrices in the $p$-th iteration of Algorithm~\ref{algo} by
\begin{equation}\label{eq:whole-u}
U_{[p]} \coloneqq (U_{[p]}^{(1)},\dots, U_{[p]}^{(k)}),
\end{equation}
and we recall that Algorithm~\ref{algo} consists of two types of updates: ALS updates from $U^{(s)}_{[p-1]}$ to $U^{(k)}_{[p-1]}$ and iAPD updates from $U^{(k)}_{[p-1]}$ to $U^{(s)}_{[p]}$, which are exhibited in Figure~\ref{fig:flow} for illustration. Hence the analysis of the global dynamics of Algorithm~\ref{algo} consists of two parts accordingly, which is done in Subsection~\ref{sec:properties}. The goal of Subsection~\ref{sec:global} is to prove the global convergence of Algorithm~\ref{algo} and the rest two subsections are respectively devoted to prove the sublinear convergence rate in general and the generic linear convergence rate.

\begin{figure}[htbp]
\centering
\begin{tikzpicture}[node distance=1.8cm,font=\sf]

\node (start) [startstop] {$(p-1)$-th iteration};
\node (pro1) [process, right of=start, xshift=1.8cm] {$U^{(1)}_{[p-1]}$};
\node (dot1) [dotsnd,right of=pro1,xshift=0.2cm]{$\dots$};
\node (dash1) [processd, right of=dot1,xshift=3.6cm]{};
\node (pro2) [process, right of=dot1, xshift= 0.4cm] {$U^{(s)}_{[p-1]}$};
\node (pro3) [process, right of=pro2, xshift= 0.4cm] {$U^{(s+1)}_{[p-1]}$};
\node (dot2) [dotsnd,right of=pro3,xshift=0.2cm]{$\dots$};
\node (pro4) [process, right of=dot2, xshift= 0.4cm] {$U^{(k)}_{[p-1]}$};

\node (startp) [startstop, below of=start, yshift=-0.6cm,xshift=-0.45cm] {$p$-th iteration};
\node (prop1) [process, below of=pro1, yshift=-0.6cm] {$U^{(1)}_{[p]}$};
\node (ppro4) [processb, left of=prop1, xshift= 0.4cm] {$U^{(k)}_{[p-1]}$};
\node (dotp1) [dotsnd, right of=prop1,xshift=0.2cm]{$\dots$};
\node (dotp4) [dotsnd,above of=dotp1,yshift=-3.1cm]{\text{iAPD updates}};
\node (prop2) [process, right of=dotp1, xshift= 0.4cm] {$U^{(s)}_{[p]}$};
\node (dash2) [processdd, left of=prop2,xshift=-1.1cm]{};
\node (prop3) [process, right of=prop2, xshift= 0.4cm] {$U^{(s+1)}_{[p]}$};
\node (dotp2) [dotsnd,right of=prop3,xshift=0.2cm]{$\dots$};
\node (dotp3) [dotsnd,above of=dotp2,yshift=1.8cm]{\text{ALS updates}};
\node (prop4) [process, right of=dotp2, xshift= 0.4cm] {$U^{(k)}_{[p]}$};

\draw [arrow](pro1) -- (dot1);
\draw [arrow](dot1) -- (pro2);
\draw [arrow](pro2) -- (pro3);
\draw [arrow](pro3) -- (dot2);
\draw [arrow](dot2) -- (pro4);


\draw[red,->] let \p1=(pro4), \p2=(prop1) in (pro4) -- (\x1, -0.33*\x2) -| (prop1);


\draw[arrow] (ppro4)--(prop1);
\draw [arrow](prop1) -- (dotp1);
\draw [arrow](dotp1) -- (prop2);
\draw [arrow](prop2) -- (prop3);
\draw [arrow](prop3) -- (dotp2);
\draw [arrow](dotp2) -- (prop4);
\end{tikzpicture}
\caption{\label{fig:flow}Flow chart of Algorithm~\ref{algo}}
\end{figure}
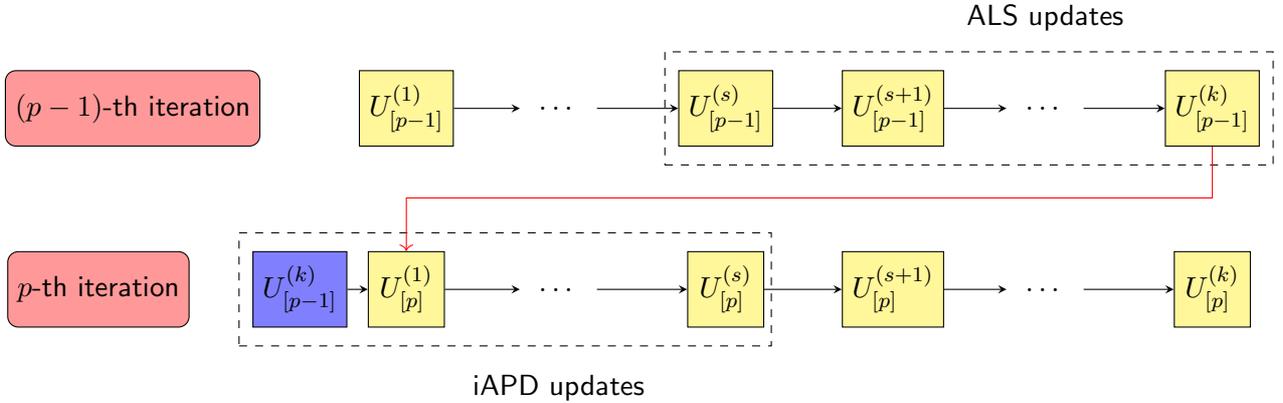

For the reader's convenience, we recall some notations from Subsection~\ref{subsec:notation} and Algorithm~\ref{algo}. Let $f$ be the function defined in \eqref{eq:objective}, which is the objective function of mLRPOTA($r$). Given $\mathcal{A} \in \mathbb{R}^{n_1} \otimes \cdots \otimes \mathbb{R}^{n_k}$ and $1\le i \le k$, maps $\mathcal{A}\tau$ and $\mathcal{A}\tau_i$ on $\mathbb{R}^{n_1} \times \cdots \times \mathbb{R}^{n_k}$ are respectively defined by
\begin{align*}
\mathcal{A}\tau (\mathbf{u}_1,\dots, \mathbf{u}_k) &= \langle\mathcal A,\mathbf u_1\otimes\dots\otimes\mathbf u_k\rangle \in \mathbb{R}, \\
\mathcal{A}\tau_i (\mathbf{u}_1,\dots, \mathbf{u}_k) &= \langle\mathcal A,\mathbf u_1\otimes\dots\otimes\mathbf u_{i-1}\otimes\mathbf u_{i+1}\otimes\dots\otimes\mathbf u_k\rangle \in \mathbb{R}^{n_i}.
\end{align*}
We write $\mathbf{u}^{(l)}_{t,[p]}$ for the $t$-th column vector of $U^{(l)}_{[p]}$, from which we construct a block vector
\begin{equation}\label{eq:block-xp}
\mathbf{x}^i_{j,[p]} := (\mathbf u^{(1)}_{j,[p]},\dots,\mathbf u^{(i-1)}_{j,[p]}, {\mathbf u}^{(i)}_{j,[p-1]}, {\mathbf u}^{(i+1)}_{j,[p-1]},\dots,{\mathbf u}^{(k)}_{j,[p-1]}).
\end{equation}
We remind the reader that the superscript $i$ of $\mathbf x^{i}_{j,[p]}$ is not in parentheses so that we can easily distinguish the vector $\mathbf u^{(i)}_{j,[p]}$ and the block vector $\mathbf{x}^i_{j,[p]} $. The rule for the superscript to be in parentheses or not is whether this superscript indicates a component. Following this rule, we also denote
\begin{equation}\label{eq:lambda+vp}
\lambda^{i-1}_{j,[p]} := \mathcal{A} \tau (\mathbf{x}^i_{j,[p]} ),\quad \mathbf{v}^{(i)}_{j,[p]} := \mathcal{A} \tau_i (\mathbf{x}^i_{j,[p]} )
\end{equation}
and
\begin{equation}\label{eq:Lambda + Vp}
\Lambda^{(i)}_{[p]} := \operatorname{diag}_2 (\lambda^{i-1}_{1,[p]},\dots, \lambda^{i-1}_{r,[p]} ),\quad V^{(i)}_{[p]} := \begin{bmatrix}
\mathbf{v}^{(i)}_{1,[p]} & \cdots \mathbf{v}^{(i)}_{r,[p]}
\end{bmatrix}.
\end{equation}
Since $\mathbf{x}^1_{j,[p]} = {\mathbf{x}}^{k+1}_{j,[p-1]}$ by \eqref{eq:block-xp}, we obtain from \eqref{eq:lambda+vp} the relation
\begin{equation}\label{eq:lambda-notationp}
\lambda^0_{j,[p]} ={\lambda}^k_{j,[p-1]}.
\end{equation}
Moreover, {for each $s+1 \le i \le k$,} the vector $\mathbf{u}^{(i)}_{j,[p]}$ is updated by the formula:
\begin{equation}\label{eq:updatep}
\mathbf u^{(i)}_{j,[p]} =
\sgn(\lambda^{i-1}_{j,[p]})\frac{\mathcal A\tau_i(\mathbf x^{i}_{j,[p]})}{\|\mathcal A\tau_i(\mathbf x^{i}_{j,[p]})\|}.
\end{equation}
We also define for each tuple $({W}, \mathcal{T}) = (W^{(1)},\dots, W^{(k)},\mathcal{T}) \in \mathbb{R}^{n_1\times r} \times \cdots \times \mathbb{R}^{n_k \times r} \times {(\mathbb{R}^r)^{\otimes k}}$ the norms
\[
\| W \|_F^2 \coloneqq \sum_{i=1}^k\| W^{(i)}\|_F^2,\quad \|( W,\mathcal{T}) \|_F^2 \coloneqq \sum_{i=1}^k\| W^{(i)}\|_F^2 + \lVert \mathcal{T} \rVert^2.
\]

In Algorithm~\ref{algo}, if one column of the iteration matrix $U^{(i)}_{[p]}$ is removed in the truncation step, we say that one truncation occurs. It is possible that several truncations may appear in one iteration. We denote by $J_p$ the set of indices of columns which are removed by these truncations in the $p$-th iteration and thus the number of truncations occurred in the $p$-th iteration of Algorithm~\ref{algo} is the cardinality of $J_p$. We have the following lemma regarding the decrease of the objective function value caused by these truncations.
\begin{lemma}\label{lem:truncation-obj}
There are at most $r$ truncations in Algorithm~\ref{algo} and the decrease of the objective function value caused by each truncation is at most $\kappa^2$. Thus, the total decrease of the objective function value caused by all truncations is at most $r\kappa^2$.
\end{lemma}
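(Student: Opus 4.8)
The plan is to analyze the truncation step of Algorithm~\ref{algo} directly and bound the loss in the objective function incurred at each truncation, then bound the total number of truncations.

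First, I would establish the bound on the number of truncations. A truncation removes at least one column from the factor matrices and correspondingly decreases $r$ by $|J_p|$. Since $r$ is a positive integer and can never go below $1$ (one always retains at least one rank-one term when $f(U_{[0]}) > 0$ and $\kappa < \sqrt{f(U_{[0]})/r}$), the cumulative number of removed columns over all iterations is at most $r - 1 < r$; hence there are at most $r$ truncations in total. This part is essentially bookkeeping.

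Next, I would bound the decrease in the objective value caused by a single truncation. The key observation is that the truncation condition is $\lvert ((U^{(s)}_{[p]})^{\tp} V^{(s)}_{[p]})_{jj} \rvert < \kappa$ for $j \in J$, and by the definitions \eqref{eq:block-xp}, \eqref{eq:lambda+vp} and the relation \eqref{eq:segre and partial segre}, the diagonal entry $((U^{(s)}_{[p]})^{\tp} V^{(s)}_{[p]})_{jj}$ is precisely the value $\lambda_j$ (up to the appropriate intermediate index), i.e.\ the coefficient $\langle \mathcal{A}, \mathbf{u}^{(1)}_{j}\otimes\dots\otimes\mathbf{u}^{(k)}_{j}\rangle$ associated with the $j$-th rank-one component at that stage. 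Recalling from \eqref{eq:objective} that the objective function is $f(U) = \sum_{j=1}^r \lambda_j(U)^2$, deleting the $j$-th column decreases $f$ by exactly $\lambda_j^2$ for each truncated index. Since $\lvert\lambda_j\rvert < \kappa$ at the moment of truncation, each such deletion costs at most $\kappa^2$. Summing over the (at most $r$) truncations gives a total decrease of at most $r\kappa^2$.

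The main obstacle I anticipate is the careful identification of which intermediate quantity the diagonal entry $((U^{(s)}_{[p]})^{\tp} V^{(s)}_{[p]})_{jj}$ actually equals, and ensuring that removing a column from all factor matrices really changes the objective (viewed in the reduced-rank problem mLRPOTA($r - |J_p|$)) by exactly the sum of the squares of the corresponding $\lambda_j$'s — one must check that the remaining components and their $\lambda$-values are unchanged by the deletion, which follows because the objective \eqref{eq:objective} is separable across the index $j$ and the column removal does not alter the other columns. A secondary subtlety is that the truncation in Algorithm~\ref{algo} happens after the polar/proximal updates of $U^{(i)}_{[p]}$ for $i \le s$ but before the ALS updates for $i > s$; I would need to confirm that the objective value is monotonically tracked through these sub-steps so that the $\kappa^2$ bound genuinely reflects a bound on the net decrease and not merely on one piece of it. Once these identifications are in place, the estimate is immediate.
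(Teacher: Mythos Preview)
Your proposal is correct and follows essentially the same route as the paper: identify the diagonal entry $((U^{(s)}_{[p]})^\tp V^{(s)}_{[p]})_{jj}$ with the intermediate coefficient $\lambda^s_{j,[p]}$, observe that the objective \eqref{eq:objective} is separable in $j$ so removing column $j$ drops $f$ by exactly $(\lambda^s_{j,[p]})^2<\kappa^2$, and bound the total number of removed columns by the initial column count $r$. One minor caution: your remark that $r$ can never drop below $1$ is not needed for the bound ``at most $r$'' and in fact relies on Proposition~\ref{prop:positive}, which in turn invokes the present lemma; simply noting that one starts with $r$ columns suffices and avoids any circularity.
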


\begin{proof} 
Note that before the truncation step in the $p$-th iteration, we have 
\[
f(U^{(1)}_{[p]},\dots,U^{(s)}_{[p]},U^{(s+1)}_{[p-1]},\dots,U^{(k)}_{[p-1]})=\sum_{j} (\lambda^{(s+1)}_{j,[p]})^2,
\]
where $j$ runs through $1$ to the number of columns of $U^{(i)}_{[p]}$'s. The $j$-th column of these matrices are truncated if and only if $j \in J_p$, i.e., $|\lambda^{(s+1)}_{j,[p]}|<\kappa$. Therefore, the new objective function value after the truncation step is
\[
f(\tilde U^{(1)}_{[p]},\dots,\tilde U^{(s)}_{[p]},\tilde U^{(s+1)}_{[p-1]},\dots,\tilde U^{(k)}_{[p-1]})=\sum_{j\not\in J_p}(\lambda^{(s+1)}_{j,[p]})^2>f(U^{(1)}_{[p]},\dots,U^{(s)}_{[p]},U^{(s+1)}_{[p-1]},\dots,U^{(k)}_{[p-1]})- \lvert J_p \rvert \kappa^2.
\]
Since iteration matrices only have $r$ columns at the initialization, truncations may occur at most $r$ times in Algorithm~\ref{algo}.
\end{proof}

\subsection{Local properties}\label{sec:properties}
As is discussed at the beginning of this section, the analysis of ALS and iAPD updates is an essential ingredient of the global convergence analysis of Algorithm~\ref{algo}. In this subsection, we estimate how the value of the objective function varies with respect to each update in Algorithm~\ref{algo}. To achieve this, it is sufficient to compare the function value between updates in $(p-1)$-th and $p$-th iterations of Algorithm~\ref{algo} for each fixed positive integer $p$. Hence we drop the subscript ``$_{[p]}$" to simplify the notation.

We denote the data of the current iteration  as
\[
\overline{U}:=(\overline{U}^{(1)},\dots,\overline{U}^{(k)})
\]
and the data of the next iteration  as 
\[
U:=(U^{(1)},\dots,U^{(k)}).
\]
Similarly, the subscript ``$_{[p]}$" in \eqref{eq:block-xp}--\eqref{eq:updatep} can also be dropped accordingly. To be more precise, we denote by $\mathbf{u}^{(l)}_{t}$ {(resp. $\overline{\mathbf{u}}^{(l)}_{t}$)} the $t$-th column vector of $U^{(l)}$ {(resp. $\overline{U}^{(l)}$).} Hence \eqref{eq:block-xp}--\eqref{eq:updatep} become
\begin{align}
\mathbf{x}^i_{j} &= (\mathbf u^{(1)}_{j},\dots,\mathbf u^{(i-1)}_{j}, \overline{\mathbf u}^{(i)}_{j}, \overline{\mathbf u}^{(i+1)}_{j},\dots,\overline{\mathbf u}^{(k)}_{j}), \label{eq:block-x} \\
\lambda^{i-1}_{j} &= \mathcal{A} \tau (\mathbf{x}^i_{j} ),\quad \mathbf{v}^{(i)}_{j} = \mathcal{A} \tau_i (\mathbf{x}^i_{j} ), \label{eq:lambda+v}\\
\Lambda^{(i)} &= \operatorname{diag}_2 (\lambda^{i-1}_{1},\dots, \lambda^{i-1}_{r} ),\quad V^{(i)} = \begin{bmatrix}
\mathbf{v}^{(i)}_{1} & \cdots \mathbf{v}^{(i)}_{r}
\end{bmatrix}, \label{eq:Lambda + V} \\
\lambda^0_{j} &=\overline{\lambda}^k_{j}, \label{eq:lambda-notation} \\
\mathbf u^{(i)}_{j} &=
\sgn(\lambda^{i-1}_{j})\frac{\mathcal A\tau_i(\mathbf x^{i}_{j})}{\|\mathcal A\tau_i(\mathbf x^{i}_{j})\|}. \label{eq:update}
\end{align}
We also define
\begin{equation}\label{eq:variable-u}
 U_{i} \coloneqq (U^{(1)},\dots,U^{(i)},\overline{U}^{(i+1)},\dots,\overline{U}^{(k)}),\quad  i = 0,\dots, k.
\end{equation}
With this notation, we in particular have
\[
U_{0} = \overline{U} =  \overline{U}_{k}.
\]
We first deal with the ALS updates.
\begin{lemma}[Monotonicity of ALS updates]\label{lem:lambda-k}
Let $r$ be the number of columns in the matrix $U^{(k)}$. For each $s \le i \le k-1$ and $1\le j \le r$, we have
\begin{equation}\label{eq:lambda-k}
\lambda^{i+1}_{j}=\lambda^i_{j}+\sgn(\lambda^i_{j})\frac{{|\lambda^{i+1}_{j}|}}{2}\|\mathbf u^{(i+1)}_{j}-\overline{\mathbf u}^{(i+1)}_{j}\|^2
\end{equation}
and
\begin{align}\label{eq:obj-k}
f( U_{i+1})-f( U_{i})&=\frac{1}{2}\sum_{j=1}^{r}|\lambda^{i+1}_{j}(\lambda^{i+1}_{j}+\lambda^{i}_{j})|\|\mathbf u^{(i+1)}_{j}-\overline{\mathbf u}^{(i+1)}_{j}\|^2\nonumber\\
&\geq \kappa^2\|U^{(i+1)}-\overline{U}^{(i+1)}\|_F^2,
\end{align}
where $\kappa > 0$ is the truncation parameter chosen in Algorithm~\ref{algo}.
\end{lemma}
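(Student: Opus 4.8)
The plan is to expand the ALS update \eqref{eq:update} directly and exploit the fact that each column of $U^{(k)}$ (hence of the intermediate matrices) is a unit vector by construction. First I would fix $s \le i \le k-1$ and $1 \le j \le r$ and observe that, by \eqref{eq:update}, $\mathbf{u}^{(i+1)}_j = \sgn(\lambda^i_j)\, \mathbf{v}^{(i+1)}_j / \|\mathbf{v}^{(i+1)}_j\|$, where $\mathbf{v}^{(i+1)}_j = \mathcal A\tau_{i+1}(\mathbf x^{i+1}_j)$ and, crucially, the block vector $\mathbf x^{i+1}_j$ differs from $\mathbf x^i_j$ only in the $(i+1)$-st slot: $\mathbf x^i_j$ has $\overline{\mathbf u}^{(i+1)}_j$ there whereas $\mathbf x^{i+1}_j$ has $\mathbf u^{(i+1)}_j$. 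Using \eqref{eq:segre and partial segre}, i.e.\ $\lambda^{i+1}_j = \mathcal A\tau(\mathbf x^{i+1}_j) = \langle \mathbf v^{(i+1)}_j, \mathbf u^{(i+1)}_j\rangle$ and similarly $\lambda^i_j = \langle \mathbf v^{(i+1)}_j, \overline{\mathbf u}^{(i+1)}_j\rangle$ (note $\mathbf v^{(i+1)}_j$ does not depend on the $(i+1)$-st slot), one gets $\lambda^{i+1}_j = \sgn(\lambda^i_j)\|\mathbf v^{(i+1)}_j\|$ and hence $|\lambda^{i+1}_j| = \|\mathbf v^{(i+1)}_j\| \ge |\lambda^i_j|$; in particular $\sgn(\lambda^{i+1}_j) = \sgn(\lambda^i_j)$ whenever $\lambda^i_j \ne 0$.

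Next I would derive the identity \eqref{eq:lambda-k}. Since both $\mathbf u^{(i+1)}_j$ and $\overline{\mathbf u}^{(i+1)}_j$ are unit vectors, $\|\mathbf u^{(i+1)}_j - \overline{\mathbf u}^{(i+1)}_j\|^2 = 2 - 2\langle \mathbf u^{(i+1)}_j, \overline{\mathbf u}^{(i+1)}_j\rangle$. Writing $\overline{\mathbf u}^{(i+1)}_j = \sgn(\lambda^i_j)\,(\lambda^i_j / |\lambda^{i+1}_j|)\, \mathbf u^{(i+1)}_j/1 + (\text{component orthogonal to }\mathbf v^{(i+1)}_j)$ — more cleanly, using $\langle \mathbf v^{(i+1)}_j, \mathbf u^{(i+1)}_j\rangle = \sgn(\lambda^i_j)|\lambda^{i+1}_j|$ and $\langle \mathbf v^{(i+1)}_j, \overline{\mathbf u}^{(i+1)}_j\rangle = \lambda^i_j$ together with $\mathbf u^{(i+1)}_j = \sgn(\lambda^i_j)\mathbf v^{(i+1)}_j/|\lambda^{i+1}_j|$ — one computes $\langle \mathbf u^{(i+1)}_j, \overline{\mathbf u}^{(i+1)}_j\rangle = \sgn(\lambda^i_j)\lambda^i_j / |\lambda^{i+1}_j| = |\lambda^i_j|/|\lambda^{i+1}_j|$. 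Therefore $\|\mathbf u^{(i+1)}_j - \overline{\mathbf u}^{(i+1)}_j\|^2 = 2(|\lambda^{i+1}_j| - |\lambda^i_j|)/|\lambda^{i+1}_j|$, which rearranges to $|\lambda^{i+1}_j| - |\lambda^i_j| = \tfrac{1}{2}|\lambda^{i+1}_j|\,\|\mathbf u^{(i+1)}_j - \overline{\mathbf u}^{(i+1)}_j\|^2$; multiplying by $\sgn(\lambda^i_j) = \sgn(\lambda^{i+1}_j)$ gives exactly \eqref{eq:lambda-k} (the case $\lambda^i_j = 0$ being trivial since then $\lambda^{i+1}_j = 0$ as well and both sides vanish).

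For \eqref{eq:obj-k}, I would note that $U_{i+1}$ and $U_i$ differ only in the $(i+1)$-st factor, and that $\lambda_j(U_{i+1}) = \lambda^{i+1}_j$, $\lambda_j(U_i) = \lambda^i_j$ for every $j$, while for the other factors the corresponding $\lambda$'s are unchanged — so by \eqref{eq:objective}, $f(U_{i+1}) - f(U_i) = \sum_{j=1}^r \big((\lambda^{i+1}_j)^2 - (\lambda^i_j)^2\big) = \sum_{j=1}^r (\lambda^{i+1}_j - \lambda^i_j)(\lambda^{i+1}_j + \lambda^i_j)$. Substituting \eqref{eq:lambda-k} for $\lambda^{i+1}_j - \lambda^i_j$ yields the first line of \eqref{eq:obj-k}. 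For the inequality, I observe $\lambda^{i+1}_j$ and $\lambda^i_j$ have the same sign, so $|\lambda^{i+1}_j(\lambda^{i+1}_j + \lambda^i_j)| = |\lambda^{i+1}_j|\,(|\lambda^{i+1}_j| + |\lambda^i_j|) \ge |\lambda^{i+1}_j|^2 \ge \kappa^2$; the last bound holds because the truncation step of Algorithm~\ref{algo} guarantees $|\lambda^{i+1}_j| = |((U^{(s+1)})^\tp V^{(s+1)})_{jj}|$-type quantities stay $\ge \kappa$ for surviving columns (for $i+1 = s+1$ this is the truncation condition directly; for $i+1 > s+1$ use the monotonicity $|\lambda^{i+1}_j| \ge |\lambda^{s+1}_j| \ge \kappa$ established in the first paragraph). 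Combining, $f(U_{i+1}) - f(U_i) \ge \tfrac{1}{2}\sum_j 2\kappa^2 \cdot \tfrac{1}{2}\|\mathbf u^{(i+1)}_j - \overline{\mathbf u}^{(i+1)}_j\|^2$... — more carefully, $\tfrac12 |\lambda^{i+1}_j(\lambda^{i+1}_j+\lambda^i_j)| \ge \tfrac12 \kappa^2$ is not quite what is needed; instead I bound $\tfrac12|\lambda^{i+1}_j(\lambda^{i+1}_j+\lambda^i_j)| \ge \kappa^2$ would require $|\lambda^{i+1}_j(\lambda^{i+1}_j+\lambda^i_j)| \ge 2\kappa^2$, which follows from $|\lambda^{i+1}_j|\ge \kappa$ and $|\lambda^{i+1}_j+\lambda^i_j| = |\lambda^{i+1}_j| + |\lambda^i_j| \ge |\lambda^{i+1}_j| \ge \kappa$ only if... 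Actually $|\lambda^{i+1}_j|(|\lambda^{i+1}_j|+|\lambda^i_j|) \ge |\lambda^{i+1}_j|^2 \ge \kappa^2$, so $\tfrac12|\lambda^{i+1}_j(\lambda^{i+1}_j+\lambda^i_j)| \ge \tfrac12\kappa^2$; summing, $f(U_{i+1})-f(U_i) \ge \tfrac12\kappa^2 \sum_j \|\mathbf u^{(i+1)}_j - \overline{\mathbf u}^{(i+1)}_j\|^2$ which gives $\ge \kappa^2\|U^{(i+1)}-\overline U^{(i+1)}\|_F^2$ once one notes — hmm, the factor $\tfrac12$. This factor-of-two bookkeeping is the one routine point I would double-check against the paper's exact truncation threshold; the main obstacle is not conceptual but making sure the constant in the truncation guarantee is used with the right power of $\kappa$ so that the final bound $\ge \kappa^2\|U^{(i+1)}-\overline U^{(i+1)}\|_F^2$ comes out cleanly (likely the truncation threshold $\kappa$ is set so that surviving $|\lambda|$'s are $\ge \sqrt{2}\,\kappa$, or the stated inequality absorbs the $\tfrac12$ via $|\lambda^{i+1}_j+\lambda^i_j|\ge 2\kappa - \varepsilon$; I would reconcile this with the precise statement in Algorithm~\ref{algo}).
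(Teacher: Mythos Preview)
Your approach is essentially the same as the paper's: both derive \eqref{eq:lambda-k} by expressing $\lambda^{i+1}_j$ and $\lambda^i_j$ as inner products of $\mathbf v^{(i+1)}_j=\mathcal A\tau_{i+1}(\mathbf x^{i+1}_j)$ with $\mathbf u^{(i+1)}_j$ and $\overline{\mathbf u}^{(i+1)}_j$ respectively, and then use that both are unit vectors. Your computation of $\langle \mathbf u^{(i+1)}_j,\overline{\mathbf u}^{(i+1)}_j\rangle=|\lambda^i_j|/|\lambda^{i+1}_j|$ is just a reorganization of the paper's chain $\lambda^{i+1}_j-\lambda^i_j=\sgn(\lambda^i_j)|\lambda^{i+1}_j|\langle \mathbf u^{(i+1)}_j,\mathbf u^{(i+1)}_j-\overline{\mathbf u}^{(i+1)}_j\rangle$; both are correct.

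Your factor-of-two confusion at the end has a clean resolution you missed: the truncation threshold is applied to $\bigl|\bigl((U^{(s)})^\tp V^{(s)}\bigr)_{jj}\bigr|=|\lambda^{s}_j|$, so after truncation $|\lambda^{s}_j|\ge\kappa$, and by the monotonicity you already established, $|\lambda^{i}_j|\ge\kappa$ for \emph{every} $i\ge s$. In particular, for $s\le i\le k-1$ \emph{both} $|\lambda^{i}_j|\ge\kappa$ and $|\lambda^{i+1}_j|\ge\kappa$. Since the two have the same sign, $|\lambda^{i+1}_j+\lambda^i_j|=|\lambda^{i+1}_j|+|\lambda^i_j|\ge 2\kappa$, whence
\[
\tfrac12\,|\lambda^{i+1}_j(\lambda^{i+1}_j+\lambda^i_j)|\;\ge\;\tfrac12\cdot\kappa\cdot 2\kappa\;=\;\kappa^2,
\]
which is exactly the constant in \eqref{eq:obj-k}. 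You only invoked the lower bound on $|\lambda^{i+1}_j|$; once you also use it on $|\lambda^i_j|$ the bookkeeping is clean and no adjustment to the truncation threshold is needed. (The parenthetical remark about the case $\lambda^i_j=0$ is moot: after truncation this case cannot occur.)
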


\begin{proof}
We first recall from \eqref{eq:lambda+v} and \eqref{eq:segre and partial segre} that
\[
\lambda^i_{j}=\mathcal A\tau(\mathbf x^{i+1}_{j}) = \langle \mathcal A\tau_{i}(\mathbf x^{i}_{j}),\mathbf u^{(i)}_{j}\rangle = \langle \mathcal{A} \tau_{i+1}(\mathbf x^{i+1}_{j}),\overline{\mathbf u}^{(i+1)}_{j}\rangle,\quad s \le i \le k-1,\ 1\le j \le r.
\]
This implies
\begin{align*}
\lambda^{i+1}_{j}-\lambda^i_{j}&=\langle \mathcal A\tau_{i+1}(\mathbf x^{i+1}_{j}),\mathbf u^{(i+1)}_{j}\rangle- \langle \mathcal A\tau_{i}(\mathbf x^{i}_{j}),\mathbf u^{(i)}_{j}\rangle\\
&=\langle \mathcal A\tau_{i+1}(\mathbf x^{i+1}_{j}),\mathbf u^{(i+1)}_{j}\rangle- \langle \mathcal{A} \tau_{i+1}(\mathbf x^{i+1}_{j}),\overline{\mathbf u}^{(i+1)}_{j}\rangle\\
&=\sgn(\lambda^i_{j})|\lambda^{i+1}_{j}|\langle \mathbf u^{(i+1)}_{j},\mathbf u^{(i+1)}_{j}-{\overline{\mathbf u}^{(i+1)}_{j}}\rangle\\
&=\sgn(\lambda^i_{j})\frac{|\lambda^{i+1}_{j}|}{2}\|\mathbf u^{(i+1)}_{j}-\overline{\mathbf u}^{(i+1)}_{j}\|^2,
\end{align*}
which completes the proof of \eqref{eq:lambda-k}. Here the third equality follows from the relation
\[
|\lambda^{i+1}_{j}|=|\langle \mathcal A\tau_{i+1}(\mathbf x^{i+1}_{j}),\mathbf u^{(i+1)}_{j}\rangle|=\|\mathcal A\tau_{i+1}(\mathbf x^{i+1}_{j})\|
\]
derived from \eqref{eq:lambda+v} and \eqref{eq:update} and the last equality is obtained by observing that both $\mathbf u^{(i+1)}_{j}$ and $\overline{\mathbf u}^{(i+1)}_{j}$ are unit vectors. 

According to the truncation step of Algorithm~\ref{algo}, it is clear that 
\[
\lambda^s_{j}\neq 0,\quad  j=1,\dots,r.
\]
Together with \eqref{eq:lambda-k}, this implies that
\begin{equation}\label{eq:sign}
\sgn(\lambda^{i+1}_{j})=\sgn(\lambda^i_{j}),\quad s\le i \le k-1,\quad 1\le j \le r,
\end{equation}
and therefore either
\begin{equation}\label{eq:monotone-k1}
\kappa\leq \lambda^{s+1}_{j}\leq \lambda^{s+2}_{j}\leq\dots\leq \lambda^{k}_{j},
\end{equation}
or 
\begin{equation}\label{eq:monotone-k2}
-\kappa\geq \lambda^{s+1}_{j}\geq \lambda^{s+2}_{j}\geq\dots\geq \lambda^{k}_{j}.
\end{equation}

Since the sequence $\{\lambda^i_j\}_{i\geq s}$ is monotone inside the iteration for all $1\le j \le r$, we have for all $s \le i \le k - 1 $ that
\begin{align*}
f( U_{i+1})-f( U_{i})&=\sum_{j=1}^r(\lambda^{i+1}_{j})^2-\sum_{j=1}^r(\lambda^{i}_{j})^2\\
&{=\frac{1}{2}\sum_{j=1}^r(\lambda^{i+1}_{j}+\lambda^{i}_{j}) \sgn(\lambda^i_{j}) |\lambda^{i+1}_{j}|  \|\mathbf u^{(i+1)}_{j}-\overline{\mathbf u}^{(i+1)}_{j}\|^2}\\
&{=\frac{1}{2}\sum_{j=1}^r (\lambda^{i+1}_{j}+\lambda^{i}_{j})  \sgn(\lambda^{i+1}_{j}+\lambda^i_{j})|\lambda^{i+1}_{j}| \|\mathbf u^{(i+1)}_{j}-\overline{\mathbf u}^{(i+1)}_{j}\|^2}\\
&=\frac{1}{2}\sum_{j=1}^r|\lambda^{i+1}_{j}(\lambda^{i+1}_{j}+\lambda^{i}_{j})|\|\mathbf u^{(i+1)}_{j}-\overline{\mathbf u}^{(i+1)}_{j}\|^2\\
&\geq \kappa^2\|U^{(i+1)}-\overline{U}^{(i+1)}\|_F^2,
\end{align*}
{where the penultimate equality follows from \eqref{eq:sign} and the last inequality is obtained by \eqref{eq:monotone-k1} and \eqref{eq:monotone-k2}.}
\end{proof}

Next we analyse the iAPD updates.
\begin{lemma}[Monotonicity of {iAPD} updates]\label{lem:lambda-s}
If the current iteration in Algorithm~\ref{algo} is not a truncation iteration, then we have
\begin{equation}\label{eq:obj-s}
f( U_{i+1})-f( U_{i})\geq \frac{\epsilon}{2}\|U^{(i+1)}-\overline{U}^{(i+1)}\|_F^2,\quad 0 \le i \le s-1,
\end{equation}
where $\epsilon > 0$ is the proximal parameter chosen in Algorithm~\ref{algo}. Moreover, the conclusion holds true for $U^{(i)}$'s before the truncation step of Algorithm~\ref{algo} even if the current iteration is a truncation iteration.
\end{lemma}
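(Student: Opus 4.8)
The plan is to analyse a single iAPD update $U^{(i)}_{[p-1]} \to U^{(i)}_{[p]}$ for $0\le i \le s-1$ (with the subscript $_{[p]}$ suppressed as agreed) and to exploit the variational characterisation of the polar decomposition in Lemma~\ref{lem:polar} together with the global error bound of Theorem~\ref{thm:error}. Recall that in the iAPD step the new factor $U^{(i+1)}$ is a polar orthonormal factor of $V^{(i+1)}\Lambda^{(i+1)} + \alpha \overline{U}^{(i+1)}$, where $\alpha = \epsilon$ if a proximal correction is triggered and $\alpha = 0$ otherwise, and $\Lambda^{(i+1)} = \operatorname{diag}_2(\lambda^i_1,\dots,\lambda^i_r)$ with $\lambda^i_j = \langle \mathcal A\tau_{i+1}(\mathbf x^{i+1}_j), \overline{\mathbf u}^{(i+1)}_j\rangle$. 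First I would rewrite the difference $f(U_{i+1}) - f(U_i)$: since only the $(i+1)$-th factor changes and using \eqref{eq:segre and partial segre}, one has $f(U_i) = \sum_j (\lambda^i_j)^2 = \langle V^{(i+1)}\Lambda^{(i+1)}, \overline{U}^{(i+1)}\rangle$ and $f(U_{i+1}) = \|V^{(i+1)}\Lambda^{(i+1)}\|$-type quantities expressed through $U^{(i+1)}$; the key algebraic point is that for any feasible $Q\in\V(r,n_{i+1})$ the quantity $\|V^{(i+1)}\Lambda^{(i+1)} + \alpha\overline U^{(i+1)} - \alpha Q\|_F^2 = \|V^{(i+1)}\Lambda^{(i+1)}\|_F^2 + \alpha^2\|\overline U^{(i+1)} - Q\|_F^2 - 2\alpha^2 + 2\langle V^{(i+1)}\Lambda^{(i+1)}, \overline U^{(i+1)} - Q\rangle$ expands cleanly because columns of $\overline U^{(i+1)}$ and $Q$ are unit vectors.

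Second, I would invoke Theorem~\ref{thm:error} with $B = [\,V^{(i+1)}\Lambda^{(i+1)}\ \ \sqrt{\alpha}\,\overline U^{(i+1)}\,]$ suitably arranged (so that $BC^\tp = V^{(i+1)}\Lambda^{(i+1)} + \alpha \overline U^{(i+1)}$ and $\sigma_{\min}(BC^\tp) \ge \alpha$ when $\alpha = \epsilon$, because the proximal correction is precisely triggered when the relevant smallest singular value drops below $\epsilon$, so adding $\epsilon\overline U^{(i+1)}$ lifts it above $\epsilon$): this yields
\[
\|B - W C\|_F^2 - \|B - U^{(i+1)} C\|_F^2 \ge \sigma_{\min}\bigl(V^{(i+1)}\Lambda^{(i+1)} + \alpha\overline U^{(i+1)}\bigr)\,\|W - \overline U^{(i+1)}\|_F^2 \ge \epsilon\,\|U^{(i+1)} - \overline U^{(i+1)}\|_F^2,
\]
where $W = U^{(i+1)}$ is the computed polar factor (optimal by \eqref{eq:polar-optimal}) and we test against $Q = \overline U^{(i+1)}$. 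Translating the $C$-weighted Frobenius differences back into $f$ via the expansion of the previous paragraph converts the left-hand side into $2\bigl(f(U_{i+1}) - f(U_i)\bigr)$ plus the $\alpha^2\|\overline U^{(i+1)} - \overline U^{(i+1)}\|^2 = 0$ term for the test point, giving exactly $f(U_{i+1}) - f(U_i) \ge \tfrac{\epsilon}{2}\|U^{(i+1)} - \overline U^{(i+1)}\|_F^2$. When $\alpha = 0$ the same argument with the bare polar decomposition $V^{(i+1)}\Lambda^{(i+1)} = U^{(i+1)}S^{(i+1)}$ and the error bound still gives monotonicity with constant $\sigma_{\min}(V^{(i+1)}\Lambda^{(i+1)}) = \lambda_{\min}(S^{(i+1)}) \ge \epsilon$, by the very condition under which the no-correction branch is taken; so in both branches the bound with constant $\epsilon/2$ holds.

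Third, for the last sentence of the statement, I note that the truncation step only deletes columns $j\in J$ for which $|((U^{(s)})^\tp V^{(s)})_{jj}| < \kappa$; this deletion commutes with the iAPD updates of $U^{(1)},\dots,U^{(s)}$ performed \emph{before} the truncation is applied, in the sense that those updated matrices (restricted to the surviving columns) are still polar factors of the correspondingly restricted target matrices, because the polar decomposition of a matrix restricts columnwise-compatibly to a diagonal-$\Lambda$ block structure. Hence the inequality \eqref{eq:obj-s}, applied to the pre-truncation iterates, is unaffected by whether or not the current iteration will later turn out to be a truncation iteration.

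\textbf{Main obstacle.} The delicate point is the bookkeeping that ensures $\sigma_{\min}$ of the perturbed matrix is genuinely bounded below by $\epsilon$ in the correction branch and by the detected $\lambda_{\min}(S^{(i)})\ge\epsilon$ in the non-correction branch, i.e.\ reconciling the algorithm's branching condition $\sigma^{(i)}_{r,[p]} = \lambda_{\min}(S^{(i)}_{[p]}) < \epsilon$ with the hypotheses of Theorem~\ref{thm:error}, and checking that after adding $\epsilon\overline U^{(i)}_{[p-1]}$ one indeed has $\sigma_{\min}\bigl(V^{(i)}\Lambda^{(i)} + \epsilon\overline U^{(i)}\bigr)\ge\epsilon$ (this uses $\|\overline U^{(i)}\| $ orthonormal and a singular-value perturbation / Weyl-type estimate, together with the fact that $V^{(i)}\Lambda^{(i)}$ and $\overline U^{(i)}$ have aligned column supports). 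The remaining manipulations — expanding the $C$-weighted Frobenius norms and identifying the difference with $2(f(U_{i+1})-f(U_i))$ — are routine once the identity $\lambda^i_j = \langle V^{(i+1)}\Lambda^{(i+1)} e_j, \overline{\mathbf u}^{(i+1)}_j\rangle$ and the unit-norm property of all columns involved are in hand.
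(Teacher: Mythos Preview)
There is a genuine gap in your argument. The identity you write, that the error-bound difference ``converts \dots\ into $2\bigl(f(U_{i+1})-f(U_i)\bigr)$'', is incorrect. From $\lambda^i_j=\langle\mathbf v^{(i+1)}_j,\overline{\mathbf u}^{(i+1)}_j\rangle$ and $\lambda^{i+1}_j=\langle\mathbf v^{(i+1)}_j,\mathbf u^{(i+1)}_j\rangle$ one gets
\[
\langle V^{(i+1)}\Lambda^{(i+1)},\,U^{(i+1)}-\overline U^{(i+1)}\rangle
=\sum_{j=1}^r\lambda^i_j\bigl(\lambda^{i+1}_j-\lambda^i_j\bigr),
\]
which is \emph{not} $f(U_{i+1})-f(U_i)=\sum_j\bigl((\lambda^{i+1}_j)^2-(\lambda^i_j)^2\bigr)$. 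The polar/error-bound step (Theorem~\ref{thm:error} or the variational characterisation) bounds only this first piece. The paper splits
\[
f(U_{i+1})-f(U_i)=\sum_{j}\lambda^i_j(\lambda^{i+1}_j-\lambda^i_j)+\sum_{j}\lambda^{i+1}_j(\lambda^{i+1}_j-\lambda^i_j),
\]
bounds the first summand by $\tfrac{\epsilon}{2}\|U^{(i+1)}-\overline U^{(i+1)}\|_F^2$ exactly as you do, and then separately shows the second summand is nonnegative. That second step is not automatic: it amounts to $\sum_j(\lambda^{i+1}_j)^2\ge\sum_j\lambda^i_j\lambda^{i+1}_j$, which the paper obtains from the first-summand inequality $\sum_j(\lambda^i_j)^2\le\sum_j\lambda^i_j\lambda^{i+1}_j$ together with Cauchy--Schwarz \emph{and} the strict positivity $\sum_j(\lambda^i_j)^2>0$ along the entire run. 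The latter requires an induction on the iteration index $p$, started from $f(U_{[0]})>0$ and using that the total objective loss from all truncations is strictly less than $f(U_{[0]})$. Your proposal omits this entire line of reasoning.

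A secondary issue: in the proximal branch you try to feed $A=V^{(i+1)}\Lambda^{(i+1)}+\epsilon\overline U^{(i+1)}$ into Theorem~\ref{thm:error} and claim $\sigma_{\min}(A)\ge\epsilon$ via a Weyl-type perturbation. That bound is not available in general (Weyl gives $\sigma_{\min}(A)\ge\epsilon-\|V^{(i+1)}\Lambda^{(i+1)}\|_2$, which can be negative), and ``aligned column supports'' does not rescue it. The paper avoids this entirely: since $U^{(i+1)}$ maximises $\langle V^{(i+1)}\Lambda^{(i+1)}+\epsilon\overline U^{(i+1)},\,H\rangle$ over $H\in\V(r,n_{i+1})$ by Lemma~\ref{lem:polar}, comparing with $H=\overline U^{(i+1)}$ directly yields
\[
\langle V^{(i+1)}\Lambda^{(i+1)},\,U^{(i+1)}\rangle-\langle V^{(i+1)}\Lambda^{(i+1)},\,\overline U^{(i+1)}\rangle
\ge \epsilon\bigl(r-\langle U^{(i+1)},\overline U^{(i+1)}\rangle\bigr)
=\tfrac{\epsilon}{2}\|U^{(i+1)}-\overline U^{(i+1)}\|_F^2,
\]
with no singular-value estimate needed. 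For the truncation clause, the point is simpler than you make it: truncation is applied \emph{after} all iAPD updates $U^{(1)},\dots,U^{(s)}$ in a given iteration, so \eqref{eq:obj-s} for the pre-truncation matrices follows verbatim from the non-truncation argument; no ``columnwise restriction'' of polar decompositions is needed.
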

\begin{proof}
We notice that the only difference between a truncation iteration and an iteration without truncation is the occurrence of the truncation step. This difference is vacuous before the truncation step. Thus it suffices to prove the first half of the statement.

Let $r$ be the number of columns of $U^{(i)}$'s.
For $i=0,\dots,s-1$, we have
\begin{align}
f( U_{i+1})-f( U_{i})&=\sum_{j=1}^r(\lambda^{i+1}_{j})^2-\sum_{j=1}^r(\lambda^{i}_{j})^2\nonumber\\
&=\sum_{j=1}^r(\lambda^{i+1}_{j}+\lambda^{i}_{j})(\lambda^{i+1}_{j}-\lambda^{i}_{j})\nonumber\\
&=\sum_{j=1}^r\lambda^{i}_{j}(\lambda^{i+1}_{j}-\lambda^{i}_{j}) + \sum_{j=1}^r\lambda^{i+1}_{j}(\lambda^{i+1}_{j}-\lambda^{i}_{j}).\label{eq:obj-k-exp}
\end{align}
By \eqref{eq:block-x}--\eqref{eq:lambda-notation}, we observe that
\begin{align*}
((U^{(i+1)})^\tp V^{(i+1)}\Lambda^{(i+1)})_{jj}& =(\mathbf u^{(i+1)}_{j})^\tp\mathbf v^{(i+1)}_{j}\lambda^{i}_{j}\\
&=((\mathbf u^{(i+1)}_{j})^\tp\mathcal A\tau_{i+1}\mathbf x^{i+1}_{j})\lambda^{i}_{j}\\
&=\mathcal A\tau(\mathbf x^{i+2}_{j})\lambda^{i}_{j}\\
&=\lambda^{i+1}_{j}\lambda^{i}_{j},
\end{align*}
and similarly $((\overline{U}^{(i+1)})^\tp V^{(i+1)}\Lambda^{(i+1)})_{jj} = \lambda^{i}_{j}\lambda^{i}_{j}$.

{We analyze the first summand in the last line of \eqref{eq:obj-k-exp} by considering the following two cases:}
\begin{enumerate}[label=(\roman*)]
\item\label{item:proof of 4.2} If $\sigma_{\min}(S^{(i+1)}) \geq \epsilon$, then there is no proximal step in Algorithm~\ref{algo}. Here $S^{(i+1)}$ is the positive semidefinite matrix in the polar decomposition $V^{(i+1)}\Lambda^{(i+1)} = U^{(i+1)} S^{(i+1)}$.
By Theorem~\ref{thm:error}, we obtain
\begin{align}
\sum_{j=1}^r\lambda^{i}_{j}(\lambda^{i+1}_{j}-\lambda^{i}_{j})
&=
\langle U^{(i+1)} ,  V^{(i+1)}\Lambda^{(i+1)} \rangle
- \langle \overline{U}^{(i+1)} , V^{(i+1)}\Lambda^{(i+1)} \rangle \nonumber \\
&=\frac{1}{2}\big\|(U^{(i+1)}-\overline{U}^{(i+1)})\sqrt{S^{(i+1)}}\big\|_F^2\nonumber\\
&\geq \frac{\epsilon}{2}\|U^{(i+1)}-\overline{U}^{(i+1)}\|_F^2.\label{eq:bound-ineq}
\end{align}

\item If $\sigma_{\min}(S^{(i+1)}) <\epsilon$, we consider the following matrix optimization problem
\begin{equation}\label{eq:proximal}
\begin{array}{rl}
\max&\langle V^{(i+1)}\Lambda^{(i+1)},H\rangle-\frac{\epsilon}{2}\|H-\overline{U}^{(i+1)}\|_F^2\\
\text{s.t.}& H\in V(r,n_{i+1}).
\end{array}
\end{equation}
{Since $H,\overline{U}^{(i+1)} \in {V(r,n_{i+1})}$, we must have}
\[
\frac{\epsilon}{2}\|H- \overline{U}^{(i+1)}\|_F^2=\epsilon r-\epsilon\langle \overline{U}^{(i+1)},H\rangle.
\]
Thus, by Lemma~\ref{lem:polar}, a global maximizer of \eqref{eq:proximal} is given by an orthonormal factor in the polar decomposition of $V^{(i+1)}\Lambda^{(i+1)}+\epsilon \overline{U}^{(i+1)}$. According to the proximal step in Algorithm~\ref{algo}, $U^{(i+1)}$ is such a polar orthonormal factor hence it is a global maximizer of \eqref{eq:proximal}. Therefore we have
\[
\langle V^{(i+1)}\Lambda^{(i+1)},U^{(i+1)}\rangle-\frac{\epsilon}{2}\|U^{(i+1)}
-\overline{U}^{(i+1)}\|_F^2\geq \langle V^{(i+1)}\Lambda^{(i+1)},\overline{U}^{(i+1)}\rangle,
\]
and the inequality \eqref{eq:bound-ineq} also holds in this case.
\end{enumerate}

In the following, we verify the nonnegativity of the second summand in the last line of \eqref{eq:obj-k-exp}, by which the proof is complete. According to \eqref{eq:bound-ineq}, we have 
\begin{equation}\label{eq:byproduct}
0\leq \sum_{j=1}^r\lambda^{i}_{j}(\lambda^{i+1}_{j}-\lambda^{i}_{j})
=\sum_{j=1}^r\lambda^{i+1}_{j}\lambda^{i}_{j}-\sum_{j=1}^r(\lambda^{i}_{j})^2,
\end{equation}
which together with the Cauchy-Schwartz inequality implies that
\begin{equation}\label{eq:caychy}
\big(\sum_{j=1}^r(\lambda^{i}_{j})^2\big)^2\leq \big(\sum_{j=1}^r\lambda^{i}_{j}\lambda^{i+1}_{j} \big)^2\leq\sum_{j=1}^r(\lambda^{i}_{j})^2\sum_{j=1}^r(\lambda^{i+1}_{j})^2.
\end{equation}

Now suppose that the iteration $\overline{U}=U_{[0]}$ is the initialization of Algorithm~\ref{algo}.
Since $f(U_{[0]})>0$, we conclude that
$f(U_{0})=\sum_{j=1}^r(\lambda^{0}_{j})^2>0$ and hence $\sum_{j=1}^r\lambda^{1}_{j}\lambda^{0}_{j}>0$ by \eqref{eq:byproduct}. Thus, we conclude that
\begin{equation}\label{eqn:proof of 4.2-1}
\sum_{j=1}^r\lambda^{1}_{j}\lambda^{0}_{j} \leq \sum_{j=1}^r(\lambda^{1}_{j})^2\frac{\sum_{j=1}^r(\lambda^{0}_{j})^2}
{\sum_{j=1}^r\lambda^{1}_{j}\lambda^{0}_{j}}\leq \sum_{j=1}^r(\lambda^{1}_{j})^2,
\end{equation}
where the first inequality follows from \eqref{eq:caychy} and the second from \eqref{eq:byproduct}. {Combining \eqref{eqn:proof of 4.2-1} with \eqref{eq:obj-k-exp} and \eqref{eq:bound-ineq}, we may obtain \eqref{eq:obj-s} for $i=0$ at the first iteration $p=1$.}

{On the one hand, from \eqref{eq:caychy} we obtain
\[
0 \le \sum_{j=1}^r (\lambda_{j}^i)^2 (\sum_{j=1}^r (\lambda_{j}^{i+1})^2 - \sum_{j=1}^r (\lambda_{j}^i)^2).
\]
Since $\sum_{j=1}^r (\lambda_{j}^i)^2 > 0$ if there is no truncation, } {we must have
\[
0 \le \sum_{j=1}^r (\lambda_{j}^{i+1})^2 - \sum_{j=1}^r (\lambda_{j}^i)^2 = f(U_{i+1})-f(U_{i}),
\]
i.e., the objective function $f$ is monotonically increasing} during the iAPD-ALS iteration as long as there is no truncation. On the other hand, {by Lemma~\ref{lem:truncation-obj},} there are at most $r$ truncations and the total loss of $f$ caused by truncations is at most $r\kappa^2<f(U_{[0]})$.
Therefore, $f$ is always positive and  according to \eqref{eq:byproduct}, we may conclude that $\sum_{j=1}^r\lambda^{i+1}_{j}\lambda^{i}_{j}>0$ along iterations. By induction on $p$, we obtain
\[
\sum_{j=1}^r\lambda^{i+1}_{j}\lambda^{i}_{j} \leq \sum_{j=1}^r(\lambda^{i+1}_{j})^2\frac{\sum_{j=1}^r(\lambda^{i}_{j})^2}
{\sum_{j=1}^r\lambda^{i+1}_{j}\lambda^{i}_{j}}\leq \sum_{j=1}^r(\lambda^{i+1}_{j})^2,
\]
which together with \eqref{eq:obj-k-exp} and \eqref{eq:bound-ineq}, implies \eqref{eq:obj-s} holds for any iteration.
\end{proof}

Combining Lemmas~\ref{lem:lambda-k} and \ref{lem:lambda-s}, we are able to derive the following estimate of the increment of the objective function during each update in Algorithm~\ref{algo}.
\begin{proposition}[Sufficient Increase]\label{prop:subfficient}
If the current iteration in Algorithm~\ref{algo} is not a truncation iteration, then we have
\begin{equation}\label{eq:obj-suf}
f( U)-f( \overline{U})\geq \frac{\min\{\epsilon,2\kappa^2\}}{2}\| U- \overline{U}\|_F^2.
\end{equation}
\end{proposition}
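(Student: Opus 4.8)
The plan is to concatenate the per-factor estimates from Lemmas~\ref{lem:lambda-k} and \ref{lem:lambda-s} over the full sweep $i=0,1,\dots,k-1$ of the $p$-th iteration, using the telescoping decomposition furnished by the intermediate tuples $U_i$ defined in \eqref{eq:variable-u}. Recall that $U_0 = \overline{U}$ and $U_k = U$, so that
\[
f(U) - f(\overline{U}) = \sum_{i=0}^{k-1} \big( f(U_{i+1}) - f(U_i) \big).
\]
The sweep naturally splits into the iAPD block $i=0,\dots,s-1$ (updating the orthonormal factors $U^{(1)},\dots,U^{(s)}$) and the ALS block $i=s,\dots,k-1$ (updating $U^{(s+1)},\dots,U^{(k)}$). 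Since the current iteration is assumed not to be a truncation iteration, Lemma~\ref{lem:lambda-s} applies to each term in the first block, giving $f(U_{i+1}) - f(U_i) \ge \frac{\epsilon}{2}\|U^{(i+1)} - \overline{U}^{(i+1)}\|_F^2$ for $0 \le i \le s-1$, and Lemma~\ref{lem:lambda-k} applies to each term in the second block, giving $f(U_{i+1}) - f(U_i) \ge \kappa^2 \|U^{(i+1)} - \overline{U}^{(i+1)}\|_F^2$ for $s \le i \le k-1$.

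First I would bound each of the $k$ increments below by $\frac{\min\{\epsilon, 2\kappa^2\}}{2}\|U^{(i+1)} - \overline{U}^{(i+1)}\|_F^2$: for the iAPD terms this is immediate since $\frac{\epsilon}{2} \ge \frac{\min\{\epsilon,2\kappa^2\}}{2}$, and for the ALS terms it follows from $\kappa^2 = \frac{2\kappa^2}{2} \ge \frac{\min\{\epsilon,2\kappa^2\}}{2}$. Summing over $i=0,\dots,k-1$ and recalling the definition
\[
\|U - \overline{U}\|_F^2 = \sum_{i=1}^k \|U^{(i)} - \overline{U}^{(i)}\|_F^2
\]
then yields exactly \eqref{eq:obj-suf}. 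A small bookkeeping point I would make explicit: although the number of columns $r$ may change within the iteration when truncations have occurred in \emph{earlier} iterations, inside the present (non-truncation) iteration all matrices $U^{(i)}, \overline{U}^{(i)}$ have the same fixed number of columns, so the decomposition of the norm and the telescoping sum are well-defined; this is the hypothesis under which both lemmas were stated.

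There is no serious obstacle here — the statement is a routine aggregation of the two monotonicity lemmas — but the one point requiring a little care is making sure the telescoping sum is set up against the \emph{correct} intermediate iterates. One must note that in passing from $U_i$ to $U_{i+1}$ only the $(i+1)$-st factor matrix changes, and that $V^{(i+1)}$, $\Lambda^{(i+1)}$ and $\lambda^{i}_j$, $\lambda^{i+1}_j$ as used in Lemmas~\ref{lem:lambda-k} and \ref{lem:lambda-s} are precisely the quantities associated with this single-factor update (cf.\ \eqref{eq:block-x}--\eqref{eq:update}); this is what makes the two lemmas directly applicable term by term. Once that alignment is observed, the proof is three lines.

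\begin{proof}
By the definition \eqref{eq:variable-u} of the intermediate tuples $U_i$, we have $U_0 = \overline{U}$, $U_k = U$, and $U_i$ differs from $U_{i+1}$ only in the $(i+1)$-st factor matrix. Hence
\[
f(U) - f(\overline{U}) = \sum_{i=0}^{k-1} \big( f(U_{i+1}) - f(U_i) \big).
\]
Since the current iteration is not a truncation iteration, Lemma~\ref{lem:lambda-s} gives, for $0 \le i \le s-1$,
\[
f(U_{i+1}) - f(U_i) \ge \frac{\epsilon}{2} \|U^{(i+1)} - \overline{U}^{(i+1)}\|_F^2 \ge \frac{\min\{\epsilon, 2\kappa^2\}}{2} \|U^{(i+1)} - \overline{U}^{(i+1)}\|_F^2,
\]
while Lemma~\ref{lem:lambda-k} gives, for $s \le i \le k-1$,
\[
f(U_{i+1}) - f(U_i) \ge \kappa^2 \|U^{(i+1)} - \overline{U}^{(i+1)}\|_F^2 \ge \frac{\min\{\epsilon, 2\kappa^2\}}{2} \|U^{(i+1)} - \overline{U}^{(i+1)}\|_F^2.
\]
Summing these $k$ inequalities and using $\|U - \overline{U}\|_F^2 = \sum_{i=1}^k \|U^{(i)} - \overline{U}^{(i)}\|_F^2$, we obtain
\[
f(U) - f(\overline{U}) \ge \frac{\min\{\epsilon, 2\kappa^2\}}{2} \sum_{i=1}^k \|U^{(i)} - \overline{U}^{(i)}\|_F^2 = \frac{\min\{\epsilon, 2\kappa^2\}}{2} \|U - \overline{U}\|_F^2,
\]
which is \eqref{eq:obj-suf}.
\end{proof}
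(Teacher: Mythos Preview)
Your proof is correct and follows essentially the same approach as the paper: telescope $f(U)-f(\overline{U})$ along the intermediate tuples $U_i$, apply Lemma~\ref{lem:lambda-s} on the iAPD block $0\le i\le s-1$ and Lemma~\ref{lem:lambda-k} on the ALS block $s\le i\le k-1$, then replace the two constants $\epsilon/2$ and $\kappa^2$ by their common lower bound $\tfrac{1}{2}\min\{\epsilon,2\kappa^2\}$ and sum. The paper's proof is identical in structure, just slightly more compressed.
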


\begin{proof}
We have
\begin{align*}
f( U)-f( \overline{U})&=\sum_{i=0}^{k-1}\big(f( U_{i+1})-f( U_{i})\big)\\
&=\sum_{i=0}^{s-1}\big(f( U_{i+1})-f( U_{i})\big)+\sum_{i=s}^{k-1}\big(f( U_{i+1})-f( U_{i})\big)\\
&\geq \frac{\epsilon}{2}\sum_{i=0}^{s-1}\|U^{(i+1)}-\overline{U}^{(i+1)}\|_F^2
+\kappa^2\sum_{i=s}^{k-1}\|U^{(i+1)}-\overline{U}^{(i+1)}\|_F^2\\
&\geq\frac{\min\{\epsilon,2\kappa^2\}}{2}\| U- \overline{U}\|_F^2,
\end{align*}
where the penultimate inequality follows from Lemmas~\ref{lem:lambda-s} and \ref{lem:lambda-k}.
\end{proof}

\subsection{Global convergence}\label{sec:global}
In contrast to the preceding subsection which only concentrates on local properties of Algorithm~\ref{algo}, we study the global convergence of Algorithm~\ref{algo} in this subsection. This requires us to work with all iterations at the same time and therefore we adopt notations with subscript ``$_{[p]}$" introduced in \eqref{eq:whole-u}--\eqref{eq:updatep} to distinguish different iterations.

At each truncation iteration, the number of columns of the matrices in $ U_{[p]}$ is decreased strictly. The first issue we have to address is that the iteration $ U_{[p]}$ is not vacuous, i.e., the numbers of the columns of the matrices in $ U_{[p]}$ are positive and stable. We have the following proposition, which is recorded for later reference.
\begin{proposition}\label{prop:positive}
The number of columns of $U^{(i)}_{[p]}$'s will be stable at a positive integer $s\leq r$
and there exists some $N_0$ such that $f( U_{[p]})$ is nondecreasing for all $p\geq N_0$.
\end{proposition}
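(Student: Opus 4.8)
The plan is to track two monotone quantities simultaneously: the number of columns of the iteration matrices $U^{(i)}_{[p]}$, and the objective value $f(U_{[p]})$ on the iterations where no truncation happens. First I would observe that each truncation iteration strictly decreases the common column number of the matrices in $U_{[p]}$ by $|J_p| \ge 1$, and that this column number is a positive integer at every iteration: indeed, the truncation step of Algorithm~\ref{algo} only removes column $j$ when $|\lambda^{(s+1)}_{j,[p]}| < \kappa$, and since by the choice of the truncation parameter $\kappa \in (0,\sqrt{f(U_{[0]})/r})$ we have $r\kappa^2 < f(U_{[0]})$, not all columns can be removed without driving $f$ to $0$, contradicting the monotonicity we establish below. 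More simply: a sequence of positive integers that is non-increasing must eventually be constant, so there is some $N_0$ after which no truncation occurs and the column number stabilizes at some integer $s$ with $1 \le s \le r$.

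The key point that makes the above dichotomy work is that $f(U_{[p]})$ stays strictly positive throughout. This follows from Lemma~\ref{lem:truncation-obj}: there are at most $r$ truncations in the whole run of Algorithm~\ref{algo}, and each one decreases $f$ by at most $\kappa^2$, so the total decrease caused by truncations is at most $r\kappa^2 < f(U_{[0]})$; meanwhile, Proposition~\ref{prop:subfficient} (Sufficient Increase) shows that $f(U_{[p]}) - f(U_{[p-1]}) \ge \tfrac{\min\{\epsilon,2\kappa^2\}}{2}\|U_{[p]}-U_{[p-1]}\|_F^2 \ge 0$ on every non-truncation iteration. Combining these two facts, $f(U_{[p]}) \ge f(U_{[0]}) - r\kappa^2 > 0$ for all $p$, so in particular the column number can never drop to $0$, and it must stabilize at some $s \ge 1$.

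Finally, once $p \ge N_0$ so that no further truncation occurs, every iteration is a non-truncation iteration, and Proposition~\ref{prop:subfficient} applies directly to give $f(U_{[p]}) \ge f(U_{[p-1]})$, i.e.\ $f(U_{[p]})$ is nondecreasing for $p \ge N_0$. I expect the main (and only genuine) obstacle to be bookkeeping: being careful that the ``number of truncations'' counted in Lemma~\ref{lem:truncation-obj} is over the \emph{entire} iteration history, so that after finitely many iterations the column number is genuinely frozen; and making sure the positivity bound $f(U_{[p]}) \ge f(U_{[0]}) - r\kappa^2$ is invoked \emph{before} concluding $s \ge 1$, to avoid a circularity. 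Everything else is a routine combination of Lemma~\ref{lem:truncation-obj} and Proposition~\ref{prop:subfficient}.
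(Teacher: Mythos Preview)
Your proposal is correct and follows essentially the same approach as the paper's own proof: both combine Lemma~\ref{lem:truncation-obj} (at most $r$ truncations, total objective loss $\le r\kappa^2 < f(U_{[0]})$) with Proposition~\ref{prop:subfficient} (monotone increase on non-truncation iterations) to conclude that $f$ stays strictly positive, hence the column count never reaches $0$ and must stabilize, after which Proposition~\ref{prop:subfficient} gives the monotonicity of $f$. The only difference is that the paper sharpens your bound $f(U_{[p]}) > 0$ to $f(U_{[p]}) > r_2\kappa^2$ (where $r_2$ is the current column count), which explicitly exhibits a surviving column with $|\lambda_j| > \kappa$; your weaker bound already suffices since $r_2 = 0$ would force $f(U_{[p]}) = 0$.
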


\begin{proof}
We first recall from Lemma~\ref{lem:truncation-obj} that truncations occur at most $r$ times and hence the total number of removed columns of matrices in $ U_{[p]}$ is bounded above by $r$.
It follows from Lemmas~\ref{lem:truncation-obj}, \ref{lem:lambda-k} and \ref{lem:lambda-s} that if the $p$-th iteration is a truncation iteration and {the number of columns of the matrices in $ U_{[p]}$ is decreased from $r_1$ to some $r_2<r_1$ during this iteration\footnote{Here {$r_1$} is the number of columns of the matrices in $U_{[p-1]}$.}}, then we have
\[
f( U_{[p]})\geq f( U_{[p-1]})-(r_1-r_2)\kappa^2.
\]
After all such truncation iterations, the value of the objective function decreases at most $r\kappa^2$. Moreover, at each iteration without truncation, the value of the objective function is nondecreasing by Proposition~\ref{prop:subfficient}. We  also recall that $r\kappa^2<f( U_{[0]})$ by the choice of $\kappa$ in Algorithm~\ref{algo}. Hence, 
\[
f( U_{[p]})=\sum_{j=1}^{r_2} \lambda_j(U_{[p]})^2\geq f(U_{[0]})-(r-r_2)\kappa^2>r_2\kappa^2.
\]
In particular, this implies that there exists some $1\le j \le r_2$ such that $\lvert \lambda_j(U_{[p]}) \rvert > \kappa$ and thus the $j$-th column of $U_{[p]}$ cannot be removed by the truncation step. Therefore, the number of columns of the factor matrices in $ U_{[p]}$ is not zero for all $p$. Since there are only finitely many truncation iterations, one can find some $N_0$ such that the $p$-th iteration is not a truncation iteration for all $p\geq N_0$.
\end{proof}

Next let us consider the following optimization problem
\begin{equation}\label{eq:sota-unconstrained}
\min_{ U} h( U):=-f( U)+\sum_{i=1}^s\delta_{{\V(r,n_i)}}(U^{(i)})+\sum_{i=s+1}^k\delta_{\B(r,n_i)}(U^{(i)}),
\end{equation}
where $\delta_{\V(r,n_i)}(U^{(i)})$ and $\delta_{\B(r,n_i)}(U^{(i)})$ are respectively the indicator functions of $\V(r,n_i)$ and $\B(r,n_i)$ defined by \eqref{eq:indicator}. By \cite[Theorem~8.2 and Proposition~10.5]{RW-98}, we have
\begin{multline*}
\partial \bigg(\sum_{i=1}^s\delta_{\V(r,n_i)}(U^{(i)})+\sum_{i=s+1}^k\delta_{\B(r,n_i)}(U^{(i)})\bigg)\\
=\partial \delta_{\V(r,n_1)}(U^{(1)})\times\dots\times \partial \delta_{\V(r,n_s)}(U^{(s)})\times\partial \delta_{\B(r,n_{s+1})}(U^{(s+1)})\times\dots\times\partial \delta_{\B(r,n_{k})}(U^{(k)}).
\end{multline*}
Moreover, according to \cite[Exercise~8.8]{RW-98} and \eqref{eq:normal-sub} we also obtain that
\begin{equation}\label{eq:sub-sum}
\partial h( U)=-\nabla f( U)+ \partial \bigg(\sum_{i=1}^s\delta_{\V(r,n_i)}(U^{(i)})+\sum_{i=s+1}^k\delta_{\B(r,n_i)}(U^{(i)})\bigg).
\end{equation}
{For simplicity, we abbreviate $\nabla_{U^{(i)}} f( U)$ as $\nabla_i f( U)$}. As in the proof of Proposition~\ref{def:kkt}, we have
\begin{equation}\label{eq:partial gradient}
\nabla_i f( U)=2V^{(i)}\Lambda,\quad 1 \le i \le k.
\end{equation}
Thus, it follows from {Proposition}~\ref{def:kkt}, \eqref{eq:normal-stf}, \eqref{eq:normal-sub} and \eqref{eq:sub-sum} that critical points of $h$ are exactly KKT points of problem \eqref{eq:sota-max}.

It is straightforward to verify that $h$ is a KL function according to Lemma~\ref{lem:KL functions}, and \eqref{eq:sota-unconstrained} is an unconstrained reformulation of problem \eqref{eq:sota-max} in the sense that
\[
U_\ast \in \operatorname{argmax}_{U\in V_{\mathbf{n},r}} f(U) \iff U_\ast \in \operatorname{argmin}_{U} h(U).
\]
Moreover, we also have
\[
\max_{U\in V_{\mathbf{n},r}} f(U) = -\min_{U} h(U).
\]

\begin{lemma}[Subdifferential Bound]\label{lem:subdiff}
If the $(p+1)$-th iteration is not a truncation iteration, then there exists a subgradient $ W_{[p+1]}\in\partial h( U_{[p+1]})$ such that
\begin{equation}\label{eq:subdiff}
\| W_{[p+1]}\|_F\leq {2\sqrt{k}}(2r{\sqrt{k}}{\|\mathcal A\|^2} +\epsilon)\| U_{[p+1]}- U_{[p]}\|_F.
\end{equation}
\end{lemma}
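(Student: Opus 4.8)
The plan is to exhibit an explicit subgradient $W_{[p+1]} \in \partial h(U_{[p+1]})$ by reading off the optimality conditions of the ALS and iAPD updates, and then to bound its norm componentwise. By \eqref{eq:sub-sum} and \eqref{eq:partial gradient}, any element of $\partial h(U_{[p+1]})$ has the form $\bigl(-2V^{(1)}_{[p+1]}\Lambda^{(1)}_{\text{final}} + N^{(1)}, \dots\bigr)$ where the $i$-th entry for $1\le i\le s$ is $-\nabla_i f(U_{[p+1]}) + N^{(i)}$ with $N^{(i)} \in \operatorname{N}_{\V(r,n_i)}(U^{(i)}_{[p+1]}) = \{U^{(i)}_{[p+1]}X : X \in \SS^r\}$ by \eqref{eq:normal-stf}, and the $i$-th entry for $s+1\le i\le k$ is $-\nabla_i f(U_{[p+1]}) + N^{(i)}$ with $N^{(i)}$ in the normal space of $\B(r,n_i)$, i.e. $N^{(i)} = U^{(i)}_{[p+1]}\diag_2(\mathbf p)$ for some $\mathbf p \in \mathbb R^r$. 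So the first step is: for each block $i$, pick the $N^{(i)}$ that makes the residual small, using the update rule for that block.

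For the ALS blocks $s+1\le i\le k$, the update \eqref{eq:update} gives $\mathbf u^{(i)}_{j,[p+1]} = \sgn(\lambda^{i-1}_{j,[p+1]}) \mathbf v^{(i)}_{j,[p+1]}/\|\mathbf v^{(i)}_{j,[p+1]}\|$, which says exactly that $\mathbf v^{(i)}_{j,[p+1]}$ is a nonnegative multiple of $\mathbf u^{(i)}_{j,[p+1]}$; hence $V^{(i)}_{[p+1]}$ (built from the $\mathbf x^{i}_{j,[p+1]}$ data available when block $i$ was updated) is $U^{(i)}_{[p+1]}$ times a diagonal matrix, and the corresponding component of the gradient lies in the normal space, so I can take that component of $W_{[p+1]}$ to be $0$ — but one must be careful that $\nabla_i f(U_{[p+1]})$ uses the \emph{final} iterates $U_{[p+1]}$ in all blocks, whereas the update used the intermediate vector $\mathbf x^{i}_{j,[p+1]}$ which has $U^{(l)}_{[p]}$ in slots $l>i$. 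This discrepancy is what contributes the $\|U_{[p+1]} - U_{[p]}\|_F$ factor: I would write $\nabla_i f(U_{[p+1]}) = 2 V^{(i)}_{\text{final}}\Lambda_{\text{final}}$ and compare it with $2V^{(i)}_{[p+1]}\Lambda^{(i)}_{[p+1]}$, estimating the difference by a Lipschitz argument using that each $\mathbf u^{(l)}$ is a unit vector and $|\langle\mathcal A, \otimes \mathbf u_l\rangle| \le \|\mathcal A\|$ from \eqref{eq:unit}, and that the factor matrix entries are bounded. For the iAPD blocks $1\le i\le s$, the proximal polar decomposition \eqref{eq:algorithm1-polar} gives $U^{(i)}_{[p+1]}S^{(i)}_{[p+1]} = V^{(i)}_{[p+1]}\Lambda^{(i)}_{[p+1]} + \alpha_i U^{(i)}_{[p]}$ with $S^{(i)}_{[p+1]}$ symmetric and $\alpha_i \in \{0,\epsilon\}$; rearranging, $V^{(i)}_{[p+1]}\Lambda^{(i)}_{[p+1]} = U^{(i)}_{[p+1]}S^{(i)}_{[p+1]} - \alpha_i U^{(i)}_{[p]}$, so $\nabla_i f$ evaluated at the update data equals $U^{(i)}_{[p+1]}$ times a symmetric matrix (which I absorb into $N^{(i)}$) minus $2\alpha_i U^{(i)}_{[p]}$, and then $-2\alpha_i U^{(i)}_{[p]} = -2\alpha_i U^{(i)}_{[p+1]} + 2\alpha_i(U^{(i)}_{[p+1]} - U^{(i)}_{[p]})$ — the first term folds into $N^{(i)}$ again (since $U^{(i)}_{[p+1]}$ times a scalar is in the normal space), leaving a genuine residual $2\alpha_i(U^{(i)}_{[p+1]}-U^{(i)}_{[p]})$ of norm $\le 2\epsilon\|U^{(i)}_{[p+1]}-U^{(i)}_{[p]}\|_F$.

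Assembling: the $i$-th component of $W_{[p+1]}$ is (up to the sign conventions of $h$) the sum of (a) the gradient-evaluation discrepancy $2(V^{(i)}_{\text{final}}\Lambda_{\text{final}} - V^{(i)}_{[p+1]}\Lambda^{(i)}_{[p+1]})$, bounded by a constant times $\|U_{[p+1]}-U_{[p]}\|_F$ where the constant is polynomial in $\|\mathcal A\|$, $r$ and $\sqrt k$, plus (b) for $i\le s$ the proximal residual bounded by $2\epsilon\|U^{(i)}_{[p+1]}-U^{(i)}_{[p]}\|_F$, plus (c) for $i>s$ nothing. Summing over $i$ and using $\|U_{[p+1]}-U_{[p]}\|_F^2 = \sum_i \|U^{(i)}_{[p+1]}-U^{(i)}_{[p]}\|_F^2$ together with $\sum_i a_i \le \sqrt k \,(\sum_i a_i^2)^{1/2}$ gives the stated bound $\|W_{[p+1]}\|_F \le 2\sqrt k(2r\sqrt k\|\mathcal A\|^2 + \epsilon)\|U_{[p+1]}-U_{[p]}\|_F$. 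I expect the main obstacle to be the careful bookkeeping in step (a): tracking precisely which intermediate iterates appear in $\mathbf x^{i}_{j,[p+1]}$ versus $U_{[p+1]}$, using the factorization of the multilinear map $\mathcal A\tau_i$ to get a telescoping/Lipschitz estimate with the right constant $2r\sqrt k\|\mathcal A\|^2$, and making sure the symmetric matrices produced by the polar decompositions really do land in $\SS^r$ so that they are legitimately absorbed into the normal-space part — this is where \eqref{eq:normal-stf}, \eqref{eq:normal-sub}, Lemma~\ref{lem:grand}, and the boundedness estimate \eqref{eq:unit} all get used. The fact that the $(p+1)$-th iteration is not a truncation iteration is needed so that all blocks have the same number $r$ of columns throughout, so the componentwise comparison makes sense.
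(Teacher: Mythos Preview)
Your proposal is correct and follows essentially the same route as the paper's proof: construct $W^{(i)}_{[p+1]}$ blockwise by absorbing the symmetric (respectively diagonal) part of the intermediate gradient into the normal space via \eqref{eq:normal-stf} and \eqref{eq:opt-k}, leave the proximal residual $\alpha_i(U^{(i)}_{[p]}-U^{(i)}_{[p+1]})$ for the iAPD blocks, and bound the discrepancy $V^{(i)}\Lambda - V^{(i)}_{[p+1]}\Lambda^{(\cdot)}_{[p+1]}$ between the final and intermediate data by a telescoping Lipschitz estimate based on \eqref{eq:unit}. The only bookkeeping point to watch is that for the ALS blocks the relevant diagonal is $\Lambda^{(i+1)}_{[p+1]}=\diag_2(\lambda^i_{1,[p+1]},\dots,\lambda^i_{r,[p+1]})$ rather than $\Lambda^{(i)}_{[p+1]}$, since it is $\lambda^i_j$ (not $\lambda^{i-1}_j$) that makes $V^{(i)}_{[p+1]}\Lambda^{(i+1)}_{[p+1]}=U^{(i)}_{[p+1]}(\Lambda^{(i+1)}_{[p+1]})^2$ hold exactly; this does not affect the estimate.
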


\begin{proof}
The subdifferential set of subgradients of $h$ can be decomposed as:
\begin{multline}\label{eq:sub-block}
\partial h( U)=(-\nabla_1 f( U)+\partial\delta_{\V(r,n_1)}(U^{(1)}))\times\dots\times (-\nabla_s f( U)+\partial\delta_{\V(r,n_s)}(U^{(s)}))\\
\times (-\nabla_{s+1} f( U)+\partial\delta_{\B(r,n_{s+1})}(U^{(s+1)}))\times\dots\times (-\nabla_k f( U)+\partial\delta_{\B(r,n_s)}(U^{(s)})).
\end{multline}
Following notations in Algorithm~\ref{algo}, for each $1 \le j \le r$, we set
\begin{align}
\mathbf x_{j} &\coloneqq (\mathbf u^{(1)}_{j,[p+1]},\dots,\mathbf u^{(k)}_{j,[p+1]}), \\
\mathbf v^{(i)}_j &\coloneqq \mathcal A\tau_i(\mathbf x_{j}), \\ \lambda_j &\coloneqq \mathcal A\tau(\mathbf x_{j}), \\
V^{(i)} &\coloneqq \begin{bmatrix}\mathbf v^{(i)}_1&\dots&\mathbf v^{(i)}_r\end{bmatrix}, \label{eq:matrix-vi} \\
\Lambda &\coloneqq {\operatorname{diag}_2}(\lambda_1,\dots,\lambda_r). \label{eq:matrix-lambda}
\end{align}
where $\mathbf u^{(i)}_{j,[p+1]}$ is the {$j$-th} column of the matrix $U^{(i)}_{[p+1]}, 1 \le i \le k, 1 \le j \le r$.
In the following, we divide the proof into two parts.
\begin{enumerate}[label=(\roman*)]
\item\label{proof:sub-1} \if The situation for $i=1,\dots,s$.\fi For $i=1,\dots,s$, we have by \eqref{eq:algorithm1-polar} and \eqref{eq:proximal} that
\begin{equation}\label{eqn:proof of lemma 4.5-1}
V^{(i)}_{[p+1]}\Lambda^{(i)}_{[p+1]}+\alpha_{i,[p+1]} U^{(i)}_{[p]}=U^{(i)}_{[p+1]}S^{(i)}_{[p+1]},
\end{equation}
where $\alpha_{i,[p+1]} = \epsilon$ or $0$ depending on {whether or not} there is a proximal correction. {According to \eqref{eq:normal-stf}, \eqref{eq:normal-sub} and \eqref{eqn:proof of lemma 4.5-1}, we have
\[
-U^{(i)}_{[p+1]} \in \partial\delta_{\V(r,n_i)}(U^{(i)}_{[p+1]}), \quad
V^{(i)}_{[p+1]}\Lambda^{(i)}_{[p+1]}+\alpha_{i,[p+1]} U^{(i)}_{[p]} \in \partial\delta_{\V(r,n_i)}(U^{(i)}_{[p+1]}),
\]
which implies that $V^{(i)}_{[p+1]}\Lambda^{(i)}_{[p+1]}+\alpha_{i,[p+1]}\big(U^{(i)}_{[p]}-U^{(i)}_{[p+1]}\big)\in \partial\delta_{\V(r,n_i)}(U^{(i)}_{[p+1]})$. If we take
\begin{equation}\label{eq:w}
W^{(i)}_{[p+1]}:=-V^{(i)}\Lambda +V^{(i)}_{[p+1]}\Lambda^{(i)}_{[p+1]}+\alpha_{i,[p+1]}\big(U^{(i)}_{[p]}-U^{(i)}_{[p+1]}\big),
\end{equation}
then we have}
\[
W^{(i)}_{[p+1]}\in -V^{(i)}\Lambda +\partial\delta_{\V(r,n_i)}(U^{(i)}_{[p+1]}).
\]
On the other hand,
\small
\begin{align}
\|W^{(i)}_{[p+1]}\|_F
&=\|V^{(i)}\Lambda -V^{(i)}_{[p+1]}\Lambda^{(i)}_{[p+1]}-\alpha_{i,[p+1]}\big(U^{(i)}_{[p]}-U^{(i)}_{[p+1]}\big)\|_F\nonumber\\
&\leq\|V^{(i)}\Lambda -V^{(i)}_{[p+1]}\Lambda \|_F+\|V^{(i)}_{[p+1]}\Lambda -V^{(i)}_{[p+1]}\Lambda^{(i)}_{[p+1]}\|_F+\alpha_{i,[p+1]}\|U^{(i)}_{[p]}-U^{(i)}_{[p+1]}\|_F\nonumber\\
&\leq\|V^{(i)}-V^{(i)}_{[p+1]}\|_F\|\Lambda \|_F+\|V^{(i)}_{[p+1]}\|_F\|\Lambda -\Lambda^{(i)}_{[p+1]}\|_F+\alpha_{i,[p+1]}\|U^{(i)}_{[p]}-U^{(i)}_{[p+1]}\|_F\\
&\leq{\|\mathcal A\|}\|\Lambda \|_F\big(\sum_{j=1}^r\|\tau_i(\mathbf x_j)-\tau_i(\mathbf x^i_{j,[p+1]})\|\big)\nonumber\\
&\quad +\|V^{(i)}_{[p+1]}\|_F\|\mathcal A\|\big(\sum_{j=1}^r\|\tau(\mathbf x_j)-\tau(\mathbf x^i_{j,[p+1]})\|\big)+\alpha_{i,[p+1]}\|U^{(i)}_{[p]}-U^{(i)}_{[p+1]}\|_F\nonumber\\
&\leq \sqrt{r}\|\mathcal A\|^2\big(\sum_{j=1}^r\sum_{t=i+1}^{k}\|\mathbf u^{(t)}_{j,[p+1]}-\mathbf u^{(t)}_{j,[p]}\| \big)\nonumber\\
&\quad+\sqrt{r}\|\mathcal A\|^2\big(\sum_{j=1}^r\sum_{t=i}^{k}\|\mathbf u^{(t)}_{j,[p+1]}-\mathbf u^{(t)}_{j,[p]}\|\big)+\alpha_{i,[p+1]}\|U^{(i)}_{[p]}-U^{(i)}_{[p+1]}\|_F\nonumber\\
&{\leq 2\sqrt{r}\|\A\|^2\sum_{t=1}^k \left( \sum_{j=1}^r\big( \|\mathbf u^{(t)}_{j,[p+1]}-\mathbf u^{(t)}_{j,[p]}\|\big) \right)+\alpha_{i,[p+1]}\|U^{(i)}_{[p]}-U^{(i)}_{[p+1]}\|_F}\nonumber\\
&\leq {2r{\sqrt{k}}\|\mathcal A\|^2 \| U_{[p+1]}- U_{[p]}\|_F +\epsilon \|U^{(i)}_{[p]}-U^{(i)}_{[p+1]}\|_F},\label{eq:subdif-est}
\end{align}
\normalsize
where the third inequality follows from
\[
V^{(i)}-V^{(i)}_{[p+1]}=\begin{bmatrix}\mathcal A(\tau_i(\mathbf x_1)-\tau_i(\mathbf x^i_{1,[p+1]}))&\dots&\mathcal A(\tau_i(\mathbf x_r)-\tau_i(\mathbf x^i_{r,[p+1]}))\end{bmatrix},
\]
and a similar formula for $(\Lambda -\Lambda^{(i)}_{[p+1]})$, the fourth inequality is derived from
\[
|\mathcal A\tau(\mathbf x)|\leq \|\mathcal A\|
\]
for any vector $\mathbf x:=(\mathbf x_1,\dots,\mathbf x_k)$ with $\|\mathbf x_i\|=1$ for all $i=1,\dots,k$ and the last one follows from $\alpha \leq\epsilon$ and $\sum_{j=1}^r\|\mathbf u^{(t)}_{j,[p+1]}-\mathbf u^{(t)}_{j,[p]}\| \le \sqrt{r}\| U^{(t)}_{[p+1]}- U^{(t)}_{[p]}\|_F$ for all $t=1,\dots, k$.

\item\label{proof:sub-2} For $i=s,\dots,k$, recall from \eqref{eq:lambda-orth} that 
\begin{equation}\label{eq:lambda-als}
{\Lambda_{[p+1]}^{(i+1)} \coloneqq \diag_2(\lambda^i_{1,[p+1]},\dots,\lambda^i_{r,[p+1]})}.
\end{equation}
It follows from the alternating least square step in Algorithm~\ref{algo}, \eqref{eq:updatep} and \eqref{eq:lambda-k} that
\[
\mathbf{v}^{(i)}_{j,[p+1]}\lambda^i_{j,[p+1]}=\mathbf u^{(i)}_{j,[p+1]}(\lambda^i_{j,[p+1]})^2, \quad j=1,\dots,r,
\]
which can be written more compactly as 
\begin{equation}\label{eq:opt-k}
V^{(i)}_{[p+1]}\Lambda_{[p+1]}^{(i+1)} =U^{(i)}_{[p+1]}\big(\Lambda_{[p+1]}^{(i+1)} \big)^2.
\end{equation}
If we take
\begin{equation}\label{eq:w-k}
W^{(i)}_{[p+1]}:=-V^{(i)}\Lambda +V^{(i)}_{[p+1]}\Lambda_{[p+1]}^{(i+1)},
\end{equation}
then we have
\[
W^{(i)}_{[p+1]}\in -V^{(i)}\Lambda +\partial\delta_{\B(r,n_i)}(U^{(i)}_{[p+1]}).
\]
{By a similar argument as in \ref{proof:sub-1}, we have}
\begin{equation}\label{eq:subdif-est-als}
\|W^{(i)}_{[p+1]}\|_F=\|V^{(i)}\Lambda -V^{(i)}_{[p+1]}\Lambda_{[p+1]}^{(i+1)}\|_F\leq {2r\sqrt{k}}\|\mathcal A\|^2\| U_{[p+1]}- U_{[p]}\|_F.
\end{equation}
\end{enumerate}

Noticing that $2W_{[p+1]}\in \partial h( U_{[p+1]})$, we obtain \eqref{eq:subdiff} from \ref{proof:sub-1} and \ref{proof:sub-2}.
\end{proof}

The following is a classical result.
\begin{lemma}[Abstract Convergence]\cite{ABS-13}\label{lem:abs-conv}
Let $h : \mathbb R^n\rightarrow\mathbb R\cup\{\pm\infty\}$ be a proper lower semicontinuous function and {let $\{\x^{(k)}\}\subseteq \mathbb R^n$ be} a sequence satisfying the following conditions:
\begin{enumerate}[label=(\roman*)]
\item\label{lem:abs-conv:item1} there is a constant $\alpha>0$ such that
\[
h( \x^{(k)})-h( \x^{(k+1)})\ge \alpha\| \x^{(k+1)}- \x^{(k)}\|^2,
\]
\item\label{lem:abs-conv:item2} there is a constant $\beta>0$ and a {subgradient} $ \w^{(k+1)}\in \partial h( \x^{(k+1)})$ such that
\[
\| \w^{(k+1)}\|\leq \beta\| \x^{(k+1)}- \x^{(k)}\|,
\]
\item\label{lem:abs-conv:item3} there is a subsequence $\{ \x^{(k_i)}\}$ of $\{ \x^{(k)}\}$ and $ \x^*\in\mathbb R^n$ such that
\[
 \x^{(k_i)}\rightarrow \x^*\ \text{and }h( \x^{(k_i)})\rightarrow h( \x^*)\ \text{as }i\rightarrow\infty.
\]
\end{enumerate}
If $h$ has the Kurdyka-\L ojasiewicz property at the point $ \x^*$, then the whole sequence $\{ \x^{(k)}\}$ converges to $\x^*$, and $ \x^*$ is a critical point of $h$.
\end{lemma}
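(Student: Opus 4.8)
The plan is to reproduce the now-standard argument of Attouch--Bolte--Svaiter. First I would reduce to the non-stationary case. By hypothesis~\ref{lem:abs-conv:item1} the values $h(\x^{(k)})$ are nonincreasing, so together with~\ref{lem:abs-conv:item3} they converge to $h(\x^*)$ and satisfy $h(\x^{(k)}) \ge h(\x^*)$ for all $k$. If $h(\x^{(k_0)}) = h(\x^*)$ for some $k_0$, then~\ref{lem:abs-conv:item1} forces $\x^{(k)} = \x^{(k_0)}$ for every $k \ge k_0$ and the conclusion is immediate; hence I may assume $h(\x^{(k)}) > h(\x^*)$ for all $k$.

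Next I would invoke the KL property of $h$ at $\x^*$: let $\eta \in (0,+\infty]$, a neighbourhood $U$ of $\x^*$, and a concave $\varphi$ be as in the definition, and fix $\rho > 0$ with $B(\x^*,\rho) \subseteq U$. Writing $\Delta_k := \varphi(h(\x^{(k)}) - h(\x^*))$, the key one-step estimate is that whenever $\x^{(k)} \in U$ and $h(\x^*) < h(\x^{(k)}) < h(\x^*)+\eta$, concavity of $\varphi$, together with~\ref{lem:abs-conv:item1},~\ref{lem:abs-conv:item2} and the KL inequality~\eqref{eq:KL property}, gives
\begin{align*}
\Delta_k - \Delta_{k+1} &\ge \varphi'\!\big(h(\x^{(k)}) - h(\x^*)\big)\,\big(h(\x^{(k)}) - h(\x^{(k+1)})\big) \\
&\ge \frac{h(\x^{(k)}) - h(\x^{(k+1)})}{\dist(\mathbf 0,\partial h(\x^{(k)}))} \ge \frac{\alpha}{\beta}\cdot\frac{\|\x^{(k+1)}-\x^{(k)}\|^2}{\|\x^{(k)}-\x^{(k-1)}\|},
\end{align*}
where the last step uses $\dist(\mathbf 0,\partial h(\x^{(k)})) \le \|\w^{(k)}\| \le \beta\|\x^{(k)}-\x^{(k-1)}\|$. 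Applying $2\sqrt{ab}\le a+b$ yields
\[
2\|\x^{(k+1)}-\x^{(k)}\| \le \|\x^{(k)}-\x^{(k-1)}\| + \tfrac{\beta}{\alpha}(\Delta_k-\Delta_{k+1}),
\]
and summing this telescoping inequality bounds $\sum_{j=N}^{M}\|\x^{(j+1)}-\x^{(j)}\|$ by $\|\x^{(N)}-\x^{(N-1)}\| + \tfrac{\beta}{\alpha}\Delta_N$, uniformly in $M$.

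The step I expect to be the main obstacle is the safeguard argument ensuring the iterates never leave $U$, so that the preceding estimate is legitimate for all large indices. Using~\ref{lem:abs-conv:item3}, the continuity of $\varphi$ with $\varphi(0)=0$, and~\ref{lem:abs-conv:item1}, I would pick an index $N$ so large that $\|\x^{(N)}-\x^*\| < \rho/2$, that $h(\x^*) < h(\x^{(N)}) < h(\x^*)+\eta$, and that $\|\x^{(N)}-\x^{(N-1)}\| + \tfrac{\beta}{\alpha}\Delta_N < \rho/2$. An induction on $k \ge N$ then establishes simultaneously that $\x^{(k)} \in B(\x^*,\rho) \subseteq U$ (via the triangle inequality and the partial-sum bound just derived) and that $\sum_{j=N}^{k}\|\x^{(j+1)}-\x^{(j)}\|$ stays below $\rho/2$, which keeps $\x^{(k+1)}$ inside the ball and closes the loop.

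Finally, $\sum_k \|\x^{(k+1)}-\x^{(k)}\| < \infty$ shows that $\{\x^{(k)}\}$ is Cauchy, hence convergent to some $\bar{\x}$; since a subsequence tends to $\x^*$ by~\ref{lem:abs-conv:item3}, necessarily $\bar{\x} = \x^*$. For criticality,~\ref{lem:abs-conv:item2} gives $\w^{(k)} \to \mathbf 0$ with $\w^{(k)} \in \partial h(\x^{(k)})$; since $\x^{(k)} \to \x^*$ and $h(\x^{(k)}) \to h(\x^*)$, the closedness of the graph of the (limiting) subdifferential — which for the functions considered here coincides with $\partial h$ — yields $\mathbf 0 \in \partial h(\x^*)$, i.e., $\x^*$ is a critical point of $h$.
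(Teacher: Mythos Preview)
Your proposal is correct and reproduces the standard Attouch--Bolte--Svaiter argument faithfully, including the trapping/safeguard step and the closure property of the subdifferential. Note, however, that the paper does not give its own proof of this lemma at all: it is stated as a classical result and attributed directly to \cite{ABS-13}, so there is nothing to compare against beyond confirming that what you wrote is precisely the argument from that reference.
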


We remark that since the feasible domain of the optimization problem \eqref{eq:sota-max} is the product of Stiefel and {Oblique} manifolds which is compact, the iteration sequence $\{ U_{[p]}\}$ generated by Algorithm~\ref{algo} must be bounded. In particular condition \eqref{lem:abs-conv:item3} in Lemma~\ref{lem:abs-conv} is satisfied automatically.
\begin{proposition}\label{prop:monotone}
Given a sequence $\{ U_{[p]}\}$ generated by Algorithm~\ref{algo}, the sequence $\{f( U_{[p]})\}$ increases monotonically {for all $p\geq N_0$ with some positive $N_0$} and hence converges.
\end{proposition}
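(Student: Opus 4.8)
The plan is to combine the sufficient–increase estimate of Proposition~\ref{prop:subfficient} with the finite–truncation bound of Lemma~\ref{lem:truncation-obj} and Proposition~\ref{prop:positive}. First I would invoke Proposition~\ref{prop:positive} to fix an integer $N_0$ such that the $p$-th iteration is not a truncation iteration for any $p\geq N_0$; such an $N_0$ exists because there are at most $r$ truncations in total by Lemma~\ref{lem:truncation-obj}, and after the last one the number of columns of the factor matrices in $U_{[p]}$ is stabilized at some positive $s\leq r$. Beyond this index the algorithm performs only iAPD updates (for $i=1,\dots,s$) and ALS updates (for $i=s+1,\dots,k$), with no column removal.

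Next, for each $p\geq N_0$ I would apply Proposition~\ref{prop:subfficient} to the pair $\overline{U}=U_{[p]}$, $U=U_{[p+1]}$, which is legitimate precisely because the $(p+1)$-th iteration is not a truncation iteration. This yields
\[
f(U_{[p+1]})-f(U_{[p]})\geq \frac{\min\{\epsilon,2\kappa^2\}}{2}\,\|U_{[p+1]}-U_{[p]}\|_F^2\geq 0,
\]
so $\{f(U_{[p]})\}_{p\geq N_0}$ is monotonically nondecreasing. Convergence then follows from boundedness: since each factor matrix lies in a Stiefel manifold $\V(r,n_i)$ or a manifold $\B(r,n_i)$ (a product of spheres), and the truncation-stabilized $U_{[p]}$ all lie in a compact product of such manifolds, the continuous function $f$ is bounded above on this compact set; equivalently one may use the explicit bound $f(U_{[p]})=\sum_j \lambda_j(U_{[p]})^2\leq r\|\mathcal A\|^2$ coming from \eqref{eq:unit}. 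A bounded monotone sequence of real numbers converges, so $\{f(U_{[p]})\}$ converges.

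I do not expect any serious obstacle here: the statement is essentially a bookkeeping corollary of the local estimates already established. The only point that requires a little care is the logical dependency, namely that Proposition~\ref{prop:subfficient} is only available for non-truncation iterations, which is exactly why the conclusion is phrased for $p\geq N_0$ rather than for all $p$; this is resolved by first extracting $N_0$ from Proposition~\ref{prop:positive}. One should also note in passing that monotonicity of $\{f(U_{[p]})\}$ for $p\geq N_0$ does not by itself force convergence of the iterates $\{U_{[p]}\}$ — that stronger conclusion will be obtained later via Lemma~\ref{lem:abs-conv} together with Lemma~\ref{lem:subdiff} — but it does suffice for the numerical statement asserted in this proposition.
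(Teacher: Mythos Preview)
Your proposal is correct and follows essentially the same approach as the paper: invoke Proposition~\ref{prop:positive} to obtain $N_0$ beyond which no truncation occurs, apply Proposition~\ref{prop:subfficient} for monotonicity, and use boundedness of the iterates (via compactness of the feasible set) together with continuity of $f$ for convergence. The paper's proof is simply a terser version of what you wrote.
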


\begin{proof}
Since the sequence $\{ U_{[p]}\}$ is bounded, $\{f( U_{[p]})\}$ is also bounded by continuity. The convergence then follows from {Propositions~\ref{prop:subfficient} and \ref{prop:positive}}.
\end{proof}
\begin{theorem}[Global Convergence]\label{thm:global}
{Any sequence $\{ U_{[p]}\}$ generated by Algorithm~\ref{algo} is bounded and converges to a KKT point of the problem~\eqref{eq:sota}}.
\end{theorem}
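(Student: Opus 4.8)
The plan is to apply the Abstract Convergence Lemma (Lemma~\ref{lem:abs-conv}) to the function $h$ defined in \eqref{eq:sota-unconstrained}, whose critical points coincide with the KKT points of \eqref{eq:sota-max} as observed after \eqref{eq:partial gradient}. The first step is to dispose of the truncation iterations: by Proposition~\ref{prop:positive} there is an index $N_0$ such that no truncation occurs after the $N_0$-th iteration and the number of columns of the factor matrices in $U_{[p]}$ is a fixed integer $1\le s'\le r$ for all $p\ge N_0$. Hence the tail $\{U_{[p]}\}_{p\ge N_0}$ lies in the fixed product space $\mathbb R^{n_1\times s'}\times\dots\times\mathbb R^{n_k\times s'}$, and since a limit of the tail is a limit of the whole sequence, it suffices to show that $\{U_{[p]}\}_{p\ge N_0}$ converges to a KKT point of mLRPOTA($s'$). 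Boundedness of $\{U_{[p]}\}$ is immediate, because the feasible domain is a product of Stiefel and $\operatorname{B}$-manifolds (hence compact) and the number of columns only decreases along the iteration.

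Next I would verify the three hypotheses of Lemma~\ref{lem:abs-conv} for the tail sequence, with $h$ now taken with $r=s'$. Since the iterates are feasible, the indicator terms in $h$ vanish at each $U_{[p]}$, so $h(U_{[p]})=-f(U_{[p]})$. Hypothesis~\ref{lem:abs-conv:item1}: as no truncation occurs for $p\ge N_0$, Proposition~\ref{prop:subfficient} gives $h(U_{[p]})-h(U_{[p+1]})=f(U_{[p+1]})-f(U_{[p]})\ge \tfrac{1}{2}\min\{\epsilon,2\kappa^2\}\,\|U_{[p+1]}-U_{[p]}\|_F^2$, i.e.\ \ref{lem:abs-conv:item1} holds with $\alpha=\tfrac12\min\{\epsilon,2\kappa^2\}$. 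Hypothesis~\ref{lem:abs-conv:item2}: Lemma~\ref{lem:subdiff} supplies $W_{[p+1]}\in\partial h(U_{[p+1]})$ with $\|W_{[p+1]}\|_F\le 2\sqrt{k}\,(2s'\sqrt{k}\,\|\mathcal A\|^2+\epsilon)\,\|U_{[p+1]}-U_{[p]}\|_F$, which is exactly \ref{lem:abs-conv:item2}. Hypothesis~\ref{lem:abs-conv:item3}: by compactness there is a subsequence $U_{[p_i]}\to U_\ast$, and continuity of $f$ gives $h(U_{[p_i]})\to h(U_\ast)$. Finally, $h$ is a KL function by Lemma~\ref{lem:KL functions}, being a finite sum of the polynomial $-f$ and the indicator functions of the semi-algebraic sets $\V(s',n_i)$ and $\operatorname{B}(s',n_i)$.

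Lemma~\ref{lem:abs-conv} then yields that the whole tail $\{U_{[p]}\}_{p\ge N_0}$ converges to $U_\ast$ and that $U_\ast$ is a critical point of $h$, hence a KKT point of mLRPOTA($s'$), equivalently of \eqref{eq:sota-max} with $r=s'$. To conclude, I would translate this into problem \eqref{eq:sota}: by Proposition~\ref{prop:sota-max} and Lemma~\ref{lem:kkt-equiv}, the pair $(U_\ast,\mathcal D_\ast)$ with $\mathcal D_\ast=\operatorname{diag}_k\big(\operatorname{Diag}_k\big(((U_\ast^{(1)})^\tp,\dots,(U_\ast^{(k)})^\tp)\cdot\mathcal A\big)\big)$ is a KKT point of \eqref{eq:sota} (with rank parameter $s'$, or the original $r$ after padding with zero columns), and the convergence $U_{[p]}\to U_\ast$ together with the continuity of $\mathcal D_\ast$ as a function of $U$ shows that the sequence generated by Algorithm~\ref{algo} converges to this KKT point. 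The only genuinely delicate point is the bookkeeping around the finitely many truncation iterations: Lemma~\ref{lem:abs-conv} is stated for a sequence in a fixed $\mathbb R^n$, so one must be careful to restart the analysis at $N_0$ in the stabilized ambient space and to note that convergence of the tail implies convergence of the full sequence; every other ingredient is a direct citation of Propositions~\ref{prop:subfficient}, \ref{prop:positive} and Lemmas~\ref{lem:subdiff}, \ref{lem:abs-conv}, \ref{lem:KL functions}, \ref{lem:kkt-equiv}.
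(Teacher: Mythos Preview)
Your proposal is correct and follows essentially the same approach as the paper: apply Lemma~\ref{lem:abs-conv} to $h$, with Proposition~\ref{prop:subfficient} for condition~\ref{lem:abs-conv:item1}, Lemma~\ref{lem:subdiff} for condition~\ref{lem:abs-conv:item2}, compactness for condition~\ref{lem:abs-conv:item3}, and Lemma~\ref{lem:KL functions} for the KL property. Your treatment is in fact more careful than the paper's terse proof: you explicitly invoke Proposition~\ref{prop:positive} to pass to the tail $p\ge N_0$ where no truncation occurs (necessary since both Proposition~\ref{prop:subfficient} and Lemma~\ref{lem:subdiff} assume a non-truncation iteration), and you make the translation back to problem~\eqref{eq:sota} via Lemma~\ref{lem:kkt-equiv} explicit, whereas the paper leaves both points implicit.
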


\begin{proof}
The conclusion is obtained by applying Lemma~\ref{lem:abs-conv} to the unconstrained minimization problem \eqref{eq:sota-unconstrained}: Condition~\ref{lem:abs-conv:item1} in Lemma~\ref{lem:abs-conv} follows from Proposition~\ref{prop:subfficient}, condition~\ref{lem:abs-conv:item2} is verified by Lemma~\ref{lem:subdiff} and condition~\ref{lem:abs-conv:item3} is also ensured by noticing that the sequence $\{ U_{[p]}\}$ is bounded and the function $h$ defined in \eqref{eq:sota-unconstrained} is continuous on the product of Stiefel and Oblique manifolds. Lastly, the Kurdyka-\L ojasiewicz property of the function $h$ follows from Lemma~\ref{lem:KL functions}.
\end{proof}

\subsection{{Sublinear convergence rate}}\label{sec:sublinear}
In this subsection, we prove the sublinear convergence rate of Algorithm~\ref{algo}.
Since the \L ojasiewicz exponent of the function $h$ in \eqref{eq:sota-unconstrained} is unknown, the convergence rate analysis such as the one in \cite{BST-14} or \cite{ABS-13} cannot be applied directly. To resolve this issue, we start with the following function
\begin{equation}\label{eq:function-aml}
q( U, P) \coloneqq f( U)-\sum_{i=1}^s\langle P^{(i)},(U^{(i)})^\tp U^{(i)}-I\rangle-\sum_{i=s+1}^k\langle {\diag_2}(\mathbf p),(U^{(i)})^\tp U^{(i)}-I\rangle,
\end{equation}
{which is a polynomial of degree $2k$ in $N$ variables:
\[
( U, P) \coloneqq ((U^{(1)},\dots,U^{(k)}),(P^{(1)},\dots, P^{(s)},\mathbf p))
\in  \left( \prod_{i=1}^k \mathbb{R}^{n_i\times r} \right) \times (\SS^r)^{\times s} \times \mathbb{R}^r,
\]
where $N \coloneqq  {(1 + \sum_{i=1}^s  n_i)r + s\binom{r+1}{2} }$. Here we remind the reader that $f$ is the objective function of \eqref{eq:sota-max} and $q$ is the Lagrange function associated to $f$ and constraints of its variables.

Denote
\begin{equation}\label{eq:tau}
{\zeta} \coloneqq 1-\frac{1}{2k(6k-3)^{N-1}},
\end{equation}
which is the \textit{\L ojasiewicz exponent} of the polynomial $q$ obtained by Lemma~\ref{lemma:loja1}. We suppose that $ U^*$ is a KKT point of \eqref{eq:sota-max} with the unique\footnote{LICQ guarantees the uniqueness of the Lagrange multiplier.} multiplier $ P^*$.

By Proposition~\ref{def:kkt} we have
\[
\nabla q( U^*, P^*)=0.
\]
According to Lemma~\ref{lemma:loja1}, there exist some $\gamma,\mu>0$ such that
\[
\|\nabla q(U,P)\|_F\geq \mu|q( U, P)-q( U^*, P^*)|^{{\zeta}}\ \text{whenever }\|(U, P)-( U^*,P^*)\|_F\leq \gamma.
\]
Therefore, we obtain
\begin{equation}\label{eq:loj-ineq}
\sum_{i=1}^s\|\nabla_i f( U)-2U^{(i)}P^{(i)}\|_F^2+\sum_{i=s+1}^k\|\nabla_i f( U)-2U^{(i)} {\diag_2}({\mathbf p})\|_F^2\geq \mu^2(f( U)-f( U^*))^{2 {\zeta}}
\end{equation}
for any feasible point $ U$ of \eqref{eq:sota-max} {and a point $P\in (\SS^r)^{\times s} \times \mathbb{R}^r$} such that $\|( U, P)-( U^*, P^*)\|_F\leq \gamma$.
\begin{theorem}[Sublinear Convergence Rate]\label{thm:sublinear}
Let $\{ U_{[p]}\}$ be a sequence generated by Algorithm~\ref{algo} for a given nonzero tensor $\mathcal A\in\mathbb R^{n_1}\otimes\dots\otimes\mathbb R^{n_k}$ and let ${\zeta}$ be defined by \eqref{eq:tau}.  The following statements hold:
\begin{enumerate}[label=(\alph*)]
\item \label{thm:sublinear-item1} {the sequence} $\{f( U_{[p]})\}$ converges to $f^*$, with {a} sublinear convergence rate at least
$O(p^{\frac{1}{1-2{\zeta}}})$, that is, there exist $M_1>0$ and ${p_1} \in \mathbb{N}$ such that for all $p \geq {p_1}$,
\begin{equation}\label{eq:sub-linear}
f^*-f( U_{[p]}) \leq M_1 \, p^{\frac{1}{1-2{\zeta}}}.
\end{equation}
\item \label{thm:sublinear-item2} $\{ U_{[p]}\}$ converges to $ U^*$ globally with {a} sublinear convergence rate at least
{$O(p^{\frac{{\zeta}-1}{2{\zeta}-1}})$}, that is, there exist $M_2>0$ and ${p_1} \in \mathbb{N}$ such that for all $p \ge {p_1}$,
\[
\| U_{[p]} -  U^*\|_F \leq M_2 \, p^{\frac{{\zeta}-1}{2{\zeta}-1}}.
\]
\end{enumerate}
\end{theorem}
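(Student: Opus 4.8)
The plan is to follow the classical template for deriving convergence rates from a Łojasiewicz-type inequality, adapted to the present setting where the Łojasiewicz exponent is that of the Lagrangian polynomial $q$ rather than of $h$ itself. First I would set $f^* \coloneqq \lim_p f(U_{[p]})$, which exists by Proposition~\ref{prop:monotone}, and introduce the error quantity $e_p \coloneqq f^* - f(U_{[p]}) \geq 0$, together with the parameter multiplier sequence $P_{[p]}$ associated to each iterate as in Lemma~\ref{lem:subdiff} and the equations \eqref{eq:w}, \eqref{eq:w-k} (so that $P_{[p]}$ is built out of the $S^{(i)}_{[p]}$'s and $(\Lambda^{(i+1)}_{[p]})^2$'s). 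Since $U_{[p]} \to U^*$ by Theorem~\ref{thm:global} and the multiplier is a continuous function of the iterate on the constraint manifold (LICQ, Proposition~\ref{prop:licq}), for $p$ large we have $\|(U_{[p]},P_{[p]}) - (U^*,P^*)\|_F \leq \gamma$, so the Łojasiewicz inequality \eqref{eq:loj-ineq} applies along the tail of the sequence.

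The engine of the argument is a three-way chain linking consecutive errors. From Lemma~\ref{lem:subdiff} (valid once there are no more truncations, i.e.\ $p \geq N_0$), the left-hand side of \eqref{eq:loj-ineq} at $(U_{[p+1]}, P_{[p+1]})$ is bounded above by $C_1 \|U_{[p+1]} - U_{[p]}\|_F^2$ for an explicit constant $C_1$; combining this with \eqref{eq:loj-ineq} gives
\[
\mu^2 e_{p+1}^{2\zeta} \leq C_1 \|U_{[p+1]} - U_{[p]}\|_F^2.
\]
On the other hand, Proposition~\ref{prop:subfficient} gives the sufficient-increase estimate $f(U_{[p+1]}) - f(U_{[p]}) \geq C_2 \|U_{[p+1]} - U_{[p]}\|_F^2$, i.e.\ $e_p - e_{p+1} \geq C_2 \|U_{[p+1]} - U_{[p]}\|_F^2$. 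Eliminating the step length yields the key recursion
\[
e_{p+1}^{2\zeta} \leq C_3 \, (e_p - e_{p+1})
\]
for all sufficiently large $p$, where $C_3 = C_1/(\mu^2 C_2)$. Part~\ref{thm:sublinear-item1} then follows from the standard real-sequence lemma: a nonnegative sequence $\{e_p\}$ decreasing to $0$ and satisfying $e_{p+1}^{\theta} \leq C_3(e_p - e_{p+1})$ with exponent $\theta = 2\zeta \in (1,2)$ obeys $e_p = O(p^{1/(1-\theta)}) = O(p^{1/(1-2\zeta)})$; I would either cite this lemma from the literature (e.g.\ the rate lemmas in \cite{ABS-13,BST-14}) or include the short inductive proof, integrating the difference inequality by comparison with $t \mapsto t^{-\theta}$.

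For part~\ref{thm:sublinear-item2} I would convert the function-value rate into an iterate rate by summing step lengths. From the sufficient-increase inequality, $\|U_{[p+1]} - U_{[p]}\|_F \leq C_2^{-1/2} (e_p - e_{p+1})^{1/2}$; to control the tail sum $\sum_{p \geq P} \|U_{[p+1]} - U_{[p]}\|_F$ I would instead use the sharper telescoping that comes out of the Kurdyka--Łojasiewicz analysis (as in the proof of Lemma~\ref{lem:abs-conv} / \cite{ABS-13}): with the concave desingularizing function $\varphi(t) = \tfrac{1}{(1-\zeta)\mu} t^{1-\zeta}$ one gets a bound of the form $\|U_{[p+1]} - U_{[p]}\|_F \lesssim \varphi(e_p) - \varphi(e_{p+1}) + \text{(step-length term)}$, whence $\sum_{q \geq p} \|U_{[q+1]} - U_{[q]}\|_F \lesssim \varphi(e_p) = O(e_p^{1-\zeta})$. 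Since $U^* = \lim U_{[q]}$, the triangle inequality gives $\|U_{[p]} - U^*\|_F \leq \sum_{q\geq p}\|U_{[q+1]}-U_{[q]}\|_F = O(e_p^{1-\zeta})$, and plugging in $e_p = O(p^{1/(1-2\zeta)})$ from part~\ref{thm:sublinear-item1} produces the exponent $\tfrac{1-\zeta}{1-2\zeta} = \tfrac{\zeta-1}{2\zeta-1}$, as claimed.

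The main obstacle I anticipate is the first link in the chain: verifying that the Łojasiewicz inequality \eqref{eq:loj-ineq} for the \emph{Lagrangian} $q$ can legitimately be fed the \emph{algorithmically produced} multipliers $P_{[p]}$, and that the left-hand side of \eqref{eq:loj-ineq} evaluated at those multipliers really is dominated by $\|U_{[p+1]}-U_{[p]}\|_F^2$ uniformly. This requires (i) identifying $P_{[p]}$ explicitly from the polar-decomposition and ALS update formulas, (ii) checking $P_{[p]} \to P^*$ so that the neighbourhood hypothesis $\|(U_{[p]},P_{[p]})-(U^*,P^*)\|_F \leq \gamma$ holds eventually — this uses continuity of the unique multiplier map guaranteed by Proposition~\ref{prop:licq}, plus Theorem~\ref{thm:global} — and (iii) re-deriving, from the estimates already assembled in the proof of Lemma~\ref{lem:subdiff} (the bounds \eqref{eq:subdif-est} and \eqref{eq:subdif-est-als}), that each block $\nabla_i f(U_{[p+1]}) - 2U^{(i)}_{[p+1]}P^{(i)}_{[p+1]}$ (resp.\ with $\diag_2(\mathbf p_{[p+1]})$) has Frobenius norm $O(\|U_{[p+1]}-U_{[p]}\|_F)$. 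Once this bookkeeping is in place, everything else is the routine real-analysis machinery above.
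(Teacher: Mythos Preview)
Your approach is essentially the paper's, and the three-way chain you describe (\L ojasiewicz for $q$, subdifferential bound, sufficient increase) is exactly the engine used there. One technical point you gloss over deserves attention: for the ALS blocks $i=s+1,\dots,k$, the quantities $(\Lambda^{(i+1)}_{[p+1]})^2$ appearing in \eqref{eq:w-k} are \emph{different for different $i$}, but the \L ojasiewicz inequality \eqref{eq:loj-ineq} admits only a \emph{single} vector $\mathbf p$ shared across all $i>s$. You therefore cannot simply feed the raw algorithmic multipliers into \eqref{eq:loj-ineq}; you must first choose one common $\hat{\mathbf p}_{[p+1]}$ and then bound the discrepancy $\|\mathbf p^{(i)}_{[p+1]}-\hat{\mathbf p}_{[p+1]}\|=O(\|U_{[p+1]}-U_{[p]}\|_F)$ for each $i$. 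The paper does this by taking $\hat{\mathbf p}_{[p]}=((\hat\lambda_{1,[p]})^2,\dots,(\hat\lambda_{r,[p]})^2)$ with $\hat\lambda_{j,[p]}=\mathcal A\tau(\mathbf u^{(1)}_{j,[p]},\dots,\mathbf u^{(k)}_{j,[p]})$, a function of $U_{[p]}$ alone; the discrepancy bound is then routine via the monotonicity in Lemma~\ref{lem:lambda-k}. This is an extra step your outline omits.

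A minor related point: your justification ``continuity of the unique multiplier map guaranteed by Proposition~\ref{prop:licq}'' is not quite the right reason for $P_{[p]}\to P^*$, since $U_{[p]}$ is not a KKT point and hence has no Lagrange multiplier in that sense. The convergence follows instead directly from continuity of the polar factor $S^{(i)}_{[p]}$ and of the $\lambda^i_{j,[p]}$'s in the iterates, together with $U_{[p]}\to U^*$ and $U_{[p-1]}\to U^*$ (and the eventual stabilization of the proximal switch $\alpha_{i,[p]}$, which the paper also notes).
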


\begin{proof}
{By Theorem~\ref{thm:global}, $\{U_{[p]}\}$ converges globally to a KKT point $U^*=(U^{(*,1)},\dots,U^{(*,k)})$ with a unique Lagrange multiplier $P^*=(P^{(*,1)},\dots,P^{(*,s)},\mathbf p^{*})$.} Again, we discuss with respect to different values of $1 \le i \le k$.
\begin{enumerate}[label=(\roman*)]
\item For $1\le i \le s$, we let
$P^{(i)}_{[p]} := S^{(i)}_{[p]}-\alpha_{i,[p]} {I_r}$, where
\begin{align*}
\alpha_{i,[p]} &:=
\begin{cases}
\epsilon, &\text{if proximal correction is executed},\\
0, &\text{otherwise}
\end{cases} \\
S^{(i)}_{[p]} &:=\begin{cases}
(U^{(i)}_{[p]})^\tp (V^{(i)}_{[p]}\Lambda^{(i)}_{[p]}+\epsilon U^{(i)}_{[p-1]}),&\text{if proximal correction is executed},\\
(U^{(i)}_{[p]})^\tp V^{(i)}_{[p]}\Lambda^{(i)}_{[p]},&\text{otherwise}. \end{cases}
\end{align*}
Note that $\{ U_{[p]}\}$ converges by Theorem~\ref{thm:global} and hence $\{ V^{(i)}_{[p]}\Lambda^{(i)}_{[p]} \}$ converges {by \eqref{eq:block-xp}, \eqref{eq:lambda+vp} and \eqref{eq:Lambda + Vp}}. Recall that in Algorithm~\ref{algo}, the proximal correction step is determined by the minimal singular value of $V^{(i)}_{[p]}\Lambda^{(i)}_{[p]}$. Thus $\alpha_{i,[p]}$ will be stable at some $\alpha_i$ for sufficiently large $p$ (say $p\geq p_0$) and $1 \le i \le s$.
By \eqref{eq:partial gradient}, \eqref{eq:algorithm1-polar}, \eqref{eq:w} and Lemma~\ref{lem:subdiff}, we have
\begin{align}\label{eq:sub-sublinear}
{\|\nabla_i f(U_{[p+1]})-2U_{[p+1]}^{(i)} P_{[p+1]}^{(i)}\|_F}&=\|\nabla_i f( U_{[p+1]})-2U^{(i)}_{[p+1]}S^{(i)}_{[p+1]}+2\alpha_i U^{(i)}_{[p+1]}\|_F\nonumber\\
&=\|-2W^{(i)}_{[p+1]}\|_F\nonumber\\
&\leq {C_0}\| U_{[p+1]}- U_{[p]}\|_F
\end{align}
where $C_0:={2(2r\sqrt{k}{\|\mathcal A\|^2} +\epsilon)}>0$ is a constant, {and the inequality follows from \eqref{eq:subdif-est}}.

Next we denote
\begin{equation}\label{eq:multiPi}
\hat P_{[p+1]}^{(i)}:=\frac{1}{4}\big((U^{(i)}_{[p+1]})^\tp\nabla_i f( U_{[p+1]})+(\nabla_i f(U_{[p+1]}))^\tp U^{(i)}_{[p+1]}\big).
\end{equation}
By construction, $\hat P_{[p+1]}^{(i)}$ is a symmetric matrix. 
By \eqref{eq:partial gradient}, we have $\nabla_i f(U_{[p+1]})=2V^{(i)}\Lambda$. Thus
\begin{align}\label{eq:sub-linear-2}
&\|U_{[p+1]}^{(i)} P_{[p+1]}^{(i)}-U_{[p+1]}^{(i)}\hat P_{[p+1]}^{(i)}\|_F\nonumber\\
=&\| \left( S^{(i)}_{[p+1]}-\alpha_{i,[p+1]} I_r \right)-\frac{1}{2}\Big((U^{(i)}_{[p+1]})^\tp V^{(i)}\Lambda+(V^{(i)}\Lambda)^\tp U^{(i)}_{[p+1]}\Big)\|_F\nonumber\\
=&\frac{1}{2}\|(U^{(i)}_{[p+1]})^\tp\big(V^{(i)}_{[p+1]}\Lambda^{(i)}_{[p+1]}
+\alpha_{i,[p+1]}U^{(i)}_{[p]}\big)-\alpha_{i,[p+1]} I_r-(U^{(i)}_{[p+1]})^\tp V^{(i)}\Lambda\nonumber\\
&\quad +\big(V^{(i)}_{[p+1]}\Lambda^{(i)}_{[p+1]}+\alpha_{i,[p+1]}U^{(i)}_{[p]}\big)^\tp U^{(i)}_{[p+1]}-\alpha_{i,[p+1]} I_r-(V^{(i)}\Lambda)^\tp U^{(i)}_{[p+1]}\|_F\nonumber\\
\leq&\|(U^{(i)}_{[p+1]})^\tp\big(V^{(i)}_{[p+1]}\Lambda^{(i)}_{[p+1]}
+\alpha_{i,[p+1]}U^{(i)}_{[p]}\big)-\alpha_{i,[p+1]} I_r-(U^{(i)}_{[p+1]})^\tp V^{(i)}\Lambda\|_F\nonumber\\
\leq& \|(U^{(i)}_{[p+1]})^\tp\big(V^{(i)}_{[p+1]}\Lambda^{(i)}_{[p+1]}-V^{(i)}\Lambda\big)\|_F
+\epsilon\|U^{(i)}_{[p+1]}-U^{(i)}_{[p]}\|_F\nonumber\\
\leq& C_1\|V^{(i)}_{[p+1]}\Lambda^{(i)}_{[p+1]}-V^{(i)}\Lambda\|_F+\epsilon\|U^{(i)}_{[p+1]}-U^{(i)}_{[p]}\|_F\nonumber\\
\leq& C_2\|U^{(i)}_{[p+1]}-U^{(i)}_{[p]}\|_F,
\end{align}
where the first equality holds because $U^{(i)}_{[p+1]}\in\V(r,n_i)$, the second equality follows from the fact that the matrix $S^{(i)}_{[p+1]}=(U^{(i)}_{[p+1]})^\tp\big(V^{(i)}_{[p+1]}\Lambda^{(i)}_{[p+1]}+\alpha_{i,[p+1]}U^{(i)}_{[p]}\big)$ is symmetric, the second inequality follows from Lemma~\ref{lem:distance} and the last inequality can be obtained by the proof of Lemma~\ref{lem:subdiff}, from which one can also easily see that $C_1,C_2>0$ are some constants depending only on the tensor $\mathcal A$.

Lastly, we combine \eqref{eq:sub-sublinear} and \eqref{eq:sub-linear-2} to obtain
\begin{align}\label{eq:sub-linear-1}
&\|\nabla_i f( U_{[p+1]})-2U_{[p+1]}^{(i)}\hat P_{[p+1]}^{(i)}\|_F\nonumber\\
=&\|\nabla_i f( U_{[p+1]})-2U_{[p+1]}^{(i)} P_{[p+1]}^{(i)}+2U_{[p+1]}^{(i)} P_{[p+1]}^{(i)}-2U_{[p+1]}^{(i)}\hat P_{[p+1]}^{(i)}\|_F\nonumber\\
\leq& \|\nabla_i f(U_{[p+1]})-2U_{[p+1]}^{(i)} P_{[p+1]}^{(i)}\|_F+2\|U_{[p+1]}^{(i)} P_{[p+1]}^{(i)}-U_{[p+1]}^{(i)}\hat P_{[p+1]}^{(i)}\|_F\nonumber\\
\leq& C\|U^{(i)}_{[p+1]}-U^{(i)}_{[p]}\|_F,
\end{align}
where $C\coloneqq C_0+2C_2>0$ is a constant. Moreover, since $\{U_{[p]}\}$ is convergent by Theorem~\ref{thm:global}, it is valid to take limits on both sides of \eqref{eq:multiPi} to obtain
\begin{equation}\label{eq:limit-apd}
\lim_{p\rightarrow\infty}\hat P^{(i)}_{[p]}=(U^{(*,i)})^\tp V^{(*,i)}\Lambda^*=P^{(*,i)}.
\end{equation}

\item For $s+1 \le i \le k$, {we denote}
\[
\mathbf p^{(i)}_{[p]}:= \left( (\lambda^i_{1,[p]})^2,\dots, (\lambda^i_{r,[p]})^2 \right)^\tp \in \mathbb{R}^r.
\]
It follows from \eqref{eq:opt-k}, \eqref{eq:w-k} and \eqref{eq:subdif-est-als} that
\[
\|\nabla_i f( U_{[p+1]})-2U^{(i)}_{[p+1]}{\diag_2}(\mathbf p^{(i)}_{[p+1]})\|_F\leq {D_0}\| U_{[p+1]}- U_{[p]}\|_F,
\]
where $D_0:=4r{\sqrt{k}}\|\mathcal A\|^2>0$ is a constant.

Next we let $\hat\lambda_{j,[p]} \coloneqq \A\tau( \mathbf u^{(1)}_{j,[p]},\dots,\mathbf u^{(k)}_{j,[p]})$ and let
\[
\hat{\mathbf p}_{[p]}\coloneqq((\hat\lambda_{1,[p]})^2,\dots,(\hat\lambda_{r,[p]})^2)^\tp.
\]
We remark that $\hat\lambda_{j,[p]}$ only depends  on $U_{[p]}$.
Now we have
\begin{align}\label{eq:sub-linear-4}
\|\hat{\mathbf p}_{[p+1]}-\mathbf p^{(i)}_{[p+1]}\|^2&=\sum_{j=1}^r((\hat\lambda_{j,[p+1]})^2-(\lambda^i_{j,[p+1]})^2)^2\nonumber\\
&=\sum_{j=1}^r(\hat\lambda_{j,[p+1]}+\lambda^i_{j,[p+1]})^2
(\hat\lambda_{j,[p+1]}-\lambda^i_{j,[p+1]})^2\nonumber\\
&\leq 4\|\A\|^2\sum_{j=1}^r(\hat\lambda_{j,[p+1]}-\lambda^i_{j,[p+1]})^2\nonumber   \\
&\leq D_1\|U_{[p+1]}-U_{[p]}\|_F^2,
\end{align}
where $D_1 \coloneqq 4rk\|\A\|^4>0$, {the first inequality follows from \eqref{eq:unit}} and the last one follows from a similar argument as in the proof of Lemma~\ref{lem:subdiff}.

Finally we arrive at
\begin{align}
&\|\nabla_i f( U_{[p+1]})-2U^{(i)}_{[p+1]}{\diag_2}(\hat{\mathbf p}_{[p+1]})\|_F\nonumber\\
=&
\|\nabla_i f( U_{[p+1]})-2U^{(i)}_{[p+1]}{\diag_2}(\mathbf p^{(i)}_{[p+1]})+2U^{(i)}_{[p+1]}{\diag_2}(\mathbf p^{(i)}_{[p+1]})-2U^{(i)}_{[p+1]}{\diag_2}(\hat{\mathbf p}_{[p+1]})\|_F\nonumber\\
\leq& \|\nabla_i f( U_{[p+1]})-2U^{(i)}_{[p+1]}{\diag_2}(\mathbf p^{(i)}_{[p+1]})\|_F +\|2U^{(i)}_{[p+1]}{\diag_2}(\mathbf p^{(i)}_{[p+1]})-2U^{(i)}_{[p+1]}{\diag_2}(\hat{\mathbf p}_{[p+1]})\|_F\nonumber\\
\leq&D_0\|U_{[p+1]}-U_{[p]}\|_F+2\sqrt{D_1}\|U_{[p+1]}-U_{[p]}\|_F\nonumber\\
=&D\|U_{[p+1]}-U_{[p]}\|_F,\label{eq:sub-linear-3}
\end{align}
where $D=D_0+2\sqrt{D_1}>0$ is a constant. Moreover, we also have
\begin{equation}\label{eq:limit-als}
\lim_{p\rightarrow\infty}\hat{\mathbf p}_{[p]}=\Diag_2\big((\Lambda^*)^2\big)=\mathbf p^{*},
\end{equation}
{by the convergence of the sequence $\{U_{[p]}\}$.}
\end{enumerate}

Now we are ready to prove \ref{thm:sublinear-item1}. To do that, we denote $\hat P_{[p]}:=(\hat P^{(1)}_{[p]},\dots,\hat P^{(s)}_{[p]},\hat{\mathbf p}_{[p]})$ and observe {from \eqref{eq:limit-apd} and \eqref{eq:limit-als}} that for any sufficiently large $p$, we have
\[
\|( U_{[p]}, {\hat P_{[p]}})-( U^*, P^*)\|_F\leq \gamma,
\]
from which we may obtain
\scriptsize
\begin{align}
&\ \ \quad \mu^2(f( U_{[p]})-f( U^*))^{2{\zeta}}\nonumber \\
&\leq \sum_{i=1}^s\|\nabla_i f( U_{[p]})-2U^{(i)}_{[p]}{\hat P^{(i)}_{[p]}}\|_F^2+\sum_{i=s+1}^k\|\nabla_i f( U_{[p]})-2U^{(i)}_{[p]}{\diag_2(\hat{\mathbf p}_{[p]})}\|_F^2\nonumber\\
& {\leq \sum_{i=1}^s \left( \|\nabla_i f( U_{[p+1]})-2U^{(i)}_{[p+1]} \hat P^{(i)}_{[p+1]}\|_F+\|\nabla_i f( U_{[p+1]})-2U^{(i)}_{[p+1]} \hat P^{(i)}_{[p+1]}-\nabla_i f( U_{[p]})+2U^{(i)}_{[p]}\hat P^{(i)}_{[p]}\|_F \right)^2}\nonumber\\
&+\sum_{i=s+1}^k \left( \|\nabla_i f( U_{[p+1]})-2U^{(i)}_{[p+1]}\diag_2(\hat{\mathbf p}_{[p+1]})\|_F + \|\nabla_i f( U_{[p+1]})-2U^{(i)}_{[p+1]}\diag_2(\hat{\mathbf p}_{[p+1]})-\nabla_i f( U_{[p]})+2U^{(i)}_{[p]}\diag_2(\hat{\mathbf p}_{[p]})\|_F \right)^2 \nonumber \\
&\leq {(s(C+L_1)^2+(k-s)(D+L_2)^2)\| U_{[p+1]}- U_{[p]}\|_F^2}\label{eq:fun-iteration}\\
&\leq {L(f( U_{[p+1]})-f( U_{[p]}))},\nonumber
\end{align}
\normalsize
where $L = \frac{2(s(C+L_1)^2+(k-s)(D+L_2)^2)}{\min\{2\kappa^2,\epsilon\}}>0$ is a constant. Here the first inequality follows from \eqref{eq:loj-ineq}, the third inequality from \eqref{eq:sub-linear-1}, \eqref{eq:sub-linear-3} and the Lipschitz continuity of the function $\nabla_i f( U_{[p+1]})-2U^{(i)}_{[p+1]} \hat P^{(i)}_{[p+1]}$ (resp. $\nabla_i f( U_{[p+1]})-2U^{(i)}_{[p+1]}\diag_2(\hat{\mathbf p}_{[p+1]})$) with a Lipschitz constant $L_1$ (resp. $L_2$) and  the last inequality is a consequence of Proposition~\ref{prop:subfficient}. According to \eqref{eq:fun-iteration}, if we set $\beta_p \coloneqq f( U^*)-f( U_{[p]})$ then we must have
\[
{\beta_{p}-\beta_{p+1}}\geq M\beta_p^{2{\zeta}}
\]
for some constant $M>0$. The rest of the proof is similar to that in \cite[Theorem~3.2--(1)]{HL-18}. We define a function $h(x)\coloneqq x^{-2\zeta}$, then it follows that
\[
\beta_{p}-\beta_{p+1}\geq M\beta_{p}^{2\zeta}=Mh(\beta_{p})^{-1}.
\]
We notice that $h(x)=t'(x)$ where $ t(x) \coloneqq \frac{x^{1-2\zeta}}{1-2\zeta} $ and  that $h$ is decreasing on $\mathbb{R}_{++}$. Hence we have
\begin{equation*}
M\leq h(\beta_{p})(\beta_{p}-\beta_{p+1}) \leq \int_{\beta_{p+1}}^{\beta_{p}}h(x)\,dx	=t(\beta_{p})-t(\beta_{p+1})
=\frac{1}{2\zeta-1}(\beta_{p+1}^{1-2\zeta}-\beta_{p}^{1-2\zeta}).
\end{equation*}
Let $p_0$ be a positive integer such that the above analysis is guaranteed for all $p\geq p_0$ and let ${p_1 \coloneqq 2p_0}$. For each $p \ge{p_1 }$, we have by induction that
\[
\beta_{p}^{1-2\zeta}\geq M(2\zeta-1)+\beta_{p-1}^{1-2\zeta}\geq \cdots\geq M(2\zeta-1)(p-{p_0})+\beta_{{p_0}}^{1-2\zeta}\geq M(2\zeta-1)(p-{p_0}).
\]
By \eqref{eq:tau}, we have $\zeta > 1/2$ which implies that
\[
\beta_{p}\leq [M(2\zeta-1)(p-{p_0})]^{\frac{1}{1-2\zeta}}=
\left[ M(2\zeta-1)\frac{(p-{p_0})}{p}\right]^{\frac{1}{1-2\zeta}}p^{\frac{1}{1-2\zeta}} \le \left[ M(\zeta-\frac{1}{2})\right]^{\frac{1}{1-2\zeta}} p^{\frac{1}{1-2\zeta}}.
\]
Therefore, if we take $M_{1} \coloneqq \left[ M(\zeta-\frac{1}{2})\right]^{\frac{1}{1-2\zeta}}$ then for each $p\geq {p_1}$, it must hold that
\[
0\leq \beta_{p}\leq M_{1}p^{\frac{1}{1-2\zeta}},
\]
which completes the proof of \ref{thm:sublinear-item1}.

To prove \ref{thm:sublinear-item2}, we observe that by \eqref{eq:fun-iteration} the inequality
\[
\beta_p^{\zeta}\leq K\| U_{[p+1]}- U_{[p]}\|_F
\]
holds for some constant $K>0$. The rest of the proof follows from the same method as in \cite[Theorem~3.2--(2)]{HL-18}.
\end{proof}

We conclude this subsection by remarking that Algorithm~\ref{algo} belongs to the category of first order methods, for which only $O(1/p)$ convergence rate can be obtained for a general non-convex problem {\cite{Beck-book}}.  Surprisingly, however, for problem \eqref{eq:sota-max}, Theorem~\ref{thm:sublinear} proves that the convergence rate can be faster than the classical rate $O(1/p)$. We also notice that the convergence rate in Theorem~\ref{thm:sublinear} is already best possible one can obtain without any further hypothesis, in the sense that there exist examples \cite{EK-15,EHK-15} exhibiting a sublinear convergence rate even for $r=1$.
\subsection{{R-Linear convergence}}\label{sec:linear}
In this subsection, we establish the R-linear convergence of Algorithm~\ref{algo} for a generic tensor $\mathcal{A} \in \mathbb{R}^{n_1}\otimes \cdots \otimes \mathbb{R}^{n_k}$.
\begin{lemma}[Relative Error from APD]\label{lem:gradient-diff}
There exists a constant $\gamma_1>0$ such that
\[
\|\nabla_i f( U_{[p+1]})-U^{(i)}_{[p+1]}(\nabla_i f( U_{[p+1]}))^\tp U^{(i)}_{[p+1]}\|_F \leq \gamma_1 \| U_{[p]}- U_{[p+1]}\|_F
\]
for all $1 \le i \le s$ and $p\in \mathbb{N}$.
\end{lemma}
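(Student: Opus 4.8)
The plan is to bound the tangential residual $\|\nabla_i f(U_{[p+1]}) - U^{(i)}_{[p+1]}(\nabla_i f(U_{[p+1]}))^\tp U^{(i)}_{[p+1]}\|_F$ by exploiting the near-optimality of the polar decomposition that produced $U^{(i)}_{[p+1]}$, together with the already-established bound $\|W^{(i)}_{[p+1]}\|_F \le C_0 \|U_{[p+1]} - U_{[p]}\|_F$ from Lemma~\ref{lem:subdiff} (specifically \eqref{eq:subdif-est}). First I would recall from Proposition~\ref{def:kkt} and \eqref{eq:partial gradient} that $\nabla_i f(U_{[p+1]}) = 2V^{(i)}\Lambda$, where $V^{(i)},\Lambda$ are formed from the columns of $U_{[p+1]}$ (as in \eqref{eq:matrix-vi}--\eqref{eq:matrix-lambda}), while the update equation \eqref{eqn:proof of lemma 4.5-1} gives $V^{(i)}_{[p+1]}\Lambda^{(i)}_{[p+1]} + \alpha_{i,[p+1]} U^{(i)}_{[p]} = U^{(i)}_{[p+1]} S^{(i)}_{[p+1]}$ with $S^{(i)}_{[p+1]}$ symmetric. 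The key point is that $U^{(i)}_{[p+1]} S^{(i)}_{[p+1]}$ already has the form $U^{(i)}_{[p+1]} \cdot (\text{symmetric matrix})$, so its own tangential residual vanishes by Lemma~\ref{lem:grand}.

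The main steps are then as follows. Write $\nabla_i f(U_{[p+1]}) = 2 U^{(i)}_{[p+1]} S^{(i)}_{[p+1]} + R$, where $R := \nabla_i f(U_{[p+1]}) - 2U^{(i)}_{[p+1]} S^{(i)}_{[p+1]} = 2V^{(i)}\Lambda - 2(V^{(i)}_{[p+1]}\Lambda^{(i)}_{[p+1]} + \alpha_{i,[p+1]} U^{(i)}_{[p]})$; this $R$ is, up to sign and a factor of $2$, exactly the quantity bounded in \eqref{eq:subdif-est}, so $\|R\|_F \le C_0' \|U_{[p+1]} - U_{[p]}\|_F$ for a constant $C_0'$ depending only on $\mathcal A$ and $\epsilon$. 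Now apply the linear map $M \mapsto M - U^{(i)}_{[p+1]} M^\tp U^{(i)}_{[p+1]}$ to both sides. On the term $2U^{(i)}_{[p+1]} S^{(i)}_{[p+1]}$ this map returns zero because $S^{(i)}_{[p+1]}$ is symmetric and $(U^{(i)}_{[p+1]})^\tp U^{(i)}_{[p+1]} = I_r$ (here I use $U^{(i)}_{[p+1]} \in \V(r,n_i)$ for $1\le i\le s$): indeed $U^{(i)}_{[p+1]} S^{(i)}_{[p+1]} - U^{(i)}_{[p+1]} (U^{(i)}_{[p+1]} S^{(i)}_{[p+1]})^\tp U^{(i)}_{[p+1]} = U^{(i)}_{[p+1]} S^{(i)}_{[p+1]} - U^{(i)}_{[p+1]} (S^{(i)}_{[p+1]})^\tp = 0$. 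Hence the tangential residual of $\nabla_i f(U_{[p+1]})$ equals the tangential residual of $R$, namely $R - U^{(i)}_{[p+1]} R^\tp U^{(i)}_{[p+1]}$, whose Frobenius norm is at most $\|R\|_F + \|U^{(i)}_{[p+1]} R^\tp U^{(i)}_{[p+1]}\|_F \le 2\|R\|_F$ since $U^{(i)}_{[p+1]}$ has orthonormal columns and thus Frobenius-norm contractive both on the left and on the right. Setting $\gamma_1 := 2 C_0' \sqrt{s}$ (or taking a maximum over $i$) yields the claimed inequality.

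The estimate is uniform in $p$ because the constants $C_0, C_0'$ in \eqref{eq:subdif-est} depend only on $\|\mathcal A\|$, $r$, $k$, and $\epsilon$, and $\alpha_{i,[p+1]} \le \epsilon$ always. I do not expect a genuine obstacle here: the only mild subtlety is bookkeeping — one must be careful that \eqref{eq:subdif-est} was derived under the hypothesis that the $(p+1)$-th iteration is not a truncation iteration, so strictly speaking the statement should be read (as in the surrounding subsection) for $p \ge N_0$ where $N_0$ is the index beyond which no truncation occurs (Proposition~\ref{prop:positive}); for the finitely many earlier $p$ the inequality holds trivially after enlarging $\gamma_1$, using boundedness of the iterates on the compact feasible set. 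Thus the hard part is merely to package the already-available bound \eqref{eq:subdif-est} correctly, and the new content is the one-line observation that projecting onto the tangent space kills the $U^{(i)}_{[p+1]} S^{(i)}_{[p+1]}$ part.
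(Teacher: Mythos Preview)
Your overall strategy is correct and is in fact cleaner than the paper's: decompose $\nabla_i f(U_{[p+1]})$ into a piece of the form $U^{(i)}_{[p+1]}\cdot(\text{symmetric})$, which the map $M\mapsto M-U^{(i)}_{[p+1]}M^\tp U^{(i)}_{[p+1]}$ annihilates, plus a remainder controlled by the estimate \eqref{eq:subdif-est}. The paper instead expands $V^{(i)}\Lambda-U^{(i)}_{[p+1]}(V^{(i)}\Lambda)^\tp U^{(i)}_{[p+1]}$ directly, inserting $V^{(i)}_{[p+1]}\Lambda^{(i)}_{[p+1]}$ and using the symmetry of $S^{(i)}_{[p+1]}$ together with Lemma~\ref{lem:distance} to produce the intermediate bound \eqref{eq:relative-2}; this is more computational but achieves the same end.

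There is, however, a genuine slip in your bookkeeping. Your $R$ is \emph{not} ``up to sign and a factor of $2$'' the quantity $W^{(i)}_{[p+1]}$ of \eqref{eq:w}. From \eqref{eqn:proof of lemma 4.5-1} and \eqref{eq:w} one computes
\[
R \;=\; 2V^{(i)}\Lambda-2\bigl(V^{(i)}_{[p+1]}\Lambda^{(i)}_{[p+1]}+\alpha_{i,[p+1]}U^{(i)}_{[p]}\bigr)
\;=\; -2W^{(i)}_{[p+1]}\;-\;2\alpha_{i,[p+1]}U^{(i)}_{[p+1]},
\]
and the extra term $-2\alpha_{i,[p+1]}U^{(i)}_{[p+1]}$ has Frobenius norm $2\epsilon\sqrt{r}$ whenever the proximal correction is active; hence $\|R\|_F$ is \emph{not} bounded by a constant times $\|U_{[p+1]}-U_{[p]}\|_F$, and the step ``$\|R\|_F\le C_0'\|U_{[p+1]}-U_{[p]}\|_F$'' fails as stated. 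The fix is immediate and fits perfectly into your scheme: the offending term is itself of the form $U^{(i)}_{[p+1]}\cdot(-2\alpha_{i,[p+1]}I_r)$ with $-2\alpha_{i,[p+1]}I_r$ symmetric, so your projection kills it too. Equivalently, absorb it into the symmetric part by writing $\nabla_i f(U_{[p+1]})=2U^{(i)}_{[p+1]}\bigl(S^{(i)}_{[p+1]}-\alpha_{i,[p+1]}I_r\bigr)-2W^{(i)}_{[p+1]}$; then your argument goes through verbatim with remainder $-2W^{(i)}_{[p+1]}$, giving the bound $4\|W^{(i)}_{[p+1]}\|_F\le \gamma_1\|U_{[p+1]}-U_{[p]}\|_F$ via \eqref{eq:subdif-est}.
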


\begin{proof}
By Algorithm~\ref{algo}, we have
\begin{equation}\label{eq:linear-polar}
V^{(i)}_{[p+1]}\Lambda^{(i)}_{[p+1]}+{\alpha_{i,[p+1]}} U^{(i)}_{[p]}=U^{(i)}_{[p+1]}S^{(i)}_{[p+1]}
\end{equation}
where $S^{(i)}_{[p+1]}$ is a symmetric positive semidefinite matrix and $\alpha_{i,[p+1]} = 0$ unless there is a proximal correction, in which case $\alpha_{i,[p+1]} = \epsilon$. Since $U^{(i)}_{[p+1]}\in \V(r,n_i)$ is an orthonormal matrix, we have
\begin{equation}\label{eq:linear-symmetry}
S^{(i)}_{[p+1]}=(U^{(i)}_{[p+1]})^\tp \big(V^{(i)}_{[p+1]}\Lambda^{(i)}_{[p+1]}+{\alpha_{i,[p+1]}} U^{(i)}_{[p]}\big)=\big(V^{(i)}_{[p+1]}\Lambda^{(i)}_{[p+1]}+{\alpha_{i,[p+1]}} U^{(i)}_{[p]}\big)^\tp U^{(i)}_{[p+1]},
\end{equation}
where the second equality follows from the symmetry of the matrix $S^{(i)}_{[p+1]}$.

If we let
\[
W^{(i)}_{[p+1]} \coloneqq V^{(i)}\Lambda -V^{(i)}_{[p+1]}\Lambda^{(i)}_{[p+1]}-{\alpha_{i,[p+1]}}\big(U^{(i)}_{[p]}-U^{(i)}_{[p+1]}\big),
\]
then it follows from Lemma~\ref{lem:subdiff} that there exists some constant $\gamma_0 > 0$ such that
\begin{equation}\label{eq:relative-gamma0}
\|W^{(i)}_{[p+1]}\|_F\leq \gamma_0 \| U_{[p]}- U_{[p+1]}\|_F.
\end{equation}
Moreover, we observe that there exists some constant $\eta_1>0$ such that
\begin{align}
&\quad\ \|V^{(i)}_{[p+1]}\Lambda^{(i)}_{[p+1]}-U^{(i)}_{[p+1]}(V^{(i)}\Lambda)^\tp U^{(i)}_{[p+1]}\|_F\nonumber\\
&=\|U^{(i)}_{[p+1]}S^{(i)}_{[p+1]}-{\alpha_{i,[p+1]}} U^{(i)}_{[p]}-U^{(i)}_{[p+1]}(V^{(i)}\Lambda)^\tp U^{(i)}_{[p+1]}\|_F\nonumber\\
&=\|U^{(i)}_{[p+1]}\big(V^{(i)}_{[p+1]}\Lambda^{(i)}_{[p+1]}+{\alpha_{i,[p+1]}} U^{(i)}_{[p]}\big)^\tp U^{(i)}_{[p+1]}-{\alpha_{i,[p+1]}} U^{(i)}_{[p]}-U^{(i)}_{[p+1]}(V^{(i)}\Lambda)^\tp U^{(i)}_{[p+1]}\|_F\nonumber\\
&\leq\|U^{(i)}_{[p+1]}\big((V^{(i)}_{[p+1]}\Lambda^{(i)}_{[p+1]})^\tp -(V^{(i)}\Lambda)^\tp \big)U^{(i)}_{[p+1]}\|_F+{\alpha_{i,[p+1]}}\| U^{(i)}_{[p+1]}(U^{(i)}_{[p]})^\tp U^{(i)}_{[p+1]}-U^{(i)}_{[p]}\|_F\nonumber\\
&\leq   \|V^{(i)}_{[p+1]}\Lambda^{(i)}_{[p+1]}-V^{(i)}\Lambda\|_F+{\alpha_{i,[p+1]}}\| U^{(i)}_{[p+1]}(U^{(i)}_{[p]})^\tp U^{(i)}_{[p+1]}-U^{(i)}_{[p]}\|_F\nonumber\\
&\leq \eta_1\| U_{[p+1]}- U_{[p]}\|_F.\label{eq:relative-2}
\end{align}
Here the two equalities {follow} from \eqref{eq:linear-polar} and \eqref{eq:linear-symmetry} respectively, the penultimate inequality\footnote{Since $U$ is orthonormal, we must have
\[
\|UAU\|_F^2=\|AU\|_F^2=\langle AUU^\tp,A\rangle\leq \|A\|_F^2.
\]
}
follows from the fact that $U^{(i)}_{[p+1]}\in V(r,n_i)$ and the last inequality is obtained by combining \eqref{eq:relative-gamma0} and the relation
\[
\|(U^{(i)}_{[p]})^\tp U^{(i)}_{[p+1]}-I_r\|_F\leq \|U^{(i)}_{[p]}-U^{(i)}_{[p+1]}\|_F,
\]
which is obtained by a direct application of Lemma~\ref{lem:distance}.

Combining \eqref{eq:relative-gamma0}, \eqref{eq:relative-2} and the fact that $\nabla_i f( U_{[p+1]})=2V^{(i)}\Lambda$ obtained in $\eqref{eq:partial gradient}$, we arrive at the following estimate:
\begin{align}
&\quad\ \frac{1}{2}\|\nabla_i f( U_{[p+1]})-U^{(i)}_{[p+1]}(\nabla_i f( U_{[p+1]}))^\tp U^{(i)}_{[p+1]}\|_F\nonumber\\
&=\|V^{(i)}\Lambda -U^{(i)}_{[p+1]}(V^{(i)}\Lambda)^\tp U^{(i)}_{[p+1]}\|_F\nonumber\\
&=\|W^{(i)}_{[p+1]}+V^{(i)}_{[p+1]}\Lambda^{(i)}_{[p+1]}+{\alpha_{i,[p+1]}}\big(U^{(i)}_{[p]}-U^{(i)}_{[p+1]}\big)-U^{(i)}_{[p+1]}(V^{(i)}\Lambda)^\tp U^{(i)}_{[p+1]}\|_F\nonumber\\
&\leq \|W^{(i)}_{[p+1]}\|_F+{\alpha_{i,[p+1]}}{\|U^{(i)}_{[p]}-U^{(i)}_{[p+1]}\|_F}+\|V^{(i)}_{[p+1]}\Lambda^{(i)}_{[p+1]}-U^{(i)}_{[p+1]}(V^{(i)}\Lambda)^\tp U^{(i)}_{[p+1]}\|_F\nonumber\\
&\leq \eta_2\| U_{[p]}- U_{[p+1]}\|_F+\|V^{(i)}_{[p+1]}\Lambda^{(i)}_{[p+1]}-U^{(i)}_{[p+1]}(V^{(i)}\Lambda)^\tp U^{(i)}_{[p+1]}\|_F\nonumber\\
&\leq \eta_3 \| U_{[p+1]}- U_{[p]}\|_F,\label{eq:relative-1}
\end{align}
where $\eta_2= \gamma_0+\epsilon$ and $\eta_3\coloneqq \eta_1+\eta_2>0$. The desired inequality easily follows from \eqref{eq:relative-1} {with $\gamma_1:=2\eta_3$}.
\end{proof}
\begin{lemma}[Relative Error from ALS]\label{lem:gradient-diff-ob}
There exists a constant $\gamma_2>0$ such that
\[
\|\nabla_i f( U_{[p+1]})-U^{(i)}_{[p+1]}\diag_2\big((\nabla_i f( U_{[p+1]}))^\tp U^{(i)}_{[p+1]}\big)\|_F \leq \gamma_2 \| U_{[p]}- U_{[p+1]}\|_F
\]
for all $s+1 \le i \le k$ and $p\in \mathbb{N}$.
\end{lemma}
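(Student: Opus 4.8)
The plan is to mirror the proof of Lemma~\ref{lem:gradient-diff}, exploiting the fact that for the factors $s+1 \le i \le k$ the ALS update is governed by the least-squares optimality relation \eqref{eq:opt-k}, which makes the computation considerably shorter than in the Stiefel case. First I would record that $\nabla_i f(U_{[p+1]}) = 2V^{(i)}\Lambda$ by \eqref{eq:partial gradient}, and observe that the quantity on the left-hand side of the claimed inequality is precisely $T_{U^{(i)}_{[p+1]}}(\nabla_i f(U_{[p+1]}))$, where for $U\in\B(r,n_i)$ I write $T_U(X) := X - U\diag_2(X^\tp U)$ — this is the column-wise orthogonal projection onto $\operatorname{T}_{\OB(r,n_i)}(U)$ that already appears in \eqref{eq:oblique-tangent} and \eqref{eq:gradient-ui-ob}.

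Next I would isolate two elementary facts about $T_U$. It is a contraction in Frobenius norm, because the $j$-th column of $T_U(X)$ is $(I_{n_i} - \mathbf u_j\mathbf u_j^\tp)\mathbf x_j$ with $\mathbf u_j$ a unit vector, so each column of $T_U(X)$ is an orthogonal projection of the corresponding column of $X$ onto a hyperplane. And $T_U$ annihilates any matrix of the form $UD$ with $D$ diagonal, again because the columns of $U$ are unit vectors and hence $\diag_2((UD)^\tp U) = D$. Combining the second fact with \eqref{eq:opt-k}, namely $V^{(i)}_{[p+1]}\Lambda^{(i+1)}_{[p+1]} = U^{(i)}_{[p+1]}(\Lambda^{(i+1)}_{[p+1]})^2$, gives $T_{U^{(i)}_{[p+1]}}(V^{(i)}_{[p+1]}\Lambda^{(i+1)}_{[p+1]}) = 0$.

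Then I would introduce $W^{(i)}_{[p+1]} := -V^{(i)}\Lambda + V^{(i)}_{[p+1]}\Lambda^{(i+1)}_{[p+1]}$ as in \eqref{eq:w-k}, so that $\nabla_i f(U_{[p+1]}) = 2(V^{(i)}_{[p+1]}\Lambda^{(i+1)}_{[p+1]} - W^{(i)}_{[p+1]})$. Applying the linear map $T_{U^{(i)}_{[p+1]}}$ kills the first term, leaving $-2T_{U^{(i)}_{[p+1]}}(W^{(i)}_{[p+1]})$, and the contraction property bounds its Frobenius norm by $2\|W^{(i)}_{[p+1]}\|_F$; then \eqref{eq:subdif-est-als} finishes the estimate with $\gamma_2 := 4r\sqrt{k}\|\mathcal{A}\|^2$. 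There is no real obstacle here — the only points requiring care are the $\diag_2$ bookkeeping that makes $T_U$ annihilate $UD$ (which rests on $U^{(i)}_{[p+1]}\in\B(r,n_i)$), and the treatment of truncation iterations: as in the proof of Lemma~\ref{lem:subdiff}, \eqref{eq:subdif-est-als} is stated for non-truncation iterations, but by Proposition~\ref{prop:positive} these are all but finitely many $p$, and for the remaining ones the identical computation applies with the current (smaller) column count, so the constant $\gamma_2$ can be chosen uniformly in $p$.
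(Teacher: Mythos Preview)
Your argument is correct. Both your proof and the paper's rest on the same two ingredients: the ALS optimality relation \eqref{eq:opt-k}, which rewrites $V^{(i)}_{[p+1]}\Lambda^{(i+1)}_{[p+1]}$ as $U^{(i)}_{[p+1]}(\Lambda^{(i+1)}_{[p+1]})^2$, and the bound \eqref{eq:subdif-est-als} on $W^{(i)}_{[p+1]}=V^{(i)}_{[p+1]}\Lambda^{(i+1)}_{[p+1]}-V^{(i)}\Lambda$. The difference lies in how the target quantity is handled after writing $V^{(i)}\Lambda = V^{(i)}_{[p+1]}\Lambda^{(i+1)}_{[p+1]} - W^{(i)}_{[p+1]}$. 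The paper applies the triangle inequality and then bounds the cross term $\|V^{(i)}_{[p+1]}\Lambda^{(i+1)}_{[p+1]} - U^{(i)}_{[p+1]}\diag_2\big((V^{(i)}\Lambda)^\tp U^{(i)}_{[p+1]}\big)\|_F$ by an explicit diagonal computation, reducing it to $\sum_j\bigl((\lambda^i_{j,[p+1]})^2-(\lambda^k_{j,[p+1]})^2\bigr)^2$ and invoking an estimate of the type \eqref{eq:sub-linear-4}. You instead view the left-hand side as $\|T_{U^{(i)}_{[p+1]}}(2V^{(i)}\Lambda)\|_F$ for the column-wise oblique projection $T_U$, observe that $T_U$ is linear, annihilates $U^{(i)}_{[p+1]}(\Lambda^{(i+1)}_{[p+1]})^2$, and is a Frobenius contraction; this kills the first summand outright and bounds the remainder by $2\|W^{(i)}_{[p+1]}\|_F$ without any intermediate estimate. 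Your route is shorter, yields the explicit constant $\gamma_2=4r\sqrt{k}\|\mathcal A\|^2$, and makes transparent why the ALS case is easier than the Stiefel case in Lemma~\ref{lem:gradient-diff}.
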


\begin{proof}
For each $s+1 \leq i\leq k$ and $p\in \mathbb{N}$, we recall from \eqref{eq:lambda-orth}, \eqref{eq:matrix-vi} and \eqref{eq:matrix-lambda} respectively that 
\begin{align*}
\Lambda^{(i+1)}_{[p+1]} & =\diag_2(\lambda^i_{1,[p+1]},\dots,\lambda^i_{r,[p+1]}), \\
V^{(i)} &= \begin{bmatrix}\mathbf v^{(i)}_1&\dots&\mathbf v^{(i)}_r\end{bmatrix}, \\ 
\Lambda &= {\operatorname{diag}_2}(\lambda_1,\dots,\lambda_r).
\end{align*}
We observe that $V^{(i)} = [\mathbf{v}^{(i)}_1,\dots, \mathbf{v}^{(i)}_r]$ is obtained from $U_{[p+1]}$ as $\mathbf v^{(i)}_j=\A\tau_i(\x_j)$
and hence
\[
\langle \mathbf v^{(i)}_j, \mathbf u^{(i)}_{j,[p+1]}\rangle=\A\tau(\x_j)=\lambda^k_{j,[p+1]}.
\]
This implies 
\begin{align}
&\quad\ \| V^{(i)}_{[p+1]}\Lambda^{(i+1)}_{[p+1]}-U^{(i)}_{[p+1]}\diag_2\big((V^{(i)}\Lambda)^\tp U^{(i)}_{[p+1]}\big)\|^2_F\nonumber\\
&=\|U^{(i)}_{[p+1]}(\Lambda^{(i+1)}_{[p+1]})^2-U^{(i)}_{[p+1]}\diag_2\big((V^{(i)}\Lambda)^\tp U^{(i)}_{[p+1]}\big)\|^2_F\nonumber \\
&=\|(\Lambda^{(i+1)}_{[p+1]})^2-\diag_2\big((V^{(i)}\Lambda)^\tp U^{(i)}_{[p+1]}\big)\|^2_F\nonumber\\
&=\sum_{j=1}^r((\lambda^i_{j,[p+1]})^2-(\lambda^k_{j,[p+1]})^2)^2\nonumber  \\
&\leq \xi \| U_{[p+1]}- U_{[p]}\|_F^2,\label{eq:relative-2-ob}
\end{align}
where $\xi>0$ is some constant, the first equality follows from \eqref{eq:opt-k} and the last
inequality follows from \if Lemma~\ref{lem:lambda-k} or \fi
{a similar argument as that for \eqref{eq:sub-linear-4}}. Moreover, Lemma~\ref{lem:subdiff} implies  that there exists some constant $\gamma_0>0$ such that
\[
{\|W^{(i)}_{[p+1]}\|_F}\leq \gamma_0 \| U_{[p]}- U_{[p+1]}\|_F,
\]
where
\[
W^{(i)}_{[p+1]}:=V^{(i)}\Lambda -V^{(i)}_{[p+1]}\Lambda^{(i+1)}_{[p+1]}.
\]

As a consequence, we may derive from \eqref{eq:relative-2-ob} that
\begin{align}
&\quad\ \frac{1}{2}\|\nabla_i f(U_{[p+1]})-U^{(i)}_{[p+1]}\diag_2\big((\nabla_i f(U_{[p+1]}))^\tp U^{(i)}_{[p+1]}\big)\|_F\nonumber\\
&=\|V^{(i)}\Lambda -U^{(i)}_{[p+1]}\diag_2\big((V^{(i)}\Lambda)^\tp U^{(i)}_{[p+1]}\big)\|_F\nonumber\\
&=\|W^{(i)}_{[p+1]}+V^{(i)}_{[p+1]}\Lambda^{(i+1)}_{[p+1]}-U^{(i)}_{[p+1]}\diag_2\big((V^{(i)}\Lambda)^\tp U^{(i)}_{[p+1]}\big)\|_F\nonumber\\
&\leq \|W^{(i)}_{[p+1]}\|_F+\|V^{(i)}_{[p+1]}\Lambda^{(i+1)}_{[p+1]}-U^{(i)}_{[p+1]}\diag_2\big((V^{(i)}\Lambda)^\tp U^{(i)}_{[p+1]}\big)\|_F\nonumber\\
&\leq (\gamma_0+\xi)\| U_{[p]}- U_{[p+1]}\|_F,\label{eq:relative-1-ob}
\end{align}
which concludes the proof {with $\gamma_2:=2(\gamma_0+\xi)$}.
\end{proof}

\begin{lemma}[\L ojasiwicz's Inequality]\label{lem:gradient}
If $( U^*,\mathcal{D}^*)$ is a nondegenerate KKT point of problem \eqref{eq:sota}, then there exist $\mu,\eta>0$ such that
\begin{equation}\label{eq:gradient-ineq}
\sum_{i=1}^s\|\nabla_i f( U)-U^{(i)}\nabla_i (f( U))^\tp U^{(i)}\|_F^2+
\sum_{i=s+1}^k\|\nabla_i f( U)-U^{(i)}{\diag_2}(\nabla_i (f( U))^\tp U^{(i)})\|_F^2\geq \mu |f( U)-f( U^*)|
\end{equation}
for any {$ U\in\V(r,n_1)\times\dots\times\V(r,n_s)\times\B(r,n_{s+1})\times\dots\times\B(r,n_k)$ satisfying $\| U- U^*\|_F\leq\eta$}. \if Here $f$ is the objective function of problem \eqref{eq:sota-max} defined by \eqref{eq:objective}. \fi
\end{lemma}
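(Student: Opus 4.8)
The plan is to reduce this to a standard \L ojasiewicz gradient inequality on a manifold, using the local diffeomorphism established earlier. First I would recall from Lemma~\ref{lem:grand} that for $A\in\V(r,n)$ and a smooth $\phi$, the quantity $\nabla\phi(A)-A(\nabla\phi(A))^\tp A$ is, up to the projection factor $(I-\tfrac12 AA^\tp)$ appearing in \eqref{eq:tangent-form}, essentially the Riemannian gradient $\operatorname{grad}(\phi)(A)$; and similarly from Lemma~\ref{lem:grand-oblique} that for $A\in\OB(r,n)$ the quantity $\nabla\phi(A)-A\diag_2((\nabla\phi(A))^\tp A)$ agrees with $\operatorname{grad}(\phi)(A)$. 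Hence the left-hand side of \eqref{eq:gradient-ineq} is, up to bounded multiplicative constants coming from these projectors, equal to $\|\operatorname{grad}(g)(U,\x)\|^2$ where $g$ is the function in \eqref{eq:objective-p} and $\x=\Diag_k(((U^{(1)})^\tp,\dots,(U^{(k)})^\tp)\cdot\mathcal A)$ is the (unique) critical value of the $\lambda$-variables determined by $U$, exactly as in the proof of Proposition~\ref{prop:critical-equivalence}. Note that the $\mathbf x$-component of $\operatorname{grad}(g)$ is $\x-\Diag_k(\dots)$ by \eqref{eq:gradient-x}, which vanishes identically for this choice of $\x$, so restricting to such $(U,\x)$ loses nothing and the two norms genuinely coincide up to constants.

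Next I would invoke Proposition~\ref{prop:lojasiewicz}, the \L ojasiewicz gradient inequality at a nondegenerate critical point with exponent $1/2$. Since $(U^*,\mathcal D^*)$ is by hypothesis a nondegenerate KKT point of \eqref{eq:sota}, Definition~\ref{def:nondegenerate} says $(U^*,\x^*)\in W_{\mathbf n,r,\ast}$ is a nondegenerate critical point of $g$ on the manifold $W_{\mathbf n,r,\ast}$ with $\diag_k(\x^*)=\mathcal D^*$. Therefore Proposition~\ref{prop:lojasiewicz} gives a neighbourhood $U'\subseteq W_{\mathbf n,r,\ast}$ of $(U^*,\x^*)$ and $\nu>0$ with $\|\operatorname{grad}(g)(U,\x)\|\geq \nu|g(U,\x)-g(U^*,\x^*)|^{1/2}$ for all $(U,\x)\in U'$, i.e. $\|\operatorname{grad}(g)(U,\x)\|^2\geq \nu^2|g(U,\x)-g(U^*,\x^*)|$. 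I would then translate $g$ back to $f$: since $g(U,\x)=\tfrac12\|\mathcal A\|^2-\tfrac12 f(U)$ whenever $\x$ is the critical choice above (by the computation in the proof of Proposition~\ref{prop:sota-max}), we get $|g(U,\x)-g(U^*,\x^*)|=\tfrac12|f(U)-f(U^*)|$, and combining with the bounded comparison between $\|\operatorname{grad}(g)\|^2$ and the left-hand side of \eqref{eq:gradient-ineq} yields the claim with $\mu$ a suitable multiple of $\nu^2$ and $\eta$ small enough that $\|U-U^*\|_F\le\eta$ forces $(U,\x)\in U'$ (here one uses that $\x$ depends continuously — indeed polynomially — on $U$, so a Euclidean ball around $U^*$ in the factor-matrix variables maps into any prescribed neighbourhood of $(U^*,\x^*)$).

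One technical point to handle carefully is the domain: the statement asks for the inequality over $\V(r,n_1)\times\cdots\times\B(r,n_k)$, which is larger than $W_{\mathbf n,r,\ast}$ (it allows rank-deficient oblique factors and does not exclude $\lambda_j=0$). But this is harmless because the conclusion is only required in an $\eta$-ball around $U^*$, and $U^*$ lies in the open submanifold $W_{\mathbf n,r,\ast}$ of $V_{\mathbf n,r}$ (a smooth primitive KKT point lies in $W_{\mathbf n,r,\ast}$, as remarked before Theorem~\ref{thm:kkt}); shrinking $\eta$ keeps the ball inside $W_{\mathbf n,r,\ast}$, so the manifold structure and Proposition~\ref{prop:lojasiewicz} apply. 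A second point is that the projector $(I-\tfrac12 U^{(i)}(U^{(i)})^\tp)$ in \eqref{eq:tangent-form} is invertible on $\mathbb R^{n_i\times r}$ with eigenvalues in $[\tfrac12,1]$, uniformly, so passing between $\operatorname{grad}_{U^{(i)}}g$ and $\nabla_i f(U)-U^{(i)}(\nabla_i f(U))^\tp U^{(i)}$ costs only a fixed constant; this is the one routine calculation I would spell out. The main obstacle is really just bookkeeping — making sure the identification of the left-hand side of \eqref{eq:gradient-ineq} with $\|\operatorname{grad}(g)\|^2$ is done cleanly across the Stiefel blocks ($i\le s$) and the oblique blocks ($i>s$) simultaneously, and confirming that restricting to the critical $\x$ does not weaken the inequality — rather than any deep difficulty, since Proposition~\ref{prop:lojasiewicz} does all the analytic work.
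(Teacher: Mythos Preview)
Your proposal is correct and follows essentially the same route as the paper: pick $\mathbf{x}=\Diag_k(((U^{(1)})^\tp,\dots,(U^{(k)})^\tp)\cdot\mathcal A)$ so that $\operatorname{grad}_{\mathbf x}g$ vanishes, apply Proposition~\ref{prop:lojasiewicz} at the nondegenerate critical point $(U^*,\mathbf x^*)$ of $g$ on $W_{\mathbf n,r,\ast}$, convert $|g-g^*|=\tfrac12|f-f^*|$, and absorb the Stiefel projector $(I-\tfrac12 U^{(i)}(U^{(i)})^\tp)$ into the constant. Your eigenvalue observation $[\tfrac12,1]$ for that projector is in fact sharper than the paper's appeal to continuity, and your explicit remark that a small $\eta$-ball around $U^*$ stays inside $W_{\mathbf n,r,\ast}$ (so the $\B$ versus $\OB$ distinction is moot) is a point the paper leaves implicit.
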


\begin{proof}
Let $\delta>0$ be the radius of the neighborhood given by Proposition~\ref{prop:lojasiewicz}. For a given $ U\in\V(r,n_1)\times\dots\times\V(r,n_s)\times\B(r,n_{s+1})\times\dots\times\B(r,n_k)$, we let $\mathcal{D}$ be the diagonal tensor uniquely determined by the relation:
\[
\operatorname{Diag}_k(\mathcal{D}) = \operatorname{Diag}_k\big(((U^{(1)})^\tp ,\dots,(U^{(k)})^\tp )\cdot\mathcal A\big).
\]
For a KKT point $( U^*,\mathcal{D}^*)$ of \eqref{eq:sota}, by Lemma~\ref{lem:kkt-equiv}, we have  
\[
\operatorname{Diag}_k(\mathcal{D}^*)=\operatorname{Diag}_k\big(((U^{(*,1)})^\tp ,\dots,(U^{(*,k)})^\tp )\cdot\mathcal A\big),
\]
where $ U^*=(U^{(*,1)},\dots,U^{(*,k)})$. It is clear that there exists some $\eta>0$ such that
\begin{equation}\label{eq:upsilon}
\|( U,\mathcal{D})-( U^*,\mathcal{D}^*)\|_F\leq \delta
\end{equation}
whenever $\| U- U^*\|_F\leq \eta$.

We denote $\mathbf x^* \coloneqq \operatorname{Diag}_k(\mathcal{D}^*) $ and $\mathbf x \coloneqq \operatorname{Diag}_k(\mathcal{D})$. Since $(U^\ast,\mathcal{D}^\ast)$ is nondegenerate, Propositions~\ref{prop:critical-equivalence} and \ref{prop:lojasiewicz} imply
the existence of some $\mu_0>0$ such that
\[
\|\operatorname{grad}(g)( U,\mathbf x)\|^2\geq \mu_0 |g( U,\mathbf x)-g( U^*,\mathbf x^*)|,
\]
if $\| U- U^*\|_F\leq \eta$. We observe that
\[
|g( U,\mathbf x)-g( U^*,\mathbf x^*)|= {\frac{1}{2}} |f( U)-f( U^*)|.
\]
In fact, we have $f( U)=\|\mathbf x\|^2$ and
\begin{align*}
g( U,\mathbf x)&=\frac{1}{2}\|\mathcal A-(U^{(1)},\dots,U^{(k)})\cdot\operatorname{diag}_k(\mathbf x)\|^2\\
&=\frac{1}{2}\|\mathcal A\|^2-\langle \mathcal A,(U^{(1)},\dots,U^{(k)})\cdot\operatorname{diag}_k(\mathbf x)\rangle+\frac{1}{2}\|\mathbf x\|^2\\
&=\frac{1}{2}\|\mathcal A\|^2-\langle\operatorname{Diag}_k{\big(((U^{(1)})^\tp ,\dots,(U^{(k)})^\tp )}\cdot\mathcal A\big),\mathbf x\rangle+\frac{1}{2}\|\mathbf x\|^2\\
&=\frac{1}{2}(\|\mathcal A\|^2-\|\mathbf x\|^2).
\end{align*}
By {\eqref{eq:gradient-x}} and the definition of $\mathbf x$, we also have
\[
\operatorname{grad}_{\mathbf x}g( U,\mathbf x)=\mathbf x-\operatorname{Diag}_k\big(((U^{(1)})^\tp ,\dots,(U^{(k)})^\tp )\cdot\mathcal A\big)=0.
\]
Since $\nabla_i f( U) =2 V^{(i)}\Gamma, 1 \le i \le k$ and
\begin{align*}
\operatorname{grad}_{U^{(i)}}g( U,\mathbf x) &=-(I-\frac{1}{2}U^{(i)}(U^{(i)})^\tp )(V^{(i)}\Gamma-U^{(i)}(V^{(i)}\Gamma)^\tp  U^{(i)}),\quad 1\le i \le s, \\
\operatorname{grad}_{U^{(i)}}g( U,\mathbf x) &=-\Big(V^{(i)}\Gamma-U^{(i)}{\diag_2}\big((U^{(i)})^\tp V^{(i)}\Gamma\big)\Big),\quad  s+1 \le i \le k,
\end{align*}
where $\Gamma={\operatorname{diag}_2(\mathbf x)}$, {the assertion will follow} if we can show that
\[
\|I-\frac{1}{2}U^{(i)}(U^{(i)})^\tp \|_F\leq \mu_1
\]
is uniformly bounded by some $\mu_1>0$ for $U$ such that $\| U- U^*\|_F\leq \eta$. But this is obviously true by continuity and the proof is complete.
\end{proof}

\begin{lemma}[R-Linear Convergence Rate]\label{lem:linear}
Let $\{ U_{[p]}\}$ be {a sequence} generated by Algorithm~\ref{algo} for a given nonzero tensor $\mathcal A\in\mathbb R^{n_1}\otimes\dots\otimes\mathbb R^{n_k}$.
If $\{ U_{[p]}\}$ converges to a nondegenerate KKT point $ U^*$ of \eqref{eq:sota-max}, then it converges $R$-linearly.
\end{lemma}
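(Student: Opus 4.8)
The plan is to combine the \textbf{sufficient increase} property (Proposition~\ref{prop:subfficient}), the \textbf{relative error estimates} (Lemmas~\ref{lem:gradient-diff} and~\ref{lem:gradient-diff-ob}), and the \textbf{Polyak-\L ojasiewicz inequality at a nondegenerate KKT point} (Lemma~\ref{lem:gradient}) in the standard way that yields $R$-linear convergence of a descent method whose \L ojasiewicz exponent is $\tfrac12$. First I would set $\beta_p \coloneqq f(U^*) - f(U_{[p]}) \ge 0$; since $\{U_{[p]}\}$ converges to $U^*$, for all sufficiently large $p$ (say $p \ge p_0$) we have $\|U_{[p]} - U^*\|_F \le \eta$ with $\eta$ as in Lemma~\ref{lem:gradient}, and we may also assume $p \ge N_0$ so that no truncation occurs and $f(U_{[p]})$ is nondecreasing (Proposition~\ref{prop:positive}). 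By Lemma~\ref{lem:gradient} applied at $U = U_{[p+1]}$, together with Lemmas~\ref{lem:gradient-diff} and~\ref{lem:gradient-diff-ob} which bound each term $\|\nabla_i f(U_{[p+1]}) - U^{(i)}_{[p+1]}(\nabla_i f(U_{[p+1]}))^\tp U^{(i)}_{[p+1]}\|_F$ (resp.\ the $\diag_2$ analogue) by a constant times $\|U_{[p]} - U_{[p+1]}\|_F$, we obtain
\[
\mu\,\beta_{p+1} \le \mu\,|f(U_{[p+1]}) - f(U^*)| \le \gamma\,\|U_{[p+1]} - U_{[p]}\|_F^2
\]
for a constant $\gamma > 0$ collecting $\gamma_1,\gamma_2$ and the number of blocks. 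On the other hand, Proposition~\ref{prop:subfficient} gives
\[
\|U_{[p+1]} - U_{[p]}\|_F^2 \le \frac{2}{\min\{\epsilon,2\kappa^2\}}\bigl(f(U_{[p+1]}) - f(U_{[p]})\bigr) = \frac{2}{\min\{\epsilon,2\kappa^2\}}(\beta_p - \beta_{p+1}).
\]

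Chaining these two inequalities yields $\mu\,\beta_{p+1} \le C(\beta_p - \beta_{p+1})$ for a constant $C > 0$, hence
\[
\beta_{p+1} \le \frac{C}{\mu + C}\,\beta_p =: \theta\,\beta_p, \qquad \theta \in (0,1),
\]
for all $p \ge p_1 := \max\{p_0, N_0\}$, which gives $\beta_p \le \theta^{\,p - p_1}\beta_{p_1}$, i.e.\ Q-linear (geometric) decay of the objective gap. To pass from the objective gap to the iterates I would again use Proposition~\ref{prop:subfficient}: $\|U_{[p+1]} - U_{[p]}\|_F \le \sqrt{\tfrac{2}{\min\{\epsilon,2\kappa^2\}}}\,\sqrt{\beta_p - \beta_{p+1}} \le \sqrt{\tfrac{2}{\min\{\epsilon,2\kappa^2\}}}\,\sqrt{\beta_p} \le C'\,\theta^{(p-p_1)/2}$, so the consecutive-iterate differences are summable with a geometric tail; summing from $p$ to $\infty$ and using the triangle inequality together with $U_{[q]} \to U^*$ gives $\|U_{[p]} - U^*\|_F \le \sum_{q \ge p}\|U_{[q+1]} - U_{[q]}\|_F \le \dfrac{C'}{1 - \sqrt{\theta}}\,\theta^{(p-p_1)/2}$, which is exactly $R$-linear convergence of $\{U_{[p]}\}$ with rate $\sqrt{\theta}$.

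A few points need care. The application of Lemma~\ref{lem:gradient} requires $U_{[p+1]}$ to lie in the stated product manifold and within the $\eta$-neighbourhood of $U^*$; the former holds by construction of Algorithm~\ref{algo} once no further truncation occurs (so the number of columns has stabilized at the value matching $U^*$), and the latter holds for $p$ large by Theorem~\ref{thm:global}. One must also ensure the constants $\gamma_1,\gamma_2$ from Lemmas~\ref{lem:gradient-diff},~\ref{lem:gradient-diff-ob} and $\mu$ from Lemma~\ref{lem:gradient} are the ones associated with the \emph{reduced} problem mLRPOTA$(r')$ in which the column count has stabilized — this is legitimate because after finitely many iterations Algorithm~\ref{algo} is genuinely running on the reduced parameter space. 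The main obstacle, such as it is, is bookkeeping rather than a deep difficulty: one must verify that the relative-error lemmas are stated for the running iterates (they are, for all $p$) and that the \L ojasiewicz inequality is invoked at the \emph{new} iterate $U_{[p+1]}$ while the sufficient-increase inequality naturally involves both $U_{[p]}$ and $U_{[p+1]}$, so the algebra lines up to give $\beta_{p+1} \lesssim \beta_p - \beta_{p+1}$ rather than $\beta_p \lesssim \beta_p - \beta_{p+1}$ (the latter would be vacuous). Once this alignment is in place the rest is the classical Q-linear-to-$R$-linear argument as in \cite[Theorem~3.4]{HL-18} and \cite{HY-19}.
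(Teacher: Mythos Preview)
Your proposal is correct and follows essentially the same route as the paper's own proof: combine the relative-error bounds (Lemmas~\ref{lem:gradient-diff} and~\ref{lem:gradient-diff-ob}) with the Polyak--\L ojasiewicz inequality (Lemma~\ref{lem:gradient}) to control the objective gap by $\|U_{[p+1]}-U_{[p]}\|_F^2$, then feed this into the sufficient-increase estimate (Proposition~\ref{prop:subfficient}) to obtain $Q$-linear decay of $\beta_p$, and finally sum the resulting geometric tail to get $R$-linear convergence of the iterates. The only cosmetic difference is an index shift---the paper applies Lemma~\ref{lem:gradient} at $U_{[p]}$ and pairs it with $\|U_{[p]}-U_{[p-1]}\|_F$, whereas you apply it at $U_{[p+1]}$---and your bookkeeping remarks about truncation having stopped and the column count being stable are exactly the right caveats.
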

\begin{proof}
%
For a sufficiently large $p$, Lemma~\ref{lem:gradient} implies that
\small
\[
\sum_{i=1}^s\|\nabla_i f( U_{[p]})-U^{(i)}_{[p]}\nabla_i (f( U_{[p]}))^\tp U^{(i)}_{[p]}\|_F^2+
\sum_{i=s+1}^k\|\nabla_i f( U_{[p]})-U^{(i)}_{[p]}\diag_2(\nabla_i (f( U_{[p]}))^\tp U^{(i)}_{[p]})\|_F^2\geq \mu |f( U_{[p]})-f( U^*)|
\]
\normalsize
On the other hand, by Lemma~\ref{lem:gradient-diff}, we have
\[
\sum_{i=1}^s\|\nabla_i f( U_{[p]})-U^{(i)}_{[p]}\nabla_i (f( U_{[p]}))^\tp U^{(i)}_{[p]}\|_F^2\leq s\gamma^2_1\| U_{[p]}- U_{[p-1]}\|_F^2,
\]
and by Lemma~\ref{lem:gradient-diff-ob}, we have
\[
\sum_{i=s+1}^k\|\nabla_i f( U_{[p]})-U^{(i)}_{[p]}{\diag_2}\big((\nabla_i f( U_{[p]}))^\tp U^{(i)}_{[p]}\big)\|_F^2 \leq (k-s)\gamma_2^2 \| U_{[p]}- U_{[p-1]}\|_F^2.
\]

We denote $\nu:=\min\{\epsilon,2\kappa^2\}$ and $\gamma:=\max\{\gamma_1,\gamma_2\}$. We observe that
\[
f(U_{[p]})-f(U_{[p-1]})\geq \frac{{\nu}}{2}\|U_{[p]}-U_{[p-1]}\|_F^2\geq \frac{\mu{\nu}}{2k\gamma^2}(f(U^*)-f(U_{[p]})),
\]
where the first inequality follows from Proposition~\ref{prop:subfficient}, and the second follows from the preceding two inequalities together with Proposition~\ref{prop:monotone}.
Hence for a sufficiently large $p$, we have
\begin{equation}\label{eq:linear-obj}
f( U^*)-f( U_{[p]})\leq \frac{2k\gamma^2}{2k\gamma^2+\mu {\nu}}\big(f( U^*)-f( U_{[p-1]})\big),
\end{equation}
which establishes the local $Q$-linear convergence of the sequence $\lbrace f({U}_{\left[ p \right]}) \rbrace$. Consequently, using the fact that $\lbrace f({U}_{\left[ p \right]}) \rbrace$ is non-decreasing and \eqref{eq:linear-obj}, we may derive
\begin{align*}
\| U_{[s]}- U_{[s-1]}\|_F&\leq\sqrt{\frac{2}{{\nu}}}\sqrt{f( U_{[s]})-f( U_{[s-1]})}\\
&\leq \sqrt{\frac{2}{{\nu}}}\sqrt{f( U^*)-f( U_{[s-1]})} \\
&\leq \sqrt{\frac{2}{{\nu}}}\Bigg[\sqrt{\frac{2k\gamma^2}{2k\gamma^2+\mu{\nu}}}\Bigg]^{s-1}\sqrt{f( U^*)-f( U_{[0]})},
\end{align*}
which implies that
\[
\sum_{s=p}^\infty\| U_{[s]}- U_{[s-1]}\|_F<\infty,
\]
for any sufficiently large positive integer $p$.
Since $ U_{[s]}\rightarrow  U^*$ as $s \to \infty$, we have
\[
\| U_{[p]}- U^*\|_F\leq \sum_{s=p}^\infty\| U_{[s+1]}- U_{[s]}\|_F.
\]
{Hence, we obtain}
\[
\| U_{[p]}- U^*\|_F\leq\sqrt{\frac{2}{{\nu}}}\sqrt{f( U^*)-f( U_{[0]})}\frac{1}{1-\sqrt{\frac{2k\gamma^2}{2k\gamma^2+\mu{\nu}}}}\Big[\sqrt{\frac{2k\gamma^2}{2k\gamma^2+\mu{\nu}}}\Big]^{p},
\]
which is the claimed $R$-linear convergence of the sequence $\{ U_{[p]}\}$ {and this completes the proof.}
\end{proof}

We are now in the position to prove the R-linear convergence rate of Algorithm~\ref{algo}.
\begin{theorem}[Generic Linear Convergence]\label{thm:generic}
Let $s\geq 1$ and let $\{ U_{[p]}\}$ be a sequence generated by Algorithm~\ref{algo} for a generic tensor $\mathcal A\in\mathbb R^{n_1}\otimes\dots\otimes\mathbb R^{n_k}$. If $s = 1$, assume in addition that \eqref{lem:defectivity s=1:eq01} and \eqref{lem:defectivity s=1:eq02} hold. The sequence $\{ U_{[p]}\}$ converges $R$-linearly to a KKT point of \eqref{eq:sota-max}.
\end{theorem}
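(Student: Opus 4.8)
The strategy is to identify the limit of the iteration sequence as a \emph{nondegenerate} KKT point and then invoke the local $R$-linear rate. By Theorem~\ref{thm:global}, any sequence $\{U_{[p]}\}$ generated by Algorithm~\ref{algo} is bounded and converges to some $U^*$ that is a KKT point of \eqref{eq:sota}, hence, by Lemma~\ref{lem:kkt-equiv}, a KKT point of \eqref{eq:sota-max}. By Lemma~\ref{lem:linear} it therefore suffices to prove that, for a generic $\mathcal{A}$, this limiting KKT point is nondegenerate in the sense of Definition~\ref{def:nondegenerate}, i.e.\ the associated point $(U^*,\mathbf{x}^*)$ with $\operatorname{diag}_k(\mathbf{x}^*)=\mathcal{D}^*$ lies in $W_{\mathbf{n},t,\ast}$ and is a nondegenerate critical point of $g$ for the appropriate reduced rank $t$.

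First I would pin down which problem $U^*$ is a KKT point of. By Proposition~\ref{prop:positive} the number of columns of the iteration matrices stabilizes at some positive integer $t\le r$, and there is an $N_0$ after which no truncation occurs. Since the truncation step is never triggered for $p\ge N_0$, the monotonicity of $\{|\lambda^i_{j,[p]}|\}_{i\ge s}$ provided by Lemma~\ref{lem:lambda-k} gives $|\lambda_j(U_{[p]})|=|\lambda^k_{j,[p]}|\ge\kappa>0$ for every surviving index $j$ and every $p\ge N_0$; passing to the limit yields $|\lambda_j(U^*)|\ge\kappa>0$. Consequently $U^*$ is a \emph{primitive} KKT point of mLRPOTA($t$), and $(U^*,\mathbf{x}^*)$ lies in $W_{\mathbf{n},t,\ast}$ as soon as we know its oblique factors have full rank, i.e.\ $(U^*,\mathbf{x}^*)\in W_{\mathbf{n},t}$.

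The genericity is then assembled from finitely many Zariski open dense conditions. For each $1\le t\le r$, the relevant location statement — Proposition~\ref{prop:KKT location s=3} if $s\ge 3$, Proposition~\ref{prop:KKT location s=2} if $s=2$, and Proposition~\ref{prop:KKT location s=1} if $s=1$ (the hypotheses \eqref{lem:defectivity s=1:eq01} and \eqref{lem:defectivity s=1:eq02} are used here, and they are independent of the rank parameter, hence survive the replacement $r\rightsquigarrow t$) — supplies a generic set of tensors for which every KKT point of mLRPOTA($t$) lies in $W_{\mathbf{n},t}$; and Proposition~\ref{prop:nondegnerate} supplies a further generic set for which every critical point of $g$ on $W_{\mathbf{n},t,\ast}$ is nondegenerate. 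Intersecting these $2r$ generic sets yields one generic set $\mathcal{U}$; for $\mathcal{A}\in\mathcal{U}$ the limit $U^*$ (a primitive KKT point of mLRPOTA($t$) for the stabilized $t$) lies in $W_{\mathbf{n},t}$, hence in $W_{\mathbf{n},t,\ast}$, and by Proposition~\ref{prop:critical-equivalence} the associated $(U^*,\mathbf{x}^*)$ is a nondegenerate critical point of $g$. Thus $U^*$ is a nondegenerate KKT point of \eqref{eq:sota-max}, and Lemma~\ref{lem:linear} gives the $R$-linear convergence of $\{U_{[p]}\}$.

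The main obstacle will be the bookkeeping around the truncations: one must make rigorous that the limiting point is primitive and sits inside the smooth open stratum $W_{\mathbf{n},t,\ast}$ rather than on its boundary or on the singular locus, which requires carrying the lower bound $|\lambda_j(U_{[p]})|\ge\kappa$ through the passage to the limit while keeping track of the \emph{a posteriori} value of the stabilized column count $t$. A secondary point of care is that each genericity statement is formulated for a \emph{fixed} rank, so one must be explicit that their intersection over the finitely many candidate ranks $t=1,\dots,r$ is still generic and covers whichever $t$ the dynamics happens to select. Once these are settled, the remaining steps are direct applications of Theorems~\ref{thm:global} and Lemma~\ref{lem:linear}.
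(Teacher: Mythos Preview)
Your proposal is correct and follows essentially the same route as the paper: invoke Theorem~\ref{thm:global} for global convergence, use the KKT-location Propositions~\ref{prop:KKT location s=3}, \ref{prop:KKT location s=2}, \ref{prop:KKT location s=1} together with Proposition~\ref{prop:nondegnerate} and Proposition~\ref{prop:critical-equivalence} to conclude that the limit is a nondegenerate critical point of $g$ on $W_{\mathbf{n},t,\ast}$ for the stabilized column count $t$, and then apply Lemma~\ref{lem:linear}. Your explicit bookkeeping---carrying the bound $|\lambda_j(U_{[p]})|\ge\kappa$ through the limit to force primitivity, and intersecting the finitely many generic conditions over $t=1,\dots,r$---fills in details that the paper's proof leaves to the reader, but the skeleton is identical.
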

\begin{proof}
By Theorem~\ref{thm:global}, the sequence $\{ U_{[p]}\}$ converges globally to $ U^*$. Moreover, since $\mathcal{A}$ is generic, Propositions~\ref{prop:nondegnerate}, \ref{prop:critical-equivalence}, \ref{prop:KKT location s=3}, \ref{prop:KKT location s=2} and \ref{prop:KKT location s=1} imply that $ U^*$ together with
\[
\mathbf x^* \coloneqq \operatorname{Diag}_k\big(((U^{(*,1)})^\tp ,\dots,(U^{(*,k)})^\tp )\cdot\mathcal A\big)
\]
is a nondegenerate critical point of the function $g$ on $W_{\mathbf n,t,\ast}$ for some $0\le t \le r$. According to Lemma~\ref{lem:linear}, we conclude that $\{ U_{[p]}\}$ converges $R$-linearly to $ U^*$.
\end{proof}
Note that by Corollary~\ref{cor:KKT location s=1}, the requirement for \eqref{lem:defectivity s=1:eq01} and \eqref{lem:defectivity s=1:eq02} can be removed if $s=1, k\ge 3$ and $n_1 = \cdots = n_k$. Hence we obtain the following
\begin{theorem}[{Square Tensors}]\label{thm:generic-square}
Assume that $k\ge 3$ and $n_1 = \cdots =n_k \ge 2$. For a generic $\mathcal{A}$, the sequence $\{ U_{[p]}\}$ generated by Algorithm~\ref{algo} converges $R$-linearly to a KKT point of \eqref{eq:sota-max}.
\end{theorem}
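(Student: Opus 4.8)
The plan is to obtain Theorem~\ref{thm:generic-square} as a direct specialization of Theorem~\ref{thm:generic}, the point being that the only hypothesis of Theorem~\ref{thm:generic} that is not automatic in the square case is the pair of inequalities \eqref{lem:defectivity s=1:eq01}--\eqref{lem:defectivity s=1:eq02}, which enters the argument only when $s=1$. First I would split according to whether $s\ge 2$ or $s=1$. If $s\ge 2$, Theorem~\ref{thm:generic} applies with no extra assumption and the conclusion is immediate. If $s=1$, then by hypothesis $k\ge 3$ and $n_1=\cdots=n_k\ge 2$, so I would invoke Corollary~\ref{cor:KKT location s=1} in place of Proposition~\ref{prop:KKT location s=1}: it asserts precisely that, under these square-tensor hypotheses and with no further numerical restriction, all KKT points of \eqref{lem:KKT parameter:optproblem} lie in $W_{\mathbf{n},r}$ for a generic $\mathcal{A}$.

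Granting the localization of KKT points in $W_{\mathbf{n},r}$, the remaining steps would repeat the proof of Theorem~\ref{thm:generic} essentially verbatim. By Theorem~\ref{thm:global} every sequence $\{U_{[p]}\}$ generated by Algorithm~\ref{algo} is bounded and converges to a KKT point $U^*$ of problem \eqref{eq:sota}, hence of its maximization reformulation \eqref{eq:sota-max}. Genericity of $\mathcal{A}$ together with Corollary~\ref{cor:KKT location s=1} (when $s=1$) or Propositions~\ref{prop:KKT location s=3} and \ref{prop:KKT location s=2} (when $s\ge 2$) puts $U^*$ inside $W_{\mathbf{n},r}$; the layer structure of $W_{\mathbf{n},r}$ inherited from $Q_s(\mathbf{n},r)$ via Proposition~\ref{prop:smooth}--\eqref{prop:smooth:item6}, combined with the KKT reduction of Corollary~\ref{cor:essential-primitive}, lets me regard $U^*$ as a primitive KKT point lying in $W_{\mathbf{n},t,\ast}$ for some $0\le t\le r$. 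Proposition~\ref{prop:nondegnerate} shows that for generic $\mathcal{A}$ every critical point of the parametrized objective $g$ on $W_{\mathbf{n},t,\ast}$ is nondegenerate, and Proposition~\ref{prop:critical-equivalence} identifies these critical points with KKT points of \eqref{eq:sota}; consequently $U^*$, paired with the diagonal tensor $\mathbf{x}^*\coloneqq\operatorname{Diag}_k\big(((U^{(*,1)})^\tp,\dots,(U^{(*,k)})^\tp)\cdot\mathcal{A}\big)$, is a nondegenerate KKT point. Lemma~\ref{lem:linear} then delivers the $R$-linear convergence of $\{U_{[p]}\}$ to $U^*$.

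Since Corollary~\ref{cor:KKT location s=1} is already established in the excerpt, I expect no serious obstacle: the proof is essentially a matter of assembling the cited results in the right order. The only spot deserving attention is the boundary instance $s=1$, $k=3$, $n_1=n_2=n_3=2$ that is explicitly flagged inside the proof of Corollary~\ref{cor:KKT location s=1}, where the sufficient inequalities \eqref{lem:defectivity s=1:eq01}--\eqref{lem:defectivity s=1:eq02} fail by exactly one unit. Handling it amounts to re-running the bookkeeping of parts (i)--(iii) in the proof of Lemma~\ref{lem:defectivity s=1} with these small explicit values to confirm that one still has $\operatorname{codim}(V_{\mathbf{n},r}(i_1,\dots,i_p;\delta))>\delta$ for every $\delta>0$; this is a short, finite case-check rather than a conceptual difficulty, and it is already subsumed in the statement of Corollary~\ref{cor:KKT location s=1}, so in the final write-up I would simply cite that corollary and conclude.
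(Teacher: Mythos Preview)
Your proposal is correct and matches the paper's approach: the paper derives Theorem~\ref{thm:generic-square} directly from Theorem~\ref{thm:generic} by noting that Corollary~\ref{cor:KKT location s=1} removes the need for \eqref{lem:defectivity s=1:eq01}--\eqref{lem:defectivity s=1:eq02} in the square case when $s=1$, which is exactly your case split and citation chain.
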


{We remark that sublinear convergence rate is proved in Theorem~\ref{thm:sublinear} for the general case, while in Theorem~\ref{thm:generic-square} linear convergence rate is given for a generic case. This can be interpreted via the fact that the local branches of the solution mapping for the best low rank partially orthogonal tensor approximation problem have local Lipschitz continuity whenever the nondegeneracy of the converged KKT point holds \cite{DR-09}. However, the nondegeneracy only holds generically. Equivalently, for general polynomial systems, only local H\"olderian error bounds can hold \cite{LMP-15}. The explicit examples given in \cite{EHK-15,EK-15} present a witness for this phenomenon when $r=1$.}

\section{Conclusions}\label{sec:conclusion}

In this paper, we study the low rank partially orthogonal tensor approximation (LRPOTA) problem. To numerically solve the problem, we propose {iAPD-ALS} algorithm (cf. Algorithm~\ref{algo})  which is based on the block coordinate descent method. We conduct a thorough analysis of the convergent behaviour of our {algorithm}. To achieve this goal, we carefully investigate geometric properties of partially orthogonal tensors related to {iAPD-ALS} algorithm. With tools from differential and algebraic geometry, we successfully establish the sublinear global convergence in general and more importantly, the generic R-linear global convergence.

On the one hand, the geometric analysis carried out in this paper is on the feasible set of the approximation problem. Thus, it is not only applicable to the convergence analysis of iAPD-ALS algorithm, but can also  be applied to other algorithms. \if It also supplies a rich class of instances for the study of tensor decomposition and low rank tensor approximation.\fi On the other hand, the nondegeneracy of KKT points for a generic tensor obtained in this paper implies that the LRPOTA problem satisfies the strict saddle point condition generically. Thus, algorithms with the property of avoiding saddle points discussed in \cite{LPPSJR19,LSJR-16} would be investigated in the future and this would shed some light on the study of global optimizers for the LRPOTA problem in certain cases.




\subsection*{Acknowledgement}
This work is partially supported by National Science Foundation of China (Grant No. 11771328). The first author is also partially supported by National Science Foundation of China (Grant No.~11801548 and Grant No.~11688101), National Key R\&D Program of China (Grant No.~2018YFA0306702) and the Recruitment Program of Global Experts of China. The second author is also partially supported by the National Science Foundation of China (Grant No. 12171128) and the Natural Science Foundation of Zhejiang Province, China (Grant No. LD19A010002 and Grant No. LY22A010022).

\bibliographystyle{abbrv}
\bibliography{odtapr}

\end{document}